\newtheorem{thm}{Theorem}[section]
\newtheorem{prop}[thm]{Proposition}
\newtheorem{cor}[thm]{Corollary}
\newtheorem{exam}[thm]{Example}
\newtheorem{lem}[thm]{Lemma}
\newtheorem*{lem*}{Lemma}
\newtheorem*{claim*}{Claim}
\theoremstyle{remark}
\newtheorem{rem}[thm]{Remark}
\newtheorem*{rem*}{Remark}
\theoremstyle{definition}
\newtheorem{defi}[thm]{Definition}
\newtheorem*{rep@theorem}{\rep@title}
\newcommand{\newreptheorem}[2]{%
\newenvironment{rep#1}[1]{%
 \def\rep@title{#2 \ref{##1}}%
 \begin{rep@theorem}}%
 {\end{rep@theorem}}}
\numberwithin{figure}{section}
\title{Selberg, Ihara and Berkovich}
\author{Jialun Li, Carlos Matheus, Wenyu Pan and Zhongkai Tao}
\date{}
\begin{document}

\maketitle

\begin{abstract}
We use the Selberg zeta function to study the limit behavior of resonances in a degenerating family of Kleinian Schottky groups. We prove that, after a suitable rescaling, the Selberg zeta functions converge to the Ihara zeta function of a limiting finite graph associated to the relevant non-Archimedean Schottky group acting on the Berkovich projective line.

Moreover, we show that these techniques can be used to get an exponential error term in a result of McMullen (recently extended by Dang and Mehmeti) about the asymptotics for the vanishing rate of the Hausdorff dimension of limit sets of certain degenerating Schottky groups generating symmetric three-funnel surfaces. Here, one key idea is to introduce an intermediate zeta function capturing \emph{both} non-Archimedean and Archimedean information (while the traditional Selberg, resp. Ihara zeta functions concern only Archimedean, resp. non-Archimedean properties). 
\end{abstract}

\setcounter{tocdepth}{2} % for table of contents, until the level of subsection.
\tableofcontents

\section{Introduction}

A \emph{Schottky group} of rank $g$ \emph{over} $\mathbb{C}$ (also called Kleinian Schottky group) is a discrete subgroup of $\mathrm{SL}_2(\mathbb{C})$ which is purely loxodromic and isomorphic to the free group of rank $g$. These groups were introduced by Schottky in 1877 and they were classically studied in connection with the famous uniformization theorem for Riemann surfaces: indeed, Koebe proved in 1910 that any closed Riemann surface is uniformized by some Kleinian Schottky group. 

Partly motivated by the great success of the study of degenerating families of Riemann surfaces (ultimately leading to the celebrated Deligne--Mumford compactification of moduli spaces and its numerous applications), we shall investigate in this paper the analytical and geometrical properties of \emph{degenerating} families of Schottky groups over $\mathbb{C}$. 

More concretely, given a family $\Gamma_z$ of Schottky groups over $\mathbb{C}$ generated by $2\times 2$ matrices whose entries are meromorphic functions of $z\in\mathbb{D}^*$, it is sometimes possible to introduce a Schottky group $\Gamma^{na}$ over the non-Archimedean field $\Ct$ (of Laurent series on $t$) whose features can be used to extract asymptotic information on $\Gamma_z$ as $|z|\to 0$. In fact, this philosophy of studying degenerating families of complex objects via non-Archimedean limiting objects is nowadays widely spread in the literature (cf. \cite{Kiwi2015}, \cite{BoJo2017}, \cite{dujardinDegenerationsProtectMathrm2019}, \cite{favreDEGENERATIONENDOMORPHISMSCOMPLEX2020} and \cite{Luo2022} for some recent works) and, in the context of Schottky groups, this idea was exploited by Dang and Mehmeti \cite[Theorem 3]{dangHausdorffDimension2024} to prove that certain degenerating families $\Gamma_z$ of Schottky groups over $\mathbb{C}$ possess \emph{limit sets} $L(\Gamma_z)\subset\mathbb{P}^1_{\mathbb{C}}$ whose Hausdorff dimensions satisfy 
\begin{equation}\label{equ-haus conv}
\textrm{dim}(L(\Gamma_z))\sim \frac{d_0}{\log(1/|z|)}
\end{equation}
as $|z|\to 0$, where $d_0$ is the Hausdorff dimension of the limit set of a non-Archimedean Schottky group $\Gamma^{na}$ acting on the \emph{Berkovich projective line} $\mathbb{P}^{1,an}_{\Ct}$. (We also recommend to the reader the very recent paper \cite{CourtoisGuilloux2024} of Courtois and Guilloux containing a partial extension of the work of Dang and Mehmeti to \emph{higher} dimensions via an \emph{alternative method}, namely, the study of the infinite dimensional hyperbolic space.) 

In the present paper, we pursue this kind of philosophy in order to describe the Selberg zeta functions of certain degenerating families of Schottky groups over $\mathbb{C}$ in terms of Ihara zeta functions of finite graphs naturally attached to non-Archimedean Schottky groups: see Subsection \ref{ss.intro2} below for precise statements. As a consequence, we are able to significantly refine the result of Dang and Mehmeti mentioned above. In particular, we improve upon the work of McMullen \cite{mcmullenHausdorffDimensionConformal1998} on the Hausdorff dimension of limit sets of Schottky reflection groups associated to symmetric three-funnel hyperbolic surfaces and the theoretical and numerical works of Weich \cite{weichResonanceChainsGeometric2015}, Borthwick \cite{borthwickSpectralTheoryInfiniteArea2016} and Pollicott--Vytnova \cite{pollicottZerosSelbergZeta2019a} on the convergence of rescaled Selberg zeta functions of symmetric three-funnel hyperbolic surfaces: see Subsection \ref{ss.intro1} below for concrete statements. 

\subsection{Example: symmetric three-funnel surfaces}\label{ss.intro1}
    Let $X(\ell)$ be the symmetric three-funnel hyperbolic surface with parameter $\ell$, that is, the non-compact hyperbolic surface of genus zero that has three funnels of widths $\ell > 0$. This gives an example of a family of degenerating surfaces when $\ell$ tends to infinity. Denote by $\Gamma(\ell)$ the Schottky group uniformizing $X(\ell)$, and let $L(\Gamma(\ell))$ be the limit set of $\Gamma(\ell)$ on the visual boundary $\partial\H^2$.
    McMullen \cite{mcmullenHausdorffDimensionConformal1998} studied the asymptotics of the Hausdorff dimension of $L(\Gamma(\ell))$ as $\ell\to\infty$ and he obtained $\dim (L(\Gamma(\ell)))\approx \frac{\log 4}{\ell}$. 

    For this particular example, as it will be explained in Example \ref{exa.borth} below, the main results of this paper imply an exponential error term in McMullen's asymptotic formula above: 
\begin{thm}\label{thm:three-funnel}

      For the case of symmetric three-funnel hyperbolic surfaces, we have for any $\epsilon>0$,
    \[\dim (L(\Gamma(\ell)))=\frac{\log 4}{\ell}+O_\epsilon(e^{-(1-\epsilon)\ell/4}). \]

    Moreover, for the Selberg zeta function (see \cref{equ:selberg-zeta} for definition), we have
      \[|Z(\Gamma(\ell),s/\ell)-(1-6e^{-s}+9e^{-2s}-4e^{-3s})|=O_{K,\epsilon}(e^{-(1-\epsilon)\ell/4}), \]
    for any $s$ in a fixed compact set $K$ and any $\epsilon>0$.
\end{thm}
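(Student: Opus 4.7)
The plan is to deduce both statements from the general Selberg-to-Ihara convergence theorem to be established in Subsection \ref{ss.intro2} of the paper, applied to the one-parameter family of rank-two Kleinian Schottky groups uniformizing the symmetric three-funnel surfaces $X(\ell)$.

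First I would realize $\Gamma(\ell)$ as a family $\Gamma_z \subset \mathrm{SL}_2(\CC)$ with $|z| \asymp e^{-\ell}$ (up to a normalization fixed once and for all), chosen so that, after a conjugation exploiting the $\ZZ/3$-rotational symmetry of $X(\ell)$, the generators are $2 \times 2$ matrices with entries meromorphic in $z \in \DD^*$. The associated non-Archimedean Schottky group $\Gamma^{na} \subset \mathrm{SL}_2(\Ct)$ can then be written down explicitly, and one identifies its skeleton --- the quotient by $\Gamma^{na}$ of the convex hull of the limit set inside the Berkovich projective line $\mathbb{P}^{1,an}_{\Ct}$ --- as an explicit finite graph $G$. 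A direct computation of the Ihara zeta function of $G$ should yield
\[
\zeta_G(u)^{-1} = (1-4u)(1-u)^2 = 1 - 6u + 9u^2 - 4u^3,
\]
which matches the claimed limit upon setting $u = e^{-s}$; the leading factor $(1-4u)$ reflects the Perron eigenvalue $4$ of the transfer matrix of the limiting symbolic dynamics, while the double factor $(1-u)^2$ reflects the rank-two first Betti number of $G$.

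Next, the main convergence result of the paper produces a quantitative estimate of the form
\[
\left| Z(\Gamma(\ell), s/\ell) - \zeta_G(e^{-s})^{-1}\right| = O_{K,\epsilon}\bsb{e^{-(1-\epsilon)\ell/4}},
\]
uniformly for $s$ in compact subsets $K$. The exponent $1/4$ should arise because, after rescaling, the first non-trivial non-Archimedean correction to the matrix entries of the generators of $\Gamma_z$ vanishes at a rate of order $|z|^{1/4} = e^{-\ell/4}$, and this is the dominant error seen by the Fredholm determinant defining the Selberg zeta function. Carefully tracking this $z$-expansion and verifying that the intermediate (Archimedean--non-Archimedean) zeta function introduced in the paper interpolates between the two with exactly this rate is the step where I expect the main technical obstacle: one must show both an upper bound $O(e^{-(1-\epsilon)\ell/4})$ and that no lower-order correction spoils it, which requires precise knowledge of the geometry of the generators in the non-Archimedean limit rather than a soft compactness argument.

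Finally, for the Hausdorff dimension I would invoke Bowen's formula identifying $\delta(\ell) := \dim(L(\Gamma(\ell)))$ with the largest real zero of $s \mapsto Z(\Gamma(\ell), s)$. The limit polynomial $(1 - 4e^{-s})(1 - e^{-s})^2$ has a simple largest real zero at $s = \log 4$, isolated from the other (double) zero at $s = 0$. A standard implicit-function / Rouché argument, applied to the uniform exponential convergence established above near $s = \log 4$, then transfers the error from the Selberg zeta function to its largest real zero and yields
\[
\delta(\ell) = \frac{\log 4}{\ell} + O_\epsilon\bsb{e^{-(1-\epsilon)\ell/4}},
\]
completing the proof.
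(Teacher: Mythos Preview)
Your overall strategy --- realize the family as a meromorphic one-parameter family satisfying $(\bigstar)$, identify the limiting graph and its Ihara zeta function, apply the quantitative convergence theorem, then transfer to the first zero by Rouch\'e using its simplicity --- matches the paper's. Two concrete points need to be fixed.

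First, the parametrization. The paper takes $z=e^{-\ell/4}$, not $|z|\asymp e^{-\ell}$. The generators involve an auxiliary quantity $a$ determined by $\tr(S_1S_2)=-2\cosh(\ell/2)$, and one computes $a^{-1}=-1+2e^{-\ell/4}+O(e^{-\ell/2})$ (Example~\ref{exa.borth}); meromorphy of the entries in the deformation parameter therefore \emph{forces} $z=e^{-\ell/4}$. With your choice $z\asymp e^{-\ell}$ the entries would involve $z^{1/4}$, which is not meromorphic, and condition~$(\bigstar)$ would not even be formulated. The exponent $1/4$ in the error is then nothing more than the general $|z|^{1-\epsilon}$ rate of Theorem~\ref{thm:zeta-conv} read through the substitution $|z|=e^{-\ell/4}$, not a fractional vanishing order of a correction term as you suggest (a meromorphic function of $z$ cannot vanish to order $1/4$).

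Second, Theorem~\ref{thm:zeta-conv} yields rate $|z|^{1-\epsilon}$ only for the convergence to the \emph{intermediate} zeta function $Z_0(\Gamma,z,s)$; the convergence to the Ihara zeta function $Z_I$ is in general merely $O(1/\log(1/|z|))$ (Theorem~\ref{prop:logarithmic convergence}). The conclusion of Theorem~\ref{thm:three-funnel} therefore rests on the additional identity $Z_0=Z_I$ for this particular family. This is verified in Example~\ref{exa.borth} via the two-generator analysis of Section~\ref{subsec:fig-theta}: since $\lt(S_1)=\lt(S_2)=-\lt(S_1S_2)=z^{-2}$ all have leading coefficients of modulus one, the weights satisfy $\alpha_1=\alpha_2=\alpha_3=0$, hence $\ell_0(\gamma,z)=\ell^{na}(\gamma)$ for every $\gamma$, and $Z_0=Z_I$. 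This step is specific to the symmetry of $X(\ell)$ and is the missing ingredient in your sketch.
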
 

The convergence of the Selberg zeta function without speed is due to Weich \cite{weichResonanceChainsGeometric2015}. 
Pollicott--Vytnova \cite{pollicottZerosSelbergZeta2019a} proved a polynomial error term $O(1/\sqrt{\ell})$ in the convergence of the Selberg zeta functions and $O(1/\sqrt{\ell^3})$ in the convergence of Hausdorff dimensions (and, in fact, Example \ref{exa.PV19} below shows how to recover their results from our techniques). The exponential error term above also explains that the numerics of resonances in Borthwick \cite{borthwickSpectralTheoryInfiniteArea2016} behave well even for small $\ell$ such as $\ell=10$: see Example \ref{exa.funnel-torus} below for more details. 

Our method of proof is different from previous works on symmetric three-funnel surfaces. Indeed, this family is just an example of a degenerating Schottky family and, as it is explained below, we actually study the limit behavior of all such families.

\subsection{Main results: degenerating Schottky families}\label{ss.intro2}
The study of the degenerations at the origin $z=0$ of meromorphic families $\Gamma_z$ of Kleinian Schottky (and, more generally, non-elementary) groups depending on a complex parameter $z\in \bb D^*$ in the unit punctured disc of $\mathbb{C}$ is a topic of intense investigations with a vast literature around it. For instance, such families are useful to build compactifications of representation variety, character variety or moduli spaces (see, e.g., \cite{cullerVarietiesGroupRepresentations1983} \cite{otalCompactificationSpacesRepresentations2015} and \cite{poineauBerkovichCurvesSchottky2021}), their Lyapunov exponents for the associated random walks satisfy fascinating asymptotics (see, e.g., \cite{AvronCraigSimon} and \cite{dujardinDegenerationsProtectMathrm2019}), and, as we already mentioned above, Hausdorff dimensions of their limit sets attracted the attention of many authors. 

In the present paper, the central objects of discussion will be the Selberg zeta functions. Recall that the Selberg zeta function for a Schottky group $\Gamma$ of $\SL_2(\C)$ is defined as follows. Let $\calP$ be the set of oriented primitive periodic geodesics on $\mathrm{T}^1(\Gamma \backslash \mathbb{H}^3)$.
 \begin{equation}\label{equ:selberg-zeta}
  Z(\Gamma,s)=\prod_{\gamma\in \calP }\prod_{k_1,k_2\geq 0}(1-e^{-(s+k_1+k_2)\ell(\gamma)}e^{-i\theta_{\gamma}(k_1-k_2)}).      
 \end{equation}
Here, by a slight abuse of notation, we also denote by $\gamma$ an oriented closed geodesic representing a loxodromic element $\gamma\in \SL_2(\CC)$, so that $\ell(\gamma)$ is the length of the periodic geodesic $\gamma$ in $\Gamma\backslash \HH^3$, and $e^{i\theta_{\gamma}}$ is the holonomy given by $\gamma'(\gamma_+) = e^{-\ell(\gamma)-i\theta_{\gamma}} $ with $\gamma_+$ being the attracting fixed point of $\gamma$. Similarly, for $\Gamma<\SL_2(\RR)$, the classical Selberg zeta function is defined as
 \begin{equation*}
      Z(\Gamma,s)=\prod_{\gamma\in \calP}\prod_{k\geq 0}(1-e^{-(s+k)\ell(\gamma)}).
 \end{equation*}
Note that the definitions for $\SL_2(\RR)$ and $\SL_2(\CC)$ do not coincide, and, for this reason, we will deal with the two cases separately.

The function $Z(\Gamma, s)$ is convergent and analytic in the region $\Re s> \delta_\Gamma$, where $\delta_{\Gamma}$ is the so-called critical exponent. It admits an analytic continuation to the whole complex plane $\mathbb{C}$, and hence we obtain an entire function $Z(\Gamma, s)$. Due to \cite{pattersonDivisorSelbergZeta2001}, we know that the poles of the resolvent of the Laplacian on $\Gamma\backslash \mathbb{H}^3$ (also called resonances) correspond to a subset of the zeros of the Selberg zeta function $Z(\Gamma,s)$.   

For a degenerating family of Schottky groups, we define the \textit{intermediate zeta function}. We will prove in \cref{prop:expansion of length function} and \cref{cor:analytic-middle} that it is well-defined and has a holomorphic extension to $s\in\CC$. 
\begin{defi}[Intermediate zeta function]
   Let $\gamma\in \SL_2(\Ct)$ and $M\in \Z_{\geq 0}$, we define the approximate length function
   \begin{equation*}
       \ell_M(\gamma,z)=\ell^{na}(\gamma)+\Re\sum\limits_{j=0}^{M}a_j(\gamma)z^j/\log(1/|z|)
   \end{equation*}
   where $a_j(\gamma)$'s are the coefficients in the expansion
   \begin{equation*}
       \ell(\gamma_z)/\log(1/|z|)=\ell^{na}(\gamma)+\Re\sum\limits_{j=0}^{\infty}a_j(\gamma)z^j/\log(1/|z|).
   \end{equation*}
   The intermediate $M$-zeta function is defined by
   \begin{equation*}
       Z_M(\Gamma,z,s)=\prod_{[\gamma]\in\calP}(1-e^{-s\ell_M(\gamma,z)})
   \end{equation*}
   for $\Re s$ sufficiently large.
   \footnote{See \cref{exa.funnel-torus} for one explicit but nontrivial intermediate zeta function for hyperbolic funneled torus. }
\end{defi}

Here is our main result:
 \begin{thm}\label{thm:zeta-conv} Let $\Gamma_z$ be a degenerating Schottky family satisfying condition $(\bigstar)$ in Subsection \ref{Degeneration of Schottky groups}, and denote by $\Gamma<\SL_2(\Ct)$ be the corresponding non-Archimedean Schottky group. Then for any $s\in \C$, as $|z|\rightarrow 0$, we have
      \[ Z(\Gamma_z,s/\log(1/|z|))\rightarrow Z_{I}(\Gamma,s), \]
      where $Z_I(\Gamma,s)$ is the Ihara zeta function associated to $\Gamma$ (defined in \cref{prop:na-Ihara}) and the convergence on any compact set is uniform.

Moreover, for any compact set $K$ and $\epsilon>0$, we have for all $0<|z|<1/e$ and for any $s\in K$,
\[ |Z(\Gamma_z,s/\log(1/|z|))-Z_0(\Gamma,z,s)|\lesssim_{K,\epsilon} |z|^{1-\epsilon}. \]

Furthermore, for any $C,\epsilon>0$ and $M\in \mathbb{Z}_{\geq 0}$, we have for $s\in[-C,C]+i[-C|z|^{-M},C|z|^{-M}]$ and $0<|z|<1/e$,
    \begin{equation*}
    |Z(\Gamma_z,s/\log(1/|z|))-Z_M(\Gamma,z,s)|\lesssim_{C,M,\epsilon} |z|^{1-\epsilon}.
    \end{equation*}
     
 \end{thm}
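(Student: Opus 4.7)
I begin by factoring the rescaled Selberg zeta as
\begin{equation*}
Z(\Gamma_z,s/\log(1/|z|)) = Z^{\mr{lead}}(\Gamma_z,s)\cdot Z^{\mr{tail}}(\Gamma_z,s),
\end{equation*}
where $Z^{\mr{lead}}(\Gamma_z,s)=\prod_{\gamma\in\calP}(1-e^{-s\ell(\gamma_z)/\log(1/|z|)})$ collects the $(k_1,k_2)=(0,0)$ factors and $Z^{\mr{tail}}$ collects the rest. The plan is to show that $Z^{\mr{tail}}=1+O(|z|^{1-\epsilon})$ and $Z^{\mr{lead}}=Z_M(\Gamma,z,s)+O(|z|^{1-\epsilon})$ uniformly for $s$ on the rectangle, so that multiplication yields the third assertion. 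Specialising $M=0$ yields the second; the first then follows from $Z_0(\Gamma,z,s)\to Z_I(\Gamma,s)$ as $|z|\to 0$, a pointwise consequence of $\ell_0(\gamma,z)\to\ell^{na}(\gamma)$ together with the Ihara-type finite expression from \cref{prop:na-Ihara}.

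\textbf{Tail factor.} For $(k_1,k_2)\neq(0,0)$, the modulus of $u:=e^{-(s/\log(1/|z|)+k_1+k_2)\ell(\gamma_z)-i\theta_{\gamma_z}(k_1-k_2)}$ is bounded by $e^{C\ell^{na}(\gamma)}\cdot|z|^{(k_1+k_2)\ell^{na}(\gamma)(1-\epsilon)}$, using $\ell(\gamma_z)\sim\ell^{na}(\gamma)\log(1/|z|)$. Using $\log(1-u)=-u+O(u^2)$ and summing over $(k_1,k_2)\neq(0,0)$ and $\gamma\in\calP$, the geometric series in $k_1+k_2$ combined with the Schottky counting bound for primitive classes (at most $e^{\delta^{na}L}$ classes of $\ell^{na}$-length $\leq L$, with $\delta^{na}$ bounded) yields a total logarithmic contribution $O(|z|^{(1-\epsilon)\ell^{na}_{\min}})=O(|z|^{1-\epsilon})$, where $\ell^{na}_{\min}\geq 1$ comes from the tree combinatorics of $\Gamma$ acting on $\P^{1,an}_{\Ct}$. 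Exponentiating proves the tail estimate.

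\textbf{Leading factor.} Plugging in the expansion from \cref{prop:expansion of length function} gives $\ell(\gamma_z)/\log(1/|z|)=\ell_M(\gamma,z)+R_M(\gamma,z)$ with $|R_M(\gamma,z)|\lesssim C_\gamma\,|z|^{M+1}/\log(1/|z|)$ and $C_\gamma$ depending geometrically on $\ell^{na}(\gamma)$. Because $\ell_M$ is real-valued (thanks to the $\Re$ in its definition), $|e^{-s\ell_M}|$ is independent of $\Im s$, so on the rectangle with $|s|\lesssim|z|^{-M}$ the per-class approximation is
\begin{equation*}
e^{-s\ell(\gamma_z)/\log(1/|z|)}-e^{-s\ell_M(\gamma,z)}=e^{-s\ell_M(\gamma,z)}\!\left(e^{-sR_M(\gamma,z)}-1\right)=O\!\left(e^{C\ell^{na}(\gamma)}\cdot|z|/\log(1/|z|)\right).
\end{equation*}
Writing $Z^{\mr{lead}}-Z_M$ as a telescoping sum over $\gamma$, with partial products controlled uniformly via the Ihara-type determinant representation of $Z_M$ from \cref{cor:analytic-middle}, and summing the resulting geometric series in $\ell^{na}(\gamma)$, gives the leading estimate and hence all three assertions of the theorem.

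\textbf{Main obstacle.} The delicate point is the leading-factor comparison on the tall rectangle: $|\Im s|$ is allowed to be as large as $|z|^{-M}$, which is precisely compensated by the per-class remainder of size $|z|^{M+1}$. Two features make this work: the realness of $\ell_M$, which decouples the modulus $|e^{-s\ell_M}|$ from $\Im s$ (so that $Z_M$ itself does not blow up on the rectangle); and the finite Ihara-type determinant representation of $Z_M$ from \cref{cor:analytic-middle}, which controls the partial products in the telescoping expansion uniformly even near possible zeros of $Z_M$. Converting the per-class multiplicative error into an additive error on the full product — while keeping the $|z|^{1-\epsilon}$ gain — is the principal technical hurdle; the alternative of working with $\log Z$ directly runs into the zero set of $Z_M$ and appears strictly harder than the determinant-based telescoping route.
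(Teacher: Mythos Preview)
There is a genuine gap in the leading-factor step. Your telescoping comparison of $Z^{\mr{lead}}$ with $Z_M$ implicitly assumes that the Euler products converge on the whole rectangle, but for $\Re s$ bounded (let alone negative) the products $\prod_{\gamma}(1-e^{-s\ell(\gamma_z)/\log(1/|z|)})$ and $\prod_{\gamma}(1-e^{-s\ell_M(\gamma,z)})$ diverge: each factor has modulus $\geq 1$ when $\Re s\leq 0$, so partial products grow without bound. The Ihara determinant from \cref{cor:analytic-middle} gives the \emph{analytic continuation} of the full product $Z_M$, not control of its finite truncations, so it cannot salvage the telescoping. Moreover, the per-class remainder is not of size $C_\gamma|z|^{M+1}$ with $C_\gamma=e^{C\ell^{na}(\gamma)}$: by \cref{prop:expansion of length function} one has $|a_j(\gamma)|\leq Ce^{C\ell^{na}(\gamma)(j+1)}$, so $|sR_M|\lesssim e^{C(M+2)\ell^{na}(\gamma)}|z|/\log(1/|z|)$. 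Summed against the counting bound $\#\{\gamma:\ell^{na}(\gamma)=n\}\lesssim e^{Cn}$ this diverges unless a factor $e^{-\Re s\,\ell^{na}(\gamma)/2}$ is available to beat it---which requires $\Re s\gtrsim M$.

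The paper's proof is structurally different and avoids this obstruction. It first establishes the estimate $|Z/Z_M-1|\lesssim|z|$ only in the half-plane $\Re s>(M+1)s_0$ via the Euler product (\cref{lem:quant-bound-zeta}), where the exponential growth in $\ell^{na}(\gamma)$ is absorbed. It then proves, independently, that both $Z(\Gamma_z,s/\log(1/|z|))$ and $Z_M(\Gamma,z,s)$ are \emph{uniformly bounded} on a large disc containing the rectangle: the former via the transfer operator realized as a trace-class operator with controlled singular values (Propositions \ref{prop:uniform-bdd-SL2(R)} and \ref{prop:uniform-bdd-SL2(C)}), the latter via the finite determinant formula (\cref{cor:analytic-middle}). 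Finally, the Hadamard three-circle theorem (\cref{prop:hadamard}) interpolates between the small bound on a disc inside $\{\Re s>(M+1)s_0\}$ and the uniform bound on the large disc, yielding $|z|^{1-\epsilon}$ on the intermediate region. This analytic-continuation mechanism is essential: no direct product manipulation can reach $\Re s\leq 0$.
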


As a direct corollary, we obtain
\begin{cor}\label{cor:dimension}
    Let $\Gamma_z$ be a degenerating Schottky family satisfying the condition $(\bigstar)$ in Subsection \ref{Degeneration of Schottky groups}. Let $P(|z|)$ be the first zero of the intermediate zeta function $Z_0(\Gamma, z,s)$ of the variable $s$. We have for any $\epsilon>0$ and $|z|<1/e$
    \[ \dim (L(\Gamma_z))=P(|z|)+O_\epsilon(|z|^{1-\epsilon}).  \]
\end{cor}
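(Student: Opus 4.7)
The plan is to exploit the classical identification of $\dim L(\Gamma_z)$ with the first positive real zero of the Selberg zeta function, combine it with the quantitative closeness of zeta functions from \cref{thm:zeta-conv}, and then apply Rouch\'e's theorem to deduce closeness of their first zeros.

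For a Kleinian Schottky group, the Hausdorff dimension $\dim L(\Gamma_z)$ equals the critical exponent $\delta_{\Gamma_z}$, which is the smallest positive real zero of $s\mapsto Z(\Gamma_z,s)$. After rescaling $s\mapsto s/\log(1/|z|)$, the quantity $\delta_{\Gamma_z}\log(1/|z|)$ is the smallest positive real zero of $s\mapsto Z(\Gamma_z,s/\log(1/|z|))$. Since $P(|z|)$ is by definition the first positive real zero of $s\mapsto Z_0(\Gamma,z,s)$, the task reduces to estimating the gap $|\delta_{\Gamma_z}\log(1/|z|)-P(|z|)|$, which after dividing by $\log(1/|z|)\geq 1$ (valid for $|z|<1/e$) delivers the asymptotic asserted in the corollary.

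The key input is the $M=0$ case of the middle inequality in \cref{thm:zeta-conv}: on any fixed compact $K\subset\CC$,
\[
\bigl|Z(\Gamma_z,s/\log(1/|z|))-Z_0(\Gamma,z,s)\bigr|\lesssim_{K,\epsilon}|z|^{1-\epsilon}.
\]
The first statement of \cref{thm:zeta-conv} combined with Hurwitz's theorem implies that $P(|z|)\to d_0=\dim L(\Gamma^{na})$ as $|z|\to 0$; in particular $P(|z|)$ remains in a fixed compact subset of $(0,\infty)$. Assuming the first real zero of the Ihara zeta function $Z_I(\Gamma,\cdot)$ is simple (see the obstacle below), the derivative $|\partial_s Z_0(\Gamma,z,P(|z|))|$ admits a uniform lower bound $c>0$ for $|z|$ small. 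Rouch\'e's theorem applied on the disc $B(P(|z|),C|z|^{1-\epsilon})$ with $C=C(\epsilon)$ large enough that $|Z_0(\Gamma,z,s)|\gtrsim cC|z|^{1-\epsilon}$ on its boundary dominates the $O(|z|^{1-\epsilon})$ perturbation, produces a unique zero of $s\mapsto Z(\Gamma_z,s/\log(1/|z|))$ inside this disc. That zero must be $\delta_{\Gamma_z}\log(1/|z|)$, since by \eqref{equ-haus conv} this is the only candidate close to $d_0$. Therefore
\[
\bigl|\delta_{\Gamma_z}\log(1/|z|)-P(|z|)\bigr|\lesssim_\epsilon|z|^{1-\epsilon},
\]
yielding the claimed estimate after the rescaling.

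The main obstacle is the uniform lower bound on $|\partial_s Z_0(\Gamma,z,P(|z|))|$, equivalently the simplicity of the first real zero of $Z_I(\Gamma,\cdot)$. This is a Perron--Frobenius-type input: under $(\bigstar)$ the underlying finite graph associated to $\Gamma$ is connected, so the leading eigenvalue of the associated edge-transition operator is simple; simplicity of the corresponding zero of $Z_I$ then follows from \cref{prop:na-Ihara}, and transfers to $Z_0(\Gamma,z,\cdot)$ for small $|z|$ via locally uniform convergence. Without such uniform non-degeneracy, the Rouch\'e step would only give the weaker square-root error $O(|z|^{(1-\epsilon)/2})$ (compare the polynomial rate in \cite{pollicottZerosSelbergZeta2019a}), and it is precisely this extra regularity that allows the sharp exponent $1-\epsilon$.
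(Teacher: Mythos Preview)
Your approach is essentially the one the paper takes: the paper packages the Rouch\'e argument in \cref{cor:res-conv} (together with \cref{lem:perturb-poly}), which compares zeros of $Z(\Gamma_z,s/\log(1/|z|))$ and $Z_M(\Gamma,z,s)$ inside a disc and yields $|\rho_j-\rho_j^M|\leq|z|^{1/m_j^{na}-\epsilon}$, where $m_j^{na}$ is the multiplicity of the corresponding Ihara zero. Your identification of the simplicity of the first Ihara zero (via Perron--Frobenius for the connected skeleton graph) as the extra input needed to get exponent $1-\epsilon$ rather than $(1-\epsilon)/m_j^{na}$ is exactly right, and is implicitly used by the paper as well. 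One small bookkeeping point: in the paper's intended normalization (see the remark following \cref{cor:dimension}, where $P(|z|)=d_0/\log(1/|z|)$ in the symmetric cases), $P(|z|)$ already incorporates the division by $\log(1/|z|)$; your ``after dividing by $\log(1/|z|)$'' step is therefore not an additional rescaling but the definition of $P(|z|)$ itself.
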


For special examples, like symmetric three-funnel surfaces and \cref{exa.o-o}, the main term has the form $P(|z|)=d_0/\log(1/|z|)$. For general examples, the main term $P(|z|)$ is a zero of a polynomial of exponential functions of the variable $s$ like $\sum a_je^{-s(b_j+c_j/\log(1/|z|))}$.  
It would be interesting to study the Taylor expansion of $P(|z|)$ on $1/\log(1/|z|)$. 

\begin{rem} Due to the definition of $Z_0$, we know $P(|z|)$ and the coefficients of the expansion only depend on the coefficients of $\tr(\gamma)$ with $\gamma\in\Gamma$. For example, if we write $Z_0(\Gamma, z,s)=Z_0(s,u)$ with $u=1/\log(1/|z|)$, then by the implicit function theorem, we obtain 
\[P(|z|)=\frac{d_0}{\log(1/|z|)}-\frac{\partial_2Z_0}{\partial_1Z_0}(d_0,0)\frac{1}{(\log(1/|z|))^2}+O\left(\frac{1}{(\log(1/|z|))^3}\right).\]
Can we find other interpretations of the term $\frac{\partial_2Z_0}{\partial_1Z_0}(d_0,0)$, such as the variations of Hausdoff dimension \cite{mcmullenThermodynamicsDimensionWeilPetersson2008} or the expansion of Lyapunov exponent \cite{AvronCraigSimon}?
\end{rem}

We also obtain a corollary on the zeros of zeta functions and, consequently, on resonances of the Laplacian.
\begin{cor}\label{cor-resonance converge}
    The zeros of $Z_{I}(\Gamma,s)$ have the structure $\rm{Res}_I(\Gamma)=\{\mu_j\}+2\pi i \ZZ$,
    where $\{\mu_j\}$ is a finite set. Outside $s=0$, the resonances of $\Gamma_z$ converge to $\rm{Res}_{I}(\Gamma)$ (counted with multiplicity) after rescaling, i.e.
    \begin{equation}\label{eq:res-minus-0}
        \log(1/|z|)\,\rm{Res}(\Gamma_z)\setminus \{0\}\to \rm{Res}_{I}(\Gamma)\setminus \{0\},\quad |z|\to 0
    \end{equation}
    on compact sets. In particular, the spectral gap of $\Gamma_z$ converges to $0$ after rescaling by $\log(1/|z|)$.
\end{cor}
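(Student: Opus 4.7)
The plan is to extract the claim from the uniform convergence in Theorem \ref{thm:zeta-conv} via Hurwitz's theorem, after first understanding the algebraic structure of the limit Ihara zeta function.

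First I would analyze the zero set of $Z_I(\Gamma,s)$. By the construction in \cref{prop:na-Ihara}, $Z_I(\Gamma,s)$ is (a variant of) the Ihara zeta function of the finite graph associated with $\Gamma$, which by the Ihara--Bass determinantal identity can be written in the form $(1-u^2)^{r-1}\det(I-Au+Qu^2)$ with $u=e^{-s}$, where $A$ is an adjacency matrix, $Q=D-I$, and $r$ is the first Betti number. In particular $Z_I(\Gamma,s)=P(e^{-s})$ for a polynomial $P\in\mathbb{C}[u]$, so its zero set in $s$ is precisely $\{\mu_j\}+2\pi i\mathbb{Z}$, where $\{\mu_j\}$ is a finite collection of representatives of $-\log$ of the roots of $P$, and the multiplicity of each zero equals that of the corresponding root of $P$. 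This yields the first assertion.

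Next, I would invoke Hurwitz's theorem. Theorem \ref{thm:zeta-conv} gives that $s\mapsto Z(\Gamma_z,s/\log(1/|z|))$ converges uniformly on compact subsets of $\mathbb{C}$ to $Z_I(\Gamma,s)$, and the limit is not identically zero. Hurwitz's theorem therefore ensures that for every relatively compact open $U\subset\mathbb{C}$ whose boundary avoids the zero set of $Z_I$, the zeros of $Z(\Gamma_z,\cdot/\log(1/|z|))$ in $U$ converge, counted with multiplicity, to the zeros of $Z_I(\Gamma,s)$ in $U$ as $|z|\to 0$. To transfer this to a statement about resonances, I would use the Patterson-type correspondence from \cite{pattersonDivisorSelbergZeta2001} recalled in the introduction: for convex cocompact Kleinian Schottky groups, the resonances of the Laplacian on $\Gamma_z\backslash\mathbb{H}^3$ are obtained from the zeros of $Z(\Gamma_z,s)$ by removing a fixed finite list of \emph{topological} zeros; under the rescaling $s\mapsto s/\log(1/|z|)$, the only possible accumulation point of this finite list is the origin. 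Removing an arbitrarily small neighborhood of $s=0$ therefore produces \eqref{eq:res-minus-0}.

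The spectral gap statement follows immediately from the periodic structure of $\mathrm{Res}_I(\Gamma)$: if $\mu_1$ is any zero of $Z_I$ of maximal real part, then so is every element of the vertical line $\mu_1+2\pi i\mathbb{Z}$, so the limiting set of resonances has infinitely many elements of the same largest real part and hence zero spectral gap; by the preceding convergence, $\log(1/|z|)$ times the spectral gap of $\Gamma_z$ must go to $0$. The main obstacle will be the identification step between zeros of $Z(\Gamma_z,\cdot)$ and actual resonances: even though the Patterson correspondence is classical in the convex cocompact Kleinian setting, one must check that the finite list of topological zeros really does concentrate at $s=0$ after rescaling (justifying the single-point exclusion in the statement) and that multiplicities are preserved under the limit --- the latter being automatic from Hurwitz's theorem, but the former requiring a direct comparison between Patterson's explicit divisor formula and the scaling $s\mapsto s/\log(1/|z|)$.
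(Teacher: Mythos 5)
Your proposal follows essentially the same route as the paper's proof: convergence of zeros from \cref{thm:zeta-conv} (the paper's \cref{cor:res-conv} carries out the Rouch\'e/Hurwitz step), the structure $\mathrm{Res}_I(\Gamma)=\{\mu_j\}+2\pi i\ZZ$ from $Z_I(\Gamma,s)$ being a polynomial in $e^{-s}$ (\cref{prop:ZI}), and the Patterson/Borthwick identification of resonances with the zeros of $Z(\Gamma_z,\cdot)$ away from $s\in\mathbb{Z}_{\leq 0}$, which after multiplication by $\log(1/|z|)$ leaves only $s=0$ inside any fixed compact set. Two harmless imprecisions: the Ihara--Bass form $(1-u^2)^{r-1}\det(I-Au+Qu^2)$ is valid only when all edges have length $1$, so in general one should invoke the determinant formula $\det(I-W(s))$ of \cref{prop:ZI} (which still yields a polynomial in $e^{-s}$ because the non-Archimedean lengths are integers), and the non-resonance ("topological") zeros are not a fixed finite list but the set $\mathbb{Z}_{\leq 0}$ with multiplicities --- which is precisely why, after rescaling, only the origin can survive on compacta, as your exclusion of $s=0$ requires.
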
 

\begin{proof}
    It follows from \cref{thm:zeta-conv} that the zeros of $Z(\Gamma_z,s/\log(1/|z|))$ converges to the zeros of $Z_I(\Gamma,s)$ in any compact set. See \cref{cor:res-conv} for a more detailed version. The structure of $\rm{Res}_I(\Gamma)$ follows from the fact that $Z_I(\Gamma,s)$ is a polynomial of $e^{-s}$ (see \cref{prop:ZI}). The convergence \cref{eq:res-minus-0} follows from the fact that the resonances of the Laplacian coincides with the zeros of $Z(\Gamma_z,s)$ outside $s\in \mathbb{Z}_{\leq 0}$ (see \cite[Chapter 10]{borthwickSpectralTheoryInfiniteArea2016} for the two dimensional case and \cite{pattersonDivisorSelbergZeta2001} for the general case). 
\end{proof}

\begin{rem}
    1. By \cref{prop:na-Ihara}, the Ihara zeta function $Z_I(\Gamma,s)$ associated to $\Gamma$ is the Ihara zeta function of a finite graph which comes from some Berkovich space quotient by $\Gamma$.

    2. It is classic that the Hausdorff dimension of the limit set equals the first zero of the Selberg zeta function. For the non-Archimedean case, see \cref{sec.appendix} for the equality between Hausdorff dimension and the first zero of the Ihara zeta function. We recover the convergence theorem of the Hausdorff dimension of the limit set of \cite{dangHausdorffDimension2024} \cref{equ-haus conv} from \cref{cor-resonance converge}.

    3.      
    The work \cite{OhWinter} proves that the family of congruence Schottky surfaces has a uniform spectral gap (in both low and high frequencies).
    The works \cite{MageeNaud}, \cite{calderonMageeNaud} study the optimal size of the low frequency spectral gap under random covers. Our work shows that the gap becomes small under degenerate situations. It is interesting to ask if the gap also gets smaller in high frequency, i.e. when $\Im s$ is big. The conjecture of Jakobson--Naud \cite{JakobsoNaud} suggests that this does not happen. 

   4. When the corresponding Mumford curve $\Sigma_X$ of $\Gamma$ is a $q$-regular graph with each edge length $1$ (see \cref{sec:na-schottky} for definition), the Ihara zeta function is given by $Z_I(\Gamma,s)=(1-e^{-2s})^{r-1}\det(I-A_Xe^{-s}+(q-1)e^{-2s})$ where $r$ is the number of generators of $\Gamma$ and $A_X$ is the adjacency matrix of $\Sigma_X$ (see for example \cite{hortonWhatAreZeta2006}). Let $\lambda_n\leq \cdots \leq \lambda_2\leq \lambda_1=q$ be the eigenvalues of $A_X$. Therefore the set of resonances $\rm{Res}_I(\Gamma)$ is determined by $\mu_j$'s, which satisfy $e^{\mu_j}=(\lambda_i\pm\sqrt{\lambda_i^2-4(q-1)})/2(q-1) $ or $e^{\mu_j}=\pm 1$. This is similar to the relation between $\rm{Res}(\Gamma_z)$ and eigenvalues/resonances of the Laplacian operator on the quotient manifold.
    
    Moreover, if the graph $\Sigma_X$ is a Ramanujan graph, that is $-2\sqrt{q-1}\leq \lambda_n\leq \cdots\leq \lambda_2\leq 2\sqrt{q-1}$, then all $\mu_j$'s have real part $\log(q-1)/2$ except one $\mu_j$ equal to $\log(q-1)$ and some $\mu_j$ equal to $0$ or $\pi i$.

    5. The fractal Weyl law conjecture \cite[Conjecture 5]{zworski-survey} states that the number of resonances in a strip grows according to the dimension of the limit set at $T\to \infty$:
    \begin{equation*}
        \#\rm{Res}(\Gamma)\cap \{\Re s>-C, \, |\Im s|\leq T\}\sim T^{1+\delta},\quad \delta=\dim(L(\Gamma)).
    \end{equation*}
    Moreover, it is also conjectured in \cite[Conjecture 7]{zworski-survey} that the resonances concentrate near the axis $\Re s=\delta/2$.
    The upper bound is known by \cite{guillopeSelbergZetaFunction2004}, but the sharp lower bound is not known in any nontrivial example of Schottky groups. Our work provides a perspective on this conjecture via the intermediate zeta functions, which we conjecture to have a similar growth pattern as the fractal Weyl law, see the discussion after \cref{exa.PV19}.
\end{rem}

\subsection*{Acknowledgment}

We would like to thank the hospitality and the support of Centre Mathématiques Laurent Schwartz, École polytechnique and Coll\`ege de France.
We want to thank Bac Dang and Vler\"e Mehmeti for their talk on their work.
We would also like to thank Peter Sarnak for helpful and inspiring discussion.
JL was partially supported by the starting grant of CNRS.
ZT was partially supported by the NSF grant DMS-1952939 and by the Simons Targeted Grant Award No. 896630.

\subsection{Notations}\label{ss.notations}
Throughout the paper, we adopt the following conventions.
\begin{itemize}
\item We write $A \lesssim B$ if there exists a positive constant $C > 0$ (that may differ from expression to expression) such that $A \leq C B$. We specify the dependencies of $C$ by a subscript, e.g., $A \lesssim_{\Gamma} B$. We use $A\ll B$ to mean that $A$ is much smaller than $B$.
\item $D(a,r):=\{w\in\CC: |w-a|<r\}$ is a disc of radius $r$ centered at $a\in\CC$.
\item $\mathbb{D}=D(0,1)$ is the unit disc and $\mathbb{D}^*=\mathbb{D}\setminus \{0\}$. Similarly, $\mathbb{D}_r=D(0,r)$ and $\mathbb{D}_r^* = \mathbb{D}_r \setminus\{0\}$.
\item The variable $t$ usually means the formal variable in the field of Laurent series $\Ct$. This field is equipped with its usual $t$-adic norm $|f|_{na}=\exp(-\textrm{ord}_{t=0}(f)) = e^{-n}$ for $f=t^n\sum\limits_{m=0} f_m t^m\in \Ct$ with $f_0\neq 0$. The variable $z$ usually means a complex number that parametrizes a degenerating family of Schottky groups over $\CC$ or $\RR$.
\item The Banach ring $A_r\subset \Ct$ is defined by $f=\sum a_n t^n\in\Ct$ such that the hybrid norm
\begin{equation*}
    \|f\|_{A_r}=\sum \max\{|a_n|_{\infty}, |a_n|_0\} r^{n}
\end{equation*}
is finite ($|\cdot|_0$ is the trivial norm). We will usually take $r=1/e$ and define $\|\cdot\|_{\mathrm{hyb}}:=\|\cdot\|_{A_{1/e}}$.
\item Let $M\in \ZZ_{\geq 0}$ be a nonnegative integer and $a=t^m\sum_{n=0}^{\infty} a_n t^n\in \Ct$ with $a_0\neq 0$. We define the leading $M$ terms
\begin{equation*}
    \lt_M(a):= t^m\sum_{n=0}^{M} a_n t^n.
\end{equation*}
\item We use Fraktur letters $\mathfrak{a}, \mathfrak{b}$ to mean an element of the (fixed) set of generators on $\Gamma$. We use bold letters $\mathbf{a},\mathbf{b}$ to mean words of the generators.
\item For an element $\gamma\in\SL(2,\Ct)$, we use $\gamma_z$ to denote an element in $\SL(2,\C)$ by evaluating at $t=z$ if all the coefficients of $\gamma$ converge at $z$.
\item  $\ell(\gamma)\in\RR_{\geq 0}$ means the length of a closed geodesic associated to $\gamma\in \Gamma$. $\ell^{na}(\gamma)\in\ZZ_{\geq 0}$ is the non-Archimedean length defined in \cref{defi:na length}. $l(\gamma)\in\ZZ_{\geq 0}$ is the word length of $\gamma$ for a fixed set of generators $\mathfrak{a}_1,\cdots,\mathfrak{a}_{2g}$ with $\frak{a}_{i+g}=\frak{a}_i^{-1}$, i.e. $l(\gamma)=n$ whenever $\gamma=\mathfrak{a}_{i_1}\mathfrak{a}_{i_2}\cdots \mathfrak{a}_{i_n}$ with  $\mathfrak{a}_{i_{j+1}}\neq \mathfrak{a}_{i_j}^{-1}$.

\end{itemize}

\section{Preliminaries of Schottky groups} 
\label{sec: preliminaries}
We recall some preliminaries on the Schottky groups.

\subsection{Schottky groups of $\textrm{PGL}_2(\mathbb{C})$} 
An element $\gamma=\left(\begin{array}{cc} a & b \\ c & d   \end{array}\right)\in \textrm{PGL}_2(\mathbb{C})$ is called \emph{loxodromic} if it induces a M\"obius transformation $z\mapsto \frac{az+b}{cz+d}$ on $\C$ which is conjugated to $z\mapsto \lambda z$ for a complex number $\lambda$ with $0<|\lambda|< 1$.

\begin{defi}\label{defi:schottky} We say that a free subgroup $\Gamma=\langle M_1,\dots, M_g\rangle\subset \textrm{PGL}_2(\mathbb{C})$ with $g\geq 2$ generators is a \emph{Schottky group of rank $g$} if
\begin{itemize} 
\item every $\gamma\in\Gamma\setminus\{id\}$ is loxodromic; 
\item $\Gamma$ admits a non-empty domain of discontinuity, i.e., it acts freely and properly on a non-empty connected invariant open subset of $\mathbb{P}^1(\mathbb{C})$.
\end{itemize}
\end{defi}

Given a Jordan curve $C$ in $\mathbb{P}^1(\mathbb{C})$ and a reference point $o\notin C$, we say that the exterior (resp. interior) of $C$ is the connected component of  $\mathbb{P}^1(\mathbb{C})\backslash C$ containing  (resp. not containing) $o$. After Maskit \cite{maskitCharacterizationSchottkyGroups1967}, a Schottky group has the form $\langle T_1,\dots, T_g\rangle$, where $T_j$'s induce M\"obius transformations with the following property: there are a reference point $o$ and Jordan curves $C_1, C_1',\dots, C_g, C_g'$ with disjoint interiors such that each $T_j$ maps the exterior of $C_j$ to the interior of $C_j'$.  If all the Jordan curves $C_j$, $C_j'$ can be chosen to be circles, then the corresponding Schottky group is called a \textit{classical Schottky group}.

\subsection{Kleinian Schottky spaces} 
A loxodromic element $\gamma\in \textrm{PGL}_2(\mathbb{C})$ is determined by three parameters: its attracting fixed point $\alpha\in\mathbb{P}^1(\mathbb{C})$, its repelling fixed point $\beta\in\mathbb{P}^1(\mathbb{C})$, and its contraction rate $\lambda\in\mathbb{D}^*$. In the sequel, we shall denote by $M(\alpha,\beta,\lambda)$ the loxodromic matrix parametrized by $\alpha$, $\beta$ and $\lambda$. 

The \emph{Schottky space $S_g(\mathbb{C})$ of rank $g\geq 2$}  
is the space of $g$ elements $M_1,\dots, M_g$ of $\textrm{PGL}_2(\mathbb{C})$ that generate a Schottky group up to the natural equivalence given by simultaneous conjugation by M\"obius transformations (cf. Bers \cite{bersAutomorphicFormsSchottky1975}). These elements are parametrized by $3g$ quantities $\alpha_1,\beta_1,\dots, \alpha_g,\beta_g,\lambda_1,\dots,\lambda_g$. Since the action of $\textrm{PGL}_2(\mathbb{C})$ on $\mathbb{P}^1(\mathbb{C})$ is $3$-point transitive, we can set $\alpha_1=0$, $\beta_1=2$, $\alpha_2=1$. In this way, we can regard $S_g(\mathbb{C})$ as a subset of $\mathbb{C}^{3g-3}$: each $p\in S_g(\mathbb{C})$ is parametrized by the so-called \emph{Koebe coordinates}    
$$(\alpha_3,\dots,\alpha_g,\beta_2,\dots,\beta_g,\lambda_1,\dots,\lambda_g)=(\underline{\alpha},\underline{\beta},\underline{\lambda})\in (\mathbb{P}^1(\mathbb{C})\setminus\{0,1,2\})^{2g-3}\times (\mathbb{D}^*)^{g}.$$ 
As it was shown by Hejhal \cite{hejhalSchottkyTeichmullerSpaces1975} (see also \cite{bersAutomorphicFormsSchottky1975}), $S_g(\mathbb{C})$ is a connected complex manifold of dimension $3g-3$.

\subsection{Limit sets of Kleinian Schottky groups}

Given a point $p\in S_g(\mathbb{C})$, let $L(\Gamma_p)$ be the limit set of the Schottky group $\Gamma=\Gamma_p$ associated to $p$. Sullivan \cite{sullivanDensityInfinityDiscrete1979} showed that the Hausdorff dimension $\textrm{dim}(L(\Gamma_p))$ coincides with the critical exponent $\delta_{\Gamma_p}$ of $\Gamma$ which is given by
$$\delta_{\Gamma_p} = \inf\{s>0:\mathcal{P}(s)<\infty\}.$$ 
Here $\mathcal{P}(s) = \sum\limits_{\gamma\in\Gamma_p}|\gamma'(o)|_{\infty}^{s}$ is the usual Poincar\'e series, $o$ is a point outside $L(\Gamma_p)$ and $|.|_{\infty}$ is the norm with respect to the Euclidean metric on $\mathbb{C}$. Due to the work of Patterson \cite{Pat1976,Pat1988,Pat1989}, we know that the critical exponent is also equal to the first zero of the Selberg zeta function.

As established by Anderson and Rocha \cite{andersonAnalyticityHausdorffDimension1997}, $\textrm{dim}(L(\Gamma_p))$ is a real analytical function of $p\in S_g(\mathbb{C})$. 

\subsection{Degenerations of Kleinian Schottky groups}
\label{Degeneration of Schottky groups}

Similarly to the case of moduli space of complex algebraic curves, one can try to approach the boundary of $S_g(\mathbb{C})$ by studying degenerations of one-parameter families of Schottky groups. More concretely, we always consider a meromorphic function $p:\mathbb{D}\to \mathbb{C}^{2g-3}\times \mathbb{D}^g, z\mapsto  p(z)=(\underline{\alpha}(z),\underline{\beta}(z),\underline{\lambda}(z))$ such that $p(z)\in S_g(\mathbb{C})$ for all $z\in\mathbb{D}^*$. From now on, when talking about Schottky family, we mean a family given by this meromorphic map $p(z)$.

Since it is not easy to handle \emph{general} degenerations of Schottky groups, we shall focus on the degenerations $p(z)$ leading to nice (Schottky) non-Archimedean limits (in the sense of Poineau and Turchetti \cite{poineauSchottkySpacesUniversal2022}) as $z\to 0$. For this sake, one follows Dang and Mehmeti \cite{dangHausdorffDimension2024} by considering the quantity 
$$K_{\min} = \min\limits_{\substack{i\neq j, i\neq k \\ u\in\{\alpha,\beta\}}} \textrm{ord}_{z=0}\left(\lambda_i(z)\cdot [u_j(z):u_k(z);\alpha_i(z):\beta_i(z)]\right)$$ 
(based on the order of vanishing of certain holomorphic functions) measuring how close the attracting and repelling fixed points can get in terms of cross-ratios versus the contraction rates\footnote{For example, by taking $i=1$, $j=k=2$, one gets $\lambda_1(z)\cdot[1:\beta_2(z);0:2] = \lambda_1(z)(2-\beta_2(z))/\beta_2(z)$. 
}.

\begin{defi}
    We say that a one-parameter family of Schottky groups satisfies condition $(\bigstar)$ if 
\begin{itemize}
\item $\lambda_j(z)$ vanishes at $z=0$ for all $j=1,\dots, g$ and 
\item $K_{\min} > 0$.
\end{itemize} 
\end{defi}
For the explanation this condition using non-Archimedean norm on $\Ct$, please see \cref{rem:schottky-laurent}.

In this setting, Dang and Mehmeti showed that 
\begin{thm}
There exists $d_0 > 0$ with 
$$\delta_{\Gamma_z} = \textrm{dim}(L(\Gamma_{z})) \sim d_0/\log(1/|z|)$$ 
as $z\to 0$ whenever the condition $(\bigstar)$ holds.
\end{thm}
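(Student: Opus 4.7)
The plan is to reduce the asymptotic to the convergence of Selberg zeta functions, which fits naturally into the framework developed in this paper. First I would invoke the classical theorem of Patterson--Sullivan identifying $\dim(L(\Gamma_z))$ with the critical exponent $\delta_{\Gamma_z}$, and the theorem of Patterson relating $\delta_{\Gamma_z}$ to the first real zero of the Selberg zeta function $Z(\Gamma_z,s)$. Thus it suffices to analyse how this first zero behaves under the degeneration, and the natural rescaling to use is $s\mapsto s/\log(1/|z|)$ since $\delta_{\Gamma_z}\to 0$.

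The heart of the argument is to show that, after this rescaling, $Z(\Gamma_z,s/\log(1/|z|))$ converges locally uniformly on $\CC$ to a limiting object depending only on the non-Archimedean Schottky group $\Gamma<\SL_2(\Ct)$, which will turn out to be the Ihara zeta function $Z_I(\Gamma,s)$. To carry this out, I would first expand the Euler product \cref{equ:selberg-zeta} for $\Re s$ large and analyse each factor. The key input is an asymptotic of the form $\ell(\gamma_z)/\log(1/|z|)\to \ell^{na}(\gamma)\in \ZZ_{\geq 0}$, which is guaranteed precisely by condition $(\bigstar)$: the hypothesis $\mr{ord}_{z=0}\lambda_j(z)\geq 1$ feeds into the translation length, and $K_{\min}>0$ ensures that the cross-ratios of fixed points do not collapse too fast, making the configuration Berkovich-Schottky in the limit (in the sense of Poineau--Turchetti). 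Under this rescaling, the factors with $k_1+k_2\geq 1$ tend to $1$ since $e^{-(k_1+k_2)\ell(\gamma_z)}\to 0$, and the holonomy phase $e^{-i\theta_\gamma(k_1-k_2)}$ drops out, leaving only the $k_1=k_2=0$ contribution $(1-e^{-s\ell^{na}(\gamma)})$. Summability of the infinite product then matches the Euler product form of the Ihara zeta function of the finite quotient graph of the Berkovich skeleton.

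The main obstacle is the following: the Euler product only converges in a right half-plane, but to conclude about zeros we must have locally uniform convergence on all of $\CC$. I would handle this by proving exponential decay $|\ell(\gamma_z)/\log(1/|z|)-\ell^{na}(\gamma)|\lesssim |z|^{1-\epsilon}$ via a careful expansion of traces (using the Banach ring $A_{1/e}$ and the hybrid norm introduced in Subsection \ref{ss.notations}), and showing a uniform exponential bound on the number of conjugacy classes of a given word length. Together with the standard analytic continuation of $Z(\Gamma_z,s)$ to an entire function and the fact that $Z_I(\Gamma,s)$ is a polynomial in $e^{-s}$, this upgrades the formal Euler-product convergence to convergence of entire functions on compacta, with a quantitative $O(|z|^{1-\epsilon})$ rate.

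Finally, to extract the asymptotic $\delta_{\Gamma_z}\sim d_0/\log(1/|z|)$, I would apply Hurwitz's theorem: the rescaled first zero $\log(1/|z|)\cdot\delta_{\Gamma_z}$ must converge to the first zero $d_0$ of the limiting Ihara zeta function. Positivity $d_0>0$ follows from identifying $d_0$ with the Hausdorff dimension of the limit set of $\Gamma$ on $\PP^{1,an}_{\Ct}$ (the appendix \cref{sec.appendix}), which is nontrivial since $\Gamma$ is a non-elementary free Schottky group on $g\geq 2$ generators. This gives the desired equivalence and, in fact, the more refined statement of \cref{cor:dimension} using the intermediate zeta function $Z_0$ in place of $Z_I$ to capture the sub-leading $1/\log(1/|z|)$ correction.
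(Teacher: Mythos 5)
Your overall route — rescale by $\log(1/|z|)$, show the rescaled Selberg zeta functions converge to the Ihara zeta function of the limiting non-Archimedean group, then transfer the first zero via Hurwitz and get positivity of $d_0$ from the appendix — is exactly the mechanism this paper uses to recover the Dang--Mehmeti asymptotic (via \cref{thm:s large}, \cref{thm:zeta-conv}, \cref{cor:res-conv} and \cref{sec.appendix}). However, there is a genuine gap at the step you dispose of in one sentence: passing from convergence of the Euler products in a right half-plane to locally uniform convergence on compact subsets of $\CC$ containing the first zero. Entirety of each $Z(\Gamma_z,\cdot)$ together with $Z_I(\Gamma,s)$ being a polynomial in $e^{-s}$ does \emph{not} upgrade half-plane convergence to convergence near the zero; the half-plane of convergence of the Euler product stops exactly at $\Re s=\delta$, so the zero you need to control lies outside the region where your estimates apply. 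What is needed is a bound on $Z(\Gamma_z,s/\log(1/|z|))$ that is \emph{uniform in $z$} on compact sets, so that a normal-family argument (Montel/Vitali, as in \cref{lem-hol conv}) or a Hadamard three-circle interpolation can be run. The paper obtains this uniform bound through the trace-class transfer operators $\cL_{s,N}$, singular-value estimates, and the uniform separation of the Schottky discs coming from the hybrid-space construction (\cref{prop:uniform-bdd-SL2(R)}, \cref{prop:uniform-bdd-SL2(C)}, \cref{cor:separtion}); your proposal contains no substitute for this input, and without it the Hurwitz step is unjustified.

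A secondary inaccuracy: the bound $|\ell(\gamma_z)/\log(1/|z|)-\ell^{na}(\gamma)|\lesssim |z|^{1-\epsilon}$ you propose is false. By \cref{prop:expansion of length function}, the difference contains the term $\Re\,a_0(\gamma)/\log(1/|z|)$ with $a_0(\gamma)$ generically nonzero (and of size up to $C\ell^{na}(\gamma)$), so the rescaled lengths approach $\ell^{na}(\gamma)$ only at rate $1/\log(1/|z|)$; this is precisely why the paper can only prove $|Z(\Gamma_z,s/\log(1/|z|))-Z_I(\Gamma,s)|\lesssim 1/\log(1/|z|)$ (\cref{prop:logarithmic convergence}) and must introduce the intermediate zeta functions $Z_M$ to reach the rate $|z|^{1-\epsilon}$ (\cref{prop:speed-conv}). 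For the statement at hand (an asymptotic with no error term) the logarithmic rate suffices, so this slip does not affect the conclusion once the uniform-boundedness gap above is filled, but the exponential claim as written would not survive scrutiny.
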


\begin{rem} Similar results for Lyapunov exponents at the place of Hausdorff dimension were obtained by Favre \cite{favreDEGENERATIONENDOMORPHISMSCOMPLEX2020} (and, more recently, an error term was derived by Ingram et al. \cite{ingramAsymptoticSumsLyapunov2022} in arithmetic situations). 
\end{rem}

Let $\M(\D)$ is the ring of meromorphic function on $\D$ with possible pole at $0$.
From a Koebe coordinate $(\alpha(z),\beta(z),\lambda(z))$, we get a matrix $\gamma_z=\begin{pmatrix}
   \alpha(z) & \beta(z) \\ 1 & 1
\end{pmatrix}\begin{pmatrix}
   1 & 0 \\ 0 & \lambda(z)
\end{pmatrix}\begin{pmatrix}
   \alpha(z) & \beta(z) \\ 1 & 1
\end{pmatrix}^{-1}$ in $\GL_2(\M(\D))$.  
In order to simplify later computations, we want a matrix in $\SL_2(\M(\D))$. The only difficulty is that $\sqrt{\lambda(z)}=\sqrt{t^{-n}f(z)}$, for some $n\in\Z$ and holomorphic $f(z)$ nonzero at $0$, may not be meromorphic. We can do a change of variable. Let $z=w^2$ for $w\in \D$, then due to $\D$ simply connected, the square root $\sqrt{\lambda(w^2)}=w^{-n}\sqrt{f(w^2)}$ is still meromorphic. Let $\beta_w=\begin{pmatrix}
   \alpha(w^2) & \beta(w^2) \\ 1 & 1
\end{pmatrix}\begin{pmatrix}
   w^n(f(w^2))^{-1/2} & 0 \\ 0 & w^{-n}(f(w^2))^{1/2}
\end{pmatrix}\begin{pmatrix}
   \alpha(w^2) & \beta(w^2) \\ 1 & 1
\end{pmatrix}^{-1} $, which is in $\SL_2(\M(\D))$. For $z=w^2$, from the definition we have $\ell(\gamma_z)=\ell(\beta_w) $ and $2\ell^{na}(\gamma)=\ell^{na}(\beta)$, where $\ell^{na}$ will be defined in \cref{defi:na length} and with respect to $\Ct$ and $\C(\!(w)\!)$, respectively. Denote the Schottky group generated by $\beta_w$ as $\Delta_w$. Then the corresponding Selberg zeta functions and Ihara zeta functions satisfy
\[ Z(\Gamma_z,s/\log(1/|z|))=Z(\Delta_w,s/2\log(1/|w|))\text{ and }Z_I(\Gamma,s)=Z_I(\Delta,s/2). \]
We can obtain the result of $\Gamma_z$ from that of $\Delta_w$.
Hence, up to a possible change of variable $z=w^2$, the family of Schottky groups in $\PGL_2(\M(\D))$ can be replaced by a family in $\SL_2(\M(\D))$.
We don't give the detailed results in $\PGL_2$. The readers are encouraged to write for themselves.

\subsection{Non-Archimedean Schottky groups}
\label{sec:na-schottky}

Let $(k,|\cdot|_k)$ be a complete normed non-Archimedean field. See \cite{poineauSchottkySpacesUniversal2022} for more details of this part. 
\begin{defi}
    For $n\in\N$, the \emph{Berkovich analytification $\A_k^{n,an}$} of the $n$-dimensional affine space $\A_k^n$ is the set of multiplicative seminorms on the polynomial ring $k[T_1,\cdots,T_n]$ which extend the norm on $k$. The topology on $\A_k^{n,an}$ is the coarsest topology such that for any polynomial $P$ in $k[T_1,\cdots, T_n]$, the evaluation map from $\A_k^{n,an}$ to $\R$ given by $|\cdot|_x\mapsto |P|_x$ is continuous.

    The \emph{Berkovich analytification} $\P_k^{1,an}$ of the projective line $\P_k^1$ is constructed by gluing two copies of $\A_k^{1,an}$. More precisely, let $|\cdot|_x$ and $|\cdot|_y$ be two seminorms of two copies. They are equivalent if the map $T\rightarrow S^{-1}$ maps one seminorm on $k[T,T^{-1}]$ to the other on $k[S,S^{-1}]$. 
\end{defi}

\begin{rem}

When $k=\mathbb{C}$, $\mathbb{A}^{n,an}_{\mathbb{C}}$ is homeomorphic to the usual analytic $\mathbb{A}^{n}_{\mathbb{C}}$, and hence $\mathbb{P}^{1,an}_{\mathbb{C}}$ is homeomorphic to $\mathbb{P}^{1}_{\mathbb{C}}$. When $k$ is non-Archimedean, $\P^{1,an}_k$ has the structure of a real tree.
\end{rem}

\begin{exam}
Set $n=1$. When $k$ is non-Archimedean, for $a\in k$ and $r\in \mathbb{R}_{\geq 0}$, we can define the point $\eta_{a,r}\in \mathbb{A}^{1,an}_k$ by $\sum_{n}a_n(T-a)^n\mapsto \max_n|a_n|_k r^n$. In this setting, $a\in k$ corresponds to $\eta_{a,0}\in \mathbb{A}^{1,an}_k$ and the element $\eta_{0,1}\in \mathbb{A}^{1,an}_k$ is called the \emph{Gauss point}.
\end{exam}

\begin{defi}
An open disk (resp. a closed disk) in $\mathbb{P}^{1,an}_k$ is an open subset (resp. a closed subset) isomorphic to a set 
\begin{align*}
D^-(a,r)&=\{x\in \mathbb{A}^{1,an}_k| |T-a|_x<r\}\\
(\text{resp.}\,\,D^+(a,r)&=\{x\in \mathbb{A}^{1,an}_k| |T-a|_x\leq r\} )
\end{align*}
where $a\in k$ and $r\in \mathbb{R}_{>0}$, and $\mathbb{A}^{1,an}_k$ is an affine chart of $\mathbb{P}^{1,an}_k$.

The \textit{Shilov boundaries} of both $D^{-}(a,r)$ and $D^{+}(a,r)$ are defined to be the singleton $\{\eta_{a,r}\}$.
\end{defi}

The action of an element $\gamma=\begin{pmatrix}
    a & b \\ c & d
\end{pmatrix}$ of $\PGL_2(k)$ on $\A_k^{1,an}-\{-d/c\}$ is given by
\[ |P(T)|_{\gamma(x)}=\left|P\left(\frac{aT+b}{cT+d}\right)\right|_x=\frac{|P_1(T)|_x}{|P_2(T)|_x} \]
for $P,P_1,P_2\in k[T]$,  and $P_1(T)/P_2(T)=P(\frac{aT+b}{cT+d})$. Moreover, it is possible to naturally extend this formula to obtain an action of $\gamma$ on $\mathbb{P}^{1,an}_k$: see \cite[II.1.3]{poineauBerkovichCurvesSchottky2021} for more details. 

An element $\gamma\in \PGL_2(k)$ is called \emph{loxodromic} of given a representative in $\GL_2(k)$, its eigenvalues in an algebraic closure $k^{\mathrm{alg}}$ of $k$ have different absolute values (by \cite[II.1.4]{poineauBerkovichCurvesSchottky2021}, the eigenvalues are in $k$). Schottky groups of $\PGL_2(k)$ (\cite[Definition 3.5.1]{poineauSchottkySpacesUniversal2022}) are defined similarly to Schottky groups of $\PGL_2(\C)$
(\cref{defi:schottky}) with the projective space $\P^1(\C)$ replaced by $\P^{1,an}_k$.  

\begin{defi} For a loxodromic element $\gamma=\begin{pmatrix}
    a & b\\ c & d
\end{pmatrix}\in \PGL_2(k)$ with $c\neq 0$ and $\lambda\in \mathbb{R}_{>0}$, let
\[ D_{\gamma,\lambda}^-=\{ x\in \A_k^{1,an}|\ |(cZ+d)(x)|^2<\lambda |ad-bc|\},  \]
\[ D_{\gamma,\lambda}^+=\{ x\in \A_k^{1,an}|\ |(cZ+d)(x)|^2\leq \lambda |ad-bc|\},  \]
where we only consider an affine chart $\A_k^{1,an}$ of $\P^{1,an}_k$ and $Z$ is the variable of $k[Z]$. They are called open and closed \emph{twisted Ford disks} respectively. 
\end{defi}

\begin{rem}
    The original definition in \cite[Definition 3.5.5]{poineauSchottkySpacesUniversal2022} contains a typo. We need to define for all $x\in \A_k^{1,an}$ to have the desired property $\gamma D_{\gamma,\lambda}^-=\P_k^{1,an}-D_{\gamma^{-1},\lambda^{-1}}^+$. The definition relies on the choice of the affine chart $\A_k^{1,an}$. In all the argument, we fix the affine chart once for all, such that the $\infty$ is not in the limit set.  
\end{rem}

\begin{defi}[Schottky figure]
Let $\Gamma$ be a Schottky group in $\PGL_2(k)$. Let $\{\gamma_1,\cdots, \gamma_g\}$ be a set of free generators of $\Gamma$. Let $\mathcal{B}=\{D^+_{\gamma_i},D^+_{\gamma_i^{-1}}:i=1,\cdots g\}$ be a collection of disjoint closed disks in $\mathbb{P}^{1,an}_k$. Then $\mathcal{B}$ is called a \emph{Schottky figure} adapted to $\{\gamma_1,\cdots, \gamma_g\}$ if each $i\in \{1,\cdots, g\}$ and $\epsilon\in \{1,-1\}$,
\[ D^-_{\gamma_i^{\epsilon}}:=\gamma_i^{\epsilon}(\P^{1,an}_k-D^+_{\gamma_i^{-\epsilon}}) \]
is the maximal open disk inside $D^+_{\gamma_i^{\epsilon}}$.
\end{defi}

The following theorem is due to Gerritzen and is stated in the notations of Poineau--Turchetti (\cite[Theorem 3.5.9]{poineauSchottkySpacesUniversal2022}).
\begin{thm}[Gerritzen]\label{thm:schottky-figure}
     Let $\Gamma$ be a Schottky group of $\PGL_2(k)$. Then there exists a set of free generators $\{\gamma_1,\cdots,\gamma_g\}$ of $\Gamma$ and $\lambda_i\in(0,1)$ for $i=1,\cdots, g$ such that the collection of twisted Ford disks
    \[\mathcal{B}=\{D^+_{\gamma_i^{-1},\lambda_i^{-1}},D^+_{\gamma_i,\lambda_i}:i=1,\cdots, g \}  \]
    is a Schottky figure adapted to $\{\gamma_1,\cdots, \gamma_g\}$.
\end{thm}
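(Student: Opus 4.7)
The plan is to begin from the abstract Schottky configuration guaranteed by the Maskit-type characterization of Schottky groups in $\PGL_2(k)$, and then upgrade it to a configuration of twisted Ford disks by shrinking the disks toward the fixed points of the generators and invoking an explicit identity for the action of loxodromic elements on $\P_k^{1,an}$.

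First, I would invoke the Maskit-type characterization in the Berkovich setting (cf.\ \cite{poineauSchottkySpacesUniversal2022}) to obtain some set of free generators $\{\gamma_1,\ldots,\gamma_g\}$ of $\Gamma$ together with a family of pairwise disjoint closed disks $\{B_i^+, B_i^-\}_{i=1}^g$ in $\P_k^{1,an}$ such that $\gamma_i$ sends the complement of $B_i^-$ homeomorphically onto the interior of $B_i^+$. Because the affine chart is fixed so that $\infty\notin L(\Gamma)$, each generator may be written as $\gamma_i=\begin{pmatrix} a_i & b_i\\ c_i & d_i\end{pmatrix}$ with $c_i\neq 0$, and its attracting and repelling fixed points $\alpha_i,\beta_i\in k$ lie strictly in the interiors of $B_i^+$ and $B_i^-$ respectively (this uses the fact that fixed points of a loxodromic element of $\PGL_2(k)$ live in $k$, hence correspond to rigid points of $\P_k^{1,an}$).

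Next, I would carry out an explicit computation describing the action of a loxodromic $\gamma\in\PGL_2(k)$ on closed twisted Ford disks. The key identity to establish is
\[
\gamma\bigl(\P_k^{1,an}\setminus D^+_{\gamma^{-1},\lambda^{-1}}\bigr)=D^-_{\gamma,\lambda},
\]
the non-Archimedean analog of the classical property that a loxodromic M\"obius transformation interchanges the interiors of its isometric circles. This can be obtained by direct manipulation of the seminorm transformation rule: starting from a point $x$ with $|(cZ+d)(x)|^2>\lambda|ad-bc|$, one substitutes $\gamma(x)$ into the defining expression of $D^+_{\gamma^{-1},\lambda^{-1}}$ and uses $ad-bc\in k^*$ to reduce to the inequality defining $D^-_{\gamma,\lambda}$. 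The same computation shows, using that the Shilov boundary of a closed disk in $\P_k^{1,an}$ consists of a single point, that $D^-_{\gamma,\lambda}$ is actually the (unique) maximal open subdisk contained in $D^+_{\gamma,\lambda}$.

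Then, for each $i$, I would choose $\lambda_i\in(0,1)$ small enough that $D^+_{\gamma_i,\lambda_i}\subset B_i^+$ and $D^+_{\gamma_i^{-1},\lambda_i^{-1}}\subset B_i^-$. This is possible because, in the Berkovich topology, the twisted Ford disks $D^+_{\gamma_i,\lambda}$ and $D^+_{\gamma_i^{-1},\lambda^{-1}}$ shrink continuously onto the rigid points $\alpha_i$ and $\beta_i$ as $\lambda\to 0$, and these fixed points lie strictly in the interiors of $B_i^\pm$. Pairwise disjointness of the resulting twisted Ford disks follows immediately from pairwise disjointness of the $B_i^\pm$, and the Schottky figure property (that $D^-_{\gamma_i^\epsilon}$ is the maximal open subdisk of $D^+_{\gamma_i^\epsilon}$) is a direct consequence of the identity established in the previous paragraph.

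The main obstacle is establishing the identity $\gamma(\P_k^{1,an}\setminus D^+_{\gamma^{-1},\lambda^{-1}})=D^-_{\gamma,\lambda}$ together with the maximality statement. Handling the action of $\PGL_2(k)$ on Berkovich seminorms requires some care, since one must verify the identity not only on rigid points but on arbitrary multiplicative seminorms extending $|\cdot|_k$; the cleanest route is to check the identity on the natural generating points $\eta_{a,r}$, for which the formula for $|P|_{\eta_{a,r}}$ is a simple maximum, and then extend by continuity using the fact that these points are dense in $\P_k^{1,an}$.
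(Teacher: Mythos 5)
There is a genuine gap, and it sits at the heart of your third step. The paper itself does not prove this statement: it is quoted as Gerritzen's theorem in the formulation of \cite[Theorem 3.5.9]{poineauSchottkySpacesUniversal2022}, so the real comparison is with that proof, which proceeds by putting the group in good position and \emph{changing the generating set} — exactly the part your argument skips. Writing $\gamma_i=\begin{pmatrix} a_i & b_i\\ c_i & d_i\end{pmatrix}$, the twisted Ford disk $D^+_{\gamma_i,\lambda}$ is the closed disk of center $\gamma_i^{-1}(\infty)=-d_i/c_i$ and radius $\sqrt{\lambda\,|a_id_i-b_ic_i|}/|c_i|$, while $D^+_{\gamma_i^{-1},\lambda^{-1}}$ has center $\gamma_i(\infty)=a_i/c_i$ and radius $\sqrt{\lambda^{-1}|a_id_i-b_ic_i|}/|c_i|$. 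Two things go wrong with "choose $\lambda_i$ small enough": (i) these disks shrink onto the poles $\gamma_i^{\pm 1}(\infty)$, not onto the fixed points $\alpha_i,\beta_i$; and (ii) more fatally, the two parameters are forced to be reciprocal, so the product of the two radii equals $|a_id_i-b_ic_i|/|c_i|^2$ independently of $\lambda$ — making one disk small makes the other large. Hence there is no freedom to fit both twisted Ford disks of a generator inside prescribed disks $B_i^{\pm}$, and for a \emph{given} basis there may be no admissible $\lambda_i$ at all. This is precisely why the theorem asserts only the existence of \emph{some} free basis: the nontrivial content is the choice of generators (Gerritzen/Poineau--Turchetti obtain it via the action on the tree and a good-position or minimal-length argument), which your proposal never modifies.

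A secondary issue is your first step: the "Maskit-type characterization" producing a Schottky figure of arbitrary closed disks from the abstract definition (free, purely loxodromic, with a domain of discontinuity) is not an independently available black box in the non-Archimedean setting; in \cite{poineauSchottkySpacesUniversal2022} the existence of an adapted Schottky figure is essentially obtained together with, not prior to, Gerritzen's theorem, so invoking it risks circularity. By contrast, your second step — the identity $\gamma\bigl(\P_k^{1,an}\setminus D^+_{\gamma^{-1},\lambda^{-1}}\bigr)=D^-_{\gamma,\lambda}$ and the maximality of the open disk, checked on the points $\eta_{a,r}$ and extended by density — is correct and is indeed the computational ingredient recorded in the paper's remark on the definition of twisted Ford disks; but it does not substitute for the missing change-of-basis argument.
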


\begin{rem} There is no analogue of this theorem in the Archimedean setting: indeed, it is known that some Kleinian Schottky groups are not classical, i.e., they cannot be described in terms of Schottky figures involving only round discs.   
\end{rem}

\begin{rem}\label{rem:schottky-laurent}
    A meromorphic family $(\Gamma_z)_{z\in\mathbb{D}^*}$ of Kleinian Schottky groups can be seen as a subgroup $\Gamma\subset \PGL_2(\Ct)$. In this context, as it is shown in \cite[Proposition 4.4.2]{poineauSchottkySpacesUniversal2022}, the condition $(\bigstar)$ is \emph{equivalent} to $\Gamma$ being a Schottky subgroup of $\PGL_2(\Ct)$ (with $\Ct$ equipped with the non-Archimedean norm $|\sum_{j\geq m}a_jt^j|_{na}=e^{-m}$ for $a_m\neq 0$).
\end{rem}

\paragraph{Mumford curve and its skeleton}
For an analogue of periodic geodesics in non-Archimedean case, we need to use the description of the limit set in $\P^{1,an}_k$. The advantage is that $\Pkan$ is a compact real tree, while $\P^1_k$ is discrete.

Let $\Gamma$ be a Schottky group of rank $g$ in $\PGL_2(k)$. Similarly to the special case where $k=\mathbb{C}$, a point $x\in \P^{1,an}_k$ is a \emph{limit point} if there exist $x_o\in \Pkan$ and a sequence $\{\gamma_n\}_n\subset \Gamma$ of distinct elements such that $\lim_{n\rightarrow \infty}\gamma_nx_0=x$.
Let $L(\Gamma)$ be the limit set of $\Gamma$ on $\Pkan$, that is the set of limit points. We have that $L(\Gamma)\subset \P^1_k$ \cite[Corollary II.3.14]{poineauBerkovichCurvesSchottky2021}. 

Moreover, let $O=\Pkan-L(\Gamma)$. It is shown that $O$ is a domain of discontinuity of $\Gamma$, i.e., it is $\Gamma$-invariant and the action of $\Gamma$ on $O$ is free and proper. The quotient space $X=\Gamma\backslash O$ is a Mumford curve. (See \cite[Theorem II.3.18]{poineauSchottkySpacesUniversal2022}.)
Let $\Sigma=O\cap\cup_{x,y\in L}[x,y]$ where $\Pkan$ has a structure of real tree and $[x,y]$ is the unique injective path from $x$ to $y$, see \cite[Section 2.5]{poineauSchottkySpacesUniversal2022} \cite[Proposition I.6.12]{poineauBerkovichCurvesSchottky2021}.\footnote{The path is the union of two intervals given by $\{\eta_{x,r},\ r\in[0,|x-y|_k]\}$ and $\{\eta_{y,r},\ r\in[0,|x-y|_k]\}$ with $\eta_{x,|x-y|_k}=\eta_{y,|x-y|_k}$,} Let $\Sigma_X=\Gamma\backslash\Sigma$, which is called the skeleton of the Mumford curve and is a finite graph of genus $g$. (See \cite[Notation 4.2.2]{poineauSchottkySpacesUniversal2022}) The skeleton $\Sigma_X$ can also be obtained by identifying the Shilov boundary points  
$p_i,q_i$ of $D_{\gamma_i}^+,D_{\gamma_i^{-1}}^+$, which is a Schottky figure adapted to $\{\gamma_1,\cdots,\gamma_g \}$.  
\begin{rem}
This skeleton $\Sigma_X$ is the analogue of the convex core of a hyperbolic manifold for Archimedean case. This can be seen from the fact that $\Pkan$ contains the Bruhat--Tits building (Bruhat--Tits tree in $\P^1_k$ case), which is the analogue of the simply connected hyperbolic manifold in Archimedean case.
\end{rem}

\paragraph{Metric structure}
Recall from \cite[Section I.8]{poineauBerkovichCurvesSchottky2021} \cite[Section 2.5]{poineauSchottkySpacesUniversal2022} the definition of multiplicative length. For $\alpha,\ \beta\in k$ and $0<r\leq s$, we define the \textit{additive} length of the elements $\eta_{\alpha,r},\eta_{\beta,s}$ by: \begin{itemize}
    \item if $\max\{r,s\}\geq |\alpha-\beta|_k$, $d_a(\eta_{\alpha,r},\eta_{\beta,s}):=\log(s/r)$; 
    \item if $ \max\{r,s\}< |\alpha-\beta|_k$, $d_a(\eta_{\alpha,r},\eta_{\beta,s}):=\log(|\alpha-\beta|_k/r)+\log(|\alpha-\beta|_k/s)$. 
\end{itemize}
This length is invariant under the action of M\"obius transformations. The tree $\Sigma$ consists of points of $\eta_{a,r}$ with $a\in k$ and $r\in\R_{>0}$.\footnote{The points like $\eta_{x,0}=x$ are not in $\Sigma=(\Pkan-L)\cap\cup _{x,y\in L}[x,y] $.} Using this length, we can define length on $\Sigma$. The length of the edge of the graph $\Sigma_X=\Gamma\backslash \Sigma$ comes from the quotient of this additive length. 

\begin{defi}\label{defi:na length}
    For a loxodromic element $\gamma$, we define the non-Archimedean length by
    \[ \ell^{na}(\gamma)=\log (|\tr \tilde\gamma|_k^2/|\det(\tilde\gamma)|_k), \]
    where $\tilde\gamma$ is a representative of $\gamma$ in $\GL_2(k)$.

    This definition also coincides with the logarithm of the attraction rate at the attracting fixed point of the loxordromic element. Hence a suitable analogue of length of periodic geodesics in the non-Archimedean case.
\end{defi}

The following lemma connects the length of loops in the graph with non-Archimedean length of elements in $\Gamma$.

\begin{lem}\label{lem:na length}
    Let $\gamma$ be a non-trivial loxodromic element in $\Gamma$ and $\alpha,\alpha'\in\P^1_k$ be its two fixed points. Then for any $x\in [\alpha,\alpha']-\{\alpha,\alpha' \}$, we have
    \[ d_a(x,\gamma x)=\ell^{na}(\gamma), \]
    where $[x,\gamma x]\subset[\alpha,\alpha']-\{\alpha,\alpha' \}\subset \Sigma$.

    For any $y=\eta_{\beta,s}\notin [\alpha,\alpha']$ with $0<s<\infty$, we have 
    \[ d_a(y,\gamma y)>\ell^{na}(\gamma). \]
\end{lem}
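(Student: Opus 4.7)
The plan is to use $\PGL_2(k)$-invariance to reduce to a diagonal canonical form and then compute everything explicitly in Berkovich coordinates. First I would note that both $d_a(x, \gamma x)$ and $\ell^{na}(\gamma)$ are invariants of the $\PGL_2(k)$-conjugacy class of $\gamma$: the additive length $d_a$ is preserved by Möbius transformations, and $\ell^{na}(\gamma) = \log(|\tr \tilde\gamma|_k^2/|\det \tilde\gamma|_k)$ manifestly depends only on the conjugacy class. Since $\gamma$ is loxodromic with eigenvalues in $k$ (by \cite[II.1.4]{poineauBerkovichCurvesSchottky2021}), I may conjugate so that $\alpha = 0$, $\alpha' = \infty$ and a lift is $\tilde\gamma = \diag(\lambda_1, \lambda_2)$ with $|\lambda_1|_k < |\lambda_2|_k$. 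Writing $\lambda := \lambda_1/\lambda_2 \in k^*$, the induced Möbius action is $z \mapsto \lambda z$ with $|\lambda|_k < 1$. The ultrametric inequality gives $|\tr \tilde\gamma|_k = |\lambda_1+\lambda_2|_k = |\lambda_2|_k$, hence
\[
\ell^{na}(\gamma) = \log\frac{|\lambda_2|_k^2}{|\lambda_1\lambda_2|_k} = \log\frac{1}{|\lambda|_k}.
\]

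Next, for the axis statement, the axis $[0,\infty] \cap \Sigma$ consists of the points $\eta_{0,r}$ for $r \in (0,\infty)$. For a polynomial $P = \sum a_n T^n$, the functoriality of the Berkovich point construction gives $|P|_{\gamma(\eta_{0,r})} = |P(\lambda T)|_{\eta_{0,r}} = \max_n |a_n|_k (|\lambda|_k r)^n$, so $\gamma(\eta_{0,r}) = \eta_{0,|\lambda|_k r}$. The definition of $d_a$ then yields
\[
d_a(\eta_{0,r}, \gamma(\eta_{0,r})) = \log\frac{r}{|\lambda|_k r} = \log\frac{1}{|\lambda|_k} = \ell^{na}(\gamma),
\]
and the connecting segment stays entirely inside $(0,\infty) \subset \Sigma$, proving the first claim.

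For the strict inequality, take $y = \eta_{\beta, s}$ with $0 < s < \infty$ not on the axis. The condition $y \notin [0,\infty]$ is equivalent to $s < |\beta|_k$, since $\eta_{\beta, r} = \eta_{0, r}$ whenever $r \geq |\beta|_k$. An analogous computation shows $\gamma(y) = \eta_{\lambda\beta, |\lambda|_k s}$, which is also off the axis because $|\lambda|_k s < |\lambda\beta|_k$. The projection of $y$ onto the axis is $\eta_{0,|\beta|_k} = \eta_{\beta, |\beta|_k}$: indeed the geodesic from $\eta_{\beta, s}$ to any $\eta_{0, r'}$ must first raise the radius parameter at $\beta$ to $|\beta|_k$ before it can agree with a point of the form $\eta_{0,\cdot}$. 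Similarly, $\gamma y$ projects to $\eta_{0, |\lambda|_k|\beta|_k}$. Since these two projections are distinct points of the axis, the geodesic $[y, \gamma y]$ decomposes as
\[
y \;\longrightarrow\; \eta_{0,|\beta|_k} \;\longrightarrow\; \eta_{0, |\lambda|_k|\beta|_k} \;\longrightarrow\; \gamma y,
\]
and summing the three additive lengths gives
\[
d_a(y, \gamma y) = \log\frac{|\beta|_k}{s} + \log\frac{1}{|\lambda|_k} + \log\frac{|\beta|_k}{s} = \ell^{na}(\gamma) + 2\log\frac{|\beta|_k}{s} > \ell^{na}(\gamma).
\]

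The main technical subtlety will be to justify rigorously the orthogonal projection onto the axis and to verify that the geodesic between $y$ and $\gamma y$ really factors through both projections — essentially, to control the branching of the real tree $\P^{1,an}_k$ at the points $\eta_{0, |\beta|_k}$ and $\eta_{0, |\lambda|_k|\beta|_k}$. Once this tree structure is made precise via the identification $\eta_{\beta, r} = \eta_{0, r}$ for $r \geq |\beta|_k$, everything else is a direct computation using the definition of $d_a$.
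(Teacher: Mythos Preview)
Your argument is correct and, for the first assertion, essentially identical to the paper's: conjugate to a diagonal $\gamma$ and compute $d_a(\eta_{0,r},\gamma\eta_{0,r})$ directly.

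For the second assertion the two proofs diverge. The paper disposes of it in one line: ``follows from the fact that $\P^{1,an}_k$ is a tree and $\gamma$ preserves the distance $d_a$'' --- in other words, it invokes the standard fact that a hyperbolic isometry of a real tree attains its translation length exactly on its axis and strictly exceeds it elsewhere. You instead carry out the explicit projection-to-axis computation in Berkovich coordinates and obtain the quantitative formula $d_a(y,\gamma y)=\ell^{na}(\gamma)+2\log(|\beta|_k/s)$. Your route is longer but self-contained and yields more information; the paper's route is a one-liner for a reader comfortable with tree isometries. The ``technical subtlety'' you flag about the geodesic factoring through both projections is exactly the general tree fact the paper is implicitly citing, so either way that point must be addressed or assumed.
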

\begin{proof}
    Due to invariance under conjugation,
    up to conjugation, it is sufficient to consider $\gamma=\diag\{\lambda,1 \}$ with $|\lambda|_k>1$. Then the line $[\alpha,\alpha']=\{\eta_{0,r}, 0\leq r\leq \infty \} $. We have $d_a(x,\gamma x)=d_a(\eta_{0,r},\eta_{0,|\lambda|_kr})=\log|\lambda|_k=\ell^{na}(\gamma)$.

    The second statement follows from the fact that $\P^{1,an}_k$ is a tree and $\gamma$ preserves the distance $d_a$.
\end{proof}

\paragraph{Ihara zeta function}

The non-Archimedean case of Selberg zeta function is well studied. The analogue zeta function is called the Ihara zeta function for finite graphs. See \cite{iharaDiscreteSubgroupsTwo1966} and \cite{hortonWhatAreZeta2006}. 

Suppose $G=(V,E)$ is a finite graph. Let $E=\{e_1,\cdots, e_{2J}\}$ be the set of oriented edges of $G$ with $e_j$ and $e_{j+J}$ in opposite direction. Each edge has a positive length. A loop $P$ in $G$ is a finite sequence $P=(e_{i_1},\cdots ,e_{i_n})$ with the end point of $e_{i_j}$ equals the starting point of $e_{i_{j+1}}$ (we use the convention $e_{i_{n+1}}=e_{i_1}$). The length $\ell(P)$ of a path $P$ is the sum of the lengths of $e_{i_j}$. A path is non-backtracking if there are no consecutive edges that are inverse to each other, that is there is no $j$ such that $e_{i_{j+1}}=e_{i_j+J}$. A loop is primitive if it cannot be written as a multiple of another loop. Two loops only differing by starting point are defined to be in the same equivalence class of loops, denoted by $[P]$. Let $\calP$ be set of equivalence classes of primitive non-backtracking loops.

\begin{thm}[Ihara, Hashimoto, Bass]\label{thm-ihara}
        Let $G$ be a finite graph. Define the Ihara zeta function by
\[Z_I(G,s)=\prod_{[P]\in \calP }(1-e^{-s\ell(P)}). \]
Then the Ihara zeta function has an analytic extension to the whole complex plan.
\end{thm}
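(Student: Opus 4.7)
\emph{Approach.} I would follow the classical Hashimoto--Bass strategy and reduce the analytic continuation of $Z_I(G,s)$ to a determinantal identity on a finite-dimensional weighted edge matrix. Let $B(s)$ be the $2J \times 2J$ matrix indexed by the oriented edges $\{e_1,\dots,e_{2J}\}$, with entries
$$B(s)_{e,e'} = \begin{cases} e^{-s\ell(e')} & \text{if the terminal vertex of } e \text{ equals the initial vertex of } e' \text{ and } e' \neq \bar e, \\ 0 & \text{otherwise,} \end{cases}$$
where $\bar e$ denotes the reverse of $e$. My goal is to prove the Bass-type identity $Z_I(G,s) = \det(I - B(s))$ on a right half-plane; the desired analytic continuation is then immediate, because the right-hand side is a polynomial in the finitely many exponentials $e^{-s\ell(e_j)}$ and therefore entire in $s$.

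\emph{Key combinatorial computation.} Expanding $\tr(B(s)^n)$ as a sum over cyclic sequences of matrix indices identifies it with $\sum_w e^{-s\ell(w)}$, where $w = (f_1,\dots,f_n)$ ranges over cyclically non-backtracking closed walks of edge-count $n$, and $\ell(w) = \sum_i \ell(f_i)$. Each such walk decomposes uniquely as $w = P^m$ for a primitive closed walk $P$ of some edge-count $k = k(P)$ with $mk = n$. Crucially, for every equivalence class $[P] \in \calP$ of primitive edge-count $k$, exactly $k$ ordered closed walks realize $P^m$ (one per choice of starting edge), since primitivity of $P$ forces the cyclic stabilizer of the sequence $P^m$ to have order precisely $m$, so the rotation orbit has size $mk/m = k$. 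Reindexing the sum by $([P],m)$ gives
$$\sum_{n\ge 1}\frac{\tr(B(s)^n)}{n} \;=\; \sum_{[P]\in\calP}\sum_{m\ge 1}\frac{k(P)}{m\,k(P)}\,e^{-sm\ell(P)} \;=\; -\sum_{[P]\in\calP}\log\bigl(1 - e^{-s\ell(P)}\bigr).$$

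\emph{Conclusion and main obstacle.} Primitive loops of combinatorial length $n$ in a finite graph grow at most exponentially in $n$, so for $\Re s$ sufficiently large the spectral radius of $B(s)$ is strictly less than one, and both the infinite product defining $Z_I(G,s)$ and the series $-\log\det(I - B(s)) = \sum_{n\ge 1}\tr(B(s)^n)/n$ converge absolutely. Combining with the previous display yields $Z_I(G,s) = \det(I - B(s))$ on that half-plane, and the right-hand side is a polynomial in $e^{-s\ell(e_1)},\dots,e^{-s\ell(e_{2J})}$, hence entire in $s$. I do not foresee any serious analytic obstacle; the only genuinely delicate step is the combinatorial bookkeeping in the trace formula, specifically verifying that each class $[P]\in\calP$ of primitive edge-count $k$ contributes exactly $k$ (and not $mk$) ordered closed walks to $\tr(B(s)^{mk})$, which is precisely where primitivity of $P$ is indispensable.
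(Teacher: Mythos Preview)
Your proposal is correct and follows essentially the same approach as the paper's proof (given in \cref{prop:ZI}): both establish a determinantal identity $Z_I(G,s)=\det(I-\text{edge matrix})$ via the trace expansion $\sum_n \tr(\cdot)^n/n$, matching it against the logarithm of the Euler product. The only cosmetic difference is that the paper distributes the edge weight symmetrically as $W(e_j,e_k)=e^{-s(h_j+h_k)/2}$ rather than your $B(s)_{e,e'}=e^{-s\ell(e')}$, but on closed walks the resulting trace is identical, so the arguments coincide.
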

See \cref{prop:ZI} for a proof and a way to compute the Ihara zeta function.

Let $\Gamma$ be Schottky group in $\PGL_2(k)$. Recall the Mumford curve $\Sigma_X=\Gamma\backslash \Sigma$ of $\Gamma$. Since $\Sigma$ is a tree, which is simply connected, each non-backtracking loop $P$ of $\Sigma_X$ corresponds to a conjugacy class $[\gamma_P]$ in $\Gamma$. The length of each edge in $\Sigma_X$ is given by the length of its lift in the $\Sigma$. Moreover, from \cref{lem:na length}, we obtain that for non-backtracking loop $P$, we have $\ell(P)=\ell^{na}(\gamma_P)$. Therefore, we obtain
\begin{prop}\label{prop:na-Ihara}
Let $Z_I(\Gamma, s)$ be the Ihara zeta function for $\Gamma< \PGL_2(k)$\footnote{This function is also called the Ruelle zeta function.}, that is
\[Z_I(\Gamma, s)=\prod_{[\gamma] \in \calP}(1-e^{-s\ell^{na}(\gamma)}). \]
Then we have 
\begin{equation}\label{equ-na zeta}
    Z_I(\Gamma, s)=Z_I({\Sigma_X},s). 
\end{equation}

\end{prop}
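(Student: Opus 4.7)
The plan is to exhibit a length-preserving bijection between $\calP$ (primitive conjugacy classes of $\Gamma$) and the set of equivalence classes of primitive non-backtracking loops of the finite metric graph $\Sigma_X$, after which the two infinite products defining $Z_I(\Gamma,s)$ and $Z_I(\Sigma_X,s)$ agree term by term.

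First I would set up the covering-space picture. Since $\Gamma$ is a Schottky group, it acts freely and properly on $O=\Pkan\setminus L(\Gamma)$, and the subspace $\Sigma=O\cap\bigcup_{x,y\in L(\Gamma)}[x,y]$ is $\Gamma$-invariant (the action preserves limit points and geodesic segments in the real tree $\Pkan$). Because $\Sigma$ is a subtree of the real tree $\Pkan$, it is itself a tree, hence simply connected, and the quotient map $\pi\colon\Sigma\to\Sigma_X=\Gamma\backslash\Sigma$ is a Galois covering of metric graphs with deck group $\Gamma$. Consequently $\pi_1(\Sigma_X)\cong\Gamma$, so free-homotopy classes of loops in $\Sigma_X$ correspond canonically to conjugacy classes in $\Gamma$.

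Next I would identify, inside each free-homotopy class, the non-backtracking representative with an \emph{axis}. For a non-trivial loxodromic $\gamma\in\Gamma$ with fixed points $\alpha,\alpha'\in L(\Gamma)\subset\mathbb{P}^1_k$, the bi-infinite path $[\alpha,\alpha']\cap O=\Ax(\gamma)$ lies in $\Sigma$ by the very definition of $\Sigma$; by \cref{lem:na length}, $\gamma$ acts on $\Ax(\gamma)$ as a translation of additive length $\ell^{na}(\gamma)$, and \emph{any} orbit $\{\gamma^n x\}$ off the axis moves a strictly longer distance. Projecting a fundamental segment $[x,\gamma x]\subset\Ax(\gamma)$ to $\Sigma_X$ therefore gives a closed loop $P_\gamma$ that, being the image of a geodesic in a tree, is non-backtracking. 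Conjugate elements produce the same loop up to the choice of starting point, so we obtain a well-defined map $[\gamma]\mapsto[P_\gamma]$ from conjugacy classes of non-trivial elements to equivalence classes of non-backtracking loops; and $\gamma^n$ maps to $P_\gamma^n$, so primitive classes map to primitive loops.

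For the inverse map, given a non-backtracking loop $P$ in $\Sigma_X$ based at $\bar x$, I would lift $P$ iteratively to $\Sigma$ starting from a preimage $x\in\Sigma$. Non-backtracking means consecutive edges in the lift are distinct and non-opposite, so the concatenation $\dots x_{-1}x_0x_1\dots$ of all iterated lifts is a bi-infinite geodesic in the tree $\Sigma$. The unique $\gamma_P\in\Gamma$ sending the start of one period to the start of the next period is then a hyperbolic isometry of $\Sigma$ whose axis is exactly this bi-infinite geodesic; changing $\bar x$ or the lift replaces $\gamma_P$ by a conjugate, and if $P=Q^n$ is a proper power, then $\gamma_P=\gamma_Q^n$. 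This produces the inverse map, establishing the bijection $\calP\leftrightarrow\{\text{primitive non-backtracking loops in }\Sigma_X\}/\sim$.

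Finally, length matching and conclusion. By construction, the metric on edges of $\Sigma_X$ is the quotient of $d_a$ on $\Sigma$; hence the total length $\ell(P_\gamma)$ equals $d_a(x,\gamma x)$ for any $x\in\Ax(\gamma)$, which by \cref{lem:na length} is exactly $\ell^{na}(\gamma)$. Substituting this identification in the defining Euler product
\[
\prod_{[\gamma]\in\calP}\bigl(1-e^{-s\ell^{na}(\gamma)}\bigr)
=\prod_{[P]\text{ primitive}}\bigl(1-e^{-s\ell(P)}\bigr)
\]
yields \eqref{equ-na zeta}. The main technical point I expect to need care with is verifying that the full axis $\Ax(\gamma)$ lies in $\Sigma$ and that the iterated lift of a non-backtracking loop is genuinely geodesic in the tree $\Sigma$; both reduce to the real-tree structure of $\Pkan$ and the description of $\Sigma$ as the convex hull of $L(\Gamma)$ in $O$, and no further analysis beyond \cref{lem:na length} is needed.
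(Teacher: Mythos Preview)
Your proposal is correct and follows essentially the same approach as the paper: the paper's argument (given in the paragraph immediately preceding the proposition) also uses that $\Sigma$ is a simply connected tree to set up the bijection between non-backtracking loops in $\Sigma_X$ and conjugacy classes in $\Gamma$, and then invokes \cref{lem:na length} to match $\ell(P)$ with $\ell^{na}(\gamma_P)$. You have simply fleshed out the covering-space and axis details that the paper leaves implicit.
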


\begin{rem}
If all the edges of $G$ are of length $1$, then\[ Z_I(s)=(1-e^{-2s})^{r-1}\det(1-A_Ge^{-s}+Q_Ge^{-2s}) \]where $A$ is the adjacent matrix of the graph $G$, $r$ is the rank of the fundamental group of $G$ and $Q_G$ is a diagonal matrix with $i$-th element equal to the degree minus 1 of the $i$-th vertex.

Therefore, if $|k^*|=r^{\Q}$ for some $r\in (0,1)$ (in particular, the case of the algebraic closure of $\Ct$), then we can use this formula to compute the Ihara zeta function.
\end{rem}

\subsection{Hybrid space} 
After Poineau--Turchetti \cite{poineauSchottkySpacesUniversal2022}, one can build a Schottky space $S_{g,A_r}^{an}$ over a Banach ring $A_r$.  

Set $r=1/e$. The Banach ring $A_r$ can be described as follows. We say that $f=\sum a_n t^n\in\Ct$  
belongs to $A_r$ if its \emph{hybrid norm} 
\begin{equation}\label{equ-hybrid-norm}
\|f\|_{\mathrm{hyb}} := \sum\max\{|a_n|_{\infty},|a_n|_0\} (1/e)^n
\end{equation}
is finite. (Here, $|.|_0$ is the trivial norm.)
Due to $\max\{|a_n|_\infty,|a_n|_0 \}\geq \sum |a_n|_\infty$, the elements $f\in A_r$ can be realized as meromorphic functions on $\Di_r$ with possible pole at $0$. 
\begin{defi}
The Berkovich affine space $\mathbb{A}^{3g-3,an}_{A_r}$ consists of elements $x$ which is a bounded multiplicative seminorm $|\cdot|_x$ on the ring $R=A_r[\alpha_3,\dots,\alpha_g,\beta_2,\dots,\beta_g,\lambda_1,\dots,\lambda_g]$, i.e., $|\cdot|_x$ satisfies 
\begin{itemize}
\item $|0|_x=0, |1|_x=1$;  
\item $|P+Q|_x\leq |P|_x+|Q|_x$ and $|P\cdot Q|_x = |P|_x|Q|_x$ for all $P,Q\in R$; 
\item $|f|_x\leq\|f\|_{\hyb}$ for all $f\in A_r$. 
\end{itemize}  
The topology is the weakest topology such that for all the element $P\in R$, the map from $\mathbb{A}^{3g-3,an}_{A_r}$ to $\R$ given by $|P|_x$ is continuous.
\end{defi}

When $n=0$, we denote by $\mathcal{M}(A_r)$ the space $\mathbb{A}^{0,an}_{A_r}$, and call it the \emph{Berkovich spectrum} of $A_r$. It is known that $\mathcal{M}(A_r)$ is naturally homeomorphic to the closed disc $\overline{\mathbb{D}}_r$ and, by restricting the seminorms to the base ring, one obtains a continuous morphism $\Pi:\mathbb{A}^{3g-3,an}_{A_r}\to\mathcal{M}(A_r)$.

 For each $x\in\mathbb{A}^{3g-3,an}_{A_r}$, we denote by $\mathcal{H}(x)$ the completion of the fraction field $R/\textrm{Ker}|\cdot|_x$. We call $x$ \emph{Archimedean} if $\mathcal{H}(x)$ is Archimedean, \emph{non-Archimedean} otherwise.

\subsubsection*{Continuous map $p: \overline{\mathbb{D}}_r\to \mathbb{A}^{3g-3,an}_{A_r}$} 

Let $p:\Di\to \mathbb{C}^{3g-3}$, $z\mapsto p(z)=(\underline{\alpha}(z),\underline{\beta}(z),\underline{\lambda}(z))$, be a continuous function in $\overline{\Di}_r^{*}$, holomorphic in $\Di_r^*$, and meromorphic at $0$. By abusing notation, we construct a map $p: \overline{\Di}_r\to \mathbb{A}^{3g-3,an}_{A_r}$ as follows: for any $F=\sum f_n(t) P_n(\underline{\alpha},\underline{\beta},\underline{\lambda})\in A_r[\underline{\alpha},\underline{\beta},\underline{\lambda}]$, we set  
\begin{equation}
\label{eqn:continuous norm map}
|F|_{p(z)}=\begin{cases}
\left|\sum f_n(z) P_n(\underline{\alpha}(z),\underline{\beta}(z),\underline{\lambda}(z))\right|_{\infty}^{1/\log (1/|z|)},\,\,\,\text{if}\,\,z\neq 0\\

\exp(-{\ord_{z=0}(\sum f_n(z)P_n(\underline{\alpha}(z),\underline{\beta}(z),\underline{\lambda}(z)))}),\,\,\,\text{if}\,\,z=0,
\end{cases}
\end{equation}
where $P_n(\underline{\alpha},\underline{\beta},\underline{\lambda})$ is a monomial of total degree $n$. The key advantage of this construction is that from \cite[Proposition A.4]{BoJo2017} the map $p:\overline{\Di}_r\to \mathbb{A}^{3g-3,an}_{A_r}$ is continuous. In the beginning, we deal with meromorphic functions. Using this hybrid construction, we don't have a singularity at $z=0$ and give a sense of the limiting behavior at $z=0$.

For $z\in \overline{\mathbb{D}}_r^*$, we have $\mathcal{H}(|\cdot|_{p(z)})=(\mathbb{C},|\cdot|_{\infty}^{1/\log(1/|z|)})$, and therefore $p(z)$ is an Archimedean point. For $z=0$, we have $\mathcal{H}(|\cdot|_{p(0)})=(\Ct, |\cdot|_{na})$, and therefore $p(0)$ is a non-Archimedean point.

Now, we build projective spaces where the Schottky groups attached to the points in Berkovich affine space act in a natural way. Given $x\in \mathbb{A}^{3g-3,an}_{A_r}$, let 
$$\mathbb{P}^{1,an}_{\mathcal{H}(x)} \simeq \mathbb{P}^1(\mathbb{C})$$ 
if the residue field $\mathcal{H}(x)$ is Archimedean, and $\mathbb{P}^{1,an}_{\mathcal{H}(x)} \simeq$ a tree otherwise. In this context, a point $x\in \mathbb{A}^{3g-3,an}_{A_r}$ belongs to the Schottky space $S^{an}_{g,A_r}$ if $0<|\lambda_j|_x<1$ for all $1\leq j\leq g$, the points $0,1,2, \alpha_3,\dots,\alpha_g,\beta_2,\dots,\beta_g$ are mutually distinct, and the matrices $\Psi(x)\subset \PGL_2(\mathcal{H}(x))$ associated to $x$ induce a Schottky group over $\mathbb{P}^{1,an}_{\mathcal{H}(x)}$. Interestingly enough, Poineau and Turchetti proved an analog of Hejhal's theorem by establishing that $S^{an}_{g,A_r}$ is an open path connected subset of $\mathbb{A}^{3g-3,an}_{A_r}$.

For the family of Schottky groups satisfying condition $(\bigstar)$, due to \cref{rem:schottky-laurent}, we know $p(0)\in S^{an}_{g,A_r}$. Due to openness of $S^{an}_{g,A_r}$,
we recover \cite[Proposition 6.1]{dangHausdorffDimension2024}
\begin{prop}
\label{prop:continuous map to Schottky moduli space}
    For the family of Schottky groups satisfying condition $(\bigstar)$, there exists $0<\eta\leq r$ such that the continuous map $p$ restricted in $\Di_\eta$ has image in $S_{g,A_r}^{an}$.
\end{prop}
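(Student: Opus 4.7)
The plan is short because the ingredients are essentially assembled in the excerpt. The strategy is an openness-plus-continuity argument in the Berkovich affine space $\mathbb{A}^{3g-3,an}_{A_r}$.

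First, I would verify that $p(0)$ already lies in $S_{g,A_r}^{an}$. By the second case of \eqref{eqn:continuous norm map}, the seminorm $|\cdot|_{p(0)}$ restricted to $A_r$ is the $t$-adic seminorm, and more generally the residue field $\mathcal{H}(p(0))$ is $(\Ct,|\cdot|_{na})$; the coordinates $\alpha_j,\beta_j,\lambda_j$ at $p(0)$ evaluate to the Laurent series $\alpha_j(z),\beta_j(z),\lambda_j(z)\in\Ct$. Consequently, the group $\Psi(p(0))\subset \PGL_2(\Ct)$ is exactly the group $\Gamma$ associated to the meromorphic family $\Gamma_z$. Remark~\ref{rem:schottky-laurent} then tells us that under condition $(\bigstar)$ this $\Gamma$ is a non-Archimedean Schottky group in the sense of Poineau--Turchetti, and the two halves of $(\bigstar)$ directly give the remaining membership conditions for $S_{g,A_r}^{an}$: vanishing of each $\lambda_j$ at $z=0$ (without being identically zero) yields $0<|\lambda_j|_{p(0)}<1$, and $K_{\min}>0$ guarantees that the prescribed points $0,1,2,\alpha_3(0),\dots,\beta_g(0)$ are mutually distinct in $\mathbb{P}^{1,an}_{\Ct}$. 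Hence $p(0)\in S_{g,A_r}^{an}$.

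Next, I would invoke two external results: (i) the openness theorem of Poineau--Turchetti (the analogue of Hejhal's theorem mentioned just before the statement), which asserts that $S_{g,A_r}^{an}$ is an open subset of $\mathbb{A}^{3g-3,an}_{A_r}$; and (ii) \cite[Proposition A.4]{BoJo2017}, which says that the hybrid map $p:\overline{\mathbb{D}}_r\to\mathbb{A}^{3g-3,an}_{A_r}$ built via~\eqref{eqn:continuous norm map} is continuous (the delicate point being that the $1/\log(1/|z|)$-rescaled Archimedean values at $z\neq 0$ match the $t$-adic value at $z=0$). Combining (i), (ii), and the previous paragraph, the preimage $p^{-1}(S_{g,A_r}^{an})$ is an open subset of $\overline{\mathbb{D}}_r$ containing $0$. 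Since the family $\{\mathbb{D}_\eta:0<\eta\leq r\}$ forms a neighborhood basis of $0$ in $\overline{\mathbb{D}}_r$, one can pick $\eta>0$ with $p(\mathbb{D}_\eta)\subset S_{g,A_r}^{an}$, which is precisely the desired conclusion.

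There is no serious obstacle in this proposition itself: all the deep input has been packaged into the openness theorem and the continuity lemma. The only subtlety is aligning the two parallel notions of \emph{Schottky group over $\Ct$}: the one encoded by $(\bigstar)$ via the Dang--Mehmeti quantity $K_{\min}$, and the one used by Poineau--Turchetti in terms of twisted Ford disks. That identification is exactly what Remark~\ref{rem:schottky-laurent} provides, which is why the remark is essential and does all the real work at the non-Archimedean base point.
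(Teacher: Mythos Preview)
Your proposal is correct and follows essentially the same argument as the paper: the paper verifies $p(0)\in S_{g,A_r}^{an}$ via Remark~\ref{rem:schottky-laurent}, then combines the Poineau--Turchetti openness of $S_{g,A_r}^{an}$ with the continuity of $p$ to find the desired $\eta$. Your write-up simply unpacks these steps in more detail.
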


\paragraph{Uniform Schottky basis}

For an element $\gamma\in\PGL_2(\Ct)$, whose entries are meromorphic on $\Di_r^*$ with a possible pole at $0$, we define a family of twisted Ford disks:
\begin{equation}\label{equ:fam-ford}
D_{\gamma,\lambda}^+(z)=\begin{cases}
&\{x\in\C|\ |c(z)x+d(z)|_\infty^{2/\log(1/|z|)}\leq \lambda |(ad-bc)(z)|_\infty^{1/\log(1/|z|)} \}\text{ if } \ z\neq 0, \\
& \{ x\in \A_{\Ct}^{1,an}|\ |(cZ+d)(x)|^2\leq \lambda |(ad-bc)(x)|\}\text{ if } \ z=0,
\end{cases} 
\end{equation}
where $c(z)$ means the complex value of the meromorphic function $c$ at $t$.

We state a result of uniform Schottky figures for all small parameters. This is a translation to this special case of \cite[Theorem 4.3.2 and  Corollary 4.3.3]{poineauBerkovichCurvesSchottky2021}. 
\begin{lem}\label{lem:relative ford}
    Let $\Gamma=\langle M_1,\cdots,M_g\rangle$ be a family of Schottky groups satisfying condition $(\bigstar)$. There exist $s\in (0,r)$ and an automorphism $\tau\in Aut(F_g)$ such that the following holds. Let
    $$
    N_1=\tau M_1,\cdots,N_g=\tau M_g\in \PGL_2(\Ct).$$
    There exist positive real numbers $\lambda_1,\cdots,\lambda_g$, such that the family of twisted Ford discs
    \[ (D_{N_1,\lambda_1}^+(z),\cdots,D_{N_g,\lambda_g}^+(z), D_{N_1^{-1},\lambda_1^{-1}}^+(z),\cdots, D_{N_g^{-1},\lambda_g^{-1}}^+(z)) \]
    is a Schottky figure adapted to the basis $(N_1,\cdots, N_g)$ for each $z\in \C$ with $|z|<s$.
\end{lem}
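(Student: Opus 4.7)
The plan is to reduce the lemma to Gerritzen's theorem at the non-Archimedean fibre $z=0$ combined with the openness of the Schottky figure condition in the Berkovich Schottky space $S^{an}_{g,A_r}$, which then spreads to a whole punctured disc via the continuous map $p$ of Proposition~\ref{prop:continuous map to Schottky moduli space}.

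First I would apply Theorem~\ref{thm:schottky-figure} at the point $p(0)$. By Remark~\ref{rem:schottky-laurent}, condition $(\bigstar)$ ensures that the family $\Gamma$, viewed as a subgroup $\Gamma\subset\PGL_2(\Ct)$, is a non-Archimedean Schottky group. Gerritzen's theorem then supplies an automorphism $\tau\in\Aut(F_g)$ — absorbing a possible relabeling of the generators — together with real numbers $\lambda_1,\ldots,\lambda_g\in(0,1)$ such that, setting $N_i=\tau M_i$, the collection
\[
\mathcal{B}_0=\{D^+_{N_i,\lambda_i}(0),\, D^+_{N_i^{-1},\lambda_i^{-1}}(0)\}_{i=1,\ldots,g}
\]
is a Schottky figure for $\Gamma$ in $\mathbb{P}^{1,an}_{\Ct}$.

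Next I would argue that this property propagates to all nearby $z$. The conditions defining a Schottky figure — mutual disjointness of the $2g$ closed twisted Ford discs together with the maximality of each open image disc $D^-_{N_i^\pm}(z)=N_i^\pm\!\left(\mathbb{P}^1\setminus D^+_{N_i^\mp,\lambda_i^\mp}(z)\right)$ inside $D^+_{N_i^\pm,\lambda_i^\pm}(z)$ — can be encoded by \emph{strict} inequalities between continuous multiplicative seminorms of polynomial expressions in the coefficients of the $N_i$'s and the $\lambda_i$'s; this is precisely what the hybrid-norm definition \eqref{equ:fam-ford} is designed to render continuous across the Archimedean/non-Archimedean boundary $\mathcal{M}(A_r)\simeq\overline{\mathbb{D}}_r$. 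Consequently, the locus $U\subset\mathbb{A}^{3g-3,an}_{A_r}$ where the collection above is a Schottky figure adapted to $(N_1,\ldots,N_g)$ is an open neighborhood of $p(0)$. Pulling back along the continuous map $p:\overline{\mathbb{D}}_\eta\to S^{an}_{g,A_r}$ of Proposition~\ref{prop:continuous map to Schottky moduli space}, one gets an open neighborhood $p^{-1}(U)\subset\overline{\mathbb{D}}_\eta$ of $0$, and any $s>0$ with $\overline{\mathbb{D}}_s\subset p^{-1}(U)$ meets the conclusion of the lemma.

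The main technical obstacle I foresee is in the second step: verifying that the Schottky figure condition is genuinely open \emph{uniformly} across the Archimedean/non-Archimedean interface. Concretely, one must check that the Shilov boundary points of the twisted Ford discs — which jointly control disjointness and the maximal-open-disc property — vary continuously in the hybrid parameter $z\in\overline{\mathbb{D}}_r$. This is the substance of the Poineau--Turchetti continuity results referenced in the statement of the lemma, and once that input is in hand the rest of the argument is the routine pullback along $p$ described above.
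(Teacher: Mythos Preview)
Your proposal is correct and follows essentially the same route as the paper. The only cosmetic difference is that the paper invokes \cite[Theorem~4.3.2 and Corollary~4.3.3]{poineauBerkovichCurvesSchottky2021} as a single black box producing a \emph{relative} Schottky figure on an open neighborhood of $p(0)$ in $S^{an}_{g,A_r}$ (thus packaging Gerritzen at $z=0$ together with the openness you describe), and then unpacks fiberwise to recover the explicit Archimedean Ford discs of \eqref{equ:fam-ford}; you separate these two ingredients, but correctly identify that the nontrivial step is the hybrid openness, which is precisely the Poineau--Turchetti input.
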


\begin{proof}

We explain the language used in \cite{poineauSchottkySpacesUniversal2022} and how to obtain the lemma.

Recall $S=S_{g,A_r}^{an}$ is the Schottky space.  Let $\calO( S)$ be the ring of analytic functions, which means locally the function can be uniformly approximated by rational functions.\footnote{We will only use meromorphic functions on $\mathbb D$, so we don't give detailed definition.} Let $\gamma=\begin{pmatrix}
    a & b \\  c & d
\end{pmatrix}\in \PGL_2(\calO(S))$ be a loxodormic element with $c\neq 0$ and $0<\lambda $. Let $\A_S^{1,an}$ be the affine line over $S$ with variable $Z$. In \cite{poineauSchottkySpacesUniversal2022}, relative twisted Ford discs is defined by 
\[ D_{\gamma,\lambda}^+=\{ x\in \A_S^{1,an}|\ |(cZ+d)(x)|^2\leq \lambda |(ad-bc)(x)|\}.  \]
We will prove that this gives the same family of twisted Ford disks as in \cref{equ:fam-ford}.

The projection map $\pi$ from $\A_S^{1,an}$ to $S$ is the restriction of the seminorm to the ring $A_r[\underline{\alpha},\underline{\beta},\underline{\lambda}]$. For any $y\in S$, the fiber $\pi^{-1}(y)$ is the affine line $\A_{\calH(y)}^{1,an}$.\footnote{The fiber $\pi^{-1}(y)$ is the set of seminorms on $R[Z]$ whose restriction to $R=A_r[\underline{\alpha},\underline{\beta},\underline{\lambda}]$ coincides with the seminorm $y$. Recall that $\calH(y)$ is the completion of the fractional field of $R/\ker(y)$. Hence this set of seminorms, $\pi^{-1}(y)$, is isomorphic to the set of multiplicative seminorms on $\calH(y)[Z]$ whose restriction to $\calH(y)$ coincides with $y$, which is exactly the affine line $\A_{\calH(y)}^{1,an}$.} Relative Ford discs are the union of twisted Ford discs on these fibers.

For the continuous map $p$, we know $p(0)$ is a non-Archimedean point in $S$. 
 Then we can apply \cite[Theorem 4.3.2 and  Corollary 4.3.3]{poineauBerkovichCurvesSchottky2021} to find a relative twisted Ford disks on an open neighbourhood $U$ of $p(0)$ in $S$, which is a Schottky figure adapted to a chosen basis. The new base is denoted by $    N_1=\tau M_1,\cdots,N_g=\tau M_g $ for some $\tau\in Aut(F_g)$ and with parameters $\lambda_1,\cdots,\lambda_g$. Moreover, for any $y\in U$, the restriction of relative twisted Ford discs to the fiber $\A_{\calH(y)}^{1,an}$ gives a Schottky figure on that fiber. Then we only need to prove this gives the twisted Ford disks in the statement of the Lemma.

For $\calH(y)$ Archimedean, the norm $(\calH(y),||_y)$ is always given by $(\C,||_\infty^{\epsilon(y)})$ with some $\epsilon(y)\leq 1$. Due to our choice of $p(z)$, we know that $\epsilon(p(z))=1/\log(1/|z|)$ for $z\neq 0$. Over $(\C,||_\infty^\epsilon)$, the analytification gives no new point. Hence a semi-norm $||_x\in\A_{\calH(p(z))}^{1,an}=\A_{\C,||_\infty^\epsilon}^{1,an}$ reads as
\[ |P(Z)(x)|=\left|(\sum a_nZ^n)(x)\right|=\left|\sum a_nZ(x)^n\right|_\infty^{1/\log(1/|z|)}, \]
for some unique $Z(x)\in \C$ determined by $x$ and for a polynomial $P\in \C[Z]$. Due to the openness of $U$ and the continuity of the map $p$, we can find $0<s<r$ such that $p(\mathbb D_s)$ is contained in $U$. Therefore, the twisted Ford discs at the fibre over $p(z)$ with $z\neq 0$ and $|z|<s$ reads as
\begin{align*}
 D_{\gamma,\lambda}^+(z)&=\{ x\in \A_{\calH(p(z))}^{1,an}|\ |(cZ+d)(x)|^2\leq \lambda |(ad-bc)(x)|\}\\
&=\{x\in\C|\ |c(z)x+d(z)|_\infty^{2/\log(1/|z|)}\leq \lambda |(ad-bc)(z)|_\infty^{1/\log(1/|z|)} \}. 
\end{align*}
The proof is complete. 
\end{proof}

We define the discs $D_{\frak{a},z}$ to be the Ford discs 
\begin{equation}
\label{eqn:disc convention}
D_{\frak{a},z}:=D^-_{\frak{a}^{-1},\lambda_{\frak{a}^{-1}}}(z)
\end{equation}
for any generator $\frak{a}$ in the generator set $\{N_1,\cdots, N_g, N_1^{-1},\cdots, N_g^{-1} \}$, where the inverse is because we want that the attracting fixed point $\frak{a}^+$ of $\frak{a}$ satisfies $\frak{a}^+\in D_\frak{a}$. For any element $\gamma=\frak{a}_{i_1}\cdots \frak{a}_{i_n}\in\Gamma$, we define
\[D_{\gamma,z}=\frak{a}_{i_1,z}\cdots \frak{a}_{i_{n-1},z}D_{\frak{a_{i_n}},z}. \]

We can further suppose the Schottky basis is uniformly bounded for $|z|\ll 1$. Indeed, the limit set and the Schottky figure of $\Gamma$ are in the unit ball $D(0,1)$ of the Berkovich affine line $\A_k^{1,an}$, which can be realized by suitably conjugating the Schottky group. Then, for small $|z|$, the Schottky figure are in a uniform compact set in $\C$ due to the centers of Schottky disks being meromorphic functions and their radii being functions of the form $\lambda^{\log(1/|z|)}/|c(z)|$, where $\lambda$ and $c(z)$ comes from the disc $D^-_{\frak{a}^{-1},\lambda_{\frak{a}^{-1}}}(z)$ for example $\frak a_z=\begin{pmatrix}
    a(z) & b(z)\\  c(z) & d(z)
\end{pmatrix} $ and $\lambda_{\frak a^{-1}}=\lambda$ .

\section{Convergence of zeta function}

Let $\Gamma<\SL_2(\M(\D))$ be a family of Schottky groups satisfying $(\bigstar)$.

\subsection{Convergence on half plane: Euler product}

\begin{thm}\label{thm:s large}
For $\Re s\gg 1$, we have that 
\begin{equation*}
\frac{Z(\Gamma_z,s/\log(1/|z|))}{Z_{I}(\Gamma,s)}\to 1\quad \text{as}\,\,z\to 0.
\end{equation*}

\end{thm}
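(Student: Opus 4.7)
The plan is to take logarithms of both Euler products and compare them term by term via dominated convergence. For $\Re s$ sufficiently large, both products converge absolutely (the Ihara product for $\Re s > d_0$ by definition of the non-Archimedean critical exponent; the Selberg product at $s/\log(1/|z|)$ for small $|z|$ using $\delta_{\Gamma_z} \sim d_0/\log(1/|z|)$ from \eqref{equ-haus conv}). Then
\begin{align*}
-\log Z(\Gamma_z, s/\log(1/|z|)) &= \sum_{\gamma \in \calP} \sum_{k_1, k_2 \geq 0} \sum_{n \geq 1} \frac{e^{-in\theta_{\gamma_z}(k_1-k_2)}}{n}\, e^{-n(s/\log(1/|z|) + k_1 + k_2)\ell(\gamma_z)}, \\
-\log Z_I(\Gamma, s) &= \sum_{\gamma \in \calP} \sum_{n \geq 1} \frac{1}{n}\, e^{-ns\ell^{na}(\gamma)}.
\end{align*}

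For each fixed primitive $\gamma$, the $(k_1,k_2)=(0,0)$ summand in the Selberg expansion converges to the corresponding Ihara summand because $\ell(\gamma_z)/\log(1/|z|) \to \ell^{na}(\gamma)$ by Proposition \ref{prop:expansion of length function}. The summands with $(k_1,k_2) \neq (0,0)$ carry an extra factor $e^{-n(k_1+k_2)\ell(\gamma_z)}$, which vanishes since $\ell(\gamma_z) \to \infty$. To pass the limit inside the sum, I need a uniform majorant, which reduces to a uniform lower bound
\[\ell(\gamma_z) \geq c\, \ell^{na}(\gamma)\, \log(1/|z|)\]
with $c > 0$ independent of $\gamma \in \Gamma$ and of small $|z|$. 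Granted this bound, $|e^{-ns\ell(\gamma_z)/\log(1/|z|)}| \leq e^{-nc \Re s\, \ell^{na}(\gamma)}$ is controlled by the convergent series $-\log Z_I(\Gamma, c\Re s)$ (for $\Re s$ large enough that $c\Re s > d_0$), while $|e^{-n(k_1+k_2)\ell(\gamma_z)}| \leq e^{-n(k_1+k_2) c \log(1/|z|)}$ gives geometric decay in $k_1+k_2$ that is uniform in $\gamma$. Dominated convergence then yields $\log Z(\Gamma_z, s/\log(1/|z|)) \to \log Z_I(\Gamma, s)$, and exponentiating completes the proof.

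The main obstacle is the uniform length comparison $\ell(\gamma_z) \geq c\, \ell^{na}(\gamma) \log(1/|z|)$. This should follow from the uniform Schottky figure of Lemma \ref{lem:relative ford}: writing $\gamma$ as a cyclically reduced word in the fixed generators, $\ell^{na}(\gamma)\log(1/|z|)$ is comparable, up to uniform multiplicative constants depending only on the fixed basis, to the sum of logarithmic contraction rates of $\gamma_z$ along the nested family of twisted Ford disks $D_{\mathfrak{a},z}$, and the same sum controls $\ell(\gamma_z)$ from below by standard distortion estimates for iterated M\"obius contractions of uniformly separated disks. The holonomy phases $\theta_{\gamma_z}$ enter only as factors of modulus one and play no role in the dominations. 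A quantitative form of Proposition \ref{prop:expansion of length function} would upgrade this argument to an explicit rate of convergence, anticipating the refinements in Theorem \ref{thm:zeta-conv}.
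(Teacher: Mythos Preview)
Your proposal is correct and follows essentially the same approach as the paper: compare the logarithms of the Euler products, use pointwise convergence of rescaled lengths (the paper's Lemma~\ref{lem-conjugacy continuous}, which is more elementary than the Proposition~\ref{prop:expansion of length function} you cite but gives the same conclusion), and invoke the uniform bound $\ell(\gamma_z)\geq c\,\ell^{na}(\gamma)\log(1/|z|)$ (the paper's Lemma~\ref{lem-uniform ell}, proved exactly via the uniform Schottky figure and distortion estimates you sketch) to dominate. The paper implements dominated convergence via an explicit $\ell^{na}(\gamma)\leq N$ versus $\ell^{na}(\gamma)>N$ split together with the counting estimate \eqref{equ:gamma_n}, but this is the same argument packaged slightly differently.
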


The following lemma gives the convergence of the length of each conjugacy class. 
\begin{lem}\label{lem-conjugacy continuous}
    Let $[\gamma]$ be a non-trivial conjugacy class in $\Gamma$, then
    \[ \lim_{z\rightarrow 0}\frac{\ell(\gamma_z)/\log(1/|z|)}{\ell^{na}(\gamma)}=1. \]
\end{lem}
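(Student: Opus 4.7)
The plan is to reduce both lengths to the trace $\tr\gamma$, a meromorphic function of $z$ whose Laurent expansion at $0$ simultaneously encodes the non-Archimedean length (via the order of the pole) and the Archimedean asymptotics (via the leading $|z|^{-n}$ growth).

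After the normalization of Subsection~2.4, $\gamma\in\SL_2(\Ct)$ has $\det\gamma=1$, so \cref{defi:na length} reduces $\ell^{na}(\gamma)$ to $2\log|\tr\gamma|_{na}=-2\,\ord_{t=0}(\tr\gamma)$, while the Archimedean length of the loxodromic element $\gamma_z\in\SL_2(\CC)$ is $\ell(\gamma_z)=2\log|\lambda_z|$ for the eigenvalue $\lambda_z$ with $|\lambda_z|\geq 1$. First I would check that $n:=-\ord_{t=0}(\tr\gamma)$ is a positive integer: this is the non-Archimedean loxodromicity of $\gamma$ (i.e.\ $|\tr\gamma|_{na}>1=|\det\gamma|_{na}^{1/2}$), which follows from condition $(\bigstar)$ via \cref{rem:schottky-laurent}, since every nontrivial element of a non-Archimedean Schottky group is loxodromic. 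Hence $\ell^{na}(\gamma)=2n$, and the Laurent expansion reads
$$\tr\gamma=a_{-n}t^{-n}+a_{-n+1}t^{-n+1}+\cdots,\qquad a_{-n}\neq 0,$$
convergent on a punctured neighborhood of $0$.

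Next, for $|z|$ small, $|\tr\gamma_z|_\infty\sim|a_{-n}|\,|z|^{-n}\to\infty$, so $\gamma_z$ is Archimedean loxodromic. Solving $\lambda_z+\lambda_z^{-1}=\tr\gamma_z$ yields $\lambda_z=\tr\gamma_z\bigl(1+O(|\tr\gamma_z|^{-2})\bigr)$, whence
$$\ell(\gamma_z)=2\log|\tr\gamma_z|_\infty+O(|z|^{2n})=2n\log(1/|z|)+2\log|a_{-n}|+O(|z|).$$
Dividing by $\log(1/|z|)\cdot\ell^{na}(\gamma)=2n\log(1/|z|)$ yields a ratio $1+O(1/\log(1/|z|))$, which tends to $1$, as claimed.

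The only nontrivial input is the strict positivity of $n$ for an \emph{arbitrary} element $\gamma\in\Gamma$ rather than only a generator; this is where the Schottky hypothesis genuinely enters, through \cref{rem:schottky-laurent}. The rest is a bookkeeping computation with the trace, and the same argument actually produces the finer expansion $\ell(\gamma_z)/\log(1/|z|)=\ell^{na}(\gamma)+O(1/\log(1/|z|))$, which will be the starting point for the analysis of the intermediate zeta function $Z_M$ later in the paper.
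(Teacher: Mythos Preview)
Your proof is correct and follows essentially the same route as the paper: both reduce to the Laurent expansion of $\tr\gamma$ at $t=0$, use that non-Archimedean loxodromicity (from $(\bigstar)$ via \cref{rem:schottky-laurent}) forces the pole order $n>0$, and compare $\ell(\gamma_z)=2\log|\lambda_z|$ with $2\log|\tr\gamma_z|$ as $|\tr\gamma_z|\to\infty$. Your version is slightly more explicit in extracting the $O(1/\log(1/|z|))$ error term, which indeed foreshadows the finer expansion in \cref{prop:expansion of length function}.
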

\begin{proof}

  Using the definition of non-Archimedean length (\cref{defi:na length}) and the assumption that $\Gamma<\SL_2(\mathbb{M}(\D))$, we have
    \[ |\Tr(\gamma)|_{na}=e^{\ell^{na}(\gamma)/2}.\]
Every non-trivial element $\gamma$ in $\Gamma$ is loxodromic in $\SL_2(\Ct)$, hence $\ell^{na}(\gamma)>0$. Since $\tr(\gamma_z)$ is meromorphic, we have 
 $\lim_{z\rightarrow 0}|\Tr(\gamma_z)|^{1/\log(1/|z|)}=|\Tr(\gamma)|_{na}$. The assumption that $\Gamma$ satisfies $(\bigstar)$ yields $\ell(\gamma_z)\to \infty$ as $z\to 0$. We obtain
 \begin{equation*}
    |\Tr(\gamma_z)|/e^{\ell(\gamma_z)/2}=(e^{\ell(\gamma_z)/2}+e^{-\ell(\gamma_z)/2})/e^{\ell(\gamma_z)/2}\rightarrow 1.  
    \end{equation*}
    We obtain
    \begin{equation*} \lim_{z\rightarrow 0}\frac{\ell(\gamma_z)/\log(1/|z|)}{\ell^{na}(\gamma)}=\lim_{z\rightarrow 0}\frac{\ell(\gamma_z)/\log(1/|z|)}{2\log|\Tr(\gamma_z)|/\log(1/|z|)}\frac{\log|\Tr(\gamma_z)|/\log(1/|z|)}{\log |\Tr(\gamma)|_{na}}\frac{2\log |\Tr(\gamma)|_{na}}{ \ell^{na}(\gamma)}=1.  \qedhere
    \end{equation*}
\end{proof}

We also need a uniform bound independent of $z$ for the lengths of geodesics. We first recall the uniform distortion estimates from Lemma 5.6 and Corollary 5.9 in \cite{dangHausdorffDimension2024}. Meanwhile, \cref{prop:continuous map to Schottky moduli space} and \cref{lem:relative ford} state that there exists $\eta\in (0,r)$ such that the map 
$p:\mathbb D_{\eta}\rightarrow \mathcal{S}_{g,A_r}^{an}$ is continuous, and $\{D_{\frak{a},z}\}$ is a uniform Schottky basis for any $z\in \D_{\eta}$. Hence, we can apply the results to obtain the following. 

\begin{lem}\label{lem:uniform-dis} There exist $\eta_0\in(0,r)$ and constants $R>0$, $c\in (0,1)$ for any $|z|\leq \eta_0$ and any non-trivial word $\gamma$, the radius $r_{\gamma,z}$ of $D_{\gamma,z}$ satisfies\footnote{Recall that $l(\gamma)$ stands for the word length: cf. Subsection \ref{ss.notations}.}
\begin{equation}\label{equ:r_gammay}
    r_{\gamma,z}\leq Rc^{l(\gamma)-1}.
\end{equation}
Moreover, there exists a constant $N>0$ such that for any $\gamma=\frak{a}\mathbf{b}$ with $\frak{a}\mathbf{b}$ reduced and $l(\frak{a})=1$ and $l(\mathbf{b})>N$,
\begin{equation}\label{equ:sup-gamma}
\frac{\sup_{x\in D_{\mathbf{b},z}}|\frak{a}'(x)|_z}{\inf_{x\in D_{\mathbf{b},z}}|\frak{a}'(x)|_z}\leq\exp\left(\frac{1}{4}r_{\mathbf{b},z}\right) .  
\end{equation}
\end{lem}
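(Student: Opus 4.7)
The plan is to combine the uniform Schottky figure from \cref{lem:relative ford} with classical dynamics of Möbius maps on Schottky figures; both bounds are direct transcriptions of \cite[Lemma 5.6 and Corollary 5.9]{dangHausdorffDimension2024} to our setting, and the essential content is that every constant can be chosen \emph{uniformly} in the parameter $z$ for $|z|\leq\eta_0$.

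For the first estimate, I would argue by induction on word length. Writing $\gamma=\frak{a}_{i_1}\cdots\frak{a}_{i_n}$ in reduced form, the Schottky figure property gives $D_{\frak{a}_{i_2}\cdots\frak{a}_{i_n},z}\subset \mathbb{P}^1(\mathbb{C})\setminus D_{\frak{a}_{i_1}^{-1},z}$; a direct computation with the Möbius action shows that $\frak{a}_{i_1,z}$ sends any disc contained in this region to a disc whose radius is at most $c$ times the original, for some $c\in(0,1)$, provided the input disc stays uniformly away from the pole of $\frak{a}_{i_1,z}$ (which lies inside $D_{\frak{a}_{i_1}^{-1},z}$). Iterating yields $r_{\gamma,z}\leq R\,c^{l(\gamma)-1}$. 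The crucial point is that both $c$ and $R$ are independent of $z$, because \cref{prop:continuous map to Schottky moduli space} extends $p$ continuously to $\overline{\mathbb{D}}_{\eta}$ with $p(0)$ a non-Archimedean Schottky point, and then \cref{lem:relative ford} together with the normalization at the end of \cref{sec: preliminaries} gives uniform lower bounds on pairwise distances between the Schottky discs and uniform upper bounds on their diameters.

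For the distortion estimate, I would compute
\begin{equation*}
\frac{d}{dx}\log|\frak{a}_z'(x)|=\Re\left(-\frac{2c_z}{c_zx+d_z}\right),
\end{equation*}
for $\frak{a}_z(x)=(a_zx+b_z)/(c_zx+d_z)$ with $a_zd_z-b_zc_z=1$. Integrating along a straight segment inside $D_{\mathbf{b},z}$ yields
\begin{equation*}
\bigl|\log|\frak{a}_z'(x)|-\log|\frak{a}_z'(y)|\bigr|\leq 4\,r_{\mathbf{b},z}\sup_{w\in D_{\mathbf{b},z}}\frac{|c_z|}{|c_zw+d_z|}.
\end{equation*}
Since $\mathbf{b}$ begins with a generator different from $\frak{a}^{-1}$, the disc $D_{\mathbf{b},z}$ lies inside a Schottky disc disjoint from $D_{\frak{a}^{-1},z}$, which in turn contains the pole $-d_z/c_z$ of $\frak{a}_z$; uniform separation of the Schottky figure thus gives a uniform lower bound $|c_zw+d_z|/|c_z|\geq \delta_0$ on $D_{\mathbf{b},z}$. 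Choosing $N$ sufficiently large ensures that $D_{\mathbf{b},z}$ is nested deeply inside one of finitely many sub-Schottky discs, pushing $\delta_0\geq 16$, which reduces the right-hand side to at most $r_{\mathbf{b},z}/4$ as desired.

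The main obstacle is genuinely the uniformity of all constants as $z\to 0$: without it, every estimate used above would a priori degenerate as one approaches the boundary of the classical Schottky space. This is where the hybrid-space framework of Poineau--Turchetti becomes indispensable: the continuous extension of the Schottky figure to the non-Archimedean limit $p(0)$ allows openness and compactness arguments on the Berkovich spectrum $\mathcal{M}(A_r)\cong\overline{\mathbb{D}}_r$ to transfer the uniform estimates from the non-Archimedean point back to a full Euclidean punctured disc $\{0<|z|\leq\eta_0\}$.
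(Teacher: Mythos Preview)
Your overall framing matches the paper exactly: the lemma is not proved there but is quoted as a direct consequence of \cite[Lemma~5.6 and Corollary~5.9]{dangHausdorffDimension2024}, with the hybrid continuity (\cref{prop:continuous map to Schottky moduli space} and \cref{lem:relative ford}) supplying the uniformity in $z$. Your sketch of the radius estimate is also fine and is essentially the argument behind those cited results.

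There is, however, a genuine slip in your distortion sketch. The lemma is stated for the rescaled norm $|\cdot|_z=|\cdot|_\infty^{1/\log(1/|z|)}$ (both the derivative ratio and the radius $r_{\mathbf{b},z}$ are in $|\cdot|_z$), whereas your integration argument is carried out in the Euclidean norm. More importantly, the claim that ``choosing $N$ large pushes $\delta_0\geq 16$'' is false: $\delta_0=\inf_{w\in D_{\mathbf{b},z}}|w+d_z/c_z|_\infty$ is bounded above by the diameter of the region containing the Schottky figure, and increasing $N$ only shrinks $D_{\mathbf{b},z}$ without moving it further from the pole. The correct mechanism is this: your Euclidean computation gives
\[
\bigl|\log|\frak a_z'(x)|_\infty-\log|\frak a_z'(y)|_\infty\bigr|\lesssim r_{\mathbf b,z}^{\log(1/|z|)}
\]
(the Euclidean radius of $D_{\mathbf b,z}$), with implied constant depending only on the fixed separation $\delta_0$ of the level-one discs. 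Dividing by $\log(1/|z|)$ to convert the left side to $|\cdot|_z$, the desired bound $\leq r_{\mathbf b,z}/4$ becomes $r_{\mathbf b,z}^{\log(1/|z|)-1}\lesssim\log(1/|z|)$, which holds once $r_{\mathbf b,z}<1$ is uniformly small---and that is precisely what the first part of the lemma guarantees for $l(\mathbf b)>N$.
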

Here, the radius and the derivative are computed with respect to the norm $|\cdot|_{z}$. More precisely, for $z\neq 0$, $|\cdot|_z$ is a norm on $\C$ given by $|\cdot|_{z}:=|\cdot|_\infty^{1/\log(1/|z|)}$, and for $z=0$, $|\cdot|_z$ is a norm on $\Ct$ defined by $|\cdot|_{z}=|\cdot|_{na}$.

\begin{lem}\label{lem-unifrom-dis-ele}
Let $\eta_0\in (0,r)$, $R>0$, $c\in (0,1)$ be the constants given in \cref{lem:uniform-dis}. Then 
    there exists $C>0$ such that for any word $\gamma=\mathbf{a}\mathfrak{b}$ with $\mathbf{a}\mathfrak{b}$ reduced and $l(\mathfrak{b})=1$ and $\mathbf{a}$ of arbitrary length, we have 
\begin{equation*}
\frac{\sup_{x\in D_{\frak{b},z}}|\mathbf{a}'(x)|_z}{\inf_{x\in D_{\frak{b},z}}|\mathbf{a}'(x)|_z}\leq C\exp(R/(1-c)) \quad \text{for any }\,\,z\in \D_{\eta_0}.
\end{equation*}
\end{lem}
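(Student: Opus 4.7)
The plan is to reduce the arbitrary-word statement to iterating the single-generator distortion bound \eqref{equ:sup-gamma} along the word, using the chain rule, and to control the unavoidable ``tail'' of at most $N$ letters (where \eqref{equ:sup-gamma} does not apply) by a uniform compactness argument.

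Concretely, write $\mathbf{a}=\mathfrak{a}_1\mathfrak{a}_2\cdots\mathfrak{a}_n$ as a reduced product of generators, and for $1\leq i\leq n$ set $\mathbf{w}_i=\mathfrak{a}_{i+1}\cdots\mathfrak{a}_n\mathfrak{b}$ (with $\mathbf{w}_n=\mathfrak{b}$). Since $\mathbf{a}\mathfrak{b}$ is reduced, each $\mathbf{w}_i$ is reduced, so $\mathfrak{a}_i\mathbf{w}_i$ is reduced, and the orbit point $\mathfrak{a}_{i+1}\cdots\mathfrak{a}_n(x)$ lies in $D_{\mathbf{w}_i,z}$ for every $x\in D_{\mathfrak{b},z}$. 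By the chain rule,
\[
\frac{\sup_{x\in D_{\mathfrak{b},z}}|\mathbf{a}'(x)|_z}{\inf_{x\in D_{\mathfrak{b},z}}|\mathbf{a}'(x)|_z}\;\leq\;\prod_{i=1}^{n}\frac{\sup_{y\in D_{\mathbf{w}_i,z}}|\mathfrak{a}_i'(y)|_z}{\inf_{y\in D_{\mathbf{w}_i,z}}|\mathfrak{a}_i'(y)|_z}.
\]

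For all indices $i\leq n-N$ we have $l(\mathbf{w}_i)>N$, so \cref{lem:uniform-dis} applies to the pair $\mathfrak{a}_i\mathbf{w}_i$ and gives the factor-wise bound $\exp(r_{\mathbf{w}_i,z}/4)$. Using the geometric radius estimate \eqref{equ:r_gammay},
\[
\prod_{i=1}^{n-N}\exp\!\bigl(r_{\mathbf{w}_i,z}/4\bigr)\;\leq\;\exp\!\Bigl(\tfrac{1}{4}\sum_{k\geq 1} Rc^{k-1}\Bigr)\;=\;\exp\!\Bigl(\frac{R}{4(1-c)}\Bigr),
\]
which is finite and independent of $n$ and $z$.

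It remains to handle the at-most-$N$ ``tail'' indices $n-N<i\leq n$. Here we invoke the uniformity statement at the end of Section~\ref{sec: preliminaries}: for $|z|\leq \eta_0$ the full Schottky figure $\{D_{\mathbf{w},z}:l(\mathbf{w})\leq N\}$ lies in a fixed compact set $K\subset \C$ (respectively in a fixed compact subset of $\mathbb{A}^{1,an}_{\Ct}$ when $z=0$), and each generator $\mathfrak{a}_i$ has entries that are meromorphic in $z$ with a possible pole only at $z=0$, hence bounded and bounded away from zero on $K\times \overline{\mathbb{D}}_{\eta_0}$. Consequently $\sup_{y\in D_{\mathbf{w}_i,z}}|\mathfrak{a}_i'(y)|_z / \inf_{y\in D_{\mathbf{w}_i,z}}|\mathfrak{a}_i'(y)|_z$ is bounded by some constant $C_0$ uniformly in $z\in \D_{\eta_0}$ and in the choice of generator, and the $N$ tail factors contribute at most $C_0^N=:C$. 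Combining both estimates yields the desired bound $C\exp(R/(1-c))$ (with room to spare).

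The only delicate point is the last step, namely promoting the evident pointwise continuity of the generators into a bound that is uniform across all words of length $\leq N$ and across all $z\in\D_{\eta_0}$ including the non-Archimedean fibre $z=0$; this however is precisely what the uniform Schottky basis construction of \cref{lem:relative ford} and the subsequent compactness remark provide, so no new ideas are needed beyond carefully keeping track of the two norms $|\cdot|_z$ simultaneously.
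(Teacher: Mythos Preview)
Your proof is correct and follows essentially the same approach as the paper: both decompose $\mathbf{a}$ via the chain rule into single-generator factors, apply \eqref{equ:sup-gamma} together with the radius bound \eqref{equ:r_gammay} to the factors with long suffix, sum the resulting geometric series to get $\exp(R/(4(1-c)))$, and absorb the at most $N$ short-suffix factors into a uniform constant depending only on $\Gamma$ and $N$. Your write-up is slightly more explicit about why that last constant is uniform in $z\in\D_{\eta_0}$ (via the uniform Schottky basis and compactness), whereas the paper simply asserts it.
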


\begin{proof}
Assume $l(\mathbf{a})=n$ and write    
$\mathbf{a}=\frak{a}_{i_{1}}\cdots\frak{a}_{i_{n}}$.  Recall that $\mathfrak{a}_{i_j}\cdots \mathfrak{a}_{i_n}D_{\frak{b},z}=D_{\mathfrak{a}_{i_j}\cdots \mathfrak{a}_{i_n}\frak{b},z}$ for any $z\in \D_{\eta_0}$.
We estimate the distortion of $\mathbf{a}$: 
\begin{align*}
    \frac{\sup_{x\in D_{\frak{b},z}}|\mathbf{a}'(x)|_z}{\inf_{x\in D_{\frak{b},z}}|\mathbf{a}'(x)|_z}&\leq  \frac{\sup_{x\in D_{\frak{a}_{i_{2}}\cdots \frak{a}_{i_{n}}\frak{b},z}}|\frak{a}_{i_{1}}'(x)|_z}{\inf_{x\in D_{\frak{a}_{i_{2}}\cdots \frak{a}_{i_{n}}\frak{b},z}}|\frak{a}_{i_{1}}'(x)|_z}\cdot\frac{\sup_{x\in D_{\frak{a}_{i_{3}}\cdots \frak{a}_{i_{n}}\frak{b},z}}|\frak{a}_{i_{2}}'(x)|_z}{\inf_{x\in D_{\frak{a}_{i_{3}}\cdots \frak{a}_{i_{n}}\frak{b},z}}|\frak{a}_{i_{2}}'(x)|_z}\cdots \frac{\sup_{x\in D_{\frak{b},z}}|\frak{a}_{i_n}'(x)|_z}{\inf_{x\in D_{\frak{b},z}}|\frak{a}_{i_n}'(x)|_z}
    \\
    & \lesssim \exp\left(\sum_{j< n-N}\frac{1}{4}r_{\frak{a}_{i_{j+1}}\cdots \frak{a}_{i_{n}}\frak{b},z}\right) \lesssim \exp \left(\frac{1}{4}R/(1-c) \right)
\end{align*}
Here, to obtain the second and third inequalities, for $j$ satisfying $n-j>N$, we estimate the distortion of $\frak{a}_{i_j}$ by applying \cref{equ:sup-gamma} and then \cref{equ:r_gammay}, and 
 for $j$ satisfying  $n-j\leq N$, we bound the distortion of $\mathfrak{a}_j$ by a constant only depending on $\Gamma$ and $N$. 
\end{proof}

\begin{lem}\label{lem-uniform ell}
    There exists $c\in (0,1)$ depending on the Schottky family, such that for any conjugacy class $[\gamma]$ with $\ell^{na}(\gamma)>1/c$ and $|z|<c$, we have
    \[ \ell(\gamma_z)\geq c\log(1/|z|)\ell^{na}(\gamma).\]
\end{lem}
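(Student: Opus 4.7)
Since $\ell(\gamma_z)$ and $\ell^{na}(\gamma)$ are class functions on $\Gamma$, the plan is to first reduce to the case when $\gamma=\mathfrak{a}_{i_1}\cdots\mathfrak{a}_{i_n}$ is cyclically reduced (so $\mathfrak{a}_{i_1}\neq\mathfrak{a}_{i_n}^{-1}$), and then compare both quantities to the word length $l(\gamma)=n$ via two independent estimates. The desired uniform inequality then follows by chaining.

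For the first comparison $l(\gamma)\gtrsim\ell^{na}(\gamma)$, I would use the hybrid norm of Subsection~\ref{ss.notations}. Since each generator has uniformly bounded hybrid norm $\|\mathfrak{a}\|_{\mathrm{hyb}}\leq C_0$, submultiplicativity yields $\|\mathrm{tr}(\gamma_z)\|_{\mathrm{hyb}}\leq 2C_1^{l(\gamma)}$ for a uniform $C_1$. On the other hand, $\mathrm{tr}(\gamma_z)$ has a pole of order $\ell^{na}(\gamma)/2$ at $z=0$ whose leading Laurent coefficient $b_0\neq 0$ satisfies $|b_0|_0=1$, so this term alone contributes at least $(1/e)^{-\ell^{na}(\gamma)/2}=e^{\ell^{na}(\gamma)/2}$ to $\|\mathrm{tr}(\gamma_z)\|_{\mathrm{hyb}}$. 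Combining these two inequalities gives $l(\gamma)\geq c_2\,\ell^{na}(\gamma)$ with uniform $c_2>0$, once $\ell^{na}(\gamma)$ is large enough.

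For the second comparison $\ell(\gamma_z)\gtrsim l(\gamma)\log(1/|z|)$, I would use $\ell(\gamma_z)=-\log|\gamma_z'(\gamma_{z+})|_\infty$ at the attracting fixed point, together with the factorization $\gamma=\mathbf{a}\,\mathfrak{a}_{i_n}$ where $\mathbf{a}=\mathfrak{a}_{i_1}\cdots\mathfrak{a}_{i_{n-1}}$ is reduced by cyclic reducedness. The attracting fixed point satisfies $\gamma_{z+}\in D_{\mathfrak{a}_{i_1},z}$, so $\mathfrak{a}_{i_n}(\gamma_{z+})\in D_{\mathfrak{a}_{i_n},z}$, and the chain rule gives
\begin{equation*}
|\gamma_z'(\gamma_{z+})|_\infty=|\mathbf{a}'(\mathfrak{a}_{i_n}(\gamma_{z+}))|_\infty\cdot|\mathfrak{a}_{i_n}'(\gamma_{z+})|_\infty.
\end{equation*}
The Möbius map $\mathbf{a}$ sends $D_{\mathfrak{a}_{i_n},z}$ onto $D_{\gamma,z}$; combining Lemma~\ref{lem-unifrom-dis-ele} with the area identity $\int_D|\mathbf{a}'|^2\,dA=\mathrm{Area}(\mathbf{a}(D))$ produces the pointwise bound $|\mathbf{a}'(y)|_\infty\lesssim (r_{\gamma,z}/r_{\mathfrak{a}_{i_n},z})^{\log(1/|z|)}$ for all $y\in D_{\mathfrak{a}_{i_n},z}$. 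Feeding in \eqref{equ:r_gammay} to bound $r_{\gamma,z}\leq R\,q^{l(\gamma)-1}$ (where $q\in(0,1)$ denotes the constant $c$ of Lemma~\ref{lem:uniform-dis}), the uniform lower bound on $r_{\mathfrak{a}_{i_n},z}$ coming from the finite generating set, and an analogous bounded estimate for $|\mathfrak{a}_{i_n}'(\gamma_{z+})|_\infty$, I arrive at $\ell(\gamma_z)\geq\bigl((l(\gamma)-1)(-\log q)-C_3\bigr)\log(1/|z|)$, which is $\gtrsim l(\gamma)\log(1/|z|)$ once $l(\gamma)$ is large.

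Chaining the two estimates produces $\ell(\gamma_z)\geq\tfrac{c_2(-\log q)}{2}\,\ell^{na}(\gamma)\log(1/|z|)$ whenever $\ell^{na}(\gamma)$ is large and $|z|$ is small, so choosing $c\in(0,1)$ smaller than $c_2(-\log q)/2$ and smaller than the two threshold conditions proves the lemma. The main obstacle I anticipate lies in Step~2: justifying, uniformly in $\gamma$ and $z$, that the pointwise magnitude of the Möbius derivative of $\mathbf{a}$ on $D_{\mathfrak{a}_{i_n},z}$ is controlled by the ratio of image/preimage Euclidean radii. Lemma~\ref{lem-unifrom-dis-ele} only bounds the distortion $\sup/\inf$, so one must upgrade it via a Koebe-type absolute estimate, and some care is required so that the constants do not blow up as $|z|\to 0$.
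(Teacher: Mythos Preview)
Your proposal is correct and follows the same two-step skeleton as the paper's proof: reduce to a cyclically reduced representative, compare $\ell(\gamma_z)$ to $l(\gamma)\log(1/|z|)$ via the distortion bound of Lemma~\ref{lem-unifrom-dis-ele} together with the radius estimate~\eqref{equ:r_gammay}, and then compare $l(\gamma)$ to $\ell^{na}(\gamma)$.

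Your anticipated obstacle in Step~2 is not actually an obstacle. The point is that Lemma~\ref{lem-unifrom-dis-ele} bounds the distortion in the \emph{rescaled} norm $|\cdot|_z=|\cdot|_\infty^{1/\log(1/|z|)}$, with a constant independent of $z$ and $\gamma$; and the radii $r_{\gamma,z}$ in~\eqref{equ:r_gammay} are likewise $|\cdot|_z$-radii. Your area identity, after raising to the power $1/\log(1/|z|)$, says exactly that $\inf_{D_{\mathfrak b,z}}|\mathbf a'|_z\le r_{\gamma,z}/r_{\mathfrak b,z}\le\sup_{D_{\mathfrak b,z}}|\mathbf a'|_z$, so combining with the uniform $|\cdot|_z$-distortion bound gives a pointwise bound on $|\mathbf a'|_z$ with constants that do \emph{not} depend on $z$. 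Multiplying by $\log(1/|z|)$ at the end yields the desired estimate on $\ell(\gamma_z)$. The paper records this as the single inequality~\eqref{equ:gamma-attract}, working with $|\cdot|_z$ throughout (and applying the derivative estimate directly to $\gamma$ on $D_{\mathfrak a_{i_1},z}$ rather than to your split $\mathbf a\cdot\mathfrak a_{i_n}$, but that is cosmetic).

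One genuine variation: for $\ell^{na}(\gamma)\lesssim l(\gamma)$ you use the hybrid norm of $\tr(\gamma)$, whereas the paper (Lemma~\ref{lem:word-na0}) uses the triangle inequality for the additive distance $d_a$ on the tree $\Sigma$ together with Lemma~\ref{lem:na length}. Both arguments are short; yours has the advantage of being purely algebraic and not requiring the tree structure, while the paper's is perhaps more geometric.
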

\begin{proof}
Let $l(\gamma)$ be the word length of $\gamma$ with the fixed generating set. Choose a $\gamma$ in the conjugacy class that is cyclically reduced, which means that the first and last letters of the word $\gamma$ are not inverse to each other.

Note that  $\gamma D_{\frak{a}_{i_1},z}=D_{\gamma\frak{a}_{i_1},z}$ and the
$\gamma$-attracting fixed point is inside $D_{\frak{a}_{i_{1}},z}$, combined with \cref{lem-unifrom-dis-ele}, we obtain
\begin{equation}\label{equ:gamma-attract}
-\log |\gamma'(\gamma\text{-attracting fixed point})|_z \geq \left|\log\left(\frac{r_{\gamma\frak{a}_{i_{1}},z}}{r_{\frak{a}_{i_{1}},z}}\right)\right|-\frac{R}{(1-c)}.
\end{equation}
As we required $\gamma$ to be cyclically reduced, we have $l(\gamma \frak{a}_{i_{1}})=l(\gamma)+1$. Hence by \cref{equ:r_gammay},
\begin{align*}
 \ell(\gamma_z)&=-\log(1/|z|)\log |\gamma'(\gamma\text{-attracting fixed point})|_z \geq \log(1/|z|)\left|\left|\log\left(\frac{r_{\gamma\frak{a}_{i_{1}},z}}{r_{\frak{a}_{i_{1}},z}}\right)\right|-\frac{R}{(1-c)}\right|\\
 &\geq \log(1/|z|) \left(l(\gamma)|\log c|-|\log r_{\frak{a}_{i_{1}},z}|-\log R-\frac{R}{(1-c)}\right)\gtrsim \log(1/|z|) l(\gamma),    
\end{align*}
where the last inequality is true if $l(\gamma)$ is large enough compared to these constants. For the finitely many $\gamma$ with   $l(\gamma)$ small, we use \cref{lem-conjugacy continuous} to get a uniform bound.
\begin{lem}
\label{lem:word-na0}
Let $\gamma$ be a cyclically reduced word. For the word length $l(\gamma)$ and the non-Archimedean length $\ell^{na}(\gamma)$, they satisfy $\ell^{na}(\gamma)\lesssim l(\gamma)$.
\end{lem}
\begin{proof}
    Since $\ell^{na}(\gamma)$ is the least translation length for the type of points $\eta_{\alpha,r}$ (\cref{lem:na length}), we have for the Gauss point $o=\eta_{0,1}$,
\begin{align*}
\ell^{na}(\gamma)&\leq d_a(o,\gamma o)\leq d_a(o,\frak{a}_{i_{1}}o)+d_a(\frak{a}_{i_{1}}o,\frak{a}_{i_{1}}\frak{a}_{i_{2}}o)+\cdots+d_a(\frak{a}_{i_{1}}\cdots \frak{a}_{i_{n-1}}o,\gamma o)\\
&\leq \sum_j d_a(o,\frak{a}_{i_j} o)\lesssim l(\gamma).  \qedhere 
\end{align*}
\end{proof}
The proof is complete by combining the above lemma.
\end{proof}
Since \cref{equ:gamma-attract} also works for non-Archimedean point $p(0)$, the same proof implies
\begin{lem}\label{lem:word-na}
Let $\Gamma$ be a non-Archimedean Schottky group.
For any cyclically reduced word $\gamma$, we have
    \[ \ell^{na}(\gamma)\gtrsim l(\gamma). \]
\end{lem}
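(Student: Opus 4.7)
The plan is to mirror the proof of \cref{lem-uniform ell}, noting that all the ingredients used there (the uniform Schottky basis from \cref{lem:relative ford}, the radius estimate \cref{equ:r_gammay}, the distortion bound \cref{lem-unifrom-dis-ele}, and the key inequality \cref{equ:gamma-attract}) are valid at the non-Archimedean point $p(0)$ as well, and in that setting they directly produce a lower bound on the non-Archimedean length.

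Concretely, I would first choose a cyclically reduced representative $\gamma=\mathfrak{a}_{i_1}\cdots\mathfrak{a}_{i_n}$ with $l(\gamma)=n$. By \cref{defi:na length}, the non-Archimedean length equals the log of the contraction rate at the attracting fixed point, i.e.
\[
\ell^{na}(\gamma)=-\log |\gamma'(\gamma^+)|_{na},
\]
where $\gamma^+\in D_{\mathfrak{a}_{i_1}}$ is the attracting fixed point of $\gamma$. Applying \cref{equ:gamma-attract} at the non-Archimedean point (this is exactly what the sentence preceding the lemma asserts), we obtain
\[
\ell^{na}(\gamma)\geq \left|\log\!\left(\frac{r_{\gamma\mathfrak{a}_{i_1}}}{r_{\mathfrak{a}_{i_1}}}\right)\right|-\frac{R}{1-c}.
\]

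Next, because $\gamma$ is cyclically reduced, $\gamma\mathfrak{a}_{i_1}$ is a reduced word of length $l(\gamma)+1$, so by the uniform radius estimate \cref{equ:r_gammay} applied to the non-Archimedean parameter we get $r_{\gamma\mathfrak{a}_{i_1}}\leq R\, c^{l(\gamma)}$. Since $r_{\mathfrak{a}_{i_1}}$ is one of finitely many fixed positive numbers, we conclude
\[
\ell^{na}(\gamma)\geq l(\gamma)\,|\log c|-\log r_{\mathfrak{a}_{i_1}}^{-1}-\log R-\frac{R}{1-c}\;\gtrsim\; l(\gamma),
\]
provided $l(\gamma)$ is larger than some constant depending only on $\Gamma$.

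The only missing piece is to handle the finitely many conjugacy classes with small word length. In the Archimedean analogue this was done by invoking \cref{lem-conjugacy continuous}; here it is even simpler, since for each fixed cyclically reduced word of bounded length we have $\ell^{na}(\gamma)>0$ and there are only finitely many such words of each length, so the implicit constant can be adjusted to absorb this finite exceptional set. No step is really an obstacle: the entire content of the lemma is that the distortion/radius machinery already developed for the relative (hybrid) Schottky figure applies verbatim at $z=0$, so the Archimedean proof transports without modification.
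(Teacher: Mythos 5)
Your proposal is correct and is essentially the paper's own argument: the paper proves this lemma by observing that the key estimate \cref{equ:gamma-attract} (together with the radius bound \cref{equ:r_gammay} and the distortion estimate, all valid at the non-Archimedean point $p(0)$) transports the proof of \cref{lem-uniform ell} verbatim to the norm $|\cdot|_{na}$, exactly as you spell out. Your treatment of the finitely many short cyclically reduced words via $\ell^{na}(\gamma)>0$ is also fine, since $\ell^{na}$ takes values in $\mathbb{Z}_{>0}$ for loxodromic elements.
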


\begin{proof}[Proof of \cref{thm:s large}]
Suppose $s\in \C$ with $\Re s\gg 1$. We have
    \begin{align}
    \label{eqn:expression of quotient}
        \frac{Z(\Gamma_z,s/\log(1/|z|))}{Z_I(\Gamma,s)}=\prod_{[\gamma]\in\calP}\frac{1-e^{-s/\log(1/|z|)\ell(\gamma_z)}}{ 1-e^{-s\ell^{na}(\gamma)}}\cdot \mathcal{R}(\gamma)
    \end{align}
    where
    \begin{equation*}
    \mathcal{R}(\gamma)=\begin{cases}
                \prod_{k\geq 1}(1-e^{-(s/\log(1/|z|)+k)\ell(\gamma_z)}) \quad \text{if}\,\, \Gamma_z< \SL_2(\R),\\
                \prod_{k_1+k_2\geq 1}(1-e^{-(s/\log(1/|z|)+k_1+k_2)\ell(\gamma_z)}e^{-i\theta_{\gamma}(k_1+k_2)})\quad \text{if}\,\,\Gamma_z< \SL_2(\C).
        \end{cases}
    \end{equation*}

We will use the inequality 
\begin{equation}
\label{elementary ineq}
1/(1-x)\leq \exp(2x)\,\,\, \text{for}\,\, \,0\leq x\leq 1/2.
\end{equation}
We estimate $|\mathcal{R}(\gamma)|^{-1}$. Using \cref{lem-uniform ell}, we have for $|z|$ small, if $\Gamma_z\subset\SL_2(\R)$, then
\begin{align*}
     |\mathcal{R}(\gamma)|^{-1}=\prod_{k\geq 1}|1-e^{-(s/\log(1/|z|)+k)\ell(\gamma_z)}|^{-1}\leq \exp\left(2\sum_{k\geq 1}e^{-kc\log(1/|z|)\ell^{na}(\gamma)} \right)=\exp\left(\frac{2e^{-c\log(1/|z|)\ell^{na}(\gamma)}}{1-e^{-c\log(1/|z|)\ell^{na}(\gamma)}}\right).
\end{align*}
If $\Gamma_z< \SL_2(\C)$, then we have
\begin{align*}
     &|\mathcal{R}(\gamma)|^{-1}\leq \prod_{k\geq 1}|1-e^{-(\Re s/\log(1/|z|)+k)\ell(\gamma_z)}|^{-(k+1)}\leq \exp\left(2\sum_{k\geq 1}(k+1)e^{-kc\log(1/|z|)\ell^{na}(\gamma)} \right)\\
     =&\exp\left(\frac{2}{(1-e^{-c\log(1/|z|)\ell^{na}(\gamma)})^2}-2\right)\leq \exp\left(\frac{4e^{-c\log(1/|z|)\ell^{na}(\gamma)}}{(1-e^{-c\log(1/|z|)\ell^{na}(\gamma)})^2}\right).
\end{align*}
Combing these two cases together, we have for $|z|$ small,  
\begin{align}
\label{control k part}
    &\prod_{[\gamma]\in\calP}|\mathcal{R}(\gamma)|^{-1}
    \leq\exp\left(8\sum_{[\gamma]\in \calP}e^{-c\log(1/|z|)\ell^{na}(\gamma)} \right).
\end{align}
 We first sum over $\{[\gamma] \in \calP: \,\ell^{na}(\gamma)=n\}$ and then sum over $n\in\N$. Due to \cref{lem:word-na}, there exists a constant $C$ only depending on the family such that
 \begin{equation}\label{equ:gamma_n}
     \#\{[\gamma]\in\calP:\, \ell^{na}(\gamma)=n\}\leq \#\{\gamma\,\text{cyclically reduced}:\, l(\gamma)\leq C'n\} \leq e^{Cn}.
    \end{equation}
Hence 
\begin{align*}
  \cref{control k part}  \leq \exp\left(\frac{8e^{-(c\log(1/|z|)-C)}}{1-e^{-(c\log(1/|z|)-C)}}\right),
\end{align*}
which converges to $1$ as $|z|$ tends to zero.

Now, we consider the other factor in \cref{eqn:expression of quotient}. Take $N\gg 1$. We separate into two parts: $\ell^{na}(\gamma)\leq N$ and $\ell^{na}(\gamma)>N$. For the first part, which is a product of finite terms, due to \cref{lem-conjugacy continuous},
\begin{align*}
    \prod_{\substack{[\gamma]\in\calP\\ \ell^{na}(\gamma)\leq N}}\frac{1-e^{-s/\log(1/|z|)\ell(\gamma_z)}}{ 1-e^{-s\ell^{na}(\gamma)}}\rightarrow 1.
\end{align*}
For the second part, due to \cref{lem-uniform ell}, we have 
\begin{align*}
    &\left|\log \prod_{\substack{[\gamma]\in\calP\\ \ell^{na}(\gamma)> N}}\left|\frac{1-e^{-s/\log(1/|z|)\ell(\gamma_z)}}{ 1-e^{-s\ell^{na}(\gamma)}}\right|\right|\leq   2\sum_{\substack{[\gamma]\in \calP\\\ell^{na}(\gamma)> N}}e^{-\Re s/\log(1/|z|)\ell(\gamma_z)}+e^{-\Re s\ell^{na}(\gamma)}\\
    \leq & 2\sum_{\substack{[\gamma]\in \calP\\ \ell^{na}(\gamma)> N}}e^{-\Re sc\ell^{na}(\gamma)}+e^{-\Re s\ell^{na}(\gamma)}
    \leq  2\sum_{n>N} e^{Cn}(e^{-n\Re sc}+e^{-\Re sn}) =\frac{4e^{-N(\Re sc-C)}}{1-e^{-N(\Re sc-C)}} ,
\end{align*}
where the first inequality is due to \cref{elementary ineq} and the last inequality is due to \cref{equ:gamma_n}.
Therefore, given $\Re s\gg 1$, we have that for $N$ sufficiently large, the product 
\[\prod_{\substack{[\gamma]\in\calP \\ \ell^{na}(\gamma)> N}}\frac{1-e^{-s/\log(1/|z|)\ell(\gamma_z)}}{ 1-e^{-s\ell^{na}(\gamma)}}\] is close to 1.
The proof is complete.    
\end{proof}

\subsection{Boundedness of zeta function through transfer operator}\label{sec:con transfer}
\subsubsection{Preliminaries on trace class operators}
We recall some preliminaries on trace class operators. See \cite[Appendix B]{DyZw2019} for more details.

Let $\cal{A}:\cal{H}_1\to\cal{H}_2$ be a compact operator between two Hilbert spaces. The singular values $\mu_{\ell}$ of $\cal{A}$ are defined to be the eigenvalues $\lambda_{\ell}$ of $(\cal{A}^*\cal{A})^{1/2}$ in decreasing order, i.e.
\begin{equation*}
    \mu_{\ell}(\cal{A}):=\lambda_{\ell}((\cal{A}^*\cal{A})^{1/2}),\quad \ell=0,1,2,\cdots.
\end{equation*}
We recall the minimax property of singular values:
\begin{equation}\label{e:sing-minimax}
    \mu_{\ell}(\cal{A})=\min\limits_{\rm{codim}(V, \cH_1)=\ell}\max\limits_{v\in V\setminus \{0\}}\frac{\|\cal{A}v\|_{\cal{H}_2}}{\|v\|_{\cal{H}_1}}.
\end{equation}
This follows directly from the minimax property for self-adjoint operators since $\|\cal{A}v\|^2_{\mathcal{H}_2}=(\cal{A}^*\cal{A}v,v)_{\mathcal{H}_2}$. As a corollary, for any compact operators $\mathcal{A}:\mathcal{H}_1\to \mathcal{H}_2$ and $\mathcal{B}:\mathcal{H}_1\to \mathcal{H}_2$, we have the inequality
\begin{equation}\label{e:sing-sum}
    \mu_{\ell}(\cal{A}+\cal{B})\leq \mu_{\ell_1}(\cal{A})+\mu_{\ell_2}(\cal{B}),\quad \forall\ \ell=\ell_1+\ell_2.
\end{equation}
\begin{defi}
    A compact operator $\cA:\cH_1\to \cH_2$ between two Hilbert spaces is a trace class operator if
    \begin{equation*}
        \|\cA\|_1:=\sum\limits_{\ell}\mu_{\ell}(\cA)<\infty.
    \end{equation*}
    $\|\cdot\|_1$ is called the trace norm.
\end{defi}
For a trace class operator $\cA:\cH\to \cH$ on a single space, one can define the trace by
\begin{equation}\label{eq:def-tr}
    \Tr(\cA):=\sum\limits_{j}\langle \cA e_j, e_j\rangle
\end{equation}
where $\{e_j\}$ is an orthonormal basis of $\cH$. One can verify that the definition does not depend on the choice of $\{e_j\}$. By Lidskii's Theorem \cite[Proposition B.31]{DyZw2019}), we have
\begin{equation*}
\Tr(\cA)=\sum_{\ell}\lambda_{\ell}(\cA).
\end{equation*}
The Fredholm determinant is defined by
\begin{equation*}
    \det(1-\cA):=\prod\limits_{\ell}(1-\lambda_{\ell}(\cA)).
\end{equation*}
We have the estimates
\begin{equation}\label{equ:trace-l1}
    |\Tr(\cA)|\leq \sum\limits_{\ell}|\lambda_{\ell}(\cA)|\leq \|\cA\|_1,\quad |\det(1-\cA)|\leq \prod\limits_{\ell}(1+\mu_{\ell}(\cA))\leq e^{\|\cA \|_1}.
\end{equation}

For a trace class operator $\mathcal{A}:\mathcal{H}_1
\to \mathcal{H}_2$ and a bounded operator $\mathcal{B}:\mathcal{H}_2\to \mathcal{H}_3$, it follows from the minimax property that $\mu_{\ell}(\cB\cA)\leq \|\cB\|_{\mathcal{H}_2\to \mathcal{H}_3} \mu_{\ell}(\cA)$ and thus
\begin{equation}\label{e:trace-prod}
    \|\cB\cA\|_{1}\leq \|\cB\|_{\mathcal{H}_2\to \mathcal{H}_3}  \|\cA\|_1.
\end{equation}
Moreover, it is shown in \cite[Eq (B.4.9)]{DyZw2019} that for a trace class operator $\cA:\cH_1\to \cH_2$ and and a bounded operator $\cB:\cH_2\to \cH_1$,
\begin{equation}\label{eq:trace-commute}
    \Tr(\cA\cB)=\Tr(\cB\cA).
\end{equation}
\subsubsection{Transfer operator: $\SL_2(\RR)$ case} 
To estimate the Selberg zeta function in the region where the infinite product expression does not converge, we employ the transfer operator $\cal{L}_s$. The observation is that the transfer operator $\cL_s$ is a trace class operator acting on the space of holomorphic functions with bounded $L^2$ norm, and its Fredholm determinant gives the Selberg zeta function. 

We first consider the case for a Schottky group $\Gamma < \SL_2(\RR)$ of rank $g\geq 2$.  
 Write $\Gamma=\langle \mathfrak{a}_{1}, \mathfrak{a}_{2},\cdots, \mathfrak{a}_{{2g}} \rangle$  with $\mathfrak{a}_{i}\mathfrak{a}_{i+g}=id$ for $i=1,2,\cdots, g$.  
By conjugating $\Gamma$ if necessary, there exist open discs $D_{\mathfrak{a}_1},D_{\mathfrak{a}_2},\cdots, D_{\mathfrak{a}_{2g}}$ {in $\mathbb{C}$} centered in $\RR$ such that their closures are disjoint, and for each $\mathfrak{a}\in \{\mathfrak{a}_1,\ldots, \mathfrak{a}_{2g}\}$,  we have $\mathfrak{a}(\hat{\CC}-\overline{D_{{\mathfrak{a}}^{-1}}})=D_\mathfrak{a}$ and $\mathfrak{a}(\hat{\RR}- I_{\mathfrak{a}^{-1}}) = I_\mathfrak{a}^\circ$, where $I_\mathfrak{a}:=\overline{D_\mathfrak{a}}\cap \RR$.

We define the Hilbert space $\mathcal{H}:=\oplus_{i=1}^{2g}\mathcal{H}(D_{\mathfrak{a}_i})$, where $\mathcal{H}(D_\mathfrak{a})=\{u\in L^2(D_{\mathfrak{a}}): u \text{ is holomorphic}\}$ is equipped with the norm $\|u\|_{\mathcal{H}(D_\mathfrak{a})}^2:=|D_\mathfrak{a}|^{-1}\|u\|_{L^2(D_\mathfrak{a})}^2$. The transfer operator $\cL_s:\mathcal{H}\to \mathcal{H}$ is defined as
\begin{equation}
\label{equ:transfer operator}
    \cL_s u(w):=\sum\limits_{\mathfrak{a}_i\neq \mathfrak{b}^{-1}} \mathfrak{a}_i'(w)^s u(\mathfrak{a}_i(w))  \quad \text{for}\quad w\in D_\mathfrak{b}\quad\text{with}\quad\mathfrak{b}\in \{\mathfrak{a}_1,\cdots, \mathfrak{a}_{2g}\}.
\end{equation}
Here note that the derivative satisfies $\mathfrak{a}_i'(w)>0$ for $w\in D_\mathfrak{b}\cap \RR$, and hence we define $\mathfrak{a}_i'(w)^s:=e^{s\log \mathfrak{a}_i'(w)}$ where we 
choose the branch of complex logarithm so that if $s\in \mathbb{R}$, then $\mathfrak{a}_i'(w)^s$ is the unique holomorphic function on $D_{\mathfrak{b}}$ satisfying $\mathfrak{a}_i'(w)^s>0$ for $w \in D_\mathfrak{b}\cap \RR$.

We introduce a few more notations. We use $\cW^n$ to denote words of length $n$ with respect to the generators  $\mathfrak{a}_1,\cdots,\mathfrak{a}_{2g}$, i.e. words of the form $\mathfrak{a}_{i_1}\mathfrak{a}_{i_2}\cdots \mathfrak{a}_{i_n}$ with  $\mathfrak{a}_{i_{j+1}}\neq \mathfrak{a}_{i_j}^{-1}$. For $\mathbf{a}=\mathfrak{a}_{i_1}\cdots \mathfrak{a}_{i_n}\in \cW^n$, we denote
\begin{equation*}
    {\mathbf{a}}(w)={\mathfrak{a}_{i_1}}({\mathfrak{a}_{i_2}}(\cdots {\mathfrak{a}_{i_n}}(w)))
\end{equation*}
and $\mathbf{a}'=\mathfrak{a}_{i_1}\cdots \mathfrak{a}_{i_{n-1}}$.

For a disc $D$ of radius $R$, and a real number $\rho>0$, we use $\rho D$ to denote the disc with the same center and of radius $\rho R$. 

The following lemma shows that $\cL_s$ is a trace class operator (see also \cite[Lemma 15.7]{borthwickSpectralTheoryInfiniteArea2016}).
\begin{lem}\label{lem:trace-compute-SL2(R)}
Suppose $\gamma:D_\mathfrak{b} \to \rho D_\mathfrak{a}$ is a holomorphic function between two discs with $0<\rho<1$ and $\mathfrak{a},\mathfrak{b}\in \{\mathfrak{a}_1,\ldots, \mathfrak{a}_{2g}\}$, then the pullback operator
\begin{equation*}
    \gamma^*u(w)=u(\gamma(w)): \cH(D_\mathfrak{a})\to \cH(D_\mathfrak{b})
\end{equation*}
is a trace class operator satisfying
\begin{equation*}
    \mu_{\ell}(\gamma^*)\leq C(1-\rho)^{-2}\rho^{\ell}\sqrt{\ell+1}.
\end{equation*}
Consequently, for any $s\in \mathbb{C}$, $\cL_s:\cH\to \cH$ is a trace class operator. 
\end{lem}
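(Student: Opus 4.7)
The plan is to bound $\mu_N(\gamma^*)$ via the minimax characterization \cref{e:sing-minimax}, applied to an explicit codimension-$N$ subspace of $\cH(D_\mathfrak{a})$ read off from the monomial (Bergman) basis. Let $w_\mathfrak{a}$ and $R_\mathfrak{a}$ denote the center and radius of $D_\mathfrak{a}$. A direct polar-coordinate computation will show that $\{\sqrt{n+1}\,((w-w_\mathfrak{a})/R_\mathfrak{a})^n\}_{n\geq 0}$ is an orthonormal basis of $\cH(D_\mathfrak{a})$; in particular, for $u(w)=\sum_n a_n(w-w_\mathfrak{a})^n$ one has $\|u\|_{\cH(D_\mathfrak{a})}^2=\sum_n R_\mathfrak{a}^{2n}|a_n|^2/(n+1)$. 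I then take $V_N$ to be the codimension-$N$ subspace of functions whose first $N$ Taylor coefficients at $w_\mathfrak{a}$ vanish.

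The key pointwise estimate is as follows: for $u\in V_N$ and $w\in D_\mathfrak{b}$, the hypothesis $|\gamma(w)-w_\mathfrak{a}|<\rho R_\mathfrak{a}$ gives
\begin{equation*}
|\gamma^*u(w)|\leq \sum_{n\geq N}|a_n|(\rho R_\mathfrak{a})^n,
\end{equation*}
and a Cauchy--Schwarz split against the weights $\sqrt{n+1}/R_\mathfrak{a}^n$ leaves the tail sum $\sum_{n\geq N}(n+1)\rho^{2n}\lesssim (N+1)\rho^{2N}(1-\rho)^{-2}$. This yields the uniform bound $|\gamma^* u(w)|\lesssim \rho^N\sqrt{N+1}\,(1-\rho)^{-1}\|u\|_{\cH(D_\mathfrak{a})}$; since the normalized $L^2$ norm on $D_\mathfrak{b}$ is dominated by the $L^\infty$ norm, minimax then delivers $\mu_N(\gamma^*)\leq C(1-\rho)^{-2}\rho^N\sqrt{N+1}$, which is summable in $N$ and places $\gamma^*$ in the trace class.

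For the consequence, I decompose $\cL_s$ into the finitely many pieces $\cH(D_{\mathfrak{a}_i})\to \cH(D_\mathfrak{b})$ indexed by admissible pairs $\mathfrak{a}_i\neq \mathfrak{b}^{-1}$; each piece factors as $M_{\mathfrak{a}_i,s}\circ \mathfrak{a}_i^*$, where $M_{\mathfrak{a}_i,s}$ is multiplication by $\mathfrak{a}_i'(w)^s$. The Schottky disjointness $\overline{D_\mathfrak{b}}\cap \overline{D_{\mathfrak{a}_i^{-1}}}=\emptyset$ makes $\mathfrak{a}_i(\overline{D_\mathfrak{b}})$ a compact subset of $D_{\mathfrak{a}_i}$, and a distance-to-boundary argument produces $\rho_i<1$ with $\mathfrak{a}_i(D_\mathfrak{b})\subset \rho_i D_{\mathfrak{a}_i}$; the first part then makes $\mathfrak{a}_i^*$ trace class. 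Since $\mathfrak{a}_i'$ is holomorphic and nonvanishing on a neighbourhood of $\overline{D_\mathfrak{b}}$, $M_{\mathfrak{a}_i,s}$ is bounded, so by \cref{e:trace-prod} each summand of $\cL_s$ is trace class, and hence so is $\cL_s$. The main technical point is the Cauchy--Schwarz step producing the precise $\sqrt{N+1}\,\rho^N$ decay with the correct $\rho$-dependent constant; the rest is routine bookkeeping together with the standard trace-norm composition estimate.
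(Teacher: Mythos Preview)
Your proof is correct and follows essentially the same strategy as the paper's: both use the minimax characterization \cref{e:sing-minimax} with the codimension-$\ell$ subspace spanned by the high-order monomials $\{\phi_n\}_{n\geq \ell}$ (where $\phi_n(w)=\sqrt{n+1}\,((w-w_\mathfrak{a})/R_\mathfrak{a})^n$), and then pass to $\cL_s$ by factoring each summand as a bounded multiplication composed with a pullback and invoking \cref{e:trace-prod}. The only difference is cosmetic: the paper bounds $\mu_\ell$ by the crude triangle inequality $\mu_\ell(\gamma^*)\leq \sum_{n\geq \ell}\|\gamma^*\phi_n\|_{\cH(D_\mathfrak{b})}\leq \sum_{n\geq \ell}\sqrt{n+1}\,\rho^n$, whereas you obtain a pointwise $L^\infty$ bound on $\gamma^*u$ via Cauchy--Schwarz on the Taylor coefficients and then pass to the normalized $L^2$ norm. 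Your route in fact yields the slightly sharper constant $(1-\rho)^{-1}$ in place of $(1-\rho)^{-2}$, but this plays no role downstream.
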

\begin{proof}
We may assume $D_\mathfrak{a}$ is centered at $0$ and take an orthonormal basis $\phi_n(w):=\sqrt{n+1}\left(\frac{w}{R}\right)^n$ of $\cH(D_\mathfrak{a})$ where $R$ is the radius of $D_\mathfrak{a}$. Then
    \begin{equation*}
        \|\gamma^*\phi_n\|_{\cH(D_\mathfrak{b})}^2=|D_\mathfrak{b}|^{-1}\int_{D_\mathfrak{b}}(n+1)\frac{|\gamma(w)|^{2n}}{R^{2n}} dm(w)\leq (n+1)\rho^{2n}.
    \end{equation*}
    By the minimax property \cref{e:sing-minimax}, we conclude
    \begin{equation*}
        \mu_{\ell}(\gamma^*)\leq\sum\limits_{n\geq \ell}\|\gamma^*\phi_n\|_{\cH(D_\mathfrak{b})}\leq \sum\limits_{n\geq \ell}\sqrt{n+1}\rho^{n}\leq C(1-\rho)^{-2}\rho^{\ell}\sqrt{\ell+1}.
    \end{equation*}
    Thus
    \begin{equation*}
    \sum\limits_{\ell}\mu_{\ell}(\gamma^*)\leq C(1-\rho)^{-2}\sum\limits_{\ell} \rho^{\ell}\sqrt{\ell+1}<\infty
    \end{equation*}
    which implies that $\gamma^*$ is a trace class operator. Since $\cL_s$ is a finite direct sum of such pullback operators composed with multiplication by holomorphic functions, we conclude $\cL_s$ is also a trace class operator using \cref{e:sing-sum} and \cref{e:trace-prod}.
\end{proof}
Since $\cL_s$ is a trace class operator, we can define its Fredholm determinant. We claim $\det(1-\cL_s)$ is exactly the Selberg zeta function, see \cite[Lemma 15.10]{borthwickSpectralTheoryInfiniteArea2016}. Here we include a proof for completeness but we will omit the proof later for similar cases. Note since $\cL_s$ is in trace class and depends holomorphically on $s\in\CC$, $\det(1-\cL_s)$ is an entire function. So this gives an alternative proof of holomorphic extension of the Selberg zeta function.
\begin{lem}
\label{lem:determinant formula SLR}
For $s\in \CC$, we have
    \begin{equation}
        \det(1-\cL_s)=Z(\Gamma,s).
    \end{equation}
\end{lem}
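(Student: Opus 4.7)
The plan is to establish the identity for $\Re s$ sufficiently large (where both sides admit absolutely convergent expansions) and then extend to all of $\CC$ by analytic continuation, using that $\det(1-\cL_s)$ is entire in $s$ by \cref{lem:trace-compute-SL2(R)} together with the holomorphic dependence of $\cL_s$ on $s$.

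For $\Re s$ large enough that $\|\cL_s\|<1$, I would start from
$$\log\det(1-\cL_s) = -\sum_{n\geq 1}\frac{1}{n}\Tr(\cL_s^n).$$
Iterating the definition \cref{equ:transfer operator} shows that $\cL_s^n$ acts on $u\in \cH(D_{\frak{b}})$ by
$$\cL_s^n u(w) = \sum_{\substack{\mathbf{a}=\frak{a}_{i_1}\cdots\frak{a}_{i_n}\in\cW^n\\ \frak{a}_{i_n}\neq \frak{b}^{-1}}} \mathbf{a}'(w)^s\, u(\mathbf{a}(w)),$$
with $\mathbf{a}(w)\in D_{\frak{a}_{i_1}}$. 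The diagonal contribution to $\Tr(\cL_s^n)$, computed via an orthonormal basis respecting the decomposition $\cH=\oplus_{\frak{b}}\cH(D_{\frak{b}})$, retains only words with $\frak{a}_{i_1}=\frak{b}$, which together with admissibility $\frak{a}_{i_n}\neq\frak{a}_{i_1}^{-1}$ forces $\mathbf{a}$ to be cyclically reduced.

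For each such cyclically reduced $\mathbf{a}$, the block $C_{\mathbf{a}}\colon\cH(D_{\frak{a}_{i_1}})\to \cH(D_{\frak{a}_{i_1}})$, $u\mapsto \mathbf{a}'(\cdot)^s u(\mathbf{a}(\cdot))$, is a weighted composition by a strict contraction with unique fixed point $x_\mathbf{a}\in D_{\frak{a}_{i_1}}$ (the attracting fixed point of the loxodromic $\mathbf{a}$). Expanding in the basis $\{(z-x_\mathbf{a})^k\}_{k\geq 0}$ after re-centering, $C_\mathbf{a}$ becomes upper triangular with diagonal entries $\mathbf{a}'(x_\mathbf{a})^s\cdot(\mathbf{a}'(x_\mathbf{a}))^k$, so by Lidskii's theorem,
$$\Tr(C_\mathbf{a}) = \frac{\mathbf{a}'(x_\mathbf{a})^s}{1-\mathbf{a}'(x_\mathbf{a})}=\frac{e^{-s\ell(\mathbf{a})}}{1-e^{-\ell(\mathbf{a})}},$$
where $\mathbf{a}'(x_\mathbf{a})=e^{-\ell(\mathbf{a})}$ is the standard relation for a loxodromic element of $\SL_2(\RR)$ at its attracting fixed point.

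To conclude, I would reorganize the sum over cyclically reduced words $\mathbf{a}$ of length $n$ into primitive conjugacy classes: each primitive $[\gamma_0]\in\calP$ with word length $m_0=l(\gamma_0)$ contributes exactly $m_0$ cyclically reduced words of length $n=km_0$ for each $k\geq 1$ (the distinct cyclic permutations of $\gamma_0^k$), with $\ell(\mathbf{a})=k\ell(\gamma_0)$. This yields
$$-\sum_{n\geq 1}\frac{1}{n}\Tr(\cL_s^n) = -\sum_{[\gamma_0]\in\calP}\sum_{k\geq 1}\frac{1}{k}\frac{e^{-sk\ell(\gamma_0)}}{1-e^{-k\ell(\gamma_0)}} = -\sum_{[\gamma_0]\in\calP}\sum_{m\geq 0}\sum_{k\geq 1}\frac{e^{-k(s+m)\ell(\gamma_0)}}{k} = \log Z(\Gamma,s).$$
The main technical point is justifying the single-operator trace identity rigorously, ensuring that the upper triangular structure indeed applies (which follows by a change of coordinates sending $x_\mathbf{a}$ to the origin and invoking the invariance of the trace), and verifying the combinatorial bijection between cyclically reduced $n$-words and pairs $([\gamma_0],k)$ with $kl(\gamma_0)=n$. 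Once these are settled, the identity $\det(1-\cL_s)=Z(\Gamma,s)$ for $\Re s\gg 1$ extends to all $s\in\CC$ by analytic continuation.
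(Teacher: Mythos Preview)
Your proposal is correct and follows essentially the same route as the paper: reduce to $\Re s\gg 1$ by analytic continuation, expand $\log\det(1-\cL_s)=-\sum_n\frac{1}{n}\Tr(\cL_s^n)$, identify the diagonal blocks with cyclically reduced words, compute each block trace via a coordinate change sending the attracting fixed point to the origin (the paper conjugates by the M\"obius map $g$ together with the multiplication operator $\cB u=(g')^s u$ and then uses the orthonormal monomial basis, which is exactly what your ``upper-triangular plus invariance of trace'' remark amounts to), and finally regroup over primitive conjugacy classes. The only cosmetic difference is that you phrase the single-block computation via Lidskii and triangularity in the basis $\{(z-x_{\mathbf a})^k\}$, whereas the paper writes it as a direct orthonormal-basis calculation after linearizing the map to $w\mapsto e^{-\ell}w$; both yield $\Tr(C_{\mathbf a})=\sum_{k\ge 0}e^{-(s+k)\ell(\mathbf a)}$.
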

\begin{proof}
Since both sides are entire functions on $\CC$, it suffices to check the inequality for $\Re s\gg 1$.
   For $\Re s\gg 1$, we have 
    \begin{equation*}
        \det(1-\cL_s)=\exp\left(-\sum\limits_{n=1}^{\infty}\frac{1}{n}\Tr(\cL_s^n)\right).
    \end{equation*}
    On the other hand, 
    \begin{equation*}
        Z(\Gamma,s)=\exp\left(-\sum\limits_{n=1}^{\infty}\frac{1}{n}\sum\limits_{[\gamma],k}e^{-n(s+k)\ell(\gamma)}\right).
    \end{equation*}
    We again consider the orthonormal basis $\phi_k^\mathfrak{a}(w)=\sqrt{k+1}\left(\frac{w-w_\mathfrak{a}}{R_\mathfrak{a}}\right)^k$ where $w_\mathfrak{a}$ is the center of $D_\mathfrak{a}$ and $R_\mathfrak{a}$ is the radius of $D_\mathfrak{a}$. 
    We write
    \begin{equation*}
        \cL_s=\sum\limits_{\mathfrak{a}\neq \mathfrak{b}^{-1}\in \mathcal{W}}\cL_{s,\mathfrak{ab}}\quad\text{with}\quad\cL_{s, \mathfrak{ab}}:\mathcal{H}(D_{\mathfrak{a}})\to \mathcal{H}({D_{\mathfrak{b}}})\quad\text{given by}\quad\cL_{s,\mathfrak{ab}}u(w)={\mathfrak{a}}'(w)^su({\mathfrak{a}}(w))\quad \text{for}\,\,w\in {D_{\mathfrak{b}}}.
    \end{equation*}
    We have
    \begin{equation*}
        \cL_s^n=\sum\limits_{\mathfrak{a}_{i_1}\mathfrak{a}_{i_2}\cdots \mathfrak{a}_{i_{n+1}}\in \cW^{n+1}}\cL_{s,\mathfrak{a}_{i_1}\mathfrak{a}_{i_2}\cdots \mathfrak{a}_{i_{n+1}}}\quad \text{with}\quad \cL_{s,\mathfrak{a}_{i_1}\mathfrak{a}_{i_2}\cdots \mathfrak{a}_{i_{n+1}}}:=\cL_{s,\mathfrak{a}_{i_n}\mathfrak{a}_{i_{n+1}}}\cdots\cL_{s,\mathfrak{a}_{i_2}\mathfrak{a}_{i_3}}\cL_{s,\mathfrak{a}_{i_1}\mathfrak{a}_{i_2}}.
    \end{equation*}
        The Schwartz kernel of $\cL_{s,\mathfrak{a}_{i_1}\mathfrak{a}_{i_2}\cdots \mathfrak{a}_{i_{n+1}}}$ is given by
    \begin{equation*}
        \cL_{s,\mathfrak{a}_{i_1}\mathfrak{a}_{i_2}\cdots \mathfrak{a}_{i_{n+1}}}(w,w')=\sum\limits_{k}  (\mathfrak{a}_{i_1}\cdots \mathfrak{a}_{i_n})'(w)^s\phi_k^{\mathfrak{a}_{i_1}}(\mathfrak{a}_{i_1}\cdots \mathfrak{a}_{i_n}(w))\overline{\phi_k^{\mathfrak{a}_{i_1}}(w')},\quad w\in D_{\mathfrak{a}_{i_{n+1}}},\, w'\in D_{\mathfrak{a}_{i_1}}.
    \end{equation*}
    Its restriction to the diagonal is nonzero if and only if $\mathfrak{a}_{i_{n+1}}=\mathfrak{a}_{i_1}$. Set $\gamma=\mathfrak{a}_{i_1}\cdots \mathfrak{a}_{i_n}$ and $\cL_{s,\gamma}:=\cL_{s,\mathfrak{a}_{i_1}\cdots \mathfrak{a}_{i_{n+1}}}$. So
    $\cL_{s,\gamma} u(w)=\gamma'(w)^s u(\gamma(w))$.  
    A change of variables $w=gw'$ gives the operator $(g)^*\circ\cL_{s,\gamma}\circ (g^{-1})^*:\mathcal{H}(g^{-1}D_{\mathfrak{a}_{i_1}}) \to \mathcal{H}(g^{-1}D_{\mathfrak{a}_{i_{n+1}}})$  given by
    \begin{equation*}
      (g)^*\circ\cL_{s,\gamma}\circ (g^{-1})^*u(w')= \gamma'(gw')^s u(g^{-1}\gamma gw').
    \end{equation*}
    Let $\gamma_1=g^{-1}\gamma g$, and $\cL_{s,\gamma_1}:\mathcal{H}(g^{-1}D_{\mathfrak{a}_{i_1}}) \to \mathcal{H}(g^{-1}D_{\mathfrak{a}_{i_{n+1}}})$ given by $\cL_{s,\gamma_1}u(w')=\gamma_1'(w)^su(\gamma_1w')$. Then 
    \begin{equation*}
        (g)^*\circ\cL_{s,\gamma}\circ (g^{-1})^*u(w')=  \cB^{-1}\cL_{s,\gamma_1}\cB\quad \text{with}\quad \cB u(w')=(g'(w'))^su(w').
    \end{equation*}
    Therefore, by \cref{eq:trace-commute}, $\Tr\cL_{s,\gamma}=\Tr \cL_{s,\gamma_1}$. So we may choose the coordinate  
    so that $\mathfrak{a}_{i_1}\cdots \mathfrak{a}_{i_n}(w)=e^{-\ell(\mathfrak{a}_{i_1}\cdots \mathfrak{a}_{i_n})}w$, then 
    \begin{equation*}
        (\mathfrak{a}_{i_1}\cdots \mathfrak{a}_{i_n})'(w)^s=e^{-s\ell(\mathfrak{a}_{i_1}\cdots \mathfrak{a}_{i_n})}.
\end{equation*}
    We may assume the center of $D_{\mathfrak{a}_{i_1}}$ is $0$. Applying the definition of trace \cref{eq:def-tr} to the basis $\phi_k^{\mathfrak{a}_{i_1}}(w)$, we obtain
    \begin{equation*}
        \Tr \cL_{s,\mathfrak{a}_{i_1}\cdots \mathfrak{a}_{i_{n+1}}} = \frac{e^{-s\ell(\mathfrak{a}_{i_1}\cdots \mathfrak{a}_{i_n})}}{\pi R_{\mathfrak{a}_{i_1}}^2} \sum\limits_{k}  (k+1) \int_{|w|\leq R_{\mathfrak{a}_{i_1}}} \left(\frac{e^{-\ell(\mathfrak{a}_{i_1}\cdots \mathfrak{a}_{i_n})}w}{R_{\mathfrak{a}_{i_1}}}\right)^k\left(\frac{\bar{w}}{R_{\mathfrak{a}_{i_1}}}\right)^k dm(w)= \sum\limits_{k}e^{-(s+k)\ell(\mathfrak{a}_{i_1}\cdots \mathfrak{a}_{i_n})}.
    \end{equation*}
    Suppose the closed loop $\mathfrak{a}_{i_1}\cdots \mathfrak{a}_{i_n}=\mathbf{a}^m$ where $\mathbf{a}$ is primitive. Then
    \begin{equation*}
       \sum\limits_{n}\frac{1}{n} \Tr(\cL_s^n)=\sum\limits_{m}\frac{1}{m}\sum\limits_{[\mathbf{a}]\in\calP , k}  e^{-m(s+k)\ell({\mathbf{a}})}.
    \end{equation*}
    The proof is complete.
\end{proof}
For further applications, we also introduce modified transfer operators where we only use discs with word length $\geq N$. For $\mathbf{a}=\mathfrak{a}_{i_1}\cdots \mathfrak{a}_{i_n}\in\cW^n$, let
\begin{equation*}
    D_{\mathbf{a}}:=\mathfrak{a}_{i_1}\cdots \mathfrak{a}_{i_{n-1}}(D_{\mathfrak{a}_{i_n}}).
\end{equation*}
We define the modified transfer operator on $\cH=\cH_N:=\oplus_{\mathbf{b}\in\cW^N} \cH(D_{\mathbf{b}})$:
\begin{equation}
    \cL_{s,N} u(w) :=\sum\limits_{\mathfrak{a}\neq \mathfrak{a}_{i_1}^{-1}} {\mathfrak{a}}'(w)^s u({\mathfrak{a}}(w)) \quad \text{for}\quad w\in D_{\mathbf{b}}\quad \text{with}\quad\mathbf{b}=\mathfrak{a}_{i_1}\cdots\mathfrak{a}_{i_N}.
\end{equation}
When $N=1$, it agrees with the previous definition \cref{equ:transfer operator}. When $N\geq 2$, the modified transfer operator acts on a space of functions with a larger domain. We can still use the proof of \cref{lem:determinant formula SLR} to show that 
for any $N\geq 1$,
\begin{equation*}
    \det(1-\cL_{s,N})=Z(\Gamma,s).
\end{equation*}
In the computation, we need to use fixed points. By taking larger $N$, the fixed point of one loxodromic element is still contained in only one disc. The computations are the same.

Now we can state a much more precise estimate for the transfer operator.

\begin{prop}\label{prop-mul ls}
    Suppose $\Gamma < \SL_2(\RR)$ is a Schottky group with the notations above. Suppose for some $A,B>0$, $N\in\mathbb{Z}_{>0}$ and $\varphi\in [0,\pi]$, 
    we have for any $\mathfrak{a}\in \mathcal{W}$ and $\mathbf{b}=\mathfrak{a}_{i_1}\cdots \mathfrak{a}_{i_N}\in \mathcal{W}^N$ with $\mathfrak{a}_{i_1}\neq \mathfrak{a}^{-1}$, 
    \begin{itemize}
        \item $e^{-A}\leq |\mathfrak{a}'(w)|\leq e^{A}$ and $|\arg \mathfrak{a}'(w)|\leq \varphi\leq \pi$  for any  
        $w\in D_{\mathbf{b}}$;
        \item $\mathfrak{a}(D_{\mathbf{b}})\subset e^{-B}D_{\mathbf{a}}$ with $\mathbf{a}=\mathfrak{a}\mathbf{b}'$.
    \end{itemize}
    Then the singular values of $\cal{L}_{s,N}$ satisfy
    \begin{equation}
        \mu_{\ell}(\cal{L}_{s,N})\leq C(g,N,B)e^{(A|\Re s|+\varphi |\Im s|-B\ell)/C(g,N)}\sqrt{\ell+1}.
    \end{equation}
\end{prop}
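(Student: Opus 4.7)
The plan is to exploit the block structure of $\cal{L}_{s,N}$ as a finite sum of pullback-multiplication operators between Bergman spaces $\cal{H}(D_{\mathbf{a}})$, and to combine the singular-value bound for pullbacks established in \cref{lem:trace-compute-SL2(R)} with the subadditivity of singular values.

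First I would decompose $\cal{L}_{s,N} = \sum_{\mathfrak{a},\mathbf{b}} \cal{L}_{s,\mathfrak{a},\mathbf{b}}$, where the sum runs over pairs $(\mathfrak{a},\mathbf{b})$ with $\mathbf{b}=\mathfrak{a}_{i_1}\cdots\mathfrak{a}_{i_N}\in\cal{W}^N$ and $\mathfrak{a}\in\cal{W}^1\setminus\{\mathfrak{a}_{i_1}^{-1}\}$, and the individual block $\cal{L}_{s,\mathfrak{a},\mathbf{b}}:\cal{H}(D_{\mathbf{a}})\to\cal{H}(D_{\mathbf{b}})$ with $\mathbf{a}=\mathfrak{a}\mathbf{b}'$ is given by $u\mapsto \mathfrak{a}'(\cdot)^s\cdot(u\circ \mathfrak{a})$. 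The total number of nonzero blocks is at most $M:=(2g-1)(2g)^N$.

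Second, each block factors as (multiplication by $\mathfrak{a}'(\cdot)^s$) composed with the pullback $\mathfrak{a}^*$. The hypothesis $\mathfrak{a}(D_{\mathbf{b}})\subset e^{-B}D_{\mathbf{a}}$ puts us exactly in the situation of \cref{lem:trace-compute-SL2(R)} with $\rho=e^{-B}$, so $\mu_\ell(\mathfrak{a}^*)\leq C(B)e^{-B\ell}\sqrt{\ell+1}$. For the multiplication, using $\mathfrak{a}'(w)^s = e^{s\log \mathfrak{a}'(w)}$ and the assumed bounds $|\log|\mathfrak{a}'(w)||\leq A$ and $|\arg\mathfrak{a}'(w)|\leq\varphi$, one obtains the sup-norm bound
\begin{equation*}
|\mathfrak{a}'(w)^s| \leq \exp\bigl(A|\Re s|+\varphi|\Im s|\bigr)\quad\text{for }w\in D_{\mathbf{b}},
\end{equation*}
so the multiplication operator has $\cal{H}(D_{\mathbf{b}})\to\cal{H}(D_{\mathbf{b}})$ norm bounded by $e^{A|\Re s|+\varphi|\Im s|}$. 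Combining with \cref{e:trace-prod} yields
\begin{equation*}
\mu_\ell(\cal{L}_{s,\mathfrak{a},\mathbf{b}})\leq C(B)\exp\bigl(A|\Re s|+\varphi|\Im s|\bigr)e^{-B\ell}\sqrt{\ell+1}.
\end{equation*}

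Third, I would apply the subadditivity \cref{e:sing-sum} iteratively to the $M$-term sum: for any decomposition $\ell=\ell_1+\cdots+\ell_M$ one has $\mu_\ell(\cal{L}_{s,N})\leq \sum_j \mu_{\ell_j}(\cal{L}_{s,\mathfrak{a}_j,\mathbf{b}_j})$. Choosing $\ell_j=\lfloor \ell/M\rfloor$ and absorbing the resulting factor of $M$ and the rounding losses into $C(g,N,B)$ yields the claimed estimate (reading $C(g,N)$ as some constant of order $M$). This is the straightforward form; the slightly sharper form with $A|\Re s|+\varphi|\Im s|$ also divided by $C(g,N)$ in the exponent, as written, would require using that the worst-case multiplier constant is attained only when several pullback factors are composed, so the same minimax split spreads the cost evenly across the $M$ blocks.

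The only real subtlety is in Step~3: namely, making sure the subadditivity is set up so that the dominant term $e^{-B\ell}$ decays with $\ell$ even after summing over $M$ blocks---the split $\ell_j=\lfloor\ell/M\rfloor$ replaces the rate $B$ by $B/M$, which is exactly the $B/C(g,N)$ appearing in the conclusion. Everything else is bookkeeping of constants depending only on $g$, $N$, and $B$.
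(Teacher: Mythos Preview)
Your proposal is correct and matches the paper's proof essentially line for line: decompose $\cal{L}_{s,N}$ into the finitely many blocks $\cal{L}_{s,\mathbf{ab}}$, bound the multiplier $|\mathfrak{a}'(w)^s|\le e^{A|\Re s|+\varphi|\Im s|}$, invoke \cref{lem:trace-compute-SL2(R)} with $\rho=e^{-B}$ for the pullback, and finish with \cref{e:sing-sum} and \cref{e:trace-prod}. Your worry about the positive terms $A|\Re s|+\varphi|\Im s|$ also being divided by $C(g,N)$ is not something the paper's own proof addresses either; what the argument actually yields is $C(g,N,B)e^{A|\Re s|+\varphi|\Im s|-B\ell/C(g,N)}\sqrt{\ell+1}$, which is all that is used downstream.
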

\begin{proof}
    The transfer operator $\cal{L}_{s,N}$ is the direct sum of operators
    $$\cal{L}_{s,\mathbf{ab}}:\cH(D_{\mathbf{a}})\to \cH(D_\mathbf{b})\quad \text{given by}\quad\cal{L}_{s,\mathbf{ab}}u(w)={\mathfrak{a}}'(w)^{s}u({\mathfrak{a}}(w)),$$
    where $\mathfrak{a}\in \mathcal{W}$ and $\mathbf{b}=\mathfrak{a}_{i_1}\cdots \mathfrak{a}_{i_N}\in \mathcal{W}^N$ with $\mathfrak{a}_{i_1}\neq \mathfrak{a}^{-1}$ and $\mathbf{a}=\mathfrak{a}\mathbf{b}'$.
    We have
    \begin{itemize}
        \item $|{\mathfrak{a}}'(w)^s|\leq \max(|{\mathfrak{a}}'(w)|^{\Re s},|{\mathfrak{a}}'(w)|^{-\Re s})|\exp({|\arg {\mathfrak{a}}'(w)||\Im s|})\leq e^{A|\Re s|+\varphi|\Im s|}$;
        \item By \cref{lem:trace-compute-SL2(R)}, the singular values of the pullback operator $\mathfrak{a}^*:\mathcal{H}(D_{\mathbf{a}})\to \mathcal{H}(D_{\mathbf{b}})$ satisfy
        \begin{equation*}
        \mu_{\ell}({\mathfrak{a}}^*)\leq C(1-e^{-B})^{-2}e^{-B\ell}\sqrt{\ell+1}.
        \end{equation*}
    \end{itemize}
Then the proposition follows from \cref{e:sing-sum} and \cref{e:trace-prod}. 
\end{proof}

Recall from \cref{lem:relative ford}, for a family of Schottky groups satisfying condition $(\bigstar)$, we have a uniform Schottky basis.
\begin{prop}\label{prop:uniform-bdd-SL2(R)}
    Suppose we have a degenerating family of Schottky groups $\Gamma_z<\SL_2(\RR)$ satisfying $(\bigstar)$ with their Schottky discs uniformly bounded away from $\infty$. For any $N\geq 1$, $\mathfrak{a}\in \mathcal{W}$,
    $\mathbf{b}=\mathfrak{a}_{i_1}\cdots \mathfrak{a}_{i_N}\in \cW^N$ with $\mathfrak{a}\neq \mathfrak{a}_{i_1}^{-1}$, and sufficiently small $0<|z| \ll_N 1$, we have for any
    $w\in D_{\mathbf{b},z}$,  and the disc $D_{\mathbf{a},z}$ with
    $\mathbf{a}=\mathfrak{a}\mathbf{b}'$,
    \begin{align}
    \label{eqn:uniforma seperation SL2R}
     &   |\log|{\mathfrak{a}}'(w)||\lesssim_{N} \log(1/|z|),\nonumber\\ 
     & \mathfrak{a}(D_{\mathbf{b},z})\subset \frac{1}{10}D_{\mathbf{a},z}
    \end{align}
    in terms of the absolute norm. Moreover, 
    for any $M>0$, there exists $N(M)\geq 1$ such that if we require $\mathbf{b}\in \mathcal{W}^{N(M)}$ additionally, then 
    \begin{equation*}
        \quad |\arg \mathfrak{a}'(w)|\lesssim_M |z|^M.
    \end{equation*}
    Consequently, given any $C,M>0$, the family of zeta functions
    $Z(\Gamma_z,s/\log(1/|z|))$ with $0<|z|<1/e$
    is uniformly bounded (depending on $C$ and $M$) in the region
    \begin{equation*}
        |\Re s|\leq C, \quad |\Im s|\leq C|z|^{-M}\log(1/|z|).
    \end{equation*}
\end{prop}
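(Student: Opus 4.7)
The plan is to verify the three estimates (bullets), extract from them parameters $A, B, \varphi$ adapted to the rescaled transfer operator $\mathcal{L}_{s/\log(1/|z|),N(M)}$, and then apply Proposition~\ref{prop-mul ls} together with the identification $Z(\Gamma_z,\sigma) = \det(1-\mathcal{L}_{\sigma,N(M)})$ from Lemma~\ref{lem:determinant formula SLR}. The three estimates are tailored precisely to match the hypotheses of Proposition~\ref{prop-mul ls}: the first controls $A$ (handling the $\Re s$ contribution), the second yields a fixed $B = \log 10$ (providing geometric decay of singular values), and the third controls $\varphi$ (needed because $|\Im s|$ is allowed to be as large as $C|z|^{-M}\log(1/|z|)$).

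The first two estimates follow from the relation $|\cdot|_z = |\cdot|_\infty^{1/\log(1/|z|)}$. By Lemma~\ref{lem:relative ford} (uniform Schottky figure) and Lemma~\ref{lem:uniform-dis} (uniform distortion), the \emph{hybrid} derivative $|\mathfrak{a}'(w)|_z$ is pinched between two constants depending only on $N$ on $D_{\mathbf{b},z}$, so $|\log|\mathfrak{a}'(w)|_\infty| = \log(1/|z|)\cdot\bigl|\log|\mathfrak{a}'(w)|_z\bigr|\lesssim_N \log(1/|z|)$. Unfolding the definitions using $\mathbf{a}'\mathfrak{a}_{i_{N-1}} = \mathfrak{a}\mathbf{b}'$ gives $\mathfrak{a}(D_{\mathbf{b},z}) = \mathbf{a}'(D_{\mathfrak{a}_{i_{N-1}}\mathfrak{a}_{i_N},z})$ and $D_{\mathbf{a},z} = \mathbf{a}'(D_{\mathfrak{a}_{i_{N-1}},z})$; the Schottky figure pins the hybrid-radius ratio $r_{\mathfrak{a}_{i_{N-1}}\mathfrak{a}_{i_N},z}/r_{\mathfrak{a}_{i_{N-1}},z}$ strictly below~$1$, and by uniform distortion of $\mathbf{a}'$ this ratio transfers to the absolute-norm ratio up to a bounded multiplicative constant (and exponentiation by $\log(1/|z|)$ only sharpens it), yielding $\leq 1/10$ for $|z|$ sufficiently small.

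For the third estimate I would exploit that $\Gamma_z<\SL_2(\mathbb{R})$ with real $z$ forces all entries of $\mathfrak{a}$ and $\mathbf{b}'$ to be real, so $D_{\mathbf{b},z}$ is a Euclidean disc symmetric about $\mathbb{R}$. Its absolute radius equals $r_{\mathbf{b},z}^{\log(1/|z|)}\lesssim |z|^{(N-1)\log(1/c)}$ by Lemma~\ref{lem:uniform-dis}. Writing $\mathfrak{a}'(w) = 1/(cw+d)^2$ with real $c,d$, the first estimate gives $|cw+d|_\infty \geq |z|^{K_N/2}$, while $\Im(cw+d) = c\,\Im w$ with $|\Im w|$ at most the absolute radius of $D_{\mathbf{b},z}$. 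Combining with the bound $|c|_\infty\lesssim |z|^{-k_N}$ coming from the meromorphic nature of the entries, $|\arg(cw+d)| \leq |\Im(cw+d)|/|\Re(cw+d)|\lesssim |c|\cdot|\Im w|/|cw+d|_\infty \lesssim |z|^{(N-1)\log(1/c) - K_N/2 - k_N}$. Choosing $N = N(M)$ so that this exponent exceeds $M$ yields the claimed bound $|\arg \mathfrak{a}'(w)| = 2|\arg(cw+d)|\lesssim_M |z|^M$.

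With these parameters in hand, I apply Proposition~\ref{prop-mul ls} to $\mathcal{L}_{s/\log(1/|z|),N(M)}$ with $A\lesssim_N \log(1/|z|)$, $B = \log 10$, and $\varphi\lesssim_M |z|^M$: in the region $|\Re s|\leq C$, $|\Im s|\leq C|z|^{-M}\log(1/|z|)$, we have $A|\Re(s/\log(1/|z|))|\lesssim_N C$ and $\varphi|\Im(s/\log(1/|z|))|\lesssim_M C$, so $\mu_\ell(\mathcal{L}_{s/\log(1/|z|),N(M)})\lesssim_{C,M} e^{-\ell\log 10/C(g,N(M))}\sqrt{\ell+1}$, which is summable. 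This uniformly bounds the trace norm, and hence $|Z(\Gamma_z,s/\log(1/|z|))| = |\det(1-\mathcal{L}_{s/\log(1/|z|),N(M)})| \leq e^{\|\mathcal{L}_{s/\log(1/|z|),N(M)}\|_1}\lesssim_{C,M} 1$ by \eqref{equ:trace-l1}. The principal obstacle is balancing the potential growth $|\mathfrak{a}'(w)|_\infty\lesssim |z|^{-K_N}$ (which may worsen with $N$ as deep discs sit closer to where $c_z w + d_z$ nearly vanishes) against the Euclidean shrinkage $|z|^{(N-1)\log(1/c)}$ of deep discs; this trade-off is precisely what dictates how the word-length threshold $N(M)$ must grow with $M$.
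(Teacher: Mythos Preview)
Your approach is essentially the paper's: verify the three estimates and feed them into Proposition~\ref{prop-mul ls}. Two minor remarks. First, for the separation $\mathfrak{a}(D_{\mathbf{b},z})\subset\frac{1}{10}D_{\mathbf{a},z}$ the paper simply invokes Corollary~\ref{cor:separtion} (proved later via the hybrid continuity of Lemma~\ref{lem:separation}); your sketch via distortion of $\mathbf{a}'$ is the same idea behind that corollary, though you should be careful that controlling only the \emph{radius} ratio is not quite enough---the center of $\mathbf{a}'(D_{\mathfrak{a}_{i_{N-1}}\mathfrak{a}_{i_N},z})$ must also land near the center of $\mathbf{a}'(D_{\mathfrak{a}_{i_{N-1}},z})$, which is what the distortion argument in Corollary~\ref{cor:separtion} actually delivers. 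Second, your ``principal obstacle'' is not one: the paper observes that the lower bound $|c_zw+d_z|_\infty\gtrsim |z|^{K/2}$ comes directly from the Schottky figure inequality $\inf_{w\in D_{\mathbf{b},z}}|w+d_z/c_z|\geq \mathrm{diam}(D_{\mathfrak{a}^{-1},z})/2$, which only uses $w\in D_{\mathfrak{a}_{i_1},z}$ and is therefore \emph{uniform in $N$}---so there is no competition between $K_N$ and the shrinkage rate, and $N(M)$ is determined solely by how fast $\mathrm{diam}(D_{\mathbf{b},z})$ decays.
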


\begin{proof}
By assumption, $\mathfrak{a}\in \SL_2(\mathbb{M}(\mathbb{D}))$, so we can write $\mathfrak{a}=\begin{pmatrix} a& b\\ c&d\end{pmatrix}$ with $a,b,c,d\in \mathbb{M}(\mathbb{D})$. Fix any $0<|z|\ll 1$.  The uniform separation $\mathfrak{a}(D_{\mathbf{b},z})\subset \frac{1}{10}D_{\mathbf{
    a},z}$ follows from \cref{cor:separtion}.

For any $w\in D_{\mathfrak{b},z}$, we have
   ${\mathfrak{a}}(w)=\frac{a_zw+b_z}{c_zw+d_z}$, where the lower subscript means to evaluate the functions at $z$.
   Then ${\mathfrak{a}}'(w)=1/(c_zw+d_z)^2$. The estimate $|z|^A\leq|{\mathfrak{a}}'(w)|$ follows from the fact that the Schottky discs are uniformly bounded away from $\infty$. The other estimate $|{\mathfrak{a}}'(w)|\leq |z|^{-A}$ follows from that the definition of uniform Schottky discs \cref{equ:fam-ford} and the convention \cref{eqn:disc convention} give
    \[\inf_{w\in D_{\bf{b},z}}|w+d_z/c_z|\geq d(\partial D_{\frak{a}^{-1},z}, \frak{a}^{-1}\infty)\geq \mathrm{diam}(D_{\frak{a}^{-1},z})/2.\]

    Finally, it follow from \cref{lem:uniform-dis} that by taking $N$ sufficiently large, we can make sure $\rm{diam}(D_{\mathbf{b},z})=O(|z|^{cN})$. The estimate $|\arg \mathfrak{a}'(w)|\leq |z|^M$ for $w\in D_{\mathbf{b},z}$ follows from the observation that
    \begin{equation*}
    \mathfrak{a}'(w)=(c_zw+d_z)^{-2}=c_z^{-2}((\Re w+d_z/c_z)+i\Im w)^{-2}
    \end{equation*}
    and $D_{b,z}\cap \mathbb{R}\neq \emptyset$.

    For the boundedness of $Z(\Gamma_z,s/\log(1/|z|))$, it suffices to assume $0<|z|\ll_N 1$ since otherwise, the statement follows from the continuity of the zeta function. Then the hypothesis of \cref{prop-mul ls} is satisfied with $A=A_0\log(1/|z|)$ where $A_0$ is fixed and $\varphi=O(|z|^M)$. We can apply the conclusion of \cref{prop-mul ls} and \cref{equ:trace-l1} to obtain the boundedness of $Z(\Gamma_z,s/\log(1/|z|)$.
\end{proof}

\subsubsection{Transfer operator: $\SL_2(\CC)$ case}

Let $\Gamma$ be a classical Schottky group in $\SL_2(\C)$ of rank $g\geq 2$. The discussion is similar but has several complications, which we detail below.

Write $\Gamma=\langle \mathfrak{a}_{1}, \mathfrak{a}_{2},\cdots, \mathfrak{a}_{{2g}} \rangle$  with $\mathfrak{a}_{i}\mathfrak{a}_{i+g}=id$ for $i=1,2,\cdots, g$.  
By conjugating $\Gamma$ if necessary, 
 there exist open discs $D_{\mathfrak{a}_1},D_{\mathfrak{a}_2},\cdots, D_{\mathfrak{a}_{2g}}\subset \CC$ such that their closures are disjoint, and  for each $\mathfrak{a}\in \{\mathfrak{a}_1,\ldots, \mathfrak{a}_{2g}\}$, we have $\mathfrak{a}(\hat{\CC}-\overline{D_{\mathfrak{a}^{-1}}}) = D_\mathfrak{a}$.

To study the Selberg zeta function, we identify $\CC$ with $\RR^2$, and given $\gamma=\begin{pmatrix} a& b\\ c& d\end{pmatrix}\in \SL_2(\mathbb{C})$,  
complexify the linear fractional transformation $\gamma:\mathbb{R}^2\to \mathbb{R}^2$:
\begin{equation}\label{eq:gamma-real}
\gamma(x,y)=\left(\Re\frac{a(x+iy)+b}{c(x+iy)+d}, \Im \frac{a(x+iy)+b}{c(x+iy)+d}\right),\quad (x,y)\in \RR^2
\end{equation} to 
\begin{equation}\label{eq:wt-gamma}
\wt{\gamma}(w_1,w_2)=\frac{1}{2}\left(\frac{a(w_1+iw_2)+b}{c(w_1+iw_2)+d}+\frac{\bar{a}(w_1-iw_2)+\bar{b}}{\bar{c}(w_1-iw_2)+\bar{d}},-i\frac{a(w_1+iw_2)+b}{c(w_1+iw_2)+d}+i\frac{\bar{a}(w_1-iw_2)+\bar{b}}{\bar{c}(w_1-iw_2)+\bar{d}}\right),\quad (w_1,w_2)\in \CC^2
\end{equation}
For each generator, suppose $D_\mathfrak{a}=\{|w-w_\mathfrak{a}|\leq R_\mathfrak{a}\}$. Let $\wt{D}_\mathfrak{a}$ be the polydisc defined by 
\begin{equation*}
    \wt{D}_\mathfrak{a}=\{(w_1,w_2)\in \CC^2 : \max(|w_1-\Re w_\mathfrak{a}|,|w_2-\Im w_\mathfrak{a}|)\leq R_\mathfrak{a}/3\},
\end{equation*}
and we also view $D_{\mathfrak{a}}\cap \wt{D}_{\mathfrak{a}}$ as a subset of $\wt{D}_{\mathfrak{a}}$.

We introduce the Hilbert space $\wt{\cH}=\oplus_{i=1}^{2g}\mathcal{H}(\wt{D}_{\mathfrak{a}_i})$ where $\mathcal{H}(\wt{D}_{\mathfrak{a}_i})=\{u\in L^2(\wt{D}_{\mathfrak{a}_i}): u \text{ is holomorphic}\}$ with the norm $\|u\|_{\mathcal{H}(\wt{D}_{\mathfrak{a}_i})}^2:=|\wt{D}_{\mathfrak{a}_i}|^{-1}\|u\|_{L^2(\wt{D}_{\mathfrak{a}_i})}^2$. Suppose $\mathfrak{a}_i$ maps $\wt{D}_{\mathfrak{b}}$ into a compact subset of $\wt{D}_{\mathfrak{a}_i}$ for any $\mathfrak{b}\neq \mathfrak{a}_i^{-1}$. The transfer operator $\cL_s:\wt{\cH}\to \wt{\cH}$ is defined as
\begin{equation}
\label{equ:transfer operator SL2C}
    \cL_s u(w_1,w_2):=\sum\limits_{\mathfrak{a}_i\neq \mathfrak{b}^{-1}} [\mathfrak{a}_i'(w_1,w_2)]^s u(\wt{\mathfrak{a}}_i(w_1,w_2))  \quad \text{for}\quad (w_1,w_2)\in \wt{D}_\mathfrak{b}\quad \text{with}\quad \mathfrak{b}\in \{\mathfrak{a}_1,\ldots,\mathfrak{a}_{2g}\}.
\end{equation}
Here $[\mathfrak{a}'_i(w_1,w_2)]$ is a holomorphic function in $\wt{D}_\mathfrak{b}$ such that $[\mathfrak{a}'_i(w_1,w_2)]= |\mathfrak{a}'_i(x+iy)|$ for $(w_1,w_2)=(x,y)\in D_\mathfrak{b}$. More precisely,
if $\mathfrak{a}_i=\begin{pmatrix} a& b\\ c& d\end{pmatrix}\in \SL_2(\mathbb{C})$, then 
\begin{equation}
\label{eqn:extension derivative}
    [{\mathfrak{a}}'_i(w_1,w_2)]=\frac{1}{(c(w_1+iw_2)+d)(\bar{c}(w_1-iw_2)+\bar{d})}.
\end{equation}
\cref{equ:transfer operator SL2C} requires choosing a branch of complex logarithm such that $\log [\mathfrak{a}'_i(w_1,w_2)]$ is a holomorphic function on $\wt{D}_{\mathfrak{b}}$ satisfying $\log [\mathfrak{a}'_i(w_1,w_2)]=\log |\mathfrak{a}'_i(x+iy)|$ for $(w_1,w_2)=(x,y)\in D_{\mathfrak{b}}\cap \wt{D}_{\mathfrak{b}}$. This is achievable because the inequality 
$|w_{\mathfrak{b}}+d/c|>R_{\mathfrak{b}}$ gives 
$|w_1+iw_2+d/c|>R_{\mathfrak{b}}/3$ and hence $[\mathfrak{a}'_i(w_1,w_2)]\neq 0$ for $(w_1,w_2)\in \wt{D}_{\mathfrak{b}}$. Define $[\mathfrak{a}'_i(w_1,w_2)]^s:=\exp ( s \log [\mathfrak{a}'_i(w_1,w_2)])$ on $\wt{D}_{\mathfrak{b}}$.

We show that the pullback operator $\wt{\gamma}^*$ is again a trace class operator.
\begin{lem}\label{lem:trace-compute-SL2(C)}
Suppose $\wt{\gamma}:\wt{D}_\mathfrak{b} \to \rho \wt{D}_\mathfrak{a}$ is a holomorphic function between polydiscs in $\CC^2$ with $0<\rho<1$ and $\frak{a}, \frak{b}\in \{\frak{a}_1,\ldots, \frak{a}_{2g}\}$. Then
the pullback operator
\begin{equation*}
    \wt{\gamma}^*u(w_1,w_2)=u(\wt{\gamma}(w_1,w_2)): \cH(\wt{D}_\mathfrak{a})\to \cH(\wt{D}_\mathfrak{b})
\end{equation*}
is a trace class operator satisfying
\begin{equation*}
    \mu_{\ell}(\wt{\gamma}^*)\leq C_{\rho}\rho^{\sqrt{\ell}}(\ell+1).
\end{equation*} 
\end{lem}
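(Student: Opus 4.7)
The argument is essentially the two-variable analogue of the one carried out in \cref{lem:trace-compute-SL2(R)}; the only new ingredient is that a monomial orthonormal basis on a polydisc in $\CC^2$ is doubly indexed, so the set $\{(m,n):m+n<k\}$ has cardinality $k(k+1)/2$ rather than $k$. This two-dimensional counting is exactly what converts the single-variable decay $\rho^{\ell}$ of \cref{lem:trace-compute-SL2(R)} into the claimed $\rho^{\sqrt{\ell}}$.

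After translating, we may assume $\wt{D}_\mathfrak{a}=\{(w_1,w_2)\in\CC^2: |w_1|,|w_2|\leq R\}$ with $R=R_\mathfrak{a}/3$. Since Lebesgue measure on the polydisc is the product of two one-dimensional Lebesgue measures and $\{\sqrt{n+1}(w/R)^n\}_{n\geq 0}$ is an orthonormal basis of holomorphic $L^2$ on the 1D disc of radius $R$ (as used in \cref{lem:trace-compute-SL2(R)}), the family
\[
\phi_{m,n}(w_1,w_2):=\sqrt{(m+1)(n+1)}\,(w_1/R)^m(w_2/R)^n,\qquad m,n\geq 0,
\]
is an orthonormal basis of $\cH(\wt{D}_\mathfrak{a})$. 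The containment $\wt\gamma(\wt{D}_\mathfrak{b})\subset \rho\wt{D}_\mathfrak{a}$ implies $|\wt\gamma(w_1,w_2)_i|\leq \rho R$ for $i=1,2$, whence the pointwise bound $|\wt\gamma^*\phi_{m,n}(w_1,w_2)|\leq \sqrt{(m+1)(n+1)}\,\rho^{m+n}$ on $\wt{D}_\mathfrak{b}$, and consequently $\|\wt\gamma^*\phi_{m,n}\|_{\cH(\wt{D}_\mathfrak{b})}\leq \sqrt{(m+1)(n+1)}\,\rho^{m+n}$.

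Now apply the minimax characterization \cref{e:sing-minimax} with $V_k$ defined as the orthogonal complement of $\spaned\{\phi_{m,n}:m+n<k\}$, so that $\mathrm{codim}(V_k)=k(k+1)/2$ in $\cH(\wt{D}_\mathfrak{a})$. Any $v\in V_k$ expands as $v=\sum_{m+n\geq k}a_{m,n}\phi_{m,n}$ with $\|v\|^2=\sum|a_{m,n}|^2$, and Cauchy--Schwarz gives
\[
\|\wt\gamma^*v\|_{\cH(\wt{D}_\mathfrak{b})}^2 \leq \|v\|^2\sum_{m+n\geq k}(m+1)(n+1)\,\rho^{2(m+n)} \lesssim_\rho k^3\rho^{2k}\|v\|^2.
\]
By the minimax formula, $\mu_{k(k+1)/2}(\wt\gamma^*)\leq C_\rho k^{3/2}\rho^k$. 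Given $\ell\geq 0$, choose the largest $k$ with $k(k+1)/2\leq \ell$, so that $k\leq \sqrt{2\ell}$; the monotonicity of singular values then yields
\[
\mu_\ell(\wt\gamma^*) \leq \mu_{k(k+1)/2}(\wt\gamma^*) \leq C_\rho k^{3/2}\rho^k \leq C'_\rho (\ell+1)\rho^{\sqrt{\ell}},
\]
using $\rho^{\sqrt{2\ell}}\leq \rho^{\sqrt{\ell}}$ for $\rho\in(0,1)$ and absorbing numerical constants into $C'_\rho$.

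The only point requiring attention is the two-dimensional counting $\dim\spaned\{\phi_{m,n}:m+n<k\}=k(k+1)/2$, which is what forces the $\sqrt{\ell}$ in the exponent; everything else is a direct transcription of the one-variable calculation, and importantly the argument uses only the image containment $\wt\gamma(\wt{D}_\mathfrak{b})\subset \rho\wt{D}_\mathfrak{a}$ rather than any specific form of $\wt\gamma$. The stronger bound $\sum_\ell (\ell+1)\rho^{\sqrt{\ell}}<\infty$ confirms that $\wt\gamma^*$ is trace class, which is the statement needed for the forthcoming Fredholm determinant computations in the $\SL_2(\CC)$ case.
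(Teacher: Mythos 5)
Your proof is correct and follows essentially the same route as the paper: the same monomial orthonormal basis of $\cH(\wt{D}_\mathfrak{a})$, the same bound $\|\wt\gamma^*\phi_{m,n}\|\leq\sqrt{(m+1)(n+1)}\,\rho^{m+n}$ coming only from the containment $\wt\gamma(\wt{D}_\mathfrak{b})\subset\rho\wt{D}_\mathfrak{a}$, and the minimax property combined with the two-dimensional count of low-degree monomials to produce the exponent $\sqrt{\ell}$ (the paper just uses a triangle-inequality sum over the excluded basis vectors where you use Cauchy--Schwarz). One small touch-up: in the last display the bound $\rho^{k}\lesssim_{\rho}\rho^{\sqrt{\ell}}$ requires the \emph{lower} bound $k>\sqrt{2\ell}-2$, which follows from the maximality of $k$ (since $(k+1)(k+2)/2>\ell$), rather than the stated upper bound $k\leq\sqrt{2\ell}$, which only serves to control the polynomial factor $k^{3/2}$.
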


In the following, we will use the multi-index notation: 
\begin{equation*}
\alpha=(\alpha_1,\alpha_2)\in \mathbb{Z}^2_{\geq 0}, \quad |\alpha|=\alpha_1+\alpha_2, \quad \text{and}\quad \alpha!=\alpha_1!\alpha_2!.
\end{equation*}

\begin{proof}
We may assume $\wt{D}_\mathfrak{a}:=\{(w_1,w_2)\in \CC^2:|w_1|<R_1, |w|_2<R_2\}$.  
Consider the orthonormal basis $\phi_{\alpha}(w_1,w_2):=\sqrt{(\alpha_1+1)(\alpha_2+1))}(w_1/R_1)^{\alpha_1}(w_2/R_2)^{\alpha_2}$ of $\cH(\wt{D}_\mathfrak{a})$. Then
    \begin{equation*}
        \|\wt{\gamma}^*\phi_\alpha\|_{\cH(\wt{D}_\mathfrak{b})}^2=|\wt{D}_\mathfrak{b}|^{-1}\int_{\wt{D}_\mathfrak{b}}(\alpha_1+1)(\alpha_2+1) \frac{|\pi_1(\gamma (w_1,w_2))|^{2\alpha_1}}{R_1^{2\alpha_1}} \frac{|\pi_2(\gamma(w_1,w_2))|^{2\alpha_2}}{R_2^{2\alpha_2}} dm(w_1,w_2)\leq (|\alpha|+1)^2\rho^{2|\alpha|},
    \end{equation*}
    where $\pi_i:\CC^2\to \CC$ is to project on the $i$-th coordinate.
    By the minimax property \cref{e:sing-minimax}, we conclude
    \begin{equation*}
        \mu_{\ell}(\wt{\gamma}^*)\leq\sum\limits_{|\alpha|^2\geq \ell-10}\|\wt{\gamma}^*\phi_\alpha\|_{\cH(\wt{D}_\mathfrak{b})}\leq \sum\limits_{k^2\geq \ell-10}(k+1)^{2}\rho^{k}\leq C_{\rho}\rho^{\sqrt{\ell}}(\ell+1).
    \end{equation*}
      Thus
    \begin{equation*}
    \sum\limits_{\ell}\mu_{\ell}(\wt{\gamma}^*)\leq C_{\rho}\sum\limits_{\ell} \rho^{\sqrt{\ell}}(\ell+1)<\infty,
    \end{equation*}
    which implies $\wt{\gamma}^*$ is a trace class operator. 
\end{proof}
Similar to the $\SL_2(\RR)$ case, we have 
\begin{equation*}
\det(1-\cL_s)=Z(\Gamma,s)\quad \text{for any} \quad s\in \CC.
\end{equation*}
We refer to \cite{guillopeSelbergZetaFunction2004} for more details.

As in the $\SL_2(\RR)$ case, we also consider the modified transfer operators to obtain further applications. The precise construction is as follows. For $\mathbf{a}=\frak{a}_{i_1}\cdots \frak{a}_{i_n}\in \mathcal{W}^n$, define the polydisc $\wt{D}_{\mathbf{a}}$ in $\CC^2$ associated to $D_{\mathbf{a}}=D(w_{\mathbf{a}},R_{\mathbf{a}})\subset \CC$ by
\begin{equation*}
\wt{D}_{\mathbf{a}}=\{(w_1,w_2): \max(|w_1-\Re w_{\mathbf{a}}|, |w_2-\Im w_{\mathbf{a}}|)<R_{\mathbf{a}}/3\}.
\end{equation*}
For any $N\in \mathbb{N}$, the modified transfer operator $\cL_{s,N}$ on $\mathcal{H}=\mathcal{H}_N:=\oplus_{b\in \mathcal{W}^N}\mathcal{H}(\wt{D}_{\mathbf{b}})$ is defined by
\begin{equation*}
        \cL_{s,N} u(w_1,w_2) :=\sum\limits_{\mathfrak{a}\neq \bar{\mathfrak{a}}_{i_1}} [{\mathfrak{a}}'(w_1,w_2)]^s u(\wt{{\mathfrak{a}}}(w_1,w_2)) \quad \text{for}\quad (w_1,w_2)\in \wt{D}_{\mathbf{b}}\quad\text{with}\quad \mathbf{b}=\mathfrak{a}_{i_1}\cdots\mathfrak{a}_{i_N}.
\end{equation*}
Again we have for any $N\in \mathbb{N}$, 
\begin{equation*}
\det(1-\cL_{s,N})=Z(\Gamma,s) \quad \text{for any}\quad s\in \mathbb{C}.
\end{equation*}

We state a proposition similar to \cref{prop-mul ls} for $\SL_2(\CC)$.
\begin{prop}\label{p:sing-est-3d}
    Suppose $\Gamma<\SL_2(\CC)$ is a Schottky group with the notations above. Suppose for some $A,B>0$, $N\in \mathbb{Z}_{>0}$ and $\varphi\in [0,\pi]$, 
    we have for any $\frak{a}\in \mathcal{W}$ and $\mathbf{b}=\frak{a}_{i_1}\cdots \frak{a}_{i_N}\in \mathcal{W}^N$ with $\frak{a}_{i_1}\neq \frak{a}^{-1}$, 
    \begin{itemize}
        \item $e^{-A}\leq |[\mathfrak{a}'(w_1,w_2)]|\leq e^{A}$ and $|\arg[\mathfrak{a}'(w_1,w_2)]|\leq \varphi\leq \pi$ for any $(w_1,w_2)\in \wt{D}_{\mathbf{b}}$; 
        \item $\wt{\mathfrak{a}}(\wt{D}_{\mathbf{b}})\subset e^{-B}\wt{D}_{\mathbf{a}}$ where $\mathbf{a}=\mathfrak{a}\mathbf{b}'$.
    \end{itemize}
    Then the singular values of $\mathcal{L}_s$ satisfy
    \begin{equation*}
        \mu_{\ell}(\mathcal{L}_{s,N})\leq C(g,N,B)e^{(A|\Re s|+\varphi |\Im s|-B\ell^{1/2})/C(g,N)}(\ell+1).
    \end{equation*}
    Consequently, the operator $\cL_{s,N}$ is a trace class operator.
\end{prop}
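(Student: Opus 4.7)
The plan is to follow the same strategy as in the proof of \cref{prop-mul ls}, with the only essential difference being the use of the polydisc singular-value bound \cref{lem:trace-compute-SL2(C)}, which produces $\rho^{\sqrt{\ell}}$ rather than $\rho^{\ell}$, accounting for the $\ell^{1/2}$ in the exponent of the target inequality.

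First, I would write $\mathcal{L}_{s,N}$ as a finite direct sum of elementary blocks. Concretely, for $\mathfrak{a}\in\mathcal{W}$ and $\mathbf{b}=\mathfrak{a}_{i_1}\cdots\mathfrak{a}_{i_N}\in\mathcal{W}^N$ with $\mathfrak{a}\neq \mathfrak{a}_{i_1}^{-1}$, define the block
\[
\mathcal{L}_{s,\mathbf{a}\mathbf{b}}:\mathcal{H}(\wt D_{\mathbf{a}})\to \mathcal{H}(\wt D_{\mathbf{b}}),\qquad \mathcal{L}_{s,\mathbf{a}\mathbf{b}}u(w_1,w_2):=[\mathfrak{a}'(w_1,w_2)]^s\,u\bigl(\wt{\mathfrak{a}}(w_1,w_2)\bigr),
\]
with $\mathbf{a}=\mathfrak{a}\mathbf{b}'$; then $\mathcal{L}_{s,N}=\sum_{\mathbf{a}\mathbf{b}}\mathcal{L}_{s,\mathbf{a}\mathbf{b}}$, a sum of at most $C(g,N)$ nonzero terms. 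Each block factors as $\mathcal{L}_{s,\mathbf{a}\mathbf{b}}=M_{\mathfrak{a},s}\circ \wt{\mathfrak{a}}^{*}$, where $\wt{\mathfrak{a}}^{*}:\mathcal{H}(\wt D_{\mathbf{a}})\to\mathcal{H}(\wt D_{\mathbf{b}})$ is the pullback and $M_{\mathfrak{a},s}$ is multiplication by $[\mathfrak{a}'(w_1,w_2)]^s$ on $\mathcal{H}(\wt D_{\mathbf{b}})$.

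Next, I would bound each factor using the two hypotheses. From $e^{-A}\leq |[\mathfrak{a}'(w_1,w_2)]|\leq e^A$ and $|\arg[\mathfrak{a}'(w_1,w_2)]|\leq \varphi$ on $\wt D_{\mathbf{b}}$, the operator norm satisfies
\[
\|M_{\mathfrak{a},s}\|_{\mathcal{H}(\wt D_{\mathbf{b}})\to\mathcal{H}(\wt D_{\mathbf{b}})}\leq \sup_{\wt D_{\mathbf{b}}}\bigl|[\mathfrak{a}'(w_1,w_2)]^s\bigr|\leq e^{A|\Re s|+\varphi|\Im s|}.
\]
By assumption $\wt{\mathfrak{a}}(\wt D_{\mathbf{b}})\subset e^{-B}\wt D_{\mathbf{a}}$, so \cref{lem:trace-compute-SL2(C)} with $\rho=e^{-B}$ gives
\[
\mu_{\ell}(\wt{\mathfrak{a}}^{*})\leq C_B\,e^{-B\sqrt{\ell}}(\ell+1).
\]
Combining these through the submultiplicativity \cref{e:trace-prod} yields
\[
\mu_{\ell}(\mathcal{L}_{s,\mathbf{a}\mathbf{b}})\leq C_B\,e^{A|\Re s|+\varphi|\Im s|-B\sqrt{\ell}}(\ell+1).
\]

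Finally, I would assemble the blocks. Labelling the at most $K=K(g,N)$ nonzero blocks and applying the singular-value subadditivity \cref{e:sing-sum} iteratively, we have $\mu_{K\ell}(\mathcal{L}_{s,N})\leq \sum_{j=1}^{K}\mu_{\ell}(\mathcal{L}_{s,\mathbf{a}_j\mathbf{b}_j})$. Plugging in the block bound and re-indexing $\ell\mapsto \ell/K$ produces
\[
\mu_{\ell}(\mathcal{L}_{s,N})\leq C(g,N,B)\,\exp\!\bigl((A|\Re s|+\varphi|\Im s|-B\sqrt{\ell/K})\bigr)(\ell+1),
\]
which is the desired estimate after absorbing $K$ into the universal constant $C(g,N)$ in the exponent. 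Summing over $\ell$, the series $\sum_{\ell}e^{-B\sqrt{\ell}/C(g,N)}(\ell+1)$ converges, giving $\|\mathcal{L}_{s,N}\|_1<\infty$ and hence the trace-class conclusion. No step presents a genuine obstacle; the only care needed is that the polydisc lemma is applied to the full two-dimensional polydiscs $\wt D_{\mathbf{a}}$, so that the square-root in the exponent correctly arises from summing $(k+1)^2\rho^k$ over $|\alpha|^2\geq \ell$, exactly as in \cref{lem:trace-compute-SL2(C)}.
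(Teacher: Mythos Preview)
Your proposal is correct and follows essentially the same approach as the paper's proof: decompose $\mathcal{L}_{s,N}$ into the blocks $\mathcal{L}_{s,\mathbf{a}\mathbf{b}}$, bound the multiplication operator by $e^{A|\Re s|+\varphi|\Im s|}$ and the pullback via \cref{lem:trace-compute-SL2(C)}, and then assemble using \cref{e:sing-sum} and \cref{e:trace-prod}. Your write-up is slightly more explicit about the factorization $M_{\mathfrak{a},s}\circ\wt{\mathfrak{a}}^*$ and the re-indexing step, but the argument is the same.
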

\begin{proof}
    The proof is similar to \cref{prop-mul ls}. The operator $\cL_{s,N}$ is a direct sum of the following operators:
    Consider each component of $\mathcal{L}_{s,N}$:
    \begin{equation*}
        \mathcal{L}_{s,\mathbf{ab}}:\mathcal{H}(\wt{D}_\mathbf{a})\to \mathcal{H}(\wt{D}_\mathbf{b}),\quad \mathcal{L}_{s, \mathbf{ab}}u(w_1,w_2):=[\mathfrak{a}'(w_1,w_2)]^s u(\wt{\mathfrak{a}}(w_1,w_2)),\quad \mathbf{a}=\mathfrak{a}\mathbf{b}',
    \end{equation*}
    where $\frak{a}\in \mathcal{W}$, $\mathbf{b}=\frak{a}_{i_1}\cdots \frak{a}_{i_N}\in \mathcal{W}^N$ with $\frak{a}_{i_1}\neq \frak{a}^{-1}$, and $\mathbf{a}=\frak{a}\mathbf{b}'$.
    
    We have
    \begin{itemize}
        \item $|[\mathfrak{a}'(w_1,w_2)]^s|\leq \max\left(|[\mathfrak{a}'(w_1,w_2)]|^{\Re s},|[\mathfrak{a}'(w_1,w_2)]|^{-\Re s}\right)\exp({|\arg [{\mathfrak{a}}'(w_1,w_2)]||\Im s|})\leq e^{A|\Re s|+\varphi |\Im s|}$;
        \item By \cref{lem:trace-compute-SL2(C)}, we know
        \begin{equation*}
            \mu_{\ell}(\wt{\mathfrak{a}}^*)\leq C(B)e^{-B\ell^{1/2}}(\ell+1).
        \end{equation*}
    \end{itemize}
Then the theorem follows from \cref{e:sing-sum} and \cref{e:trace-prod}.
\end{proof}

Similar to \cref{prop:uniform-bdd-SL2(R)}, we have the following.
\begin{prop}\label{prop:uniform-bdd-SL2(C)}
    Suppose we have a degenerating family of Schottky groups $\Gamma_z<\SL_2(\CC)$ satisfying $(\bigstar)$ with their Schottky discs uniformly bounded away from $\infty$. For $N\geq 1$, 
    $\frak{a}\in \mathcal{W}$,
    $\mathbf{b}=\frak{a}_{i_1}\cdots \frak{a}_{i_N}\in \cW^N$ with $\frak{a}\neq \frak{a}_{i_1}^{-1}$, and sufficiently small $0<|z|\ll_N 1$, we have for any $(w_1,w_2)\in \wt{D}_{\mathbf{b},z}$, and the disc $\wt{D}_{\mathbf{a},z}$ with $\mathbf{a}=\frak{a}\mathbf{b}'$,
        \begin{align}
    \label{eqn:uniform separation SL2C}
    &\left|\log|[\mathfrak{a}'(w_1,w_2)]|\right|\lesssim \log(1/|z|)\nonumber\\
    &\wt{\mathfrak{a}}(\wt{D}_{\mathbf{b},z})\subset \frac{1}{10}\wt{D}_{\mathbf{a},z} 
    \end{align}
    in terms of the absolute norm. Moreover, for any $M>0$, there exists $N(M)\geq 1$ such that if we require $\mathbf{b}\in \mathcal{W}^{N(M)}$ additionally, then
    \begin{equation*}
        |\arg [\frak{a}'(w_1,w_2)]|\lesssim |z|^M.
    \end{equation*}
    Consequently, given any $C,M>0$,
     the family of zeta functions $Z(\Gamma_z,s/\log(1/|z|))$ with $0<|z|<1/e$ is uniformly bounded (depending on $C$ and $M$) in the region
    \begin{equation*}
        |\Re s|\leq C,\quad |\Im s|\leq C|z|^{-M}\log(1/|z|).
    \end{equation*}
\end{prop}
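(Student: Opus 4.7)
The plan is to mirror the proof of \cref{prop:uniform-bdd-SL2(R)}, adapting the real-variable arguments to the complexified transfer operator setup where the dynamics takes place on polydiscs $\wt{D}_{\mathbf{b},z}\subset\CC^2$. By the uniform Schottky basis provided by \cref{lem:relative ford} together with \cref{prop:continuous map to Schottky moduli space}, the Ford discs $D_{\frak{a},z}$ are a Schottky figure for all sufficiently small $|z|$, and the assumption $\Gamma_z<\SL_2(\mathbb{M}(\mathbb{D}))$ lets me write $\mathfrak{a}=\begin{pmatrix}a&b\\c&d\end{pmatrix}$ with meromorphic entries. The inclusion $\wt{\mathfrak{a}}(\wt{D}_{\mathbf{b},z})\subset \tfrac{1}{10}\wt{D}_{\mathbf{a},z}$ reduces, after taking $|z|$ small, to the corresponding separation for the Ford discs in $\CC$ (the analog of \cref{cor:separtion} used in the $\SL_2(\RR)$ case), since the polydiscs $\wt{D}$ are built as products of Euclidean discs of radius $R_{\mathbf{a}}/3$ around the centers of the $D_{\mathbf{a}}$.

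For $|\log|[\mathfrak{a}'(w_1,w_2)]||\lesssim_N \log(1/|z|)$, I would use the explicit formula \eqref{eqn:extension derivative}: the factor $(c_z(w_1+iw_2)+d_z)$ is bounded above by the uniform boundedness of the Schottky discs away from $\infty$, and bounded below because $-d/c$ (the pole of $\mathfrak{a}$) is a fixed point of $\mathfrak{a}^{-1}$, hence lies in $D_{\mathfrak{a}^{-1},z}$, at distance at least $\tfrac{1}{2}\mathrm{diam}(D_{\mathfrak{a}^{-1},z})$ from $\partial D_{\mathfrak{a}^{-1},z}$; the conjugate factor is treated identically.

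For the argument estimate, the key observation is that on the real slice $(w_1,w_2)=(x,y)$ with $x+iy\in D_{\mathbf{b}}$ one has
\begin{equation*}
[\mathfrak{a}'(x,y)]=\frac{1}{|c_z(x+iy)+d_z|^2}\in \RR_{>0},
\end{equation*}
so $\arg[\mathfrak{a}'(\cdot)]$ vanishes on the real slice. Since $\log[\mathfrak{a}'(w_1,w_2)]$ is holomorphic on $\wt{D}_{\mathbf{b},z}$ and its real part is controlled in the preceding paragraph by $O(\log(1/|z|))$, Cauchy's estimates bound its gradient on a slightly shrunken polydisc by $O(\log(1/|z|)/\mathrm{diam}(\wt{D}_{\mathbf{b},z}))$; combining this with the distance $O(\mathrm{diam}(\wt{D}_{\mathbf{b},z}))$ from $(w_1,w_2)\in\wt{D}_{\mathbf{b},z}$ to the real slice yields $|\arg[\mathfrak{a}'(w_1,w_2)]|\lesssim \log(1/|z|)\cdot\mathrm{diam}(D_{\mathbf{b},z})$. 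Applying the uniform distortion estimate \eqref{equ:r_gammay} of \cref{lem:uniform-dis} (in the $|\cdot|_z$-norm, which translates to Euclidean diameter $\lesssim |z|^{cN}$ for $\mathbf{b}\in \cW^N$) gives, after choosing $N(M)\gtrsim M/c$ large enough, $|\arg[\mathfrak{a}'(w_1,w_2)]|\lesssim_M |z|^M$.

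With these three estimates in hand, I would invoke \cref{p:sing-est-3d} applied to the rescaled operator $\cL_{s/\log(1/|z|),N(M)}$, which equals $\cL_{s,N(M)}$ of the original Schottky family evaluated at $s/\log(1/|z|)$, with parameters $A=O(\log(1/|z|))$, $B=B_0$ (from the $\tfrac{1}{10}$-separation), and $\varphi=O(|z|^M)$. In the region $|\Re s|\leq C$, $|\Im s|\leq C|z|^{-M}\log(1/|z|)$, both $A|\Re(s/\log(1/|z|))|=O(1)$ and $\varphi|\Im(s/\log(1/|z|))|=O(1)$ are uniformly bounded, so the singular value bound becomes $\mu_\ell(\cL_{s/\log(1/|z|),N(M)})\leq C_{C,M,N} e^{-B_0 \ell^{1/2}/C}(\ell+1)$. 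Summing gives a uniform bound on the trace norm, and then \eqref{equ:trace-l1} combined with $\det(1-\cL_{s,N(M)})=Z(\Gamma_z,s)$ yields $|Z(\Gamma_z,s/\log(1/|z|))|\leq e^{\|\cL_{s/\log(1/|z|),N(M)}\|_1}\lesssim_{C,M} 1$. The main obstacle is the argument estimate: unlike the $\SL_2(\RR)$ case where the discs meet $\RR$ and $d_z/c_z\in\RR$ provided a direct real-structure argument, here the vanishing of $\arg[\mathfrak{a}'(\cdot)]$ on a real slice must be combined with Cauchy estimates and a quantitative shrinking of the polydiscs under iteration.
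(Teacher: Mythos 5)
Your architecture is the same as the paper's (establish the three estimates, feed them into \cref{p:sing-est-3d}, sum singular values, use $\det(1-\cL_{s,N})=Z$), and the modulus estimate and the final deduction are fine. The genuine gap is in the phase estimate $|\arg[\frak{a}'(w_1,w_2)]|\lesssim|z|^M$, which is the one new difficulty of the $\SL_2(\CC)$ case. Two problems. First, Cauchy's estimate requires a bound on $|\log[\frak{a}'(\cdot)]|$ on the polydisc, but you only control its real part; the imaginary part is exactly what you are trying to bound, so as written the step is circular (it can be repaired by Borel--Carath\'eodory, using that $\log[\frak{a}']$ is real at the real centre of $\wt{D}_{\mathbf{b},z}$). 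Second, and more seriously, the numbers do not close: your gradient bound $O\left(\log(1/|z|)/\mathrm{diam}\,\wt{D}_{\mathbf{b},z}\right)$ times the distance $O\left(\mathrm{diam}\,\wt{D}_{\mathbf{b},z}\right)$ to the real slice gives only $|\arg[\frak{a}']|=O(\log(1/|z|))$, not the $O\left(\log(1/|z|)\cdot\mathrm{diam}\,D_{\mathbf{b},z}\right)$ you state, and a fortiori not $O(|z|^M)$. With $\varphi$ only of size $\log(1/|z|)$ the last step collapses: applying \cref{p:sing-est-3d} at $s/\log(1/|z|)$ with $|\Im s|\leq C|z|^{-M}\log(1/|z|)$ requires $\varphi\,|\Im s|/\log(1/|z|)=O(1)$, i.e.\ $\varphi\lesssim|z|^M$, whereas $\log(1/|z|)\cdot|z|^{-M}$ is unbounded.

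The fix is to estimate at the right scale, or to avoid gradients altogether. Directly, $\nabla\log[\frak{a}']$ is bounded by the reciprocal of the distance from the polydisc to the pole $-d_z/c_z$, which lies in $D_{\frak{a}^{-1},z}$ (note it is $\frak{a}^{-1}(\infty)$, the centre of that Ford disc, not a fixed point of $\frak{a}^{-1}$ as you wrote); this distance is $\gtrsim|z|^{A}$ for a fixed $A$, so the gradient is $\lesssim|z|^{-A}$, and since $\mathrm{diam}\,\wt{D}_{\mathbf{b},z}\lesssim|z|^{cN}$ by \cref{lem:uniform-dis}/\cref{cor:separtion}, the product is $\lesssim|z|^{cN-A}\leq|z|^M$ once $N(M)$ is large. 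The paper achieves the same multiplicatively via \cref{eqn:derivative polydisc}: $[\frak{a}'(w_1,w_2)]=|\frak{a}'(w_{\mathbf{b},z})|\,(1+\epsilon_1)^{-1}(1+\epsilon_2)^{-1}$ with $|\epsilon_i|\lesssim R_{\mathbf{b},z}/|w_{\mathbf{b},z}+d_z/c_z|\lesssim|z|^{cN}$, so the argument is $\lesssim|z|^{cN}$ at once. Finally, the inclusion $\wt{\frak{a}}(\wt{D}_{\mathbf{b},z})\subset\frac{1}{10}\wt{D}_{\mathbf{a},z}$ is not an automatic consequence of the separation of the discs in $\CC$: the paper proves it by Cauchy estimates on the Taylor coefficients of $\frak{a}$ on $D_{\mathbf{b},z}$ (alternatively, observe that $\wt{\frak{a}}$ decouples into the M\"obius map evaluated at $w_1+iw_2\in D_{\mathbf{b},z}$ and its conjugate at $w_1-iw_2$, each of which stays $|z|^{v}$-deep inside the target disc); your proposal should supply one of these arguments rather than assert the reduction.
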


\begin{proof}
    By \cref{cor:separtion}, there exists $v>0$ such that for any 
    $N\in \mathbb{N}$, $\frak{a}\in \mathcal{W}$, $\mathbf{b}=\frak{a}_{i_1}\cdots \frak{a}_{i_N}\in \mathcal{W}^N$ with $\frak{a}\frak{a}_{i_1}\neq id$, and sufficiently small 
    $0<|z|\ll 1$, we have 
    \begin{equation}
    \label{eqn: uniform separation equation}
        \mathfrak{a}(D_{\mathbf{b},z})\subset |z|^{v} D_{\mathbf{a},z}.
    \end{equation}
    where $\mathbf{a}=\frak{a}\mathbf{b}'$.
    
    Fix $0<|z|\ll 1$. We first show \cref{eqn:uniform separation SL2C}.
    Suppose $D_{\mathbf{a},z}=D(w_{\mathbf{a},z},R_{\mathbf{a},z})$ and $D_{\mathbf{b},z}=D(w_{\mathbf{b},z},R_{\mathbf{b},z})$. Then $|\mathfrak{a}(w)-w_{\mathfrak{a},z}|\leq |z|^{v}R_{\mathbf{a},z}$ for $w\in D_{\mathbf{b},z}$.
     Using the Cauchy integral formula, we estimate the derivatives of $\frak a$ on $D_{\mathbf{b},z}$ at $w_{\mathbf{b},z}$: for any $k\in \mathbb{N}$,
    \begin{equation*}
        |\frac{d^k}{dw^k} \mathfrak{a}(w_{\mathbf{b},z})|=k!\left|\int_{\partial D_{\mathbf{b},z}}\frac{\frak{a}(w)}{(w-w_{\mathbf{b},z})^{k+1}} dw\right|=k!\left|\int_{\partial D_{\mathbf{b},z}}\frac{\frak{a}(w)-w_{\mathbf{a},z}}{(w-w_{\mathbf{b},z})^{k+1}} dw \right| \leq 2\pi\frac{k!}{R_{\mathbf{b},z}^k}|z|^{v}R_{\mathbf{a},z}.
    \end{equation*}
    We can then form the analytic continuation
    \begin{equation*}
    \begin{aligned}
        &\wt{\mathfrak{a}}(w_1,w_2)=\frak{a}(w_{\mathbf{b},z})+\\
        &\sum\limits_{\substack{\alpha\in\mathbb{Z}^2_{\geq 0}\\\text{with}\,\,|\alpha|>0}}\frac{1}{\alpha!}(w_1-\Re w_{\mathbf{b},z})^{\alpha_1}(w_2-\Im w_{\mathbf{b},z})^{\alpha_2}\left(\partial_x^{\alpha_1}\partial_y^{\alpha_2}\Re \mathfrak{a}(x+iy), \partial_x^{\alpha_1}\partial_y^{\alpha_2}\Im{\mathfrak{a}}(x+iy)\right)|_{x+iy=w_{\mathbf{b},z}}\in \CC^2.
    \end{aligned}
    \end{equation*}
    Combining the above two equations, we obtain that for any $(w_1,w_2)\in \wt{D}_{\mathbf{b},z}$,
    \begin{align*}
        &|\wt{\mathfrak{a}}(w_1,w_2)-w_{\mathbf{a},z}|\leq |z|^vR_{\mathbf{a},z}+4\pi\sum\limits_{\alpha}\frac{|\alpha|!}{\alpha!R_\mathbf{b}^{|\alpha|}} |z|^{v}R_{\mathbf{a},z}(R_{\mathbf{b},z}/3)^{|\alpha|}\\
        =&|z|^vR_{\mathbf{a},z}+4\pi\sum\limits_{k} \left(\frac{2}{3}\right)^k |z|^{v}R_{\mathbf{a},z}\leq (8\pi+1)|z|^{v}R_{\mathbf{a},z}. 
    \end{align*}
    Thus, we have \cref{eqn:uniform separation SL2C}. 

    Next, we estimate $\left|\log |[\frak{a}'(w_1,w_2)]| \right|$.
    We write $\mathfrak{a}=\begin{pmatrix}a &b\\c&d\end{pmatrix}$ with $a,b,c,d\in \mathbb{M}(\D)$. For any $w\in D_{\frak{b},z}$, we have
     $\mathfrak{a}(w)=\frac{a_zw+b_z}{c_zw+d_z}$ and $\mathfrak{a}'(w)=1/(c_zw+d_z)^2$, where the lower subscript means to evaluate the functions at $z$. By \cref{eqn:extension derivative}, we have for any $(w_1,w_2)\in \wt{D}_{\frak{b},z}$, 
    \begin{align}
    \label{eqn:derivative polydisc}
        [\mathfrak{a}'(w_1,w_2)]&=|c_z|^{-2} (w_1+iw_2-w_{\frak{b},z}+w_{\frak{b},z}+d_z/c_z)^{-1} (w_1-iw_2-\bar{w}_{\mathbf{b},z}+\bar{w}_{\mathbf{b},z}+\bar{d}_z/\bar{c}_z)^{-1}\nonumber\\
        &=|\frak{a}'(w_{\mathbf{b},z})|^2 \left(1+\frac{w_1+iw_2-w_{\mathbf{b},z}}{w_{\mathbf{b},z}+d_z/c_z}\right)^{-1}
        \left(1+\frac{w_1-iw_2-\bar{w}_{\mathbf{b},z}}{\bar{w}_{\mathbf{b},z}+\bar{d}_z/\bar{c}_z}\right)^{-1} 
    \end{align}
    Using $|w_{\mathfrak{b},z}+d_z/c_z|>R_{\mathfrak{b},z}$, and $\max(|w_1-\Re w_{\mathfrak{b}_z}|,|w_2-\Im w_{\mathfrak{b},z}|)\leq R_{\mathfrak{b},z}/3$, we have that both the absolute values of the second and third factors are bounded above by $5/3$ and below by $1/3$. Using the argument of $\SL_2(\mathbb{R})$ case (proof of \cref{prop:uniform-bdd-SL2(R)}), we have that there exists $A>0$, such that $|z|^{A}\leq |\frak{a}'(w_{\mathbf{b},z})|\leq |z|^{-A}$. Hence, we have established the estimate for $\left|\log|[\frak{a}'(w_1,w_2)]|\right|$. 

     We estimate $|\arg [\frak{a}'(w_1,w_2)]|$.
    It follows from \cref{cor:separtion} that $D_{\mathbf{b},z}\subset |z|^vD_{\frak{a}_{i_1},z}$ and  $R_{\mathbf{b},z}\leq |z|^{vN}R_{\frak{a}_{i_1},z}$. Then, we have
    \begin{equation*}
    \frac{|w_1+iw_2-w_{\mathbf{b},z}|}{|w_{\mathbf{b},z}+d_z/c_z|}\leq \frac{2}{3}|z|^v(1-|z|^v)^{-1},
    \end{equation*}
    which yields the estimate $|\arg [\frak{a}'(w_1,w_2)]|\lesssim |z|^{M}$ using \cref{eqn:derivative polydisc}.

    Finally, we use \cref{p:sing-est-3d} to obtain the boundedness of the zeta functions.
\end{proof}

\subsection{Convergence on the whole plane}

\begin{lem}\label{lem-hol conv}
    Let $\Omega$ be a connected open set in $\CC$ and $f(z)$, $f_n(z)$ be a family of holomorphic functions on $\Omega$. Suppose 
    \begin{itemize}
        \item $|f_n(z)|\leq C$ for any $z\in\Omega$ for some constant $C>0$;
        \item $f_n(z)\to f(z)$ pointwisely in some open set $\Omega'\subset \Omega$, as $n\to\infty$.
    \end{itemize}
    Then $f_n(z)\to f(z)$ uniformly on any compact subset of $\Omega$ as $n\to\infty$.
\end{lem}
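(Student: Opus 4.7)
The plan is to recognize this as Vitali's convergence theorem (sometimes called the Vitali--Porter theorem) and prove it via a standard normal-families argument combined with the identity theorem.

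First, I would invoke Montel's theorem. Since $\{f_n\}$ is a uniformly bounded family of holomorphic functions on $\Omega$, it is a normal family: every subsequence of $\{f_n\}$ has a further subsequence converging, uniformly on compact subsets of $\Omega$, to some holomorphic function on $\Omega$.

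Next, I would argue uniqueness of the limit. Suppose $f_{n_k} \to g$ uniformly on compact subsets of $\Omega$ for some subsequence, where $g$ is holomorphic on $\Omega$. In particular $f_{n_k}(z) \to g(z)$ pointwise for all $z \in \Omega$. Combined with the pointwise hypothesis $f_n(z) \to f(z)$ on $\Omega'$, we get $g = f$ on $\Omega'$. Since $\Omega' \subset \Omega$ is a non-empty open set and $\Omega$ is connected, the identity theorem forces $g$ to equal any holomorphic extension of $f|_{\Omega'}$ to $\Omega$; in particular, any two subsequential limits of $\{f_n\}$ agree on $\Omega'$ and hence on all of $\Omega$. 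So there is a unique holomorphic function $\widetilde{f}$ on $\Omega$ which is the only possible subsequential limit, and $\widetilde{f} = f$ on $\Omega'$.

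Finally, I would promote subsequential convergence to convergence of the whole sequence by a standard contradiction argument: if $f_n \not\to \widetilde{f}$ uniformly on some compact $K \subset \Omega$, there exist $\varepsilon > 0$ and a subsequence $\{f_{n_k}\}$ with $\sup_K |f_{n_k} - \widetilde{f}| \geq \varepsilon$; applying Montel again to this subsequence yields a further subsequence converging uniformly on $K$ to some holomorphic function, which by the previous paragraph must be $\widetilde{f}$, a contradiction. This establishes uniform convergence on compact subsets of $\Omega$, and in particular extends the limit $f$ (a priori defined only on $\Omega'$) to a holomorphic function on all of $\Omega$ which coincides with $\widetilde{f}$. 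There is no serious obstacle here; the only subtlety is keeping track of the fact that the hypothesis only gives $f$ on $\Omega'$, so the conclusion implicitly asserts that $f$ extends holomorphically to $\Omega$ and the convergence $f_n \to f$ then holds in the strong sense on every compact subset of $\Omega$.
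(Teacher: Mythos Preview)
Your argument is correct and follows essentially the same route as the paper: Montel's theorem for normality, then a subsequence-contradiction argument to upgrade to convergence of the full sequence. The paper's proof is terser and leaves the identity-theorem step implicit, while you spell it out; one small point is that in the statement $f$ is already assumed holomorphic on all of $\Omega$, so your remark about extending $f$ from $\Omega'$ is not needed here, though the identity theorem is still what pins down the subsequential limit on the whole of $\Omega$.
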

\begin{proof}
    Since $f_n(z)$ is uniformly bounded, by Montel's theorem, any subsequence of $\{f_n\}$ has a subsequence that converges uniformly on any compact subset. Suppose the lemma is false, then there exists a subsequence of $f_n$ that converges locally uniformly but not to $f(z)$. This contradicts the second condition.
\end{proof}

    Therefore, the convergence part of \cref{thm:zeta-conv} follows from \cref{lem-hol conv} by combining \cref{thm:s large}, \cref{prop:uniform-bdd-SL2(R)} and \cref{prop:uniform-bdd-SL2(C)}.

\subsection{Uniform separation}

In this subsection, we verify the uniform separation property in \cref{eqn:uniforma seperation SL2R} of \cref{prop:uniform-bdd-SL2(R)} and \cref{eqn: uniform separation equation} of \cref{prop:uniform-bdd-SL2(C)} using the continuity of the hybrid model. The idea is to relate discs in Archimedean and non-Archimedean cases and then compute the discs under the action of $\Gamma$ in both cases. The key input is \cref{lem:relative ford}.

We need \cite[Lemma 3.2.2]{poineauBerkovichCurvesSchottky2021}\footnote{In \cite[Lemma 3.2.2]{poineauBerkovichCurvesSchottky2021}, they forget to take into account the $\epsilon$ in the Archimedean case. }
\begin{lem}\label{lem-gammaD}
    Let $k$ be a complete valued field. If $k$ is Archimedean, we suppose $(k,|\cdot|)$ is isometrically embedded into $(\C,|\cdot|_\infty^\epsilon)$ for some $\epsilon\in (0,1]$, where $|\cdot|_\infty$ is the usual absolute value on $\mathbb{C}$. Let $\rho>0$ and $\gamma=\begin{pmatrix}
        a & b\\ c & d
    \end{pmatrix}\in\PGL_2(k)$. If $\gamma D^+(\alpha,\rho)\subset \A_k^{1,an}$, then $|\alpha+(d/c)|>\rho$ and 
    \begin{equation}\label{equ:gamma ball}
       \gamma D^+(\alpha,\rho)=\begin{cases}
           & D^+\left((\frac{a}{c}-\frac{ad-bc}{c^2}\frac{\overline{\alpha+d/c}}{|\alpha+d/c|^{2/\epsilon}-\rho^{2/\epsilon}},\frac{|ad-bc|\rho}{|c|^2(|\alpha+d/c|^{2/\epsilon}-\rho^{2/\epsilon})^\epsilon}\right), 
           \text{ if }k \text{ is Archimedean;} \\
           & D^+\left(\frac{b+a\alpha}{c(\alpha+d/c)},\frac{|ad-bc|\rho}{|c|^2|\alpha+d/c|^2}\right), \text{ otherwise.} 
       \end{cases}
   \end{equation}
\end{lem}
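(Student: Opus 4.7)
The plan is to decompose $\gamma$ as a composition of elementary transformations and track the image step by step, handling both cases in parallel up to the inversion step. Writing
\[
\gamma(z)=\frac{a}{c}-\frac{ad-bc}{c^{2}}\cdot\frac{1}{z+d/c},
\]
we factor $\gamma$ as the translation $z\mapsto z+d/c$, the inversion $u\mapsto 1/u$, and the affine map $v\mapsto a/c-(ad-bc)v/c^{2}$. The opening observation is that $-d/c$ is the unique pole of $\gamma$, so the hypothesis $\gamma D^{+}(\alpha,\rho)\subset\A_{k}^{1,an}$ forces $-d/c\notin D^{+}(\alpha,\rho)$, i.e.\ $|\alpha+d/c|>\rho$. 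The translation is a $k$-isometry sending $D^{+}(\alpha,\rho)$ to $D^{+}(\alpha+d/c,\rho)$, and the final affine map scales centers and radii by $|ad-bc|/|c|^{2}$ in the obvious way. Thus the whole problem reduces to computing the image of $D^{+}(\beta,\rho)$ under $u\mapsto 1/u$ with $\beta=\alpha+d/c$ satisfying $|\beta|>\rho$, and then composing with the outer affine map.

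For the non-Archimedean case, the key input is the ultrametric inequality: for every point $x$ of the Berkovich disc $D^{+}(\alpha,\rho)$, one has $|T-\alpha|_{x}\leq\rho<|\alpha+d/c|$, so $|T+d/c|_{x}=|\alpha+d/c|$ is constant on the disc. Expanding
\[
\gamma(z)-\gamma(\alpha)=\frac{ad-bc}{c^{2}}\cdot\frac{z-\alpha}{(z+d/c)(\alpha+d/c)}
\]
then gives $|\gamma(z)-\gamma(\alpha)|_{x}\leq|ad-bc|\rho/(|c|^{2}|\alpha+d/c|^{2})$, with equality achieved on the Shilov boundary of $D^{+}(\alpha,\rho)$. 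Since $\gamma(\alpha)=(b+a\alpha)/(c(\alpha+d/c))$, this exhibits $\gamma D^{+}(\alpha,\rho)$ as exactly the closed disc announced in the lemma.

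For the Archimedean case, the isometric embedding $(k,|\cdot|)\hookrightarrow(\CC,|\cdot|_{\infty}^{\epsilon})$ identifies $D^{+}(\alpha,\rho)$ with the Euclidean disc of center $\alpha$ and Euclidean radius $\rho^{1/\epsilon}$. Reducing again to the inversion step, one applies the classical fact that $u\mapsto 1/u$ sends a Euclidean circle $|u-\beta|_{\infty}=r$ with $|\beta|_{\infty}>r$ to the Euclidean circle of center $\overline{\beta}/(|\beta|_{\infty}^{2}-r^{2})$ and Euclidean radius $r/(|\beta|_{\infty}^{2}-r^{2})$; this is a one-line computation starting from $(u-\beta)(\overline{u}-\overline{\beta})=r^{2}$, dividing through by $u\overline{u}$ and completing the square. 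Composing with the affine map $v\mapsto a/c-(ad-bc)v/c^{2}$ and then converting the Euclidean radius back to a $k$-norm radius by raising to the $\epsilon$-th power, together with the identities $|\beta|_{\infty}^{2}=|\alpha+d/c|^{2/\epsilon}$ and $r^{2}=\rho^{2/\epsilon}$, produces exactly the stated formula.

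The main obstacle is not conceptual but notational: the careful bookkeeping between Euclidean radii and $k$-norm radii in the Archimedean case, and in particular the appearance of the exponent $\epsilon$ in $(|\alpha+d/c|^{2/\epsilon}-\rho^{2/\epsilon})^{\epsilon}$ in the denominator of the radius, which originates from the conversion $|\cdot|=|\cdot|_{\infty}^{\epsilon}$. Everything else is a direct substitution once the reduction to the pure inversion step is carried out.
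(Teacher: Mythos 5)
The paper gives no proof of this lemma: it is quoted from Poineau--Turchetti (their Lemma 3.2.2), with only a footnote correcting the missing exponent $\epsilon$ in the Archimedean case, so there is no in-paper argument to compare with. Judged on its own, your argument is the standard proof and it is essentially correct; in particular it correctly produces the exponent $\epsilon$ that the cited reference omits. The decomposition of $\gamma$ into translation, inversion and affine map, the pole argument giving $|\alpha+d/c|>\rho$, the ultrametric identity $|T+d/c|_x=|\alpha+d/c|$ on the whole Berkovich disc, and the Archimedean bookkeeping $|\alpha+d/c|_\infty^2=|\alpha+d/c|^{2/\epsilon}$, $r^2=\rho^{2/\epsilon}$ followed by raising the Euclidean radius to the power $\epsilon$ all check out and yield exactly the stated centers and radii.

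The one step that is under-justified is the set equality in the non-Archimedean case. Your estimate $|\gamma(T)-\gamma(\alpha)|_x\leq |ad-bc|\,\rho/(|c|^2|\alpha+d/c|^2)$ for all $x\in D^+(\alpha,\rho)$ only gives the inclusion $\gamma D^+(\alpha,\rho)\subseteq D^+(\gamma(\alpha),R)$, and ``equality on the Shilov boundary'' only shows that the image is contained in no smaller concentric disc, not that it fills the closed disc. The standard fix stays entirely within your framework: apply the same computation to $\gamma^{-1}$ and the disc $D^+(\gamma(\alpha),R)$, after checking that this disc avoids the pole $a/c=\gamma(\infty)$ of $\gamma^{-1}$, which follows from $|\gamma(\alpha)-a/c|=|ad-bc|/(|c|^2|\alpha+d/c|)>R$ since $\rho<|\alpha+d/c|$; this gives $\gamma^{-1}D^+(\gamma(\alpha),R)\subseteq D^+(\alpha,\rho)$ and hence equality. (In the Archimedean case your circle-to-circle statement for the inversion already handles both inclusions, so no issue arises there, beyond the cosmetic remark that for $k\subsetneq\mathbb{C}$ one works in $\mathbb{C}$ via the isometric embedding and descends to $\A_k^{1,an}$.)
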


Recall that for the non-Archimedean case, an open disc $D^-(\beta,\lambda)$ is contained in a closed disc $D^+(\alpha,\rho)$ with $\alpha,\beta\in k$ and $\lambda,\rho >0$ iff
\begin{equation*}
\lambda<\rho \quad \text{and}\quad |\alpha-\beta|<\rho. 
\end{equation*}

We recall the continuous map $p:\overline{\D}_r\to \mathbb{A}^{3g-3,an}_{A_r}$ in
the hybrid model: for $0<|z|<r$, 
the Archimedean norm $|\cdot|_z$ evaluates a function $f\in \Ct$ which is holomorphic on $\Di_r^*$ and meromorphic at $0$ 
by the formula
\[ |f|_z=|f(z)|_{\infty}^{1/\log(1/|z|)} ,\]
and we have as $|z|\rightarrow 0$
\[|f|_z\rightarrow |f|_{na}.  \]
A ball with center $\alpha$ and radius $\rho$ satisfies
\[ D_z^\pm(\alpha,\rho):=D_\infty^\pm(\alpha,\rho^{\log(1/|z|) })=D_\infty^\pm(\alpha,|z|^{\log(1/\rho) }).\]
Here, we use the lower subscript $z$ (resp. $na$ ) to emphasize that the ball is measured using the norm $|\cdot|_z$ (resp. $|\cdot|_{na}$ ).

\begin{lem}\label{lem:separation}

Let $\gamma\in\PGL_2(\Ct)$, and $\alpha,\beta\in\Ct$ which are holomorphic on $\Di_r^*$ and meromorphic at $0$. Suppose for some $\rho,\lambda>0$, we have  $\gamma D_{na}^-(\alpha,\rho)\subset D_{na}^+(\beta,\lambda)$. Then there exists $v>0$ such that for any $0<|z|\ll 1$, we have
\begin{equation*}
\gamma_zD_z^-(\alpha(z),\rho)\subset D_z^+(\beta(z),\lambda e^{-v}) 
\end{equation*}
and in terms of the absolute norm, we have
\begin{equation*}
\gamma_zD_\infty^-(\alpha(z),|z|^{\log(1/\rho)})\subset |z|^v D_\infty^+(\beta(z),|z|^{\log(1/\lambda)}). 
\end{equation*}
\end{lem}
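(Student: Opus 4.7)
The plan is to apply \cref{lem-gammaD} in both the non-Archimedean and the Archimedean regimes and then compare the two explicit formulas asymptotically as $|z|\to 0$, using the continuity of the hybrid seminorm on meromorphic functions.

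Writing $\gamma=\begin{pmatrix}a&b\\c&d\end{pmatrix}$ with $a,b,c,d\in\Ct$ holomorphic on $\Di_r^*$ and meromorphic at $0$, set $\Delta=ad-bc$. Applied non-Archimedeanly, \cref{lem-gammaD} forces $|\alpha+d/c|_{na}\geq\rho$ and identifies $\gamma D^-_{na}(\alpha,\rho)=D^-_{na}(\gamma\alpha,R_{na})$ with $R_{na}:=|\Delta|_{na}\rho/(|c|_{na}^2|\alpha+d/c|_{na}^2)$; the hypothesis then reads $R_{na}\leq\lambda$ and $|\gamma\alpha-\beta|_{na}\leq\lambda$, which in the intended applications hold strictly with a positive ``log gap''. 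Applying \cref{lem-gammaD} in the Archimedean completion $(\C,|\cdot|_\infty^\epsilon)$ with $\epsilon=1/\log(1/|z|)$, and converting the $|\cdot|_z$-radius to the Euclidean radius by raising to the $1/\epsilon$-th power, gives that $\gamma_z D^-_z(\alpha(z),\rho)$ is a Euclidean disc of radius
\[\tilde R(z)=\frac{|\Delta(z)|_\infty\,\rho^{\log(1/|z|)}}{|c_z|_\infty^2\bigl(|\alpha(z)+d_z/c_z|_\infty^2-\rho^{2\log(1/|z|)}\bigr)},\]
and of an Euclidean center $c_{\text{img}}(z)$ with a similar explicit expression.

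I would then exploit the basic asymptotic $|f(z)|_\infty=|g(0)|\,|z|^{m_f}(1+O(|z|))$ for any meromorphic $f=t^{m_f}g$ with $g(0)\neq0$ (so $|f|_{na}=e^{-m_f}$). Since $|\alpha+d/c|_{na}>\rho$ strictly, the ratio of the two summands in the parentheses above decays like $|z|^{\mu_0}$ with $\mu_0=2\log(|\alpha+d/c|_{na}/\rho)>0$, yielding $\tilde R(z)=C|z|^{\log(1/R_{na})}(1+O(|z|^{\mu_0}))$. An analogous bookkeeping of the non-Archimedean/Archimedean difference of the center formulas shows $|c_{\text{img}}(z)-\gamma_z\alpha(z)|_\infty$ is a higher-order perturbation, and $|\gamma_z\alpha(z)-\beta(z)|_\infty\sim C'|z|^{\log(1/|\gamma\alpha-\beta|_{na})}$. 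Using the strict inequalities $R_{na}<\lambda$ and $|\gamma\alpha-\beta|_{na}<\lambda$, pick any $0<v<\min\bigl(\log(\lambda/R_{na}),\,\log(\lambda/|\gamma\alpha-\beta|_{na}),\,\mu_0\bigr)$; then both $\tilde R(z)$ and $|c_{\text{img}}(z)-\beta(z)|_\infty$ are bounded by $|z|^{\log(1/\lambda)+v}$ for all sufficiently small $|z|$. The usual Euclidean triangle inequality gives $\gamma_z D^-_\infty(\alpha(z),|z|^{\log(1/\rho)})\subset D^+_\infty(\beta(z),|z|^{\log(1/\lambda)+v})$, and the dictionary $D^\pm_z(c,r)=D^\pm_\infty(c,|z|^{\log(1/r)})$ converts this into both inclusions stated in the lemma.

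The main technical obstacle is to extract a \emph{genuine} polynomial margin $|z|^v$ rather than the weaker $O(1/\log(1/|z|))$ rate obtained by naively invoking the pointwise convergence $|f|_z\to|f|_{na}$. The polynomial slack emerges only because of the two structural strict inequalities: the separation $|\alpha+d/c|_{na}>\rho$ makes the ``correction'' $\rho^{2\log(1/|z|)}$ polynomially negligible compared with $|\alpha(z)+d_z/c_z|_\infty^2$, while the log-gap between $\max(R_{na},|\gamma\alpha-\beta|_{na})$ and $\lambda$ fixes the admissible exponent $v$. Care is also needed to check that the Archimedean image center converges to its non-Archimedean counterpart at a polynomial (rather than merely logarithmic) rate, which again hinges on $|\alpha+d/c|_{na}>\rho$.
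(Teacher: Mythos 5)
Your proposal is correct and follows essentially the same route as the paper's proof: apply \cref{lem-gammaD} in both the non-Archimedean and Archimedean ($\epsilon=1/\log(1/|z|)$) settings, compare the image radius and center with their non-Archimedean counterparts using the separation $|\alpha+d/c|_{na}>\rho$ (which makes the correction $(\rho/|\alpha+d/c|_z)^{2/\epsilon}$ negligible), and conclude by the triangle inequality. Your polynomial margin $|z|^v$ in the Euclidean norm is exactly the paper's $e^{-v}$ margin in the $|\cdot|_z$-norm, obtained there via the convergence $|\cdot|_z\to|\cdot|_{na}$ together with the strict inequalities \cref{equ:hypo}, so the two write-ups are equivalent.
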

\begin{proof}
Assume $\gamma=\begin{pmatrix} a& b\\ c&d\end{pmatrix}$ with $a,b,c,d\in \Ct$. 
From \cref{lem-gammaD} and $\gamma D_{na}^-(\alpha,r)\subset \A_k^{1,an} $ with $k=\Ct$, we have
\begin{equation}\label{equ:hypo}
    |\alpha+d/c|_{na}>\rho \quad \text{and}\quad \frac{|ad-bc|_{na}\rho}{|c|^2_{na}|\alpha+d/c|_{na}^2}<\lambda.
\end{equation}

For the Archimedean radius, let $\epsilon=1/\log(1/|z|)$, then by \cref{lem-gammaD} the radius of $\gamma_zD_z^-(\alpha(z),\rho)$ is equal to
\begin{equation*}
\frac{|ad-bc|_z\rho}{|c|_z^2(|\alpha+d/c|_z^{2/\epsilon}-\rho^{2/\epsilon})^\epsilon}=\frac{|(ad-bc)|_z\rho}{|c|_z^2|\alpha+d/c|_z^2}\frac{1}{(1-(\rho/|\alpha+d/c|_z)^{2/\epsilon})^\epsilon}. 
\end{equation*}
Due to $\rho/|\alpha+d/c|_z\rightarrow \rho/|\alpha+d/c|_{na}<1$ and $\epsilon\rightarrow 0$ as $|z|\rightarrow 0$, we obtain that 
\begin{equation}\label{equ:1-r}
    (1-(\rho/|\alpha+d/c|_z)^{2/\epsilon})^\epsilon\rightarrow 1, 
\end{equation}
as $|z|\rightarrow 0$.
Hence the Archimedean radius converges to the non-Archimedean radius, and by continuity, we obtain that for $|z|$ small, the Archimedean radius also satisfies
\begin{equation*}
 \frac{|ad-bc|_z\rho}{|c|_z^2(|\alpha+d/c|_z^{2/\epsilon}-\rho^{2/\epsilon})^\epsilon}<\lambda.  
\end{equation*}
In particular, there exists $v_0>0$ such that for $|z|$ small, we have a stronger inequality
\begin{equation*}
\frac{|ad-bc|_z\rho}{|c|_z^2(|\alpha+d/c|_z^{2/\epsilon}-\rho^{2/\epsilon})^\epsilon}<\lambda e^{-v_0}.  
\end{equation*}
In terms of the absolute norm, for $|z|$ sufficiently small, the Archimedean radius (the left-hand side of the following inequality) satisfies
\begin{equation*}
\left(\frac{|ad-bc|_zr}{|c|_z^2(|\alpha+d/c|_z^{2/\epsilon}-(r)^{2/\epsilon})^\epsilon}\right)^{1/\epsilon}\leq |z|^{v_0}\lambda^{1/\epsilon}. 
\end{equation*}

Next, we consider 
the center of the ball $\gamma_zD_z^+(\alpha,r)$ for $0\leq |z|\ll1$. For the non-Archimedean case (i.e., $z=0$), from the hypothesis, we have
$|\frac{b+a\alpha}{c(\alpha+d/c)}-\beta|_{na}<\lambda$. Using the convergence $|\cdot|_z\to |\cdot|_{na}$ as $|z|\to 0$, we obtain a constant $v_1>0$ such that for $0<|z|\ll 1$, we have
\begin{equation*}
\left|\frac{b+a\alpha}{c(\alpha+d/c)}-\beta\right|_{z}<e^{-v_1}\lambda,\quad \text{i.e.,}\quad \left|\frac{b+a\alpha}{c(\alpha+d/c)}-\beta\right|_{\infty}<|z|^{v_1}\lambda^{1/\epsilon}.
\end{equation*}
Using \cref{lem-gammaD},  the distance between the Archimedean centre $\delta_a$ and the non-Archimedean center $\delta_{na}$ (view them as functions of $z$) is given by
\begin{align*}
\delta_{na}-\delta_a=\frac{b+a\alpha}{c(\alpha+d/c)}-\left(\frac{a}{c}-\frac{ad-bc}{c^2}\frac{\overline{\alpha+d/c}}{|\alpha+(d/c)|_z^{2/\epsilon}-\rho^{2/\epsilon}}\right) &=-\frac{ad-bc}{c^2(\alpha+d/c)}\left(1-\frac{1}{1-(\rho/|\alpha+d/c|_z)^{2/\epsilon}}\right)\\
&=\frac{ad-bc}{c^2(\alpha+d/c)}\frac{(\rho/|\alpha+d/c|_z)^{2/\epsilon}}{1-(\rho/|\alpha+d/c|_z)^{2/\epsilon}}. 
\end{align*}
Its $|\cdot|_z$ norm equals (notice that for a real number $|x|_z=|x|^{\epsilon}$) 
\begin{equation}\label{equ:diff-cen}
\begin{split}
|\delta_{na}-\delta_a|_z&=\left|\frac{ad-bc}{c^2(\alpha+d/c)}\right|_z \left(\frac{\rho}{|(\alpha+d/c)|_z}\right)^2\frac{1}{(1-(\rho/|\alpha+d/c|_z)^{2/\epsilon})^\epsilon}\\
 &=\rho\left|\frac{ad-bc}{c^2(\alpha+d/c)^2}\right|_z \frac{\rho}{|(\alpha+d/c)|_z}\frac{1}{(1-(\rho/|\alpha+d/c|_z)^{2/\epsilon})^\epsilon}<\lambda, 
\end{split}
\end{equation}
where the inequality is due to \cref{equ:hypo} and \cref{equ:1-r}, and holds for $|z|$ small. As the estimate for the radius, we can replace $\lambda$ by $e^{-v_2}\lambda$ for some $v_2>0$ in the previous inequality, which holds
for $|z|$ small.
Combining the estimate of radius and distance between centers, we obtain for $|z|$ sufficiently small and any $x\in \gamma_zD^-_z(\alpha(z),\rho)$,
\begin{align*}
&|x-\beta|_z=|x-\beta|_{\infty}^{\epsilon}\leq (|x-\delta_{a}|_{\infty}+|\delta_{a}-\delta_{na}|_{\infty}+|\delta_{na}-\beta|_{\infty})^{\epsilon}\\
\leq &(|z|^{v_0}\lambda^{1/\epsilon}+|z|^{v_2}\lambda^{1/\epsilon}+|z|^{v_1}\lambda^{1/\epsilon})^{\epsilon}\leq 3^{\epsilon} \lambda e^{-v}<\lambda e^{-v/2},
\end{align*}
where $v=\min\{v_0,v_1,v_2\}$. This completes the proof.
\end{proof}
\begin{rem}
    From \cref{equ:diff-cen}, as $z\rightarrow 0$, the $|\cdot|_z$-norm of the difference of two centers does not go to zero. 
\end{rem}

We are ready to prove the main result of this part:
\begin{prop}\label{prop:separtion}
Suppose $\Gamma<\SL_2(\mathbb{M}(\mathbb{D}))$ is a family of Schottky groups satisfying $(\bigstar)$. There exists $v>0$ such that for any $0<|z|\ll 1$, the corresponding Schottky group and the Schottky basis given by \cref{lem:relative ford} satisfy
    \[ \mathfrak{a}_z(D_{\mathfrak{b},z})\subset |z|^v D_{\mathfrak{a},z},\]
   in terms of the absolute norm, for any generators $\mathfrak{a},\mathfrak{b}$ with $\mathfrak{ab}\neq id$. 
\end{prop}

\begin{proof}
Recall that
    \cref{lem:relative ford} gives a uniform Schottky basis for all $|z|$ small.
        In particular, for $z=0$, the Schottky basis for $\Gamma<\PGL_2(k)$ with $k=\Ct$ equipped with the norm $|\cdot|_{na}$ is given by
    \[\mathcal{B}=\{D^+_{\delta_i,\lambda_i},D^+_{\delta_i^{-1},\lambda_i^{-1}},\ i=1,\cdots, g \}, \]
   Here the closed discs are given as in \cref{equ:fam-ford}. Writing $\delta_i=\begin{pmatrix} a_i & b_i\\c_i&d_i\end{pmatrix}$, we have 
   \begin{align}
   \label{eqn:inclusion}
   &D^+_{\delta_i,\lambda_i}:=D_{na}^+\left(\delta_i^{-1}\infty, \lambda_i^{1/2}\left|\frac{(a_id_i-b_ic_i)}{c_i^2}\right|^{1/2}_{na}\right),\nonumber\\
  & \delta_i^{-1} D^{-}_{\delta_j,\lambda_j}\subset D^+_{\delta_i,\lambda_i} \quad \text{for any} \quad i\neq j.
   \end{align}

Then we apply \cref{lem:separation} to obtain the uniform separation. More precisely, take any $\delta_i=\frak{a}^{-1}$ and 
$\delta_j=\frak{b}^{-1}$ with $i\neq j$.  
For the non-Archimedean case $z=0$, the Schottky discs are given by 
\begin{equation}
\label{eqn:define a b}
D_{\frak{a},0}=D^-_{\delta_i,\lambda_i}\quad \text{and} \quad D_{\frak{b},0}=D^-_{\delta_j,\lambda_j}.
\end{equation}
The Schottky discs for the Archimedean case are given by balls (see \cref{equ:fam-ford} and \cref{eqn:disc convention})
\begin{equation*}
D_{\mathfrak{a},z}=D_z^-\left((\delta_i^{-1}\infty)(z), \lambda_i^{1/2}\left|\frac{(a_id_i-b_ic_i)}{c_i^2}\right|^{1/2}_z\right) \quad \text{and}\quad D_{\mathfrak{b},z}=D_z^-\left((\delta_j^{-1}\infty)(z), \lambda_j^{1/2}\left|\frac{(a_jd_j-b_jc_j)}{c_j^2}\right|^{1/2}_z\right) .
\end{equation*}

Set
\begin{equation*}
\lambda_{na}=\lambda_i^{1/2}\left|\frac{(a_id_i-b_ic_i)}{c_i^2}\right|^{1/2}_{na} \quad\text{and}\quad\rho_{na}=\lambda_j^{1/2}\left|\frac{(a_jd_j-b_jc_j)}{c_j^2}\right|^{1/2}_{na}.
\end{equation*}
Similarly, we introduce the notations $\lambda(z)$ and $\rho(z)$ by replacing $|\cdot|_{na}$ with $|\cdot|_z$.

We can apply \cref{lem:separation} to the discs $D_{\frak{b},0}$ and $D_{\frak{a},0}$ due to the property \cref{eqn:inclusion} and the convention \cref{eqn:define a b}, and use the convergence of the radii $\rho(z)\to \rho_{na}$ and $\lambda(z)\to \lambda_{na}$ as $z\to 0$. This yields a constant 
 $v'>0$ such that for any $0<|z|\ll 1$, we have
\begin{equation}
\label{eqn:convergence in z norm}
\frak{a}_z D^-_z\left((\delta^{-1}_j\infty)(z), \rho(z)\right)=(\delta_i^{-1})_z D^-_z\left((\delta^{-1}_j\infty)(z), \rho(z)\right)\subset D^-_z\left((\delta_i^{-1}\infty)(z),e^{-v'}\lambda(z)\right),
\end{equation}
and in terms of the absolute norm, we have
\begin{equation*}
\frak{a}_{z}  
D_{\mathfrak{b},z}\subset |z|^{v'}D_{\mathfrak{a},z}. 
\end{equation*}
%in terms of the absolute norm.

Repeating this argument for all possible pairs, we finish the proof.
\end{proof}
\begin{rem}
    This proposition also implies the radius estimate in \cref{lem:uniform-dis}.
\end{rem}

\begin{rem}\label{rem:real-schottky}
    If the Schottky group $\Gamma$ is indeed inside $\SL_2(\R(\!(t)\!))$, then for $z\in \R$ the discs $D_{\frak{a},z}$ are centered in $\R$ and the group $\Gamma_z$ preserves $\R\cup \infty$. Therefore, the intersections $I_{\frak{a},z}=D_{\frak{a},z}\cap\R$ give a Schottky figure for $\Gamma_z$ as a subgroup of $\SL_2(\R)$.
\end{rem}

\begin{cor}\label{cor:separtion}
Suppose $\Gamma<\SL_2(\mathbb{M}(\mathbb{D}))$ is a family of Schottky groups satisfying $(\bigstar)$. There exists $v_1>0$ such that for any $0<|z|\ll 1$ and for any $n\in \mathbb{N}$, the corresponding Schottky group and the Schottky basis given by \cref{lem:relative ford} satisfy
    \[ \mathfrak{a}_z(D_{\mathbf{b},z})\subset |z|^{v_1} D_{\mathbf{a},z},\]
    in terms of the absolute norm, where $\mathbf{a}=\mathfrak{a}\mathbf{b}'$ with $\frak{a}$
    a generator and $\mathbf{b}=\frak{b}_{i_1}\cdots \frak{b}_{i_n}\in \mathcal{W}^{n}$ satisfying $\mathfrak{ab}_{i_1}\neq id$.
    \end{cor}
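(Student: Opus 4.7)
The corollary extends Proposition~\ref{prop:separtion} (the case $n=1$) from single generators to arbitrary reduced words. The plan is to isolate the innermost letter of $\mathbf{b}$, apply Proposition~\ref{prop:separtion} there, and then transport the resulting $|z|^{v}$ gain through the remaining outer word via the uniform distortion estimate of Lemma~\ref{lem-unifrom-dis-ele}.

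Write $\mathbf{b}=\mathfrak{b}_{i_1}\cdots\mathfrak{b}_{i_n}$, so that $\mathbf{a}=\mathfrak{a}\mathfrak{b}_{i_1}\cdots\mathfrak{b}_{i_{n-1}}$, and set $\gamma:=\mathfrak{a}\mathfrak{b}_{i_1}\cdots\mathfrak{b}_{i_{n-2}}$ (the empty word when $n=1$, in which case the claim is Proposition~\ref{prop:separtion} itself). Directly from the definition of the discs $D_{\bullet,z}$, one has
\[
\mathfrak{a}_z(D_{\mathbf{b},z})=\gamma_z\bigl(\mathfrak{b}_{i_{n-1},z}(D_{\mathfrak{b}_{i_n},z})\bigr),
\qquad
D_{\mathbf{a},z}=\gamma_z(D_{\mathfrak{b}_{i_{n-1}},z}).
\]
Since $\mathbf{b}$ is reduced, $\mathfrak{b}_{i_{n-1}}\mathfrak{b}_{i_n}\ne\mathrm{id}$, so Proposition~\ref{prop:separtion} applied to the pair $(\mathfrak{b}_{i_{n-1}},\mathfrak{b}_{i_n})$ yields, for some fixed $v>0$ and all sufficiently small $|z|$,
\[
\mathfrak{b}_{i_{n-1},z}(D_{\mathfrak{b}_{i_n},z})\subset |z|^{v}\,D_{\mathfrak{b}_{i_{n-1}},z}.
\]
Simultaneously, $\gamma\mathfrak{b}_{i_{n-1}}=\mathbf{a}$ is reduced, so Lemma~\ref{lem-unifrom-dis-ele} (applied with outer word $\gamma$ and trailing generator $\mathfrak{b}_{i_{n-1}}$) supplies a constant $C\geq 1$, independent of $z$, of $\gamma$, and of $n$, such that
\[
\frac{\sup_{x\in D_{\mathfrak{b}_{i_{n-1}},z}}|\gamma_z'(x)|_z}{\inf_{x\in D_{\mathfrak{b}_{i_{n-1}},z}}|\gamma_z'(x)|_z}\leq C.
\]

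Working in the $|\cdot|_z$-norm, in which the factor $|z|^{v}$ becomes $e^{-v}$, the bounded-distortion inequality above implies (by the standard Möbius estimate $\mathrm{diam}_z(\gamma_zE)\leq\sup|\gamma_z'|_z\cdot\mathrm{diam}_z(E)$ and $\mathrm{diam}_z(\gamma_zD)\geq\inf|\gamma_z'|_z\cdot\mathrm{diam}_z(D)$ for a diameter-realising segment in $D$) that the $|\cdot|_z$-radius of $\mathfrak{a}_z(D_{\mathbf{b},z})$ is at most $Ce^{-v}$ times the $|\cdot|_z$-radius of $D_{\mathbf{a},z}$. Converting back through the relation $|\cdot|_z=|\cdot|_\infty^{1/\log(1/|z|)}$ produces an absolute-radius ratio of at most $|z|^{v-\log C}$. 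Therefore the claim holds with $v_1:=v-\log C$, uniformly in $n$.

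\paragraph{Main obstacle.}
The delicate point is ensuring that $v_1>0$, i.e.\ that the one-step separation $v$ from Proposition~\ref{prop:separtion} exceeds the distortion constant $\log C$. Since $C$ is a single constant depending only on $\Gamma$, the inspection of the proof of Lemma~\ref{lem:separation} shows that $v$ measures the strictly positive gap between the Archimedean and non-Archimedean norms induced by the non-Archimedean Schottky figure at $|z|=0$; this gap sharpens as $|z|\to 0$, so $v$ can be taken to exceed any fixed constant (in particular $\log C$) by shrinking $|z|$ sufficiently, or else by refining the Schottky basis to length-$N$ words for $N$ large enough to widen the gap while keeping $C$ uniformly bounded. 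Either adjustment gives $v_1>0$, completing the plan.
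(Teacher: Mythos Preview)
Your overall strategy coincides with the paper's: peel off the last letter so that $D_{\mathbf{a},z}=\gamma_z(D_{\mathfrak{b}_{i_{n-1}},z})$ and $\mathfrak{a}_z(D_{\mathbf{b},z})=\gamma_z(D_{\mathfrak{b}_{i_{n-1}}\mathfrak{b}_{i_n},z})$, invoke Proposition~\ref{prop:separtion} for the innermost inclusion, and push through $\gamma$ using the bounded distortion of Lemma~\ref{lem-unifrom-dis-ele}. The gap is in how you transfer the separation through $\gamma$.

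First, you only bound the \emph{radius} of the image and say nothing about its centre; the conclusion $\mathfrak{a}_z(D_{\mathbf{b},z})\subset |z|^{v_1}D_{\mathbf{a},z}$ also requires the centre to be close. More seriously, your radius ratio $Ce^{-v}$ in the $|\cdot|_z$-norm converts to $|z|^{v-\log C}$, so you are forced into the constraint $v>\log C$. Your proposed fixes do not work. The constant $v$ in Proposition~\ref{prop:separtion} is \emph{fixed}: inspecting the proof of Lemma~\ref{lem:separation}, each $v_i$ records a strict non-Archimedean inequality at $z=0$ and is independent of $|z|$; shrinking $|z|$ makes $|z|^v$ smaller but does not increase $v$. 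The alternative of passing to a length-$N$ Schottky basis is essentially the statement you are trying to prove, so it is circular.

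The paper avoids the constraint $v>\log C$ altogether by comparing \emph{distance to the boundary} rather than radii. For $w_1\in\partial D_{\mathfrak{b}_{i_{n-1}},z}$ and $w_2\in\partial D_{\mathfrak{b}_{i_{n-1}}\mathfrak{b}_{i_n},z}$ one has $|w_1-w_2|_z\ge(1-e^{-v})\lambda(z)$; the distortion bound then gives, with $C'=C\exp(R/(1-c))$,
\[
|\gamma w_1-\gamma w_2|_z\ \ge\ \tfrac{1}{2}\,C'^{-2}(1-e^{-v})\,|\gamma w_1-\gamma w_3|_z,
\]
where $\gamma w_3$ is the centre of $D_{\mathbf{a},z}$. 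Thus every point of $\mathfrak{a}_z(D_{\mathbf{b},z})$ lies within $(1-\kappa)$ of the centre of $D_{\mathbf{a},z}$ in $|\cdot|_z$-radius, with $\kappa=\tfrac{1}{2}C'^{-2}(1-e^{-v})>0$. Hence $v_1=-\log(1-\kappa)>0$ automatically, with no comparison between $v$ and $\log C$ required. Reformulating your distortion step in these terms closes the gap.
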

\begin{proof}
 Notice that $D_{\mathbf{a},z}=\frak{a}\frak{b}_{i_1}\cdots \frak{b}_{i_{n-2}}D_{\mathbf{b}_{i_{n-1}},z},\ \mathfrak{a}_z(D_{\mathbf{b},z})=\frak{a}\frak{b}_{i_1}\cdots \frak{b}_{i_{n-2}}D_{\mathbf{b}_{i_{n-1}}\mathbf{b}_{i_n},z}$. 
The idea is to apply the distortion estimate of \cref{lem-unifrom-dis-ele} to 
\begin{equation*}
\gamma:=\frak{a}\frak{b}_{i_1}\cdots \frak{b}_{i_{n-2}},\quad D(\alpha(z),\lambda(z)):=D_{\frak{b}_{i_{n-1}},z}, \quad D(\beta(z),\rho(z)):=D_{\frak{b}_{i_{n-1}}\frak{b}_{i_n},z}, 
\end{equation*}
and use the result that $D(\beta(z),\rho(z))\subset D(\alpha(z),e^{-v}\lambda(z))$
for some $v>0$ due to \cref{eqn:convergence in z norm}. More precisely, let $w_0$ be the center of $D(\alpha(z),\lambda(z))$. For any points $w_1\in \partial D(\alpha(z),\lambda(z))$ and $w_2\in \partial D(\beta(z),\rho(z))$, we have
\begin{equation}
\label{eqn:distance boundaries}
(1-e^{-v})\lambda(z) \leq |w_1-w_2|_z\leq \int_{0}^{1} \left|(\gamma^{-1}f(t))'\right|_z dt\leq |\gamma w_1-\gamma w_2|_z \cdot |\gamma'w_0|_z^{-1}\cdot C\exp(R/(1-c)),
\end{equation}
where $f:[0,1]\to \mathbb{C}$ is given by $f(t)=(1-t)\gamma w_2+t\gamma w_1$, and the constants $C, R>0$ and $c\in (0,1)$ are given as in \cref{lem-unifrom-dis-ele}.

Let $w_3\in D(\alpha(z),\lambda(z))$ be such that $\gamma w_3$ is the center of $\gamma D(\alpha(z),\lambda(z))$. By a similar argument, we have
\begin{equation}
\label{eqn:distance center boundary}
|\gamma w_3-\gamma w_1|_z\leq 2\lambda(z) \cdot |\gamma'(w_0)|_z\cdot C\exp(R/(1-c)).
\end{equation}
Combining \cref{eqn:distance boundaries} and \cref{eqn:distance center boundary}, we obtain
\begin{equation*}
|\gamma w_1-\gamma w_2|_z\geq \frac{1}{2} C^{-2} \exp(-2R/(1-c)) \cdot (1-e^{-v}) \cdot|\gamma w_3-\gamma w_1|_z,
\end{equation*}
which implies there exists some $v_1>0$ such that for $0<|z|\ll 1$, 
\begin{equation*}
\frak{a}_z(D_{\mathbf{b},z}) \subset |z|^{v_1} D_{\mathbf{a},z}
\end{equation*}
in terms of the absolute norm. 
\end{proof}

\section{Speed of convergence}
Let $\Gamma<\SL_2(\mathbb{M}(\mathbb{D}))$ be a family of Schottky groups satisfying $(\bigstar)$.
In this section, we establish a logarithmic rate for the convergence  $Z(\Gamma, s/\log(1/|z|))\to Z_{I}(\Gamma, s)$.

\begin{thm}
\label{prop:logarithmic convergence}
Given any bounded region $K\subset \mathbb{C}$, we have for any $0<|z|<1/e$, 
\begin{equation*}
\left|Z(\Gamma_z,s/\log(1/|z|))-{Z_{I}(\Gamma,s)}\right|\lesssim_{K} \frac{1}{\log(1/|z|)} \quad{for} \quad s\in K.
\end{equation*}
\end{thm}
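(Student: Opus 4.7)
The plan is to compare the Selberg and Ihara zeta functions through their Fredholm determinant representations. By \cref{lem:determinant formula SLR} and its $\SL_2(\CC)$ analogue, for any $N \geq 1$,
\[ Z(\Gamma_z, s/\log(1/|z|)) = \det\bigl(1 - \mathcal{L}_{s/\log(1/|z|),N}^{(z)}\bigr), \]
where $\mathcal{L}_{s,N}^{(z)}$ is the modified transfer operator on $\mathcal{H}_N = \bigoplus_{\mathbf{b} \in \mathcal{W}^N} \mathcal{H}(D_{\mathbf{b},z})$ (resp.\ the polydisc version). On the Ihara side, a Bass/Hashimoto-type determinant formula gives $Z_I(\Gamma, s) = \det(1 - T_s^N)$ where $T_s^N$ is a finite-rank operator on the space indexed by $\mathbf{b} \in \mathcal{W}^N$, with nonzero entries $e^{-s\ell^{na}_{\mathfrak{a},\mathbf{b}}}$ corresponding to the non-Archimedean edge lengths in $\Sigma_X$. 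Fix $N$ large enough.

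The central step is to approximate $\mathcal{L}_{s/\log(1/|z|),N}^{(z)}$ by a finite-rank operator. Decompose
\[ \mathcal{L}_{s/\log(1/|z|),N}^{(z)} = \mathcal{A}_{s,N}^{(z)} + \mathcal{R}_{s,N}^{(z)}, \]
where $\mathcal{A}_{s,N}^{(z)}$ acts on $\mathcal{H}(D_{\mathbf{a},z})$ by evaluating $u$ at $\mathfrak{a}(w_{\mathbf{b},z})$ (a chosen center of $D_{\mathbf{b},z}$), multiplying by $\mathfrak{a}'(w_{\mathbf{b},z}, z)^{s/\log(1/|z|)}$, and lifting to a constant in $\mathcal{H}(D_{\mathbf{b},z})$. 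Since $\mathfrak{a}(D_{\mathbf{b},z}) \subset |z|^{v_1} D_{\mathbf{a},z}$ by \cref{cor:separtion}, Cauchy estimates on holomorphic functions in the shrinking image disc together with the singular-value estimates of \cref{prop-mul ls} (resp.\ \cref{p:sing-est-3d}) yield
\[ \|\mathcal{R}_{s,N}^{(z)}\|_1 \lesssim_{K,N} |z|^{v_1}, \]
which is much smaller than $1/\log(1/|z|)$ for $0<|z|<1/e$.

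Next, compare $\mathcal{A}_{s,N}^{(z)}$ with $T_s^N$ on the common finite-dimensional space. Since $\mathfrak{a}'$ is rational in the meromorphic entries of $\mathfrak{a}$, a direct Laurent expansion at $z=0$ gives
\[ |\mathfrak{a}'(w_{\mathbf{b},z}, z)| = c_{\mathfrak{a},\mathbf{b}} |z|^{-\ell^{na}_{\mathfrak{a},\mathbf{b}}}\bigl(1 + O(|z|)\bigr), \]
where the exponent matches the edge length in $\Sigma_X$ via \cref{lem:na length} (comparing the non-Archimedean translation of $\mathfrak{a}$ acting on the Shilov point of $D_{\mathbf{b},0}^+$). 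Hence for $s \in K$,
\[ \mathfrak{a}'(w_{\mathbf{b},z}, z)^{s/\log(1/|z|)} = e^{-s\ell^{na}_{\mathfrak{a},\mathbf{b}}}\bigl(1 + O_K(1/\log(1/|z|))\bigr), \]
where the argument of $\mathfrak{a}'$ is handled by the $O(|z|^M)$ estimate of \cref{prop:uniform-bdd-SL2(R)} (resp.\ \cref{prop:uniform-bdd-SL2(C)}). This gives $\|\mathcal{A}_{s,N}^{(z)} - T_s^N\|_1 \lesssim_K 1/\log(1/|z|)$.

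The proof concludes by combining these estimates with the uniform trace-norm bound $\|\mathcal{L}_{s/\log(1/|z|),N}^{(z)}\|_1 \lesssim_{K,N} 1$ (also from \cref{prop:uniform-bdd-SL2(R)} and \cref{prop:uniform-bdd-SL2(C)}) via the Lipschitz inequality for Fredholm determinants,
\[ |\det(1-A) - \det(1-B)| \leq \|A-B\|_1 \exp(1 + \|A\|_1 + \|B\|_1). \]
The main obstacle is the finite-dimensional bookkeeping: one must correctly match the non-backtracking word-length-$N$ decomposition on the Schottky side with the edge-matrix formulation of $Z_I$, and verify that the leading meromorphic exponents $\ell^{na}_{\mathfrak{a},\mathbf{b}}$ of $|\mathfrak{a}'(w_{\mathbf{b},z}, z)|$ are precisely the edge lengths of $\Sigma_X$ (no rescaling), so that $\det(1 - T_s^N) = Z_I(\Gamma, s)$ holds on the nose rather than up to an extra additive term that would spoil the claimed rate.
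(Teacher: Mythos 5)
Your route is genuinely different from the paper's. The paper never compares the two determinants directly: it first proves the Euler-product estimate of \cref{lem:quant-bound-zeta} in a right half-plane, combines it with the uniform bounds of \cref{prop:uniform-bdd-SL2(R)} and \cref{prop:uniform-bdd-SL2(C)} and the Hadamard three-circle theorem to get $|Z(\Gamma_z,s/\log(1/|z|))-Z_0(\Gamma,z,s)|\lesssim |z|^{1-\epsilon}$ on compact sets (\cref{prop:speed-conv}), and only then compares $Z_0$ with $Z_I$ through the polynomial representations $Z_0=P_0(e^{s\mu_1(z)},\ldots,e^{s\mu_J(z)})$ and $Z_I=P_0(e^{sa_1},\ldots,e^{sa_J})$ of \cref{cor:analytic-middle} and \cref{lem:M'-M}, the rate $1/\log(1/|z|)$ coming from $\mu_j(z)-a_j=O(1/\log(1/|z|))$. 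You instead extract the rate directly at the level of Fredholm determinants by a finite-rank reduction of the transfer operator; if completed, this bypasses the half-plane/three-circle step for this particular statement, at the cost of not producing the intermediate zeta functions and the sharper $|z|^{1-\epsilon}$ comparisons.

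Two steps need real work, and the second is more than ``bookkeeping''. First, $\|\mathcal{R}^{(z)}_{s,N}\|_1\lesssim|z|^{v_1}$: since $\mathcal{A}^{(z)}_{s,N}$ is only finite rank you must split the singular values of $\mathcal{R}=\mathcal{L}-\mathcal{A}$, bounding the first $O(\#\mathcal{W}^N)$ of them by the operator norm of the difference (Cauchy estimates on the $|z|^{v_1}$-contracted image discs from \cref{cor:separtion}, plus distortion control of the weight) and the tail by $\mu_\ell(\mathcal{L}_{s/\log(1/|z|),N})\lesssim_K |z|^{c\ell}$ from \cref{prop-mul ls} and \cref{p:sing-est-3d}; any positive power of $|z|$ suffices, so this is within reach of what you cite. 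Second, and crucially, the identity $\det(1-T^N_s)=Z_I(\Gamma,s)$, with $T^N_s$ built from the leading exponents of the derivatives, is precisely what the paper has to prove and what you leave open: you need (i) that $|\mathfrak{a}'(x)|_{na}$ is constant on the relevant disc (the $M=0$ case of \cref{lem:lc-constant}), (ii) the cocycle identity $\ell^{na}(\gamma)=\sum_k\ell^{na}_{\mathfrak{a}_{i_k},(\mathfrak{a}_{i_{k+1}}\cdots)}$ for cyclically reduced words, which follows from the chain rule at the attracting fixed point and $\ell^{na}(\gamma)=-\log|\gamma'(\gamma_+)|_{na}$ (the $M=0$ case of \cref{prop:lc-cocycle}), and (iii) the trace computation identifying the resulting determinant with $\prod_{[\gamma]}(1-e^{-s\ell^{na}(\gamma)})=Z_I(\Gamma,s)$ as in \cref{prop:ZI} and \cref{prop:na-Ihara}; without (i)--(iii) the claimed equality ``on the nose'' is unproved. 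In addition, the uniform statement $\log|\mathfrak{a}'(w_{\mathbf{b},z})|=-\ell^{na}_{\mathfrak{a},\mathbf{b}}\log(1/|z|)+O(1)$ requires the lower bound on the leading Laurent coefficient, i.e.\ \cref{prop:expansion-der-cycl} (note also the sign: on the contracted disc $|\mathfrak{a}'(w_{\mathbf{b},z})|\approx c\,|z|^{+\ell^{na}_{\mathfrak{a},\mathbf{b}}}$, not $|z|^{-\ell^{na}_{\mathfrak{a},\mathbf{b}}}$), the comparison of $\mathcal{A}^{(z)}_{s,N}$ with $T^N_s$ should be phrased on a common space (e.g.\ restrict both to the locally constant functions, or compare the two finite matrices), and your estimates hold for $0<|z|\ll_N 1$, the full range $0<|z|<1/e$ being recovered by continuity and compactness as in the paper. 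With these points supplied — and at $M=0$ they are easier than the general \cref{prop:lc-cocycle} — your argument closes.
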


\subsection{Expansion of lengths}

The key to obtaining an effective convergence result such as \cref{prop:logarithmic convergence} lies in establishing the following expansion of $\ell(\gamma_z)$ with the leading term $\ell^{na}(\gamma)\log(1/|z|)$ and exponential bounds for the coefficients $a_j(\gamma)$.

\begin{prop}
\label{prop:expansion of length function}
 There exists a constant $C>0$ that depends only on $\Gamma$ such that for any $\gamma\in \Gamma\backslash \{id\}$, we have for any $0<|z|<e^{-C\ell^{na}(\gamma)}$, the function $\ell(\gamma_z)$ in $z$ has the expansion
\begin{equation}
\label{equ:expansion of length function}
\ell(\gamma_z) =\ell^{na}(\gamma) \log(1/|z|)+\Re \left(\sum_{j\geq 0} a_j(\gamma)z^j\right),
\end{equation}
where $a_j(\gamma)$'s are complex numbers satisfying
\begin{equation}\label{eq:expansion-a_j}
        |a_0(\gamma)|\leq C\ell^{na}(\gamma)\quad \text{and}\quad|a_j(\gamma)|\leq Ce^{C\ell^{na}(\gamma)(j+1)} \quad \text{for}\quad j\in \mathbb{N}.
    \end{equation}
\end{prop}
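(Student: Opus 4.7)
The plan is to realise $\ell(\gamma_z)-\ell^{na}(\gamma)\log(1/|z|)$ as the real part of an explicit holomorphic function on a small disc around $z=0$, then apply Cauchy's estimate to bound its Taylor coefficients.

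First I would use $\ell(\gamma_z)=2\Re\log\lambda_+(\gamma_z)$, where $\lambda_+(\gamma_z)=\tfrac{1}{2}(\Tr(\gamma_z)+\sqrt{\Tr(\gamma_z)^2-4})$ is the expanding eigenvalue. Since $\gamma\in\SL_2(\mathcal{M}(\mathbb{D}))$ is loxodromic when viewed in $\SL_2(\Ct)$, setting $m:=\ell^{na}(\gamma)/2\in\mathbb{Z}_{>0}$ we have $\Tr(\gamma_z)=z^{-m}g(z)$ with $g$ holomorphic on $\mathbb{D}$ and $g(0)\neq 0$. Factoring
\begin{equation*}
\log\lambda_+(\gamma_z)=\log\Tr(\gamma_z)+\log\left(\tfrac{1+\sqrt{1-4z^{2m}/g(z)^2}}{2}\right)
\end{equation*}
and taking $2\Re$ gives $\ell(\gamma_z)=\ell^{na}(\gamma)\log(1/|z|)+\Re G(z)$ with
\begin{equation*}
G(z):=2\log g(z)+2\log\left(\tfrac{1+\sqrt{1-4z^{2m}/g(z)^2}}{2}\right),
\end{equation*}
so the $a_j(\gamma)$'s are precisely the Taylor coefficients of $G$ at $0$.

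The crux is then producing two-sided bounds on $g$. Replacing $\gamma$ by a cyclically reduced representative (which does not change $\ell(\gamma_z)$), \cref{lem:word-na} gives $l(\gamma)\lesssim\ell^{na}(\gamma)$. Submultiplicativity of matrix norms on $|z|=1/e$, uniform bounds on the generators, and the maximum modulus principle applied to $g$ yield $\sup_{|z|\leq 1/e}|g(z)|\leq e^{C_1\ell^{na}(\gamma)}$. For the lower bound, I would first establish $|g_0|:=|g(0)|\geq e^{-C_2\ell^{na}(\gamma)}$ and then propagate it via
\begin{equation*}
|g(z)-g_0|\leq \tfrac{|z|}{1/e-|z|}\sup_{|w|=1/e}|g(w)|
\end{equation*}
to $|g(z)|\geq |g_0|/2$ on a disc $|z|\leq R:=e^{-C_3\ell^{na}(\gamma)}$ for $C_3$ sufficiently large. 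Shrinking $R$ if necessary makes $4|z|^{2m}/|g(z)|^2\leq 1/2$ on this disc, so $G$ is holomorphic and single-valued there with $|G(z)|\leq C_4\ell^{na}(\gamma)$. Cauchy's formula then delivers $|a_j(\gamma)|\leq R^{-j}\sup_{|z|=R}|G(z)|\leq Ce^{C(j+1)\ell^{na}(\gamma)}$ for $j\geq 1$, while $|a_0(\gamma)|=|G(0)|=2|\log g_0|\leq C\ell^{na}(\gamma)$ follows directly from the two-sided bound on $|g_0|$.

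The main obstacle is the lower bound $|g_0|\geq e^{-C_2\ell^{na}(\gamma)}$ on the leading Laurent coefficient of $\Tr(\gamma_z)$. The naive route through $|\Tr(\gamma_z)|\geq\tfrac{1}{2} e^{\ell(\gamma_z)/2}$ combined with \cref{lem-uniform ell} yields only $|g(z)|\gtrsim|z|^{(1-c)\ell^{na}(\gamma)/2}$, which vanishes at $z=0$ and so contains no information about $g_0$ itself. To overcome this I would extract the dominant non-Archimedean term of the matrix product $\gamma_z=(\mathfrak{a}_{i_1})_z\cdots(\mathfrak{a}_{i_n})_z$ explicitly: the uniform Schottky figure of \cref{lem:relative ford} and the contraction rates built into $(\bigstar)$ allow one to identify the leading monomial of each entry of $\gamma_z$, and hence of $\Tr(\gamma_z)$, as a product of the generators' leading coefficients, giving $|g_0|\gtrsim M^{-l(\gamma)}\geq e^{-C\ell^{na}(\gamma)}$ after another application of \cref{lem:word-na}.
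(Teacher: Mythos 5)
Your overall scheme (write $\tr(\gamma_z)=z^{-\ell^{na}(\gamma)/2}g(z)$, factor $\log\lambda_+$ into $\log\tr$ plus a square-root correction, get two-sided bounds on $g$, and run Cauchy estimates on a disc of radius $e^{-C\ell^{na}(\gamma)}$) parallels the paper's proof, and the upper bound on $g$ together with the Cauchy step are fine. The genuine gap is exactly where you place the crux: the lower bound $|g_0|\geq e^{-C\ell^{na}(\gamma)}$ on the leading Laurent coefficient. Your proposed argument --- that the Schottky figure and the contraction rates let you ``identify the leading monomial of each entry of $\gamma_z$, and hence of $\tr(\gamma_z)$, as a product of the generators' leading coefficients'' --- does not hold as stated. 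The trace of a word of length $n$ is a sum of exponentially many products of generator entries, many of which share the same $t$-order; the ping-pong/Schottky structure guarantees the \emph{valuation} $\ord_{t=0}(\tr\gamma)=-\ell^{na}(\gamma)/2$ (which you already used to define $g$), but it says nothing direct about the \emph{Archimedean size} of the resulting complex coefficient, which could a priori be tiny after near-cancellation. The paper itself flags this lower bound as the genuinely hard point (see the remark following \cref{prop:lc-cocycle}), and its second, ``geometric'' proof of it (\cref{prop:lc-cocycle} and \cref{prop:expansion-der-cycl}) is not a product formula but a careful analysis using the uniform separation of discs \emph{plus} harmonic-function lower bounds; so the geometric route you gesture at cannot be closed in the one line you give.

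Moreover, you dismiss too quickly the route that actually works and that the paper uses in \cref{prop:laurent-est}. The bound $|\tr(\gamma_z)|\geq \tfrac12 e^{\ell(\gamma_z)/2}$ combined with \cref{lem-uniform ell} gives $|g(z)|\gtrsim |z|^{(1-c)\ell^{na}(\gamma)/2}$; you say this ``contains no information about $g_0$,'' but it does, once you add one observation: since $\gamma_z$ is loxodromic for every $z\in\mathbb{D}^*$, $\tr(\gamma_z)$ never vanishes, so $g$ is zero-free on $\mathbb{D}$ and $\log|g|$ is harmonic there. Evaluating the lower bound on the fixed circle $|z|=c/2$ (radius independent of $\gamma$) gives $\log|g|\geq -C\ell^{na}(\gamma)$ on that circle, and the minimum principle (or mean-value property) for the harmonic function $\log|g|$ transfers this to the center, yielding precisely $\log|g_0|\geq -C\ell^{na}(\gamma)$. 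Replacing your last paragraph by this argument closes the gap and makes the rest of your plan go through.
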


The proof of \cref{prop:expansion of length function} is based on the following Laurent expansion of the trace.
\begin{prop}\label{prop:laurent-est}
 There exists a constant $C>0$ that depends only on $\Gamma$ such that for any $\gamma\in \Gamma\backslash \{id\}$, the function $\tr(\gamma_z)$ in $z$ has the Laurent expansion
\begin{equation}
\label{Laurent trace}
 \tr(\gamma_z)=z^{-\ell^{na}(\gamma)/2}\left(\sum_{j\geq 0}A_j(\gamma)z^j\right)
\end{equation}
where $A_j(\gamma)$'s are complex numbers satisfying
    \begin{equation}
        |A_0(\gamma)|\geq e^{-C\ell^{na}(\gamma)}/C\quad \text{and}\quad|A_j(\gamma)|\leq Ce^{j+C\ell^{na}(\gamma)}\quad \text{for}\quad j\in\mathbb{Z}_{\geq 0}.
    \end{equation}

\end{prop}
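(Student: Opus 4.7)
The plan is to (i) obtain the Laurent shape from the non-Archimedean valuation, (ii) prove the upper bound $|A_j|\le Ce^{j+C\ell^{na}(\gamma)}$ by propagating the hybrid norm through matrix multiplication, and (iii) prove the (harder) lower bound $|A_0|\ge e^{-C\ell^{na}(\gamma)}/C$ via the attracting eigenvalue of $\gamma_z$ and a chain-rule decomposition controlled by the uniform Schottky figure. Step (i) is immediate: since $\gamma\in\SL_2(\mathbb{M}(\mathbb{D}))$, $\tr(\gamma_z)$ is meromorphic on $\mathbb{D}$, and the identity $|\tr\gamma|_{na}=e^{\ell^{na}(\gamma)/2}$ from \cref{defi:na length} says exactly that $\tr(\gamma_z)$ has a pole of order precisely $\ell^{na}(\gamma)/2$ at $z=0$ with nonzero leading Laurent coefficient, producing the form \eqref{Laurent trace}.

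\medskip

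For (ii), replacing $\gamma$ by a cyclic conjugate does not change $\tr(\gamma_z)$, so we may assume $\gamma$ is cyclically reduced; then \cref{lem:word-na} gives $l(\gamma)\le C_1\ell^{na}(\gamma)$. For each generator, write $\mathfrak{a}_i=t^{-n_i}B_i$ with $B_i$ a $2\times 2$ matrix whose entries lie in $A_{1/e}$, and let $M$ and $n_{\max}$ denote the maximum hybrid norm of the entries of the $B_i$'s and the maximum of the $n_i$'s respectively (constants depending only on $\Gamma$). The hybrid norm is submultiplicative and a $2\times 2$ matrix product loses at most a factor $2$ per step, so with $N:=n_{i_1}+\cdots+n_{i_n}\le n\,n_{\max}$ one obtains $\|t^N\tr(\gamma_z)\|_{\hyb}\le 2(2M)^n$. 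Since the actual pole order of $\tr(\gamma_z)$ is $\ell^{na}(\gamma)/2\le N$, multiplying by $t^{-(N-\ell^{na}(\gamma)/2)}$ costs at most a factor $e^{N-\ell^{na}(\gamma)/2}$ in hybrid norm (because $r=1/e$), yielding $\|t^{\ell^{na}(\gamma)/2}\tr(\gamma_z)\|_{\hyb}\le e^{C_2\ell^{na}(\gamma)}$. Reading off coefficients of the resulting power series gives $|A_j|\le e^j\,e^{C_2\ell^{na}(\gamma)}$.

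\medskip

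For (iii), write $\tr(\gamma_z)=\lambda(z)+\lambda(z)^{-1}$ with $\lambda(z)\in\Ct$ the larger eigenvalue, so that $|\lambda|_{na}=e^{\ell^{na}(\gamma)/2}$ and $A_0$ is the leading Laurent coefficient of $\lambda$. From $\lambda^{-2}=\gamma_z'(\alpha_{\gamma,z})$ (the Möbius derivative at the attracting fixed point) and the chain rule,
\[
\gamma_z'(\alpha_{\gamma,z})=\prod_{j=1}^n\mathfrak{a}_{i_j}'(w_{j,z}),\qquad w_{j,z}:=\mathfrak{a}_{i_{j+1}}\cdots\mathfrak{a}_{i_n}\,\alpha_{\gamma,z}\in D_{\mathfrak{a}_{i_{j+1}},z}.
\]
Multiplication in $\Ct$ is exact on leading Laurent coefficients, so it suffices to bound the modulus of the leading Laurent coefficient of each factor $\mathfrak{a}_{i_j}'(w_{j,z})=(c_{i_j}w_{j,z}+d_{i_j})^{-2}$ above by a universal constant. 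In the non-Archimedean limit $z\to 0$, the disc $D_{\mathfrak{a}_{i_{j+1}},0}$ containing $w_{j,0}$ is disjoint from $D_{\mathfrak{a}_{i_j}^{-1},0}$ containing the pole $-d_{i_j}/c_{i_j}$ of $\mathfrak{a}_{i_j}$, and the uniform Schottky figure of \cref{lem:relative ford} guarantees a uniform positive lower bound on the non-Archimedean distance between these two discs. Combining this with the finiteness of the generator set yields a universal $c_1>0$ bounding the modulus of the leading Laurent coefficient of each factor from above. Taking the product over $j$, extracting a square root, and using $n\le C_1\ell^{na}(\gamma)$ then gives $|A_0|\ge c_1^{n/2}\ge e^{-C\ell^{na}(\gamma)}/C$.

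\medskip

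The main obstacle is step (iii): unlike the upper bound, where the hybrid norm provides a clean estimate depending only on matrix sizes, controlling the leading Laurent coefficient of each factor $\mathfrak{a}_{i_j}'(w_{j,z})$ requires knowing not just the non-Archimedean size of $c_{i_j}w_{j,0}+d_{i_j}\in\Ct$ but the actual complex modulus of its leading coefficient. This is precisely where the uniform non-Archimedean Schottky structure is essential: $w_{j,0}$ is forced to lie in a fixed Schottky disc of $\Gamma<\PGL_2(\Ct)$ disjoint from the pole of $\mathfrak{a}_{i_j}$, so that the leading Laurent coefficients of the entries of $\mathfrak{a}_{i_j}$ and of $w_{j,0}$ combine without catastrophic cancellation, yielding the universal constant $c_1$.
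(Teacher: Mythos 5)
Your steps (i) and (ii) are fine and essentially reproduce the paper's argument: the Laurent shape comes from meromorphy plus $|\tr\gamma|_{na}=e^{\ell^{na}(\gamma)/2}$, and the upper bound on $|A_j|$ comes from submultiplicativity of the hybrid norm along a cyclically reduced word together with $l(\gamma)\lesssim \ell^{na}(\gamma)$ (\cref{lem:word-na}).

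Step (iii), however, has a genuine gap exactly at the point you flag as the "main obstacle." You reduce the lower bound on $|A_0|$ to an upper bound, uniform over all $\gamma$, on the complex modulus of the leading Laurent coefficient of each factor $(c_{i_j}w_{j,0}+d_{i_j})^{-2}$, and you claim this follows from the disjointness of the non-Archimedean Schottky discs. It does not: the norm $|\cdot|_{na}$ only records the $t$-adic order of an element of $\Ct$, so a uniform non-Archimedean distance between $w_{j,0}$ and the pole $-d_{i_j}/c_{i_j}$ bounds the order of $c_{i_j}w_{j,0}+d_{i_j}$ but says nothing about the size of its leading complex coefficient; when $w_{j,0}$ and $-d_{i_j}/c_{i_j}$ have the same $t$-adic order, their leading coefficients can nearly cancel without violating any Berkovich-disc separation, and since the orbit points $w_{j,0}$ vary over all of $\Gamma$, uniformity is a real issue, not a finiteness-of-generators issue. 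This is precisely why the paper's own proof of the proposition avoids the cocycle route altogether: it uses \cref{lem-uniform ell} (an Archimedean statement, proved via the uniform Archimedean separation of the Schottky discs) to get $|\tr(\gamma_z)|\geq 1$ on a fixed circle $|z|=c/2$, notes that $f(z)=A_0+A_1z+\cdots$ is zero-free on $\mathbb{D}$ because $\gamma_z$ is loxodromic, and then applies the mean value property of the harmonic function $\log|f|$ to get $\log|A_0|\geq -\tfrac12\ell^{na}(\gamma)\log(2/c)$. Your cocycle idea is in fact viable — the paper carries it out later in \cref{prop:lc-cocycle} and \cref{prop:expansion-der-cycl} and remarks that this gives a second proof of the lower bound — but there the needed lower bound on leading coefficients is obtained from the \emph{Archimedean} uniform separation $\mathfrak{a}_z(D_{\mathbf{b},z})\subset|z|^{v}D_{\mathbf{a},z}$ (\cref{prop:separtion}, \cref{cor:separtion}) combined with a harmonic-function/mean-value argument applied to the relevant holomorphic functions of $z$, i.e. exactly the Archimedean input your sketch is missing. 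As written, your proof of the lower bound is an assertion rather than an argument; to repair it you must either import the paper's harmonic-function argument for each factor or switch to the trace-based route.
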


\begin{proof}
Since each entry of $\gamma$ is an element of $\Ct$,  $\tr(\gamma)$ is also an element of $\Ct$, and the first term on the right-hand side of \cref{Laurent trace} is obtained using the definition of $\ell^{na}(\gamma)$. 

For the upper bound of $A_j(\gamma)$, recall $\{{\mathfrak{a}_k},\ 1\leq k\leq 2g  \}$ be a finite symmetric generating set of $\Gamma$.  Write $\gamma={\mathfrak{a}_{i_1}}\cdots{\mathfrak{a}_{i_m}}$ which we may assume the word on the right-hand side is cyclically reduced as $\Tr(\gamma_z)$ is invariant under conjugation. It follows from \cref{lem:word-na} that  
$m\lesssim \ell^{na}(\gamma)$.  
Each entry $\gamma (i,l)$ of $\gamma$ is a sum of at most $2^m$ terms, with each term a product of $m$ entries of generators. 
By subadditivity and submultiplicativity of the hybrid norm, the entries of $\gamma$ satisfy\footnote{The finiteness of hybrid norm follows from that the entries are meromorphic function on the unit disc with possible pole at $0$.}
\[ \sup_{\substack{1 \leq i,l\leq 2 }}\|\gamma(i,l)\|_{\mathrm{hyb}}\leq 2^m \sup_{\substack{1\leq k\leq 2g\\1 \leq i,l\leq 2 }}\{ \|\mathfrak{a}_{k}(i,l) \|_{\mathrm{hyb}} \}^m\leq e^{C\ell^{na}(\gamma)}. \]
Each entry of $\gamma$ has the form $\sum_{j\geq j_0}c_jz^j$. By the definition of hybrid norm, we have $|c_j|\leq \|\sum_{j\geq j_0}c_jz^j\|_{\mathrm{hyb}}e^{j}$. Combining these two estimates, we obtain the upper bound of $A_j(\gamma)$.

Now we prove the lower bound. By \cref{lem-uniform ell}, there exists $c>0$ such that for any conjugacy class $[\gamma]$ with $\ell^{na}(\gamma)>1/c$ and $0<|z|<c$, we have
    \[\ell(\gamma_z)\geq c\log(1/|z|)\ell^{na}(\gamma). \]
    Let $\lambda_1(\gamma_z),\lambda_2(\gamma_z)$ be the two eigenvalues of $\gamma_z$ with $|\lambda_1(\gamma_z)|>1$. Notice that $2\log|\lambda_1(\gamma_z)|=\ell(\gamma_z)=-2\log|\lambda_2(\gamma_z)|$. For $0<|z|<c$
    \begin{equation}\label{equ:trgg}
     |\tr(\gamma_z)|\geq e^{\ell(\gamma_z)/2}-e^{-\ell(\gamma_z)/2}\geq \frac{1}{2}(1/|z|)^{c\ell^{na}(\gamma)/2}\geq 1. 
    \end{equation}
    Recall
    \begin{equation}\label{equ:trgamma}
     \tr(\gamma_z)=z^{-\ell^{na}(\gamma)/2}(A_0(\gamma)+A_1(\gamma)z+\cdots). 
    \end{equation}
    Let $f(z)=A_0(\gamma)+A_1(\gamma)z+\cdots$, which is an analytic function on $|z|<1/e$. 
    Using \cref{equ:trgg,equ:trgamma} we obtain for $0<|z|<c$,
    \[ \log|f(z)|\geq -\frac{1}{2}\ell^{na}(\gamma)\log(1/|z|). \]
    Since $\gamma_z$ is loxodromic for all $z\in\D^*$, the analytic function $f(z)$ has no zeros in $\D$. Applying the maximal principle to the harmonic function $\log |f(z)|$, we have
    \begin{equation*}
        \log |A_0(\gamma)| = \log |f(0)| \geq \min_{|z|=c/2} \log | f(z) | \geq -\frac{1}{2}\ell^{na}(\gamma)\log(2/c).
    \end{equation*}
    Therefore, we obtain an exponential lower bound of $|A_0(\gamma)|$ in terms of $\ell^{na}(\gamma)$.
    \end{proof}
\begin{proof}[Proof of \cref{prop:expansion of length function}] We use the same notations as in the proof of \cref{prop:laurent-est}. We have $\lambda_1(\gamma_z)=\tr(\gamma_z)\left(\frac{1}{2}+\sqrt{\frac{1}{4}-\frac{1}{\tr(\gamma_z)^2}}\right)$ \footnote{On $\C\backslash (-\infty,0]$, we choose the principal of $\log z$: $\log z=\log |z|+i\arg z$ with $|\arg z|<\pi$; set $\sqrt{z}=e^{\log z/2}$.} is a meromorphic function at $0$. This is because 
\Cref{equ:trgg} gives $\lambda_{2}(\gamma_z)/\lambda_1(\gamma_z)\to 0$ as $z\to 0$, which implies $\log(\frac{1}{4}-\frac{1}{\tr(\gamma_z)^2})=\log\left(\frac{1-\lambda_2(\gamma_z)/\lambda_1(\gamma_z)}{2(1+\lambda_2(\gamma_z)/\lambda_1(\gamma_z))}\right)^2=2\log\left(\frac{1-\lambda_2(\gamma_z)/\lambda_1(\gamma_z)}{2(1+\lambda_2(\gamma_z)/\lambda_1(\gamma_z))}\right)$ is holomorphic in a neighborhood of $0$. We continue to use \Cref{Laurent trace} to obtain
\begin{align*}
\lambda_1(\gamma_z)&=z^{-\ell^{na}(\gamma)/2} A_0(\gamma) \left(1+\sum\limits_{j\geq 1} \frac{A_j(\gamma)}{A_0(\gamma)}z^j\right) \left(\frac{1}{2}+\sqrt{\frac{1}{4}-\frac{z^{\ell^{na}(\gamma)}}{A_0(\gamma)^2}\left(1+\sum\limits_{j\geq 1} \frac{A_j(\gamma)}{A_0(\gamma)}z^j\right)^{-2}}\right)\\
&=: z^{-\ell^{na}(\gamma)/2} A_0(\gamma)\cdot f(z)\cdot g(z).
\end{align*}

By the previous estimate, we have $|A_j(\gamma)/A_0(\gamma)|\leq e^{C\ell^{na}(\gamma)+j}$ and $|A_0(\gamma)|^{-1}\leq e^{C\ell^{na}(\gamma)}$ for some $C>0$. Thus for $|z|<e^{-10C\ell^{na}(\gamma)}/20$, we have the bounds
\begin{equation*}
    \left|\sum\limits_{j\geq 1} \frac{A_j(\gamma)}{A_0(\gamma)}z^j\right|\leq 2e^{2C\ell^{na}(\gamma)}|z|\leq \frac{1}{10},\quad \frac{|z|}{|A_0(\gamma)|^2}\leq e^{2C\ell^{na}(\gamma)}|z|\leq \frac{1}{10}.
\end{equation*}
This implies that on the disc $|z|<e^{-10C\ell^{na}(\gamma)}/20$
\begin{equation*}
\log(f(z)g(z))=\log(f(z)) \log (g(z)).
\end{equation*}
Moreover, we apply Cauchy's integral formula to obtain upper bounds of the form \cref{eq:expansion-a_j} for the coefficients of the power series expansion of $\log (f(z))$ and $\log (g(z))$. We complete the proof by using the equation
\begin{align}
\label{equ:obtaining lz using lambdaz}
\ell(\gamma_z)&=2\log|\lambda_1(\gamma_z)|=\ell^{na}(\gamma) \log(1/|z|) +2\log |A_0(\gamma)|+2\log |f(z)|+2\log |g(z)|\nonumber\\
&=\ell^{na}(\gamma) \log(1/|z|) +2\log |A_0(\gamma)|+2\Re\left(\sum_{j\geq 1}b_j z^j\right)
\end{align}
where $\sum_{j\geq 1}b_jz^j$ is the power series expansion of $\log(f(z))+\log(g(z))$ about $0$. \qedhere

\end{proof}

\subsection{Convergence to intermediate zeta functions: 
statement of results}

To obtain a better convergence rate, we introduce the following \textit{intermediate zeta functions}: for each $M\in\mathbb{Z}_{\geq 0}$ and $z\in \mathbb{D}^*$, 
\begin{equation}
\label{equ:intermediate zeta function}
Z_M(\Gamma,z,s)=\prod_{[\gamma]\in \calP}(1-e^{-s\ell_M(\gamma,z)}) 
\end{equation}
where $\ell_{M}(\gamma,z)$ is the {\it{$M$th-expansion of length}} defined by 
\begin{equation}
\label{eqn:Mth expansion of length}
\ell_M(\gamma,z)=\ell^{na}(\gamma)+\Re(\sum\limits_{j=0}^{M}a_j(\gamma)z^j/\log(1/|z|))
\end{equation}
with $a_j(\gamma)$'s the coefficients given in the expansion \cref{equ:expansion of length function}.

A priori, an intermediate zeta function is defined for $\Re s$ large. In \cref{sec:analytic}, we will prove that it admits an analytic extension to $s\in \CC$. 

We establish the following polynomial rate for the convergence of
the $\log(1/|z|)$-rescaled zeta functions to an intermediate zeta function.
\begin{thm}\label{prop:speed-conv}
    For any $C>0$, $M\in \mathbb{Z}_{\geq 0}$ and $\epsilon>0$, we have for any $0<|z|<1/e$, 
    \begin{equation*}
        |Z(\Gamma_z,s/\log(1/|z|))-Z_M(\Gamma,z,s)|\lesssim_{C,M,\epsilon} |z|^{1-\epsilon}\quad \text{for}\quad |\Re s|\leq C, \, |\Im s|\leq C|z|^{-M}.
    \end{equation*}
\end{thm}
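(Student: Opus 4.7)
The plan is to combine a term-by-term Euler-product comparison, valid in a right half-plane $\Re s \geq \sigma_0$, with a transfer-operator/Fredholm determinant comparison that transports the estimate to the full region $|\Re s|\leq C,\ |\Im s|\leq C|z|^{-M}$.

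For $\Re s \geq \sigma_0$ sufficiently large, both $Z(\Gamma_z, s/\log(1/|z|))$ and $Z_M(\Gamma, z, s)$ are absolutely convergent Euler products, so one can take logarithms and write
\begin{equation*}
\log Z(\Gamma_z, s/\log(1/|z|)) - \log Z_M(\Gamma, z, s) = \sum_{[\gamma]\in\calP}\Bigl[\log(1-e^{-s\ell(\gamma_z)/\log(1/|z|)}) + \log \calR_\gamma - \log(1-e^{-s\ell_M(\gamma,z)})\Bigr],
\end{equation*}
where $\calR_\gamma$ collects the higher $(k_1,k_2)$ factors of the Selberg product. By \cref{prop:expansion of length function},
\begin{equation*}
\frac{\ell(\gamma_z)}{\log(1/|z|)}-\ell_M(\gamma,z) = \frac{\Re\sum_{j>M}a_j(\gamma)z^j}{\log(1/|z|)},
\end{equation*}
and for $|z|<e^{-2C\ell^{na}(\gamma)}$ this tail is bounded by $Ce^{C(M+2)\ell^{na}(\gamma)}|z|^{M+1}/\log(1/|z|)$. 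With $|\Im s|\leq C|z|^{-M}$, each summand for a short orbit is of order $e^{C(M+2)\ell^{na}(\gamma)}|z|/\log(1/|z|)$, and summing over $\ell^{na}(\gamma)\leq n_0 := (1-\epsilon)\log(1/|z|)/(C(M+2))$ using the count $\#\{[\gamma]:\ell^{na}(\gamma)=n\}\lesssim e^{C_1n}$ from \eqref{equ:gamma_n} produces the desired $O(|z|^{1-\epsilon})$. Orbits with $\ell^{na}(\gamma) > n_0$ and the factor $\calR_\gamma$ are absorbed by \cref{lem-uniform ell} as in the proof of \cref{thm:s large}.

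To extend the estimate to the whole region, I would realize $Z_M(\Gamma,z,s)$ as a Fredholm determinant $\det(1-\cL_{s,z,M})$ of a modified transfer operator analogous to $\cL_s$ in \cref{lem:determinant formula SLR} -- this realization is the role of \cref{sec:analytic} and is what makes $Z_M$ entire. The trace-norm difference then satisfies
\begin{equation*}
\|\cL_{s/\log(1/|z|)} - \cL_{s,z,M}\|_1 \lesssim |z|^{1-\epsilon}
\end{equation*}
uniformly in the region, by combining the length expansion of \cref{prop:expansion of length function} with the singular-value bounds of \cref{prop-mul ls}/\cref{p:sing-est-3d}. The standard Fredholm inequality $|\det(1-A)-\det(1-B)|\leq \|A-B\|_1 \exp(\|A\|_1+\|B\|_1+1)$, together with the uniform trace-norm control from \cref{prop:uniform-bdd-SL2(R)}/\cref{prop:uniform-bdd-SL2(C)}, then yields $|Z-Z_M|\lesssim |z|^{1-\epsilon}$ throughout.

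The main obstacle is the tension between three competing scales: the $|z|^{-M}$ growth of the allowed $|\Im s|$, the $|z|^{M+1}$ smallness of the tail $\sum_{j>M}a_j(\gamma)z^j$, and the $e^{C(M+2)\ell^{na}(\gamma)}$ blow-up of the coefficients $a_j(\gamma)$ recorded in \cref{prop:laurent-est}. Balancing these forces the truncation scale $n_0\asymp\log(1/|z|)/(C(M+2))$, and requires that the Fredholm tail of $\cL_{s/\log(1/|z|)}$ beyond $n_0$ iterations be absorbed without spoiling the polynomial rate. A secondary subtlety is that $\ell_M(\gamma,z)$ depends on $\gamma$ through the full expansion of $\tr(\gamma_z)$ rather than through purely local generator data, so constructing $\cL_{s,z,M}$ so that its periodic-orbit sum reproduces $Z_M$ exactly requires a carefully enlarged transfer operator, which is the technical heart of \cref{sec:analytic}.
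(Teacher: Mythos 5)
Your first half — the termwise Euler-product comparison for $\Re s$ large, with the truncation scale forced by balancing $|\Im s|\leq C|z|^{-M}$ against the tail bound $e^{C\ell^{na}(\gamma)(M+2)}|z|^{M+1}/\log(1/|z|)$ and the counting bound \cref{equ:gamma_n} — is essentially the paper's \cref{lem:quant-bound-zeta}, and that part is fine.

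The second half has a genuine gap. You assert that $Z_M(\Gamma,z,s)$ can be realized as $\det(1-\cL_{s,z,M})$ for a modified transfer operator "analogous to $\cL_s$", and that this is "the role of \cref{sec:analytic}". Neither claim is correct. \cref{sec:analytic} (via \cref{prop:lc-cocycle}, \cref{prop:ZI} and \cref{cor:analytic-middle}) realizes $Z_M$ as a \emph{finite-dimensional} determinant $\det(I-W(s))$ of a weighted edge matrix indexed by words of bounded length — a weighted Ihara/path zeta formula — not as a Fredholm determinant of an operator on $\cH$. And the natural candidate for your $\cL_{s,z,M}$ (replace the weight $\mathfrak{a}'(w)^{s/\log(1/|z|)}$ by the locally constant truncated weight $e^{(s/\log(1/|z|))\Re l_M}$ on word-cylinders) does \emph{not} have determinant $Z_M$: the trace of the composed operator over a periodic word $\gamma$ is $e^{-s\ell_M(\gamma,z)}$ divided by $1-\gamma_z'(\gamma_+)$-type factors, because the $k$ (resp.\ $(k_1,k_2)$) products in \cref{lem:determinant formula SLR} come from the genuine multiplier of the holomorphic contraction at its fixed point, which you have not modified. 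So $\det(1-\cL_{s,z,M})$ is a hybrid zeta $\prod_{[\gamma]}\prod_{k}(1-e^{-s\ell_M(\gamma,z)}\cdot(\text{true multipliers})^k)$ rather than $\prod_{[\gamma]}(1-e^{-s\ell_M(\gamma,z)})$, and an additional comparison (or a Ruelle-type ratio of two such determinants) would be needed; your proposal does not notice this. Separately, the key inequality $\|\cL_{s/\log(1/|z|)}-\cL_{s,z,M}\|_1\lesssim|z|^{1-\epsilon}$ in the region $|\Im s|\leq C|z|^{-M}$ is asserted but not derivable from \cref{prop:expansion of length function} plus \cref{prop-mul ls}: the length expansion controls conjugacy-class (periodic-orbit) data, whereas a trace-norm bound requires pointwise control of the weights on the discs, i.e.\ the locally constant leading-term property of the derivative cocycle (\cref{lem:lc-constant}, \cref{prop:lc-cocycle}), the distortion estimates, and the argument bounds $|\arg\mathfrak{a}'|\lesssim|z|^{M'}$ with $M'>M$ — none of which you invoke.

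For contrast, the paper sidesteps operator comparison entirely: it shows $F_z(s)=Z(\Gamma_z,s/\log(1/|z|))-Z_M(\Gamma,z,s)$ is entire, is $O(|z|)$ on a disc inside the half-plane of convergence (\cref{lem:quant-bound-zeta}), and is uniformly bounded on a concentric larger disc covering each horizontal strip $[-C,C]+i[T,T+1]$ — boundedness of $Z$ coming from the transfer-operator singular-value bounds (\cref{prop:uniform-bdd-SL2(R)}, \cref{prop:uniform-bdd-SL2(C)}) and boundedness of $Z_M$ from the polynomial formula of \cref{cor:analytic-middle} — and then applies the Hadamard three-circle theorem (\cref{prop:hadamard}) with radii chosen so the interpolation exponent exceeds $1-\epsilon$. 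If you want to salvage your operator-theoretic route you would need both the corrected determinant identity (handling the extra $k\geq1$ factors) and the full cylinder-weight analysis of \cref{sec:analytic}; as written the argument does not close.
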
 

As a corollary, we obtain the convergence of zeros of zeta functions: in particular, the convergence of the first zeros gives the convergence of the Hausdorff dimensions of limit sets.

\begin{cor}\label{cor:res-conv} 
    Let $R>0$, $\epsilon>0$ and $M\in \mathbb{Z}_{\geq 0}$. Suppose that no zero of $Z_{I}(\Gamma,s)$ lies on the boundary of $D_R:=\{s:|s|<R\}$. Let
    \begin{itemize}
        \item $\rho_1,\cdots,\rho_{A_{z}}$ be the zeros of $Z(\Gamma_z, s/\log(1/|z|))$ in $D_R$;
        \item $\rho^M_1,\cdots, \rho^M_{B_z}$ be the zeros of $Z_M(\Gamma,z,s)$ in $D_R$;
        \item $\rho_1^{na},\cdots,\rho_{N}^{na}$ be the zeros of $Z_{I}(\Gamma,s)$ in $D_R$. \footnote{Our convention is to list zeros according to their multiplicities.}
    \end{itemize} 
    
    Then there exists $t_0>0$ such that for $0<|z|<t_0$, we have $A_{z}=B_z=N,$ and we can order the zeros such that for each $j\in \mathbb{N}\cap[1,N]$ 
    \begin{equation*}
        |\rho_j-\rho_j^M|\leq |z|^{\frac{1}{m_j^{na}}-\epsilon},
    \end{equation*}
    where $m_j^{na}$ is the multiplicity of $\rho_j^{na}$ as a zero of $Z_{I}(\Gamma, s)$.
\end{cor}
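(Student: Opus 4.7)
The plan is a two-step argument: first use Hurwitz's theorem to equate the zero counts of $Z(\Gamma_z,\cdot/\log(1/|z|))$, $Z_M(\Gamma,z,\cdot)$, and $Z_I(\Gamma,\cdot)$ in $D_R$; then apply a classical Ostrowski-type polynomial root-stability inequality cluster-by-cluster to upgrade \cref{prop:speed-conv} into a quantitative pairing with Hölder exponent $1/m_j^{na}$.

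For the counting step, I would combine \cref{thm:zeta-conv} with the bound from \cref{prop:speed-conv} (taken with parameter $\epsilon/2$) to first deduce that $Z_M(\Gamma,z,\cdot)$ also converges locally uniformly to $Z_I(\Gamma,\cdot)$ as $|z|\to 0$. Then I would choose small pairwise disjoint discs $D_j=\{|s-\rho_j^{na}|\leq r_j\}\subset D_R$, each containing $\rho_j^{na}$ as the only zero of $Z_I$, with $Z_I$ nonvanishing on $\partial D_R\cup\bigcup_j\partial D_j$. Hurwitz's theorem then gives, for $|z|$ small enough, exactly $m_j^{na}$ zeros (with multiplicity) of each of $Z(\Gamma_z,\cdot/\log(1/|z|))$ and $Z_M(\Gamma,z,\cdot)$ inside each $D_j$ and no zeros in $D_R\setminus\bigcup_j D_j$, giving $A_z=B_z=N$ and canonically splitting the zeros into clusters indexed by $j$.

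Next I would handle the quantitative matching within a single cluster. Fix $j$, set $m:=m_j^{na}$, $\rho^*:=\rho_j^{na}$, and let $\{\rho_k\}_{k=1}^m$ and $\{\rho_k^M\}_{k=1}^m$ be the cluster zeros of $F_z:=Z(\Gamma_z,\cdot/\log(1/|z|))$ and $G_z:=Z_M(\Gamma,z,\cdot)$. Form the monic polynomials $P(s):=\prod_k(s-\rho_k)$ and $Q(s):=\prod_k(s-\rho_k^M)$ of degree $m$, and factor $F_z=PU$, $G_z=QV$ with $U,V$ holomorphic and nonvanishing on $D_j$. Since $Z_I(\Gamma,s)=(s-\rho^*)^m h(s)$ with $h(\rho^*)\neq 0$ and both $U,V\to h$ uniformly on compact subsets of $D_j$, the minimum modulus principle gives $|U|,|V|\geq c>0$ on $D_j$ for $|z|$ small. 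Since $G_z(\rho_k^M)=0$, \cref{prop:speed-conv} yields
\[|P(\rho_k^M)|=\frac{|F_z(\rho_k^M)-G_z(\rho_k^M)|}{|U(\rho_k^M)|}\lesssim |z|^{1-\epsilon/2},\]
and symmetrically $|Q(\rho_k)|\lesssim |z|^{1-\epsilon/2}$. A classical Ostrowski-type inequality---if two monic polynomials of degree $m$ with roots in a fixed disc are each small at the other's roots with bound $\delta$, then their roots can be bijectively paired within distance $C_m\delta^{1/m}$---now produces a permutation $\sigma$ with $|\rho_k-\rho_{\sigma(k)}^M|\leq C_m|z|^{(1-\epsilon/2)/m}$. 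Since $(1-\epsilon/2)/m=1/m-\epsilon/(2m)>1/m-\epsilon$, we have $C_m|z|^{(1-\epsilon/2)/m}\leq |z|^{1/m_j^{na}-\epsilon}$ for $|z|$ small enough, which is the desired bound.

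The main obstacle is the Ostrowski step: extracting a clean bijective matching with a constant $C_m$ that does not degenerate as the cluster collapses onto $\rho^*$. Standard approaches go through the resultant identity $\prod_{k,\ell}(\rho_k-\rho_\ell^M)=\pm\mathrm{Res}(P,Q)$ combined with a greedy closest-pair matching, or through Hall's marriage theorem applied to the bipartite graph of $(C_m\delta^{1/m})$-close pairs; the delicate point in either approach is ruling out that several $\rho_k^M$ crowd against a single $\rho_\ell$, which is handled by an iterative argument using the fact that any such clumping of one side forces a matching clumping of the other by the symmetric estimates on $|P(\rho_k^M)|$ and $|Q(\rho_k)|$. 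Once this combinatorial lemma is in place the rest is routine bookkeeping in the choice of $\epsilon/2$ versus $\epsilon$.
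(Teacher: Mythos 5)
Your counting step (Hurwitz/Rouch\'e to get $A_z=B_z=N$ and the clustering around each $\rho_j^{na}$), the factorizations $F_z=PU$, $G_z=QV$ with $|U|,|V|$ bounded below, and the evaluation bounds $|P(\rho_k^M)|,|Q(\rho_k)|\lesssim |z|^{1-\epsilon/2}$ are all fine and parallel the paper. The gap is the ``classical Ostrowski-type inequality'' you invoke: the statement that two monic polynomials of degree $m$ with roots in a fixed disc, each of size at most $\delta$ at the other's roots, admit a bijective root pairing within $C_m\delta^{1/m}$ is \emph{false}, and no greedy/Hall/resultant argument can rescue it. Counterexample with $m=3$: take $P(s)=s^2(s-\tfrac12)$ and $Q(s)=(s-c)(s-\tfrac12-a)(s-\tfrac12-b)$ with $|a|,|b|,|c|\leq\delta$. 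Then $|P(\beta_k)|\lesssim\delta$ at all roots $\beta_k$ of $Q$ and $|Q(\alpha_\ell)|\lesssim\delta$ at all roots $\alpha_\ell$ of $P$, yet $P$ has a double root at $0$ and one root near $\tfrac12$ while $Q$ has one root near $0$ and two near $\tfrac12$, so any bijection moves some root by about $\tfrac12\gg C_3\delta^{1/3}$. (A rescaled version with the two sub-clusters at distance $\delta^{1/4}$ shows the failure persists even when the whole cluster collapses to a point, as it does in your setting.) The underlying issue is that by symmetrizing into the two monic polynomials $P,Q$ you throw away the uniform closeness $|F_z-G_z|\lesssim|z|^{1-\epsilon/2}$ on circles and retain only values at the roots, and evaluation bounds at roots genuinely do not control multiplicities of sub-clusters.

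The paper's proof avoids this by an asymmetric comparison: inside each cluster disc it writes $Z(\Gamma_z,s/\log(1/|z|))=g_z(s)f_z(s)$ with $g_z$ monic and $C^{-1}<|f_z|<C$, and then compares the monic polynomial $g_z$ with the holomorphic function $g_z+h_z:=Z_M(\Gamma,z,s)/f_z(s)$, whose zeros are exactly the $\rho_j^M$ and where $h_z=(Z_M-Z(\Gamma_z,\cdot/\log(1/|z|)))/f_z$ is \emph{uniformly} small ($\lesssim|z|^{1-\epsilon/2}$) on the disc by \cref{prop:speed-conv}. The quantitative pairing then comes from \cref{lem:perturb-poly}: a pigeonhole choice of a scale $C\epsilon^{1/d}$ at which the roots of $g_z$ separate into well-spaced sub-clusters, followed by Rouch\'e on each doubled sub-cluster disc, where the lower bound $|g_z|\geq (C\epsilon^{1/d})^d>\epsilon\geq|h_z|$ holds on the boundary circles. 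That lemma uses smallness of the perturbation on circles, which is exactly the information that defeats the counterexample above; your argument can be repaired by replacing your two-polynomial matching step with this comparison of $g_z$ against $Z_M/f_z$ (or by proving a perturbation lemma of that type), rather than by any matching lemma based solely on $|P(\rho_k^M)|$ and $|Q(\rho_k)|$.
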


\begin{cor}\label{cor:zero-ZM}
    Let $C>0$, $\epsilon>0$ and $M\in \mathbb{Z}_{\geq 0}$. Let $D\subset [-C,C]+i[-C|z|^{-M}, C|z|^{-M}]$ be an open connected set with its boundary a simple closed curve and
     \begin{itemize}
        \item $\rho_1,\cdots,\rho_{A_{z}}$ be the zeros of $Z(\Gamma_z,s/\log(1/|z|))$ in $D$;
        \item $\rho^M_1,\cdots, \rho^M_{B_z}$ be the zeros of $Z_M(\Gamma,z,s)$ in $D$.
    \end{itemize} 
    Suppose $|Z_M(\Gamma,z,s)|\geq |z|^{1-\epsilon}$ on $\partial D$ for $0<|z|<t_0$, then there exists $t_1\in (0,t_0)$ such that for $0<|z|<t_1$, we have $A_{z}=B_z$.
\end{cor}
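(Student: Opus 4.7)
The plan is to apply Rouché's theorem on the contour $\partial D$ to the pair of entire functions $Z_M(\Gamma,z,\cdot)$ and $Z(\Gamma_z,\cdot/\log(1/|z|))$. First I would invoke \cref{prop:speed-conv} with a strictly smaller exponent $\epsilon'\in(0,\epsilon)$, say $\epsilon'=\epsilon/2$, to produce a constant $C'=C'(C,M,\epsilon')$ such that
\begin{equation*}
|Z(\Gamma_z,s/\log(1/|z|))-Z_M(\Gamma,z,s)|\leq C'|z|^{1-\epsilon'}
\end{equation*}
throughout $[-C,C]+i[-C|z|^{-M},C|z|^{-M}]$, and in particular on $\partial D$.

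Next I would write $|z|^{1-\epsilon'}=|z|^{\epsilon-\epsilon'}\cdot|z|^{1-\epsilon}$; since $\epsilon-\epsilon'>0$, the prefactor $|z|^{\epsilon-\epsilon'}$ tends to $0$ as $|z|\to 0$. Hence there exists $t_1\in(0,t_0)$ such that $C'|z|^{1-\epsilon'}<|z|^{1-\epsilon}$ whenever $0<|z|<t_1$. Combined with the standing hypothesis $|Z_M(\Gamma,z,s)|\geq|z|^{1-\epsilon}$ on $\partial D$, this upgrades to the strict inequality
\begin{equation*}
|Z(\Gamma_z,s/\log(1/|z|))-Z_M(\Gamma,z,s)|<|Z_M(\Gamma,z,s)|\quad\text{on }\partial D.
\end{equation*}

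Rouché's theorem then yields that $Z_M(\Gamma,z,\cdot)$ and $Z_M(\Gamma,z,\cdot)+\bigl(Z(\Gamma_z,\cdot/\log(1/|z|))-Z_M(\Gamma,z,\cdot)\bigr)=Z(\Gamma_z,\cdot/\log(1/|z|))$ share the same number of zeros in the interior of $D$, counted with multiplicity, which is precisely $A_z=B_z$. I do not anticipate any real obstacle: the argument is a direct book-keeping of the quantitative estimate already established in \cref{prop:speed-conv} combined with a standard Rouché step, the only subtlety being the need to trade $\epsilon$ for a slightly smaller $\epsilon'$ in order to absorb the implicit constant for $|z|$ sufficiently small.
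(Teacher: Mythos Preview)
Your proposal is correct and follows essentially the same approach as the paper: invoke \cref{prop:speed-conv} with the smaller exponent $\epsilon/2$ to get $|Z(\Gamma_z,s/\log(1/|z|))-Z_M(\Gamma,z,s)|\lesssim_{C,M,\epsilon}|z|^{1-\epsilon/2}$, absorb the implicit constant into the extra factor $|z|^{\epsilon/2}$ for $|z|$ small, and conclude by Rouch\'e's theorem on $\partial D$.
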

The difference between \cref{cor:res-conv} and \cref{cor:zero-ZM} lies in the regions we compare $Z(\Gamma_z, s/\log(1/|z|))$ and $Z_{M}(\Gamma,z,s)$. \cref{cor:zero-ZM} will be used in \cref{exa.PV19} to recover \cite{pollicottZerosSelbergZeta2019a} on the structure of the zeros with large imaginary parts
of the Selberg zeta function for a three-funnel surface.

\subsection{Convergence to intermediate zeta functions: proofs}

In this section, we prove the effective convergence results \cref{prop:logarithmic convergence},
\cref{prop:speed-conv}, \cref{cor:res-conv} and \cref{cor:zero-ZM}.

First, we recall the Hadamard three-circle theorem.
\begin{prop}\label{prop:hadamard}
Let $r_1< r_2< r_3\in (0,\infty)$ and set $\alpha=\frac{\log(r_3/r_2)}{\log(r_3/r_1)}$.
    Let $f:\{z:r_1 \leq |z-a| \leq r_3\}\to\mathbb{C}$ be a holomorphic function with $a\in \mathbb{C}$. Then
    \begin{equation*}
        \max_{|z-a|=r_2}|f(z)|\leq \left(\max_{|z-a|=r_1}|f(z)|\right)^{\alpha}\left(\max_{|z-a|=r_3} |f(z)|\right)^{1-\alpha}.
    \end{equation*}
\end{prop}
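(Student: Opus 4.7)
The statement is the classical Hadamard three-circle theorem, so the plan is to invoke the standard proof via the maximum principle applied to an auxiliary function of the form $|z-a|^{\lambda}|f(z)|$. After translating, I would reduce to $a=0$. Setting $M(r):=\max_{|z|=r}|f(z)|$, the asserted inequality is equivalent to $\log M(r_2)\leq \alpha\log M(r_1)+(1-\alpha)\log M(r_3)$, and one checks directly from the definition of $\alpha$ that $\log r_2=\alpha\log r_1+(1-\alpha)\log r_3$; so the statement is exactly that $\log M$ is a convex function of $\log r$ on $[\log r_1,\log r_3]$.

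For each real parameter $\lambda$ I would consider the auxiliary function $h_\lambda(z):=|z|^{\lambda}|f(z)|$ on the closed annulus $\{r_1\leq |z|\leq r_3\}$. Since $\log|f|$ is subharmonic there (being harmonic away from zeros of $f$ and equal to $-\infty$ at zeros), and $\log|z|^{\lambda}=\lambda\log|z|$ is harmonic, $\log h_\lambda$ is subharmonic. The maximum principle for subharmonic functions then yields
\begin{equation*}
\max_{|z|=r_2}h_\lambda(z)\leq \max\bigl\{r_1^{\lambda}M(r_1),\;r_3^{\lambda}M(r_3)\bigr\}.
\end{equation*}

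I would now pick $\lambda$ so that the two quantities on the right coincide, namely $\lambda=\log(M(r_1)/M(r_3))/\log(r_3/r_1)$, reducing the inequality to $r_2^{\lambda}M(r_2)\leq r_1^{\lambda}M(r_1)$. Taking logarithms and using $\log(r_2/r_1)/\log(r_3/r_1)=1-\alpha$ produces exactly $\log M(r_2)\leq \alpha\log M(r_1)+(1-\alpha)\log M(r_3)$, as required.

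The only slightly subtle point is the subharmonicity of $\log|f|$ at zeros of $f$, which I would take as a standard fact (the mean-value inequality is trivially satisfied at a pole of $-\infty$). If one prefers to avoid it, an alternative route is to prove the statement first for rational exponents $\alpha=m/n$ by applying the ordinary maximum modulus principle to the holomorphic function $z^m f(z)^n$ on the annulus, and then extend to general $\alpha$ by density and continuity of $\log M(r)$ in $r$; I do not expect any real obstacle in either approach.
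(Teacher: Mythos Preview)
Your proof is correct and follows the same approach as the paper, which simply says ``This follows from the fact that $\log|f(z)|$ is a subharmonic function.'' You have spelled out the standard details behind that one-line justification.
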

\begin{proof}
    This follows from the fact that $\log|f(z)|$ is a subharmonic function.
\end{proof}
The strategy of the proof of \cref{prop:speed-conv}
is to prove convergence in the region $\Re s\gg 1$ and then use \cref{prop:hadamard} to obtain
convergence in a larger region. 

We need a lower bound for the function $\ell_M(\gamma,z)$ given in \cref{eqn:Mth expansion of length},
whose proof will be provided at the end of \cref{sec:use derivative of Schottky groups}.
\begin{lem}\label{lem:ellM>}
    There exists $C>1$ depending only on $\Gamma$ such that for any $M\in \mathbb{Z}_{\geq 0}$ and
    for any $\gamma\in \Gamma\backslash \{id\}$, we have for any $0<|z|\leq e^{-C(M+1)}$, 
    \begin{equation*}
     \ell_M(\gamma,z)\geq  \ell^{na}(\gamma)/2. 
    \end{equation*}
\end{lem}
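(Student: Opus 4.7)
The aim is to show $\bigl|\sum_{j=0}^{M}a_j(\gamma)z^j\bigr|\leq \tfrac{1}{2}\,\ell^{na}(\gamma)\log(1/|z|)$, which immediately yields $\ell_M(\gamma,z)\geq \ell^{na}(\gamma)/2$ from the definition of $\ell_M$. I would split into the $j=0$ term and the higher-order terms.

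For $j=0$, the sharp bound $|a_0(\gamma)|\leq C_0\,\ell^{na}(\gamma)$ from \cref{prop:expansion of length function} contributes at most $C_0\,\ell^{na}(\gamma)$, which is dominated by $\tfrac{1}{4}\ell^{na}(\gamma)\log(1/|z|)$ as soon as the constant $C$ in the hypothesis $|z|\leq e^{-C(M+1)}$ satisfies $C\geq 4C_0$.

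For $j\geq 1$, I would exploit the holomorphic extension of $F(z):=\sum_{j\geq 0}a_j(\gamma) z^j$. From the proof of \cref{prop:expansion of length function} (using the factorisation $\lambda_1(\gamma_z)=z^{-\ell^{na}(\gamma)/2}A_0(\gamma) f(z) g(z)$ together with the bounds $|f-1|,|g-1|\leq 1/10$ on $|w|\leq e^{-C_1\ell^{na}(\gamma)}$), $F$ extends holomorphically to a disc of radius $\sim e^{-C_1\ell^{na}(\gamma)}$ on which $|F(w)|\lesssim \ell^{na}(\gamma)$. Applying Cauchy's integral formula on a circle of radius $r\sim e^{-C_1\ell^{na}(\gamma)}$ yields the refined bound $|a_j(\gamma)|\lesssim \ell^{na}(\gamma)\,r^{-j}$, and summing against $|z|^j$ produces
\[
\sum_{j\geq 1}|a_j(\gamma)|\,|z|^j \;\lesssim\; \ell^{na}(\gamma)\cdot\frac{|z|/r}{1-|z|/r}\;\lesssim\; \ell^{na}(\gamma)
\quad\text{whenever}\quad |z|\leq r/2.
\]
The condition $|z|\leq r/2$ translates into $C(M+1)\geq C_1\ell^{na}(\gamma)+\log 2$, which is ensured in the regime $\ell^{na}(\gamma)\lesssim M+1$; combined with the $j=0$ estimate, the lemma follows in this case.

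The main obstacle is the complementary regime $\ell^{na}(\gamma)\gg M+1$, where $|z|\leq e^{-C(M+1)}$ no longer lies inside the disc of convergence of $F$, so the Cauchy estimate above breaks down. Here the plan would be to use the exact partial-sum representation
\[
P_M(z)\;:=\;\sum_{j=0}^{M}a_j(\gamma) z^j \;=\; \frac{1}{2\pi i}\oint_{|w|=r}F(w)\,\frac{1-(z/w)^{M+1}}{w-z}\,dw,
\]
with $r$ still inside the convergence disc of $F$ (hence much smaller than $|z|$), and to balance the amplification factor $(z/w)^{M+1}$ against the smallness of $|z|\leq e^{-C(M+1)}$. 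Carrying this out uniformly in $\ell^{na}(\gamma)$ and $M$ is the crux of the argument, and is expected to require tighter coefficient bounds than those stated in \cref{prop:expansion of length function}, most likely drawn from the uniform distortion estimates of \cref{lem:uniform-dis} and the non-Archimedean geometry of the Ford discs $D_{\mathfrak a,z}$.
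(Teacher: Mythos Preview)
Your handling of the regime $\ell^{na}(\gamma)\lesssim M+1$ is fine, and you have correctly isolated the genuine difficulty: when $\ell^{na}(\gamma)\gg M+1$, the only control you have on the $a_j(\gamma)$ places the radius of convergence of $F$ at $r\sim e^{-C_1\ell^{na}(\gamma)}$, which is exponentially smaller than $|z|\sim e^{-C(M+1)}$. Unfortunately the partial--sum integral you propose cannot close this gap. In that formula the kernel carries a factor $(z/w)^{M+1}$ with $|w|=r$, hence of size roughly $\exp\bigl((C_1\ell^{na}(\gamma)-C(M+1))(M+1)\bigr)$; when $\ell^{na}(\gamma)\gg M+1$ this blows up faster than any gain from $|F|\lesssim \ell^{na}(\gamma)$ on $|w|=r$, so the estimate does not balance. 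More fundamentally, the bounds of \cref{prop:expansion of length function} are compatible with individual terms $|a_j(\gamma)z^j|$ being astronomically large in this regime, and nothing in your scheme exploits cancellation among them.

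The paper avoids this obstacle altogether by not working with the global coefficients $a_j(\gamma)$. Instead it uses the cocycle decomposition of \cref{prop:lc-cocycle}: for a cyclically reduced $\gamma=\mathfrak a_{i_1}\cdots\mathfrak a_{i_n}$ one writes $-L_M(\gamma)=\sum_{k=1}^{n} l_M\bigl(\mathfrak a_{i_k},(\mathfrak a_{i_{k+1}}\cdots\mathfrak a_{i_{k+N-1}})\bigr)$ with $N\sim M$. The crucial point, supplied by \cref{prop:expansion-der-cycl}, is that each local piece $l_M(\mathfrak a,\cdot)$ has Taylor coefficients bounded by $e^{C(N+1)j}$, i.e.\ depending on $M$ but \emph{not} on $\gamma$. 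Thus for $|z|\leq e^{-C'(M+1)}$ each local contribution is $O(1)$, the sum is $O(n)\sim O(\ell^{na}(\gamma))$ by \cref{lem:word-na}, and this is dominated by $\tfrac12\ell^{na}(\gamma)\log(1/|z|)$. The finitely many short words with $n\leq N$ are handled directly from \cref{prop:expansion of length function}. In short, the missing idea in your attempt is precisely this localisation: one must replace the $\gamma$-dependent radius $e^{-C_1\ell^{na}(\gamma)}$ by the $M$-dependent (but $\gamma$-independent) radius coming from the derivative cocycles.
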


\begin{prop}\label{lem:quant-bound-zeta}
There exist $s_0>1$ and $C_{\Gamma}>0$ depending on $\Gamma$, such that for any $M\in \mathbb{Z}_{\geq 0}$, we have for any $\log(1/|z|)>C_{\Gamma}(M+1)$ and any $C_0>0$,
\begin{equation}\label{equ:zeta-conv-quant}
        \left|\frac{Z(\Gamma_z, s/\log(1/|z|))}{Z_M(\Gamma,z,s)}-1\right|\lesssim_{C_0} |z| \quad \text{for}\quad \Re s> (M+1) s_0, \,\,|\Im s|\leq C_0|z|^{-M}.
\end{equation}
\end{prop}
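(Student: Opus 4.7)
The plan is to take logarithms of the ratio and compare the two Euler products factor by factor. Writing $\alpha(\gamma) := \ell(\gamma_z)/\log(1/|z|)$ and $\beta(\gamma) := \ell_M(\gamma,z)$, and separating the $k=0$ factor in the Selberg product (as in the proof of \cref{thm:s large}), one has
\[
\log\frac{Z(\Gamma_z,s/\log(1/|z|))}{Z_M(\Gamma,z,s)} \;=\; \sum_{[\gamma]\in\calP}\left[\log\frac{1-e^{-s\alpha(\gamma)}}{1-e^{-s\beta(\gamma)}} + \log\mathcal{R}(\gamma)\right],
\]
where $\mathcal{R}(\gamma)$ collects the factors with $k\geq 1$ (resp.\ $k_1+k_2\geq 1$). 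The goal is to show this sum is $O_{C_0}(|z|)$. Set the threshold $L := \log(1/|z|)/(C_1(M+2))$, where $C_1$ denotes the constant of \cref{prop:expansion of length function}, and split the sum into \emph{Group A} with $\ell^{na}(\gamma)\leq L$ and \emph{Group B} with $\ell^{na}(\gamma)>L$.

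For Group A, \cref{prop:expansion of length function} applies (since $C_1\ell^{na}(\gamma)\leq\log(1/|z|)/(M+2)<\log(1/|z|)$) and yields
\[
|\alpha(\gamma)-\beta(\gamma)|\;\lesssim\;\frac{e^{C_1\ell^{na}(\gamma)(M+2)}\,|z|^{M+1}}{\log(1/|z|)}.
\]
Combined with $|s|\leq 2C_0|z|^{-M}$ and $\ell^{na}(\gamma)\leq L$, this gives $|s(\alpha-\beta)|\lesssim C_0/\log(1/|z|)$, which is $\leq 1/2$ for $|z|$ small. A first-order Taylor expansion of $\log(1-\cdot)$, together with $\beta\geq \ell^{na}(\gamma)/2$ from \cref{lem:ellM>}, then bounds each Group A term by
\[
e^{-\Re s\,\beta(\gamma)}\,|s|\,|\alpha-\beta|\;\lesssim\;\frac{C_0\,|z|\cdot e^{\ell^{na}(\gamma)(C_1(M+2)-\Re s/2)}}{\log(1/|z|)}.
\]
Summing via the orbit count $\#\{[\gamma]:\ell^{na}(\gamma)=n\}\leq e^{Cn}$ from \cref{equ:gamma_n} produces a geometric series in $n$, which converges uniformly provided $\Re s>2C+2C_1(M+2)+2$; this is guaranteed by choosing $s_0$ large enough that $(M+1)s_0$ exceeds $2C_1(M+2)+2C+2$ for every $M\geq 0$.

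For Group B, I use the cruder bound $|\log(1-e^{-s\alpha})|+|\log(1-e^{-s\beta})|\lesssim e^{-\Re s\,c'\ell^{na}(\gamma)}$, where $c':=\min(c,1/2)$ and $c$ is from \cref{lem-uniform ell}. Summing yields a geometric tail $\lesssim e^{-(\Re s\,c'-C)L}$; enlarging $s_0$ so that $\Re s\,c'-C\geq C_1(M+2)$ forces the exponent to exceed $\log(1/|z|)$, giving $O(|z|)$. Finally, $|\log\mathcal{R}(\gamma)|\lesssim e^{-\Re s\,\alpha(\gamma)}|z|^{\alpha(\gamma)}$, and using $\alpha(\gamma)\geq c\ell^{na}(\gamma)\geq 1+\epsilon$ uniformly (via \cref{prop:expansion of length function} for small $\ell^{na}$ and \cref{lem-uniform ell} for large $\ell^{na}$, noting $\ell^{na}(\gamma)\geq 2$) shows that these factors contribute $O(|z|^{1+\epsilon})$ in total.

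The main technical obstacle lies in the delicate choice of the threshold $L$: making $L$ larger would let the exponential blow-up $e^{C_1\ell^{na}(\gamma)(M+2)}$ of the \cref{prop:expansion of length function} coefficients overwhelm the $|z|^{M+1}$ decay on Group A, while making $L$ smaller would cut the Group B geometric tail short of $O(|z|)$. The choice $L=\log(1/|z|)/(C_1(M+2))$ saturates both constraints simultaneously, and $s_0$ must be taken as a multiple of the constants appearing in \cref{prop:expansion of length function} and \cref{lem-uniform ell} to make both the $n$-sum on Group A and the tail on Group B converge with a rate uniform in $M$.
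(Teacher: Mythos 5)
Your overall architecture matches the paper's: compare the two Euler products factor by factor, split at a threshold in $\ell^{na}(\gamma)$, use \cref{prop:expansion of length function} with its exponential coefficient bounds on short words, use the counting bound \cref{equ:gamma_n} together with \cref{lem-uniform ell} and \cref{lem:ellM>} on long words, and treat the $k\geq 1$ factors $\mathcal{R}(\gamma)$ separately; your $M$-dependent threshold $L=\log(1/|z|)/(C_1(M+2))$ in place of the paper's $\epsilon_0\log(1/|z|)$ is a harmless variant. However, there is a genuine gap in your Group A estimate: you assert $|s|\leq 2C_0|z|^{-M}$ and from it deduce both $|s(\alpha-\beta)|\leq 1/2$ and the per-term bound $e^{-\Re s\,\beta}|s||\alpha-\beta|\lesssim C_0|z|\,e^{\ell^{na}(\gamma)(C_1(M+2)-\Re s/2)}/\log(1/|z|)$. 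But the hypothesis only bounds $\Re s$ from \emph{below} ($\Re s>(M+1)s_0$) and $|\Im s|$ from above; $\Re s$ is unbounded, so $|s|\leq 2C_0|z|^{-M}$ is false on part of the claimed region and the Taylor/Lipschitz step is not justified there. The paper handles exactly this by splitting the range of $\Re s$: for $\Re s<10\log(1/|z|)$ it runs the comparison of differences, and for $\Re s\geq 10\log(1/|z|)$ it abandons the difference and bounds the two logarithms separately (each is then exponentially small in $\Re s\,\ell^{na}(\gamma)$, and the sum is $O(|z|)$ precisely because $\Re s\geq 10\log(1/|z|)$). Your proposal never addresses this regime.

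A second, related defect: even for moderate $\Re s$, requiring the full modulus bound $|s(\alpha-\beta)|\leq 1/2$ forces $C_0/\log(1/|z|)\leq 1/2$, i.e.\ $|z|$ small \emph{depending on $C_0$}, whereas the statement demands the estimate for all $\log(1/|z|)>C_\Gamma(M+1)$ with $C_\Gamma$ independent of $C_0$. The paper avoids this by writing $|e^{-s\alpha}-e^{-s\beta}|=e^{-\Re s\,\beta}|1-e^{-s(\alpha-\beta)}|$ and using $|1-e^{w}|\lesssim|w|$ valid whenever $|\Re w|\leq 1$: since $\alpha-\beta$ is real, only $\Re s\cdot|\alpha-\beta|\leq 1$ is needed (no smallness of $\Im s\,(\alpha-\beta)$), and the possibly large factor $|s|$ is absorbed only at the end through $|sz^{M+1}|/\log(1/|z|)\lesssim_{C_0}|z|$. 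To repair your argument you should adopt this real-part-only inequality and add the case split at $\Re s\approx 10\log(1/|z|)$ (or otherwise supply a uniform bound on the uncovered ranges); as written, the large-$\Re s$ regime and the uniformity in $C_0$ are not established.
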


\begin{proof}
We establish the convergence of lengths of geodesics. We use $C$ to refer to a constant depending only on $\Gamma$, which may vary from line to line.

We first deal with the case $\Re s< 10\log(1/|z|)$. Due to \cref{prop:expansion of length function}, for each $\gamma\in\Gamma\backslash \{id\}$ and $0<|z|<e^{-C\ell^{na}(\gamma)}$,

\begin{equation}\label{equ:expansion}
    \ell(\gamma_z)/\log(1/|z|)=\ell^{na}(\gamma)+\Re\left(\sum\limits_{j\geq 0}a_j(\gamma)z^j\right)/\log(1/|z|)
\end{equation}
where $a_j(\gamma)$'s are complex numbers satisfying
\begin{equation}\label{equ:lb-aj}
    |a_0(\gamma)|\leq C \ell^{na}(\gamma), \quad |a_j(\gamma)|\leq Ce^{C\ell^{na}(\gamma)(j+1)}.
\end{equation} 
By \cref{equ:expansion} and \cref{equ:lb-aj}, we have for $\log(1/|z|)\gg C$ and $\ell^{na}(\gamma)\ll C^{-1}\log(1/|z|)$, 
\begin{equation}\label{equ:modif-ell-lb}
    \ell_M(\gamma,z)\geq \ell^{na}(\gamma)/2, \quad \ell(\gamma_z)/\log(1/|z|)\geq \ell^{na}(\gamma)/2,
\end{equation}
and
\begin{equation}\label{equ:modif-ell-difference}
    |\ell_M(\gamma,z)-\ell(\gamma_z)/\log(1/|z|)|\leq C(e^{C\ell^{na}(\gamma)}|z|)^{M+1}/\log(1/|z|).
\end{equation}
Using the following basic inequality: for $|z_1|,|z_2|<1/2$,
\begin{equation}\label{equ:log1-z}
    |\log(1-z_1)-\log(1-z_2)|\leq 2|z_1-z_2|,
\end{equation}
we have for $1\ll\Re s<10\log (1/|z|)$ and $\ell^{na}(\gamma)\leq \epsilon_0 \log (1/|z|)$ with  $\epsilon_0>0$ sufficiently small,
\begin{equation*}
\begin{aligned}
    &\left|\log(1-e^{-s\ell(\gamma_z)/\log(1/|z|)})-\log (1-e^{-s \ell_M(\gamma,z)})\right|\lesssim \left| e^{-s\ell(\gamma_z)/\log(1/|z|)}-e^{-s\ell_M(\gamma,z)} \right|\\
    &= e^{-\Re s \ell_M(\gamma,z)}\left|1-e^{- s\ell(\gamma_z)/\log(1/|z|)+s\ell_M(\gamma,z)}\right|\lesssim e^{-\Re s\ell^{na}(\gamma)/2}\frac{|s(e^{C\ell^{na}(\gamma)}|z|)^{M+1}|}{\log (1/|z|)}\leq e^{(C(M+1)-\Re s)\ell^{na}(\gamma)/2}\frac{|sz^{M+1}|}{\log (1/|z|)},
\end{aligned}
\end{equation*} 
where to obtain the second to last inequality, we use the inequality 
\[ |1-e^{z}|\lesssim |z|,\ \text{ for }|\Re z|\leq 1, \]
because \cref{equ:modif-ell-difference}, and the assumption $\Re s<10 \log(1/|z|)$ and $\ell^{na}(\gamma)\leq \epsilon_0 \log (1/|z|)$
give 
$$|\Re s(e^{C\ell^{na}(\gamma)}|z|)^{M+1}|/\log (1/|z|)\leq 10e^{C\ell^{na}(\gamma)}|z|\leq 1.$$
Meanwhile, for $\log (1/|z|)\geq C(M+1)$ and $\ell^{na}(\gamma)>\epsilon_0 \log (1/|z|)$, by \cref{lem-uniform ell} and \cref{lem:ellM>} we have
\begin{equation}
\label{eq:different t and M}
    |\log(1-e^{-s\ell(\gamma_z)/\log (1/|z|)})|\lesssim e^{-c\Re s\ell^{na}(\gamma)},\quad|\log (1-e^{-s\ell_M(\gamma,z)})|\lesssim e^{-c\Re s\ell^{na}(\gamma)},
\end{equation}
where we use $|\log(1-e^{-z})|\lesssim e^{-\Re z}$ for $\Re z>1$.
Thus for $\log (1/z)\geq C(M+1)$, and for 
$(M+1)s_0\leq \Re s<10 \log (1/|z|)$ and $|\Im s| \leq C_0|z|^{-M}$, we have
\begin{equation}\label{equ:zeta-conv-quant-k=0}
\begin{aligned}
    \sum\limits_{[\gamma] \in \calP}|\log(1-e^{-s\ell(\gamma_z)/\log (1/|z|)})-\log (1-e^{-s\ell_M(\gamma,z)})| 
   & \lesssim  \frac{|sz^{M+1}|}{\log (1/|z|)}\left(\sum\limits_{ \ell^{na}( \gamma)\leq \epsilon_0\log(1/|z|) } e^{(C(M+1)-\Re s) \ell^{na}(\gamma)/2}\right)\\
     +\sum\limits_{ \ell^{na}( \gamma)> \epsilon_0\log(1/|z|)  }  e^{-c\Re s \ell^{na}(\gamma)}&\lesssim \frac{|sz^{M+1}|}{\log (1/|z|)}+|z|\lesssim_{C_0} |z|, 
\end{aligned}
\end{equation}
where we used the counting result \cref{equ:gamma_n} to obtain the last inequality.

Now, we treat the part of $k\geq 1$ terms in the Selberg zeta function. We only provide the details for the $\SL_2(\C)$-case here, and $\SL_2(\R)$-case is similar. For $\Re s\gg 1$,  \cref{lem-uniform ell} and \cref{equ:modif-ell-lb} allow us to use \cref{equ:log1-z} with $z_2=0$ to obtain
\begin{equation}
\label{equ:k geq 1 terms}
\sum_{k\geq 1}(k+1)|\log(1-e^{-(s/\log(1/|z|)+k)\ell(\gamma_z)})|\lesssim \sum_{k\geq 1}(k+1)e^{-(\Re s/\log(1/|z|)+k)\ell(\gamma_z)}\leq 2 e^{-(\Re s/\log(1/|z|)+1)\ell(\gamma_z)}/(1-e^{-\ell(\gamma_z)})^2. 
\end{equation}
Let 
$\log(1/|z|) \gg C$ and $\Re s\gg C$. We have the following two cases.
\begin{itemize}
\item For
$\ell^{na}(\gamma)\leq \epsilon_0 \log (1/|z|)$ with $\epsilon_0>0$ sufficiently small, by \cref{equ:expansion}, we have
\[ \ell(\gamma_z)\geq \log(1/|z|)\left(\ell^{na}(\gamma)-\frac{|a_0(\gamma)|}{\log(1/|z|)}+O\left(\frac{e^{C\ell^{na}(\gamma)}|z|}{\log (1/|z|)}\right)\right)\geq \left(1-\frac{2C}{\log(1/|z|)}\right)\log(1/|z|)\ell^{na}(\gamma).  
\]
This gives \footnote{For $\Re s, \log(1/|z|)\geq 4C$, we have $(\Re s/\log(1/|z|)+1)(1-2C/\log(1/|z|))\geq 1+2C/\log(1/|z|)-8C^2/\log(1/|z|)^2\geq 1$.}
\[ (\Re s/\log(1/|z|)+1)\ell(\gamma_z)\geq (\Re s/\log(1/|z|)+1)\left(1-\frac{2C}{\log(1/|z|)}\right)\log(1/|z|)\ell^{na}(\gamma)
\geq\log(1/|z|)\ell^{na}(\gamma) . \]

\item For $\ell^{na}(\gamma)>\epsilon_0 \log (1/|z|)$, we can continue to apply \cref{lem-uniform ell} to \cref{equ:k geq 1 terms} to obtain an upper bound in terms of $\ell^{na}(\gamma)$.
\end{itemize}
In conclusion, for $\log(1/|z|) \gg C$ and $\Re s\gg C$, we obtain
\begin{equation}\label{equ:zeta-conv-quant-k>0}
\begin{aligned}   
\sum_{[\gamma]\in \calP}\sum_{k\geq 1}(k+1)|\log(1-e^{-(s/\log(1/|z|)+k)\ell(\gamma_z)})|\lesssim \sum\limits_{ \ell^{na}( \gamma)\leq \epsilon_0\log(1/|z|) } e^{-\ell^{na}(\gamma)\log(1/|z|)}
     \\
     +\sum\limits_{ \ell^{na}( \gamma)> \epsilon_0\log(1/|z|)  }  e^{-c(\Re s+\log(1/|z|))\ell^{na}(\gamma)}\lesssim |z|,
\end{aligned}
\end{equation}
where we used again the counting result \cref{equ:gamma_n} to obtain the last inequality.

For $\log(1/|z|)\geq C(M+1)$ and $(M+1)s_0<\Re s<10 \log(1/|z|)$, we obtain \cref{equ:zeta-conv-quant} by combining \cref{equ:zeta-conv-quant-k=0} and \cref{equ:zeta-conv-quant-k>0}. For $\Re s\geq 10\log(1/|z|)$, it remains to bound the right-hand side of the following inequality by $|z|$:
\begin{equation*}
\left|\log(1-e^{-s\ell(\gamma_z)/\log(1/|z|)})-\log (1-e^{-s \ell_M(\gamma,z)})\right| \leq \left|\log(1-e^{-s\ell(\gamma_z)/\log(1/|z|)})|+|\log (1-e^{-s \ell_M(\gamma,z)})\right|.
\end{equation*}
This can be achieved by an argument similar to 
the proof of \cref{equ:zeta-conv-quant-k>0}.
\end{proof}

Now we are ready to prove \cref{prop:speed-conv}.

\begin{proof}[Proof of \cref{prop:speed-conv}] 
Fix any $C>1$,  $M\in \mathbb{Z}_{\geq 0}$, and $\epsilon>0$. Recall the constants $s_0>0$ and $C_{\Gamma}>0$ in \cref{lem:quant-bound-zeta}. 

Set $R=(M+1)s_0+1$, and let $C_{\epsilon}>1$ be a constant satisfying $C_{\epsilon}>\max\{10 C,R\}$ and 
\begin{equation*}
\log(R+10 C)/\log (R+C_{\epsilon})<\epsilon.
\end{equation*}
It suffices to prove the proposition for $\log(1/|z|)>C_{\Gamma}(M+1)$. This 
is because the zeta functions depend continuously on the parameters $z$ and $s$, thus uniformly bounded for $s\in [-C,C]+i[-C|z|^{-M},C|z|^{-M}]$ and $\log(1/|z|) \leq C_{\Gamma}(M+1)$ depending on $C,M,\epsilon$.

For any $K_T:=[-C,C]+i[T,T+1]$ with $T\in [-C|z|^{-M},C|z|^{-M}]$, we choose three concentric discs $D_1\subset D_2\subset D_3$ with radii $r_1<r_2<r_3$ such that $D_1\subset \{s: \,\Re s>(M+1)s_0, |\Im s|\leq (C+2)|z|^{-M}\}$, $D_2\supset K_T$ and $D_3\subset \{s:\, \Re s > - C_{\epsilon}, |\Im s|\leq (C+R+C_{\epsilon}+1)|z|^{-M}\}$ (see Figure~\ref{fig:disc}). In particular, we can take
\begin{equation*}
   D_1=D(a,R-(M+1)s_0),\quad D_2=D(a,R+10 C), \quad D_3=D(a,R+C_{\epsilon})
\end{equation*}
where $a=R+iT$.

\begin{figure}
    \centering

\begin{tikzpicture}[scale=0.8]

\draw[->,ultra thick] (-5,0)--(8,0) node[right]{$x$};
\draw[->,ultra thick] (-1.5,-3)--(-1.5,7.5) node[above]{$y$};

\node at (-4,0.3)  {$-C_{\epsilon}$} ;

\node at (0,0.3)  {$(M+1)s_0$} ;

\node at (6,0.3)  {$2R+C_{\epsilon}$} ;

\node at (-4,6.5)  {$i(C+R+C_{\epsilon}+1)|z|^{-M}$} ;

\node at (2,2.3)  {$D_1$};

\node at (3.5,2.3)  {$D_2$};

\node at (4.5,2.3)  {$D_3$};

\draw [dashed] (-0.6,-3) -- (-0.6,7.5);

\draw (1,2) circle  (1.5);

\draw (1,2) circle (3);

\draw (1,2) circle (4);

\end{tikzpicture}

\caption{Choice of discs}
\label{fig:disc}
    
\end{figure}

By \cref{cor:analytic-middle}, the function
\begin{equation*}
    F_z(s)=Z(\Gamma_z,s/\log(1/|z|))-Z_{M}(\Gamma,z,s)
\end{equation*}
is an entire function. By
\cref{lem:quant-bound-zeta}, $F_z(s)$ satisfies the bound $|F_z(s)|\lesssim_{C} |z|$ for $s\in D_1$ and $s\in D_3\cap \{s:\,\Re s>(M+1)s_0\}$. By \cref{prop:uniform-bdd-SL2(R)} and \cref{prop:uniform-bdd-SL2(C)}, $Z(\Gamma_z,s/\log(1/|z|))$ is uniformly bounded on $D_3\cap \{ s:\,\Re s \leq (M+1)s_0\}$. Meanwhile, \cref{cor:analytic-middle} shows that $Z_M(\Gamma,z,s)$ is uniformly bounded on $D_3\cap \{s:\, \Re s \leq (M+1)s_0\}$. Therefore the function $F_z(s)$ is uniformly bounded on $D_3$, and from \cref{prop:hadamard} we conclude
\begin{equation*}
    |F_z(s)|\lesssim_{C,M,\epsilon} |z|^{\log(r_3/r_2)/\log(r_3/r_1)},\quad s\in \partial D_2.
\end{equation*}
The same bound also holds for $s\in D_2$ by the maximal modulus principle since $F_z(s)$ is an entire function.
The statement follows from
\begin{equation*}
    \log(r_3/r_2)/\log(r_3/r_1) =1-\frac{\log r_2-\log r_1}{\log r_3-\log r_1}=1-\log(R+10 C)/\log (R+C_{\epsilon})
     >1-\epsilon. \qedhere
\end{equation*}
\end{proof}

\begin{proof}[Proof of \cref{prop:logarithmic convergence}]
    Take $M=0$ in \cref{prop:speed-conv}, we have for $0<|z|<1/e$,
    \begin{equation*}
        |Z(\Gamma_z,s/\log(1/|z|) ) -Z_0(\Gamma,z,s)|\lesssim_{C} |z|^{1/2}, \quad \text{ for } |\Re s|\leq C,\, |\Im s|\leq C.
    \end{equation*}
    It suffices to prove that
    \begin{equation*}
        |Z_0(\Gamma,z,s)-Z_I(\Gamma,s)|\lesssim_K  \frac{1}{\log(1/|z|)},\quad s\in K.
    \end{equation*}
    By \cref{cor:analytic-middle},
    \begin{equation*}
        Z_0(\Gamma,z,s)=P_0(e^{s \mu_1(z)}, e^{s\mu_2(z)},\cdots, e^{s\mu_J(z)})
    \end{equation*}
    where each $\mu_j(z)=a_j+a_{j,0}/\log(1/|z|)$. By \cref{lem:M'-M}, 
    \begin{equation*}
        Z_I(\Gamma,s)=P_0(e^{sa_1},\cdots, e^{s a_J}).
    \end{equation*}
    Since $P_0$ is smooth, we conclude that
    \begin{equation*}
        |P_0(e^{s \mu_1(z)}, e^{s\mu_2(z)},\cdots, e^{s\mu_J(z)})-P_0(e^{sa_1},\cdots, e^{s a_J})|\lesssim_K \sup_{j} |\mu_j(z)-a_j| \lesssim_K \frac{1}{\log(1/|z|)},\quad s\in K.
    \end{equation*}
    The proof is complete.
\end{proof}

\begin{proof}[Proof of \cref{cor:res-conv}]
Given a disc $D_R$ that satisfies the assumption, let $\rho^{na}$ be a zero of $Z_I(\Gamma,s)$ in $D_R$ of multiplicity $m$. Then there exists a small disc $D_0$ centered at $\rho^{na}$ such that $\rho^{na}$ is the only zero of $Z_{I}(\Gamma,s)$ in $\bar{D}_0$. We can write
\begin{equation*}
Z_I(\Gamma,s)=(s-\rho^{na})^m f(s)
\end{equation*}
where $f(s)$ is an entire function non-vanishing on $D_0$.

Since $Z(\Gamma_z,s/\log(1/|z|))\to Z_I(\Gamma,s)$ uniformly for $s\in \partial D_0$, there exists $t_1>0$ such that for $0<|z|<t_1$, we have
\begin{equation*}
|Z_I(\Gamma,s)|>|Z_{I}(\Gamma,s)-Z(\Gamma_z,s/\log(1/|z|))|\quad \text{for}\quad s\in \partial D_0.
\end{equation*} 
By Rouch\'{e}'s theorem, $Z_I(\Gamma,s)$ and $Z(\Gamma_z,s/\log(1/|z|))$ have the same number of zeros in $D_0$. We can write 
\begin{equation*}
Z(\Gamma_z,s/\log(1/|z|))=(s-\rho_1)\cdots(s-\rho_{m}) f_z(s)
\end{equation*}
where $f_z(s)$ is an entire function non-vanishing on $D_0$. We claim that $f_z(s)\to f(s)$ uniformly on $\bar{D}_0$. Write $D_0=D(\rho^{na},r)$.
By the uniform convergence $Z(\Gamma_z,s/\log(1/|z|))\to Z_I(\Gamma,s)$ on $\bar{D}_0$,  for any $\epsilon>0$, there exists $t(\epsilon)>0$ such that for $0<|z|<t(\epsilon)$, we have 
\begin{equation*}
    |Z(\Gamma_z,s/\log(1/|z|))- Z_I(\Gamma,s)|<\min\left\{\epsilon,\inf_{\epsilon\leq |s-\rho^{na}|\leq r}|Z_I(\Gamma,s)|\right\}\quad \text{for} \quad s\in \bar{D}_0.
\end{equation*}
Therefore $Z(\Gamma_z,s/\log(1/|z|))$ has no zero in $\{s:\epsilon\leq |s-\rho^{na}|\leq r\}$, which implies $|\rho_j-\rho^{na}|< \epsilon$ for each $\rho_j$. For $s\in \bar{D}_0$, 
\begin{equation*}
    |f_z(s)-f(s)| \leq  \sup_{s\in \partial D_0}|Z(\Gamma_z,s/\log(1/|z|))/((s-\rho_1)\cdots(s-\rho_{m}))-Z_I(\Gamma,s)/(s-\rho^{na})^m|\lesssim \epsilon.
\end{equation*}
As a consequence, there exist $C>1$ and $t_2>0$ such that for $0<|z|<t_2$, we have $C^{-1}<|f_z(s)|<C$ for $s\in D_0$.

Similarly, since $Z_M(\Gamma,z,s)\to Z_I(\Gamma,s)$, there exists $t_3>0$ such that for $0<|z|<t_3$, $Z_M(\Gamma,z,s)$ has $m$ zeros in $D_0$, and we can write 
    \begin{equation*}
        Z_M(\Gamma,z,s)=(s-\rho^M_1)\cdots(s-\rho^M_m)f_{M,z}(s)
    \end{equation*}
    where $f_{M,z}(s)$ is an entire function non-vanishing on $D_0$.

    For $0<|z|<\min\{t_1,t_2,t_3\}$, consider the following functions on $D_0$:
    \begin{equation*}
    g_z(s)=(s-\rho_1)\cdots(s-\rho_{m})\quad \text{and} \quad h_z(s)=g_z(s)-Z_{M}(\Gamma,z,s)/f_z(s)
    \end{equation*}
    Thus, we can complete the proof by applying the following \cref{lem:perturb-poly} to $g_z(s)$ and $h_z(s)$.
\end{proof}

\begin{lem}\label{lem:perturb-poly}
    Let $g(s)=s^d+a_1s^{d-1}+\cdots+a_d=(s-\rho_1)\cdots(s-\rho_d)$ be a polynomial such that all of its zeros $\rho_1,\ldots,\rho_d$ are in a disc $D(\rho,\epsilon_0)$. Let $h(s)$ be a holomorphic function on $D(\rho,3\epsilon_0)$ such that $|h(s)|\leq \epsilon \leq 4^{-d^3-10d}\epsilon_0^{d}$ for $z\in \partial D(\rho,2\epsilon_0)$. Then $g(s)+h(s)$ also has $d$ zeros $\rho_1',\cdots, \rho_d'$ in $D(\rho,2\epsilon_0)$, and we can order the zeros such that for each $j\in [1, d]\cap \mathbb{N}$,
    \begin{equation*}
        |\rho_j-\rho_j'|<4^{d^2+3}\epsilon^{1/d}.
    \end{equation*}
\end{lem}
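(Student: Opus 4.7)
The plan is to apply Rouché's theorem twice on different contours. The first application, on $\partial D(\rho,2\epsilon_0)$, certifies that $g+h$ has exactly $d$ zeros in $D(\rho,2\epsilon_0)$; the second, applied on a much smaller set built from discs around the $\rho_i$'s, delivers the matching with the required rate $\epsilon^{1/d}$.

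First I would verify the initial Rouché step. On $\partial D(\rho,2\epsilon_0)$, any zero $\rho_i\in D(\rho,\epsilon_0)$ satisfies $|s-\rho_i|\geq \epsilon_0$, so $|g(s)|\geq \epsilon_0^d$, while the hypothesis gives $|h(s)|\leq \epsilon\leq 4^{-d^3-10d}\epsilon_0^d\ll \epsilon_0^d$. So $g+h$ has exactly $d$ zeros $\rho_1',\dots,\rho_d'$ in $D(\rho,2\epsilon_0)$. Since $h$ is holomorphic on the larger disc $D(\rho,3\epsilon_0)$, the maximum modulus principle upgrades the boundary estimate to $|h(s)|\leq\epsilon$ on all of $\overline{D(\rho,2\epsilon_0)}$.

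For the matching, set $r_0:=2\epsilon^{1/d}$ and form the open set $U:=\bigcup_{i=1}^d D(\rho_i,r_0)$. The hypothesis $\epsilon\leq 4^{-d^3-10d}\epsilon_0^d$ gives $r_0\leq 2\cdot 4^{-d^2-10}\epsilon_0<\epsilon_0$, so $U\subset D(\rho,2\epsilon_0)$. For any $s\in\partial U$ the point $s$ lies outside every open disc $D(\rho_i,r_0)$, so $|s-\rho_i|\geq r_0$ for all $i$ and
\[
|g(s)|=\prod_{i=1}^d|s-\rho_i|\geq r_0^d=2^d\epsilon>\epsilon\geq |h(s)|.
\]
Rouché's theorem applied to each connected component $U_k$ of $U$ therefore implies that $g$ and $g+h$ have the same number, call it $n_k$, of zeros in $U_k$. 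Match them arbitrarily inside each component.

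Finally, I would bound the diameter of each component. A chain of overlapping discs of length $\leq n_k-1$ through $U_k$ shows $\mathrm{diam}(U_k)\leq 2n_k r_0\leq 2d r_0=4d\,\epsilon^{1/d}$. Any two matched zeros lie in the same component, hence $|\rho_j-\rho_j'|\leq 4d\,\epsilon^{1/d}\leq 4^{d^2+3}\epsilon^{1/d}$. The only real obstacle is administrative: one must ensure $U$ sits inside $D(\rho,2\epsilon_0)$ so that the $|h|\leq\epsilon$ bound is available on $\partial U$, and one must be careful that $\partial U$ is a union of (finitely many) circular arcs to which Rouché's theorem applies — both handled by the smallness constraint on $\epsilon$. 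No deeper difficulty is expected.
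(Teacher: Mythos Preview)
Your proof is correct and in fact cleaner than the paper's. Both arguments begin with the same Rouch\'e step on $\partial D(\rho,2\epsilon_0)$, but diverge at the matching stage. The paper introduces a pigeonhole step: it considers the set $S=\{|\rho_i-\rho_j|\}$ of at most $d^2$ pairwise distances, and finds a scale $C\in[2,4^{d^2+2}]$ such that no element of $S$ lies in $[C\epsilon^{1/d},4C\epsilon^{1/d}]$; this guarantees that the discs $D_\ell=D(\rho_{j_\ell},C\epsilon^{1/d})$ can be chosen with the $2D_\ell$ pairwise disjoint, after which Rouch\'e is applied on each $\partial(2D_\ell)$. Your approach bypasses this entirely: you take a single fixed radius $r_0=2\epsilon^{1/d}$, form the union $U=\bigcup_i D(\rho_i,r_0)$, apply Rouch\'e component by component, and control $\mathrm{diam}(U_k)$ via the chain-of-discs argument. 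This is both shorter and gives a visibly better constant ($4d$ in place of $4^{d^2+3}$). The paper's pigeonhole detour buys separated round discs, which makes the Rouch\'e application cosmetically cleaner (round contours rather than unions of arcs), but as you note this is only an administrative issue: the argument principle works on $\partial U_k$ without difficulty.
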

\begin{proof}
Since $|g(s)|\geq \epsilon_0^d$ on $\partial D(\rho,2\epsilon_0)$, and $|h(s)|\leq \epsilon<\epsilon_0^d$, by Rouch\'e's theorem $g(s)$ and $g(s)+h(s)$ have the same number of zeros in $D(\rho,2\epsilon_0)$. 

We claim there exist finitely many discs $D_\ell$ 
of radius $C\epsilon^{1/d}$ for some $C\in [2, 4^{d^2+2}]\cap \mathbb{N}$ 
 such that they cover all the $\rho_j$'s, and $2D_{\ell}\cap 2D_{\ell'}=\varnothing$ {\footnote{Here, if $D_{\ell}=D(a,r)$, then $2D_{\ell}=D(a,2r)$.}} for $\ell\neq \ell'$. Let $S=\{|\rho_i-\rho_j|:\, i,j=1,2,\cdots,d\}\subset [0, 2\epsilon_0]$. Then $\#S\leq d^2$. By the pigeonhole principle, there exists $C\in [2,4^{d^2+2}]\cap \mathbb{N}$ such that
$[C\epsilon^{1/d}, 4C\epsilon^{1/d}] \cap S=\varnothing$ (note the assumption $\epsilon\leq 4^{-d^3-10d} \epsilon_0^d$ is used here to ensure $4^{d^2+3}\epsilon^{1/d}<\epsilon_0$). Then for any pair $\rho_i,\rho_j$, 
we have either $|\rho_i-\rho_j|< C \epsilon^{1/d}$ or $|\rho_i-\rho_j|>4C\epsilon^{1/d}$. 
Let $D_1=D(\rho_1,C\epsilon^{1/d})$. Choose a zero $\rho_{j_2}\notin D_1$, and let $D_2=D(\rho_{j_2},C\epsilon^{1/d})$. Using the inequality  $|\rho_1-\rho_{j_2}|>4C\epsilon^{1/d}$, we obtain  
 $2D_1\cap 2D_2=\varnothing$. We repeat this process finitely many times to obtain the desired family of discs.

The fact $\epsilon \leq 4^{-d^3-10d} \epsilon_0^{d}$ ensures $2D_{\ell}\subset D(\rho_*,2\epsilon_0)$. Note
\begin{equation*}
    |h(s)|\leq \epsilon< (C\epsilon^{1/d})^d \leq |g(s)|\quad \text{for}\quad s\in \partial(2D_{\ell}).
\end{equation*}
By Rouch\'e's theorem, $g(s)+h(s)$ and $g(s)$ have the same number of zeros in $2D_{\ell}$, and the proof is complete. \qedhere

\begin{proof}[Proof of \cref{cor:zero-ZM}]
By Rouch\'e's theorem, it suffices to show 
\begin{equation*}
    |Z(\Gamma_z,s/\log(1/|z|))-Z_M(\Gamma,z,s)|< |Z_M(\Gamma,z,s)|,\quad z\in\partial D.
\end{equation*}
Since we assume $|Z_M(\Gamma,z,s)|\geq |z|^{1-\epsilon}$, it follows from \cref{prop:speed-conv} that
\begin{equation*}
    |Z(\Gamma_z,s/\log(1/|z|))-Z_M(\Gamma,z,s)|\lesssim_{C,M,\epsilon}|z|^{1-\epsilon/2}<|z|^{1-\epsilon}
\end{equation*}
for $|z|$ sufficiently small.
\end{proof}

\begin{proof}[Proof of \cref{thm:zeta-conv} and \cref{thm:three-funnel}]
    \cref{thm:zeta-conv} is a direct consequence of \cref{prop:logarithmic convergence} and \cref{prop:speed-conv}. The estimate on the Hausdorff dimension of the limit set in \cref{thm:three-funnel} is a consequence of \cref{cor:res-conv} and the computation of the intermediate zeta function $Z_0(\Gamma,z,s)$ for symmetric three-funnel hyperbolic surfaces in \cref{exa.borth}.
\end{proof}

\end{proof}

\section{Analytic extension of intermediate zeta functions}\label{sec:analytic}

Recall the intermediate zeta functions introduced in \cref{equ:intermediate zeta function}:
\begin{equation}
	\label{equ:recall intermediate zeta functions}
	Z_M(\Gamma,z,s)=\prod\limits_{[\gamma]\in \calP}(1-e^{-s\ell_M(\gamma,z)})\quad \text{for} \quad \Re s\gg 1.
\end{equation}
The goal of this section is to show that $Z_{M}(\Gamma,z,s)$ admits an analytic extension to $\mathbb{C}$.

This can be seen as a generalization of the analytic continuation of the Ihara zeta function. We need to find good combinatorial relations between $\ell_M(\gamma,t)$'s and then obtain a determinant formula similar to the Ihara zeta function. 

We will first review the proof of the determinant formula for a weighted version of the Ihara zeta function. Then, we use derivative cocycles from the Schottky group to get a good combinatorial relation and apply the determinant formula. 

\subsection{Weighted Ihara zeta function}
We briefly review the proof of analytic continuation of the weighted Ihara zeta function. For more details, see, for example, \cite[Section 3]{hortonWhatAreZeta2006}.

Let $G=(V,E)$ be a finite graph, where $E=\{e_1,\cdots, e_{2J}\}$ is the set of oriented edges of $G$ with $e_j$ and $e_{j+J}$ in opposite direction. We write $e_{j+J}=e_j^{-1}$ to refer to their relation. For each edge $e_j$, we associate it a complex number $h_j$.
Define $W(s)$ to be a $(2J)\times (2J)$-matrix by
\begin{equation*}
	W(e_j,e_k)(s)=e^{-s(h_j+h_k)/2}
\end{equation*}
if the endpoint of $e_j$ is the beginning point of $e_k$ and $e_j\neq e_k^{-1}$, and $W(e_j,e_k)(s)=0$ otherwise.

For a non-backtracking loop $P=(e_{i_1},\cdots, e_{i_n})$, define
$\ell_h(P)=\sum\limits_{j=1}^{n} h_{i_j}$.
The weighted Ihara zeta function is defined as a product over primitive loops $P$:
\begin{equation*}
	Z_I(G,h,s)=\prod\limits_{[P]\in \calP} (1-e^{-s\ell_h(P)}).
\end{equation*}
Recall that we use $P$ to denote a non-backtracking loop with a beginning point at a vertex, and $[P]$ to denote a loop forgetting the starting point. For $s$ with large real part, the product is absolute convergent. The following \cref{prop:ZI} gives the analytic extension of $Z_I(G,h,s)$ to the entire complex plane. 

\begin{prop}\label{prop:ZI}
	For $\Re s\gg 1$, we have $Z_I(G,h,s)=\det (I-W(s))$.
\end{prop}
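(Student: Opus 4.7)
The plan is to follow the standard Hashimoto-style proof of the determinant formula: take the logarithm of $\det(I-W(s))$, expand it as a trace series, and recognize the resulting sum as the logarithmic derivative of an Euler product over primitive non-backtracking loops. The weighting by $h_j$'s will enter symmetrically and collapse to $\ell_h(P)$.

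First, I would note that for $\Re s$ sufficiently large, all entries of $W(s)$ are exponentially small, so $\|W(s)\|<1$ and one has the absolutely convergent identity
\begin{equation*}
\log\det(I-W(s))^{-1}=\sum_{n\geq 1}\frac{1}{n}\Tr(W(s)^n).
\end{equation*}
Next I would unfold the trace combinatorially:
\begin{equation*}
\Tr(W(s)^n)=\sum_{(e_{i_1},\ldots,e_{i_n})}W(e_{i_1},e_{i_2})(s)W(e_{i_2},e_{i_3})(s)\cdots W(e_{i_n},e_{i_1})(s).
\end{equation*}
By the definition of $W$, a summand is nonzero precisely when $(e_{i_1},\ldots,e_{i_n})$ is a closed non-backtracking walk of length $n$, i.e.\ the endpoint of $e_{i_j}$ is the starting point of $e_{i_{j+1}}$ and $e_{i_{j+1}}\neq e_{i_j}^{-1}$ (indices mod $n$). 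In that case each weight $e^{-s(h_{i_j}+h_{i_{j+1}})/2}$ contributes the factor $h_{i_j}$ twice as one multiplies around the cycle, so the product telescopes to $e^{-s\sum_{j=1}^n h_{i_j}}=e^{-s\ell_h(P)}$ where $P=(e_{i_1},\ldots,e_{i_n})$.

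Now I would group closed non-backtracking walks by their primitive underlying loop. A closed non-backtracking walk with a marked starting edge corresponds to a pair $([P_0],k)$ with $[P_0]\in\mathcal{P}$ primitive of length $n_0$ and $k\geq 1$ such that $kn_0=n$, together with a choice of starting point among the $n_0$ cyclic shifts. Hence
\begin{equation*}
\Tr(W(s)^n)=\sum_{\substack{[P_0]\in\mathcal{P},\, k\geq 1\\ k\,\ell(P_0)=n}}\ell(P_0)\,e^{-sk\ell_h(P_0)},
\end{equation*}
where $\ell(P_0)$ denotes the number of edges of $P_0$ (not the weighted length $\ell_h$). Substituting and summing over $n$, the factor $\ell(P_0)$ cancels with $1/n=1/(k\ell(P_0))$, leaving
\begin{equation*}
\sum_{n\geq 1}\frac{1}{n}\Tr(W(s)^n)=\sum_{[P_0]\in\mathcal{P}}\sum_{k\geq 1}\frac{1}{k}e^{-sk\ell_h(P_0)}=-\sum_{[P_0]\in\mathcal{P}}\log\bigl(1-e^{-s\ell_h(P_0)}\bigr).
\end{equation*}
Exponentiating gives $\det(I-W(s))=Z_I(G,h,s)$ as claimed.

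The only genuine subtlety is verifying absolute convergence (which justifies both the series for $\log\det$ and the Fubini rearrangement over $[P_0]$ and $k$): for $\Re s$ sufficiently large, $|W(s)(e_j,e_k)|\leq e^{-\Re s\cdot \min_i h_i}$, and the number of closed non-backtracking walks of length $n$ grows at most exponentially in $n$, so both sides converge absolutely in a common right half-plane. This is a routine bookkeeping step; no new idea is needed beyond the classical argument, and the weights $h_j$ pose no extra difficulty since they enter only through the telescoping product along the cycle.
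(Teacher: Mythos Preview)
Your proof is correct and follows essentially the same approach as the paper: expand both $\log Z_I$ and $\log\det(I-W(s))$ as power series, and match them via the identity $\Tr(W(s)^n)=\sum_{P\in\calP,\,j:j\#(P)=n}e^{-js\ell_h(P)}$. Your writeup is in fact more explicit than the paper's (you spell out the telescoping of the edge weights and the counting of cyclic shifts). One small remark: since the $h_j$ are complex, your convergence bound should read $|W(e_j,e_k)(s)|\leq e^{-\Re s\cdot\min_i\Re h_i}$ rather than $e^{-\Re s\cdot\min_i h_i}$, and one tacitly needs $\Re h_j>0$ (which holds in all applications in the paper).
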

\begin{proof}
	We expand $\log Z_I(G,h,s)$:
	\begin{equation*}
		\ag{
			\log Z_I(G,h,s)&=-\sum\limits_{j=1}^{\infty}\sum\limits_{[P]\in \calP}\frac{1}{j} e^{-js\ell_h(P)}\\
			&=-\sum\limits_{j=1}^{\infty}\sum\limits_{P \in \calP}\frac{1}{j\#(P)} e^{-js\ell_h(P)}\\
			&=-\sum\limits_{n=1}^{\infty}\sum\limits_{\substack{P\in \calP\\j:j\#(P)=n}}\frac{1}{n} e^{-js\ell_h(P)}
		},
	\end{equation*}
	where the sum $\sum_{P\in \calP}$ is over primitive non-backtracking loops $P$ and $\#(P)$ is the number of edges in the primitive loop $P$. 
	
	We can also expand $\log \det(I-W(s))$:
	\begin{equation*}
		\ag{
			\log \det (I-W(s))&=-\sum\limits_{n=1}^{\infty}\frac{1}{n}\tr W(s)^n.
		}
	\end{equation*}
	We finish the proof by using the equality $\tr W(s)^n=\sum\limits_{\substack{P\in \calP\\j:j\#(P)=n}} e^{-js\ell_h(P)}$.
\end{proof}

\subsection{$M$th-expansion of lengths and derivative cocycles}
\label{sec:use derivative of Schottky groups}
 In this section we prove the intermediate zeta function $Z_M(\Gamma,z,s)$ has an analytic extension and is computable for concrete examples. We will use the geometry of Schottky basis. Throughout this section, we fix generators $\mathfrak{a}_1,\cdots,\mathfrak{a}_g$ and a Schottky basis $D_{\mathfrak{a}_j}$ satisfying \cref{prop:separtion}.

For computational convenience, we will moreover assume $D_{\mathfrak{a}_j}$ are contained in $D(0,C)$, i.e. uniformly bounded away from $\infty$. This is always possible by choosing an appropriate coordinate.

The key observation is that the leading terms of the derivative is locally constant, from which we can compute the intermediate zeta function. 

Consider pairs of the form $(U,f)$ where $U$ is an open neighborhood of $0$, and $f$ is meromoprhic on $U$ and holomorphic on $U\backslash \{0\}$. Two such pairs $(U,f)$ and $(V,g)$ are equivalent if there exists an open neighborhood $W$ of $0$, contained in $U\cap V$, such that $f|_{W}=g|_{W}$. A germ of meromorphic functions at $0$ is an equivalence class of such a pair, and it has a representative that is a convergent complex Laurent series about $0$, $t^{n}\sum_{j=0}^{\infty}a_j t^j$ with $n\in \mathbb{Z}, a_{0}\neq 0$. Let $\mathcal{M}_0$ be the ring of germs of meromorphic functions at $0$, which is isomorphic to the ring of all convergent complex Laurent series about $0$. The ring $\mathcal{O}_0$ of germs of holomorphic functions at $0$ is defined analogously, which is isomorphic to the ring of all convergent complex power series about $0$.

Fix any $M\in \mathbb{Z}_{\geq 0}$. We introduce several combinatorial operations.
\begin{itemize}
	\item For $a=t^n \sum_{j=0}^{\infty}a_jt^j\in\mathcal{M}_0$ with $n\in\ZZ, a_0\neq 0$, we let $\lt_{M}(a)$ be the first $M$ leading terms of $a$:
	\begin{equation*}
		\lt_M(a):=t^n \sum_{j=0}^{M}a_jt^j.
	\end{equation*}
	
	\item We define the formal expansion of logarithm $\plog: \mathcal{M}_0\to \mathbb{Z}\log(1/t)\oplus (\mathcal{O}_0/{2\pi i\mathbb{Z}})$ by mapping 
	$a=a_0t^n(1+\sum_{j=1}^{\infty}a_jt^j)\in \mathcal{M}_0$ with $n\in \mathbb{Z}$ and $a_0\neq 0$ to
	\begin{equation}\label{equ:log}
		\mathrm{plog}(a):=-n\log (1/t)+\log a_0+\sum_{m=1}^{\infty}\frac{(-1)^{m-1}}{m}\left(\sum_{j=1}^{\infty} a_jt^j\right)^m.
	\end{equation}
	Here $\sum_{m=1}^{\infty}\frac{(-1)^{m-1}}{m}(\sum_{j=1}^{\infty} a_jt^j)^m$ is the power series expansion of the holomorphic function $\log (1+\sum_{j=1}^{\infty} a_jt^j)$ about $0$, which is uniquely determined.
	
	\item We define $\lt'_{M}:\mathbb{Z}\log(1/t)\oplus (\mathcal{O}_0/2\pi i\mathbb{Z})\to \mathbb{Z}\log(1/t)\oplus (\mathcal{O}_0/2\pi i \mathbb{Z})$ by mapping 
	$a=n{\log(1/t)}+\sum\limits_{j=0}^{\infty}a_jt^j$ to its first $M$ leading terms
	\begin{equation*}
		\lt'_{M}(a):=n\log(1/t)+\sum\limits_{j=0}^{M}a_jt^j.
	\end{equation*}
	And we define $\lt'_{-1}:\mathbb{Z}\log(1/t)\oplus (\mathcal{O}_0/2\pi i\mathbb{Z})\to \mathbb{Z}\log(1/t)$ by mapping 
	$a=n{\log(1/t)}+\sum\limits_{j=0}^{\infty}a_jt^j$ to its  $(-1)$-leading term
	\begin{equation*}
		\lt_{-1}'(a):=n\log (1/t).
	\end{equation*}

\end{itemize}

\begin{lem}
	\label{properties of combinatorial operations}
	For any $M\in \mathbb{Z}_{\geq 0}$, and for $x,y\in\mathcal{M}_0$, we have
	\begin{align}\label{equ:ltm'}
		&\lt'_{M}\circ\plog\circ\lt_M(x)=\lt'_M\circ\plog(x),\\
		&\lt'_M\circ\plog\circ\lt_M(xy)=\lt'_M\circ\plog (x) +\lt'_M\circ\plog (y).  \nonumber
	\end{align}
\end{lem}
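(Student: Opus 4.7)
The plan is to establish both identities by reducing them to formal power series manipulations, treating $\plog$ as a ``homomorphism from multiplication to addition'' modulo terms of sufficiently high order.

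For the first identity, I would write $x=t^n a_0(1+u)$ with $u=\sum_{j\geq 1}(a_j/a_0)t^j\in t\mathcal{O}_0$, so $\lt_M(x)=t^n a_0(1+u_M)$ where $u_M=\sum_{j=1}^M(a_j/a_0)t^j$ is the truncation of $u$. Then $\plog(x)=-n\log(1/t)+\log a_0+\log(1+u)$ while $\plog(\lt_M(x))=-n\log(1/t)+\log a_0+\log(1+u_M)$, so the two agree in their $\log(1/t)$ and constant parts, and one only needs to show $\log(1+u)\equiv\log(1+u_M)\pmod{t^{M+1}}$ in $\mathcal{O}_0$. Since $u-u_M$ belongs to $t^{M+1}\mathcal{O}_0$ and $u\in t\mathcal{O}_0$, the term $u^m$ in the formal expansion \eqref{equ:log} lies in $t^m\mathcal{O}_0$, so only finitely many (namely $m\leq M$) contribute modulo $t^{M+1}$; for each such $m$, $u^m\equiv u_M^m\pmod{t^{M+1}}$ by direct expansion. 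Applying $\lt'_M$ then gives the first identity.

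For the second identity, I would first prove that $\plog$ intertwines multiplication and addition: for any $x,y\in\mathcal{M}_0$ one has
\begin{equation*}
\plog(xy)=\plog(x)+\plog(y)\quad\text{in}\quad \mathbb{Z}\log(1/t)\oplus\mathcal{O}_0/2\pi i\mathbb{Z}.
\end{equation*}
The $\log(1/t)$-contribution of $\plog(xy)$ comes from the order of vanishing at $0$, which is additive. The constant term splits as $\log(a_0b_0)\equiv\log a_0+\log b_0\pmod{2\pi i\mathbb{Z}}$, which is precisely the reason $\plog$ takes values in the quotient by $2\pi i\mathbb{Z}$. The remaining holomorphic pieces require $\log((1+u)(1+v))=\log(1+u)+\log(1+v)$ as an identity of formal power series in $t$; this is the classical identity, which I would justify either by the universal identity in two variables $\log(1+U)+\log(1+V)=\log(1+U+V+UV)$ (verified by differentiation in each variable, reducing to the familiar numerical identity on a neighbourhood of $0$), or by analytic continuation from the region where all terms are honest convergent series.

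Once this homomorphism property is in place, the second identity follows by combining it with the first one:
\begin{equation*}
\lt'_M\circ\plog\circ\lt_M(xy)=\lt'_M\circ\plog(xy)=\lt'_M\bigl(\plog(x)+\plog(y)\bigr)=\lt'_M\circ\plog(x)+\lt'_M\circ\plog(y),
\end{equation*}
using the linearity of $\lt'_M$. The only genuine obstacle is the careful bookkeeping of the $\log(1/t)$ and $\log a_0$ components (where the quotient by $2\pi i\mathbb{Z}$ is essential), together with the justification of the formal logarithm identity; everything else is a straightforward truncation argument in $t\mathcal{O}_0$.
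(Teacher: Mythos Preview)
Your proposal is correct and follows essentially the same route as the paper: write $x=t^n a_0(1+u)$ with $u\in t\mathcal{O}_0$, observe that $\lt'_M\circ\plog(x)$ depends only on $u$ modulo $t^{M+1}$ (giving the first identity), and deduce the second identity from the homomorphism property $\plog(xy)=\plog(x)+\plog(y)$, which boils down to $\log((1+u)(1+v))=\log(1+u)+\log(1+v)$ for convergent series near $0$. Your write-up is in fact a bit more explicit about the role of the $2\pi i\mathbb{Z}$ quotient and the finiteness of the contributing terms, but the arguments are the same.
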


\begin{proof}
	We recall the definition of $\plog$ \cref{equ:log}. The element $x\in \mathcal{M}_0$ can be written uniquely as $x=x_0(1+x_1)$ with $x_1=\sum_{j\geq 1}b_jt^j,\ x_0=b_0t^n$ for some $b_0\neq 0,\ n\in\Z$. Then
	$\plog x=-n\log(1/t)+\log b_0+(x_1-x_1^2/2+x_1^3/3-\cdots)$. From this expansion, we see that $\lt'_M\circ\plog(x)$ is determined by $x_0$ and the first $M-1$ term of $x_1$, and hence determined by $\lt_M(x)$. This completes the proof of the first equality. 
	
	The second equality uses the observation that for two convergent power series $\sum_{j\geq 1}a_jt^j,\,\sum_{l\geq 1}b_lt^l$, we have $\log((1+\sum_{j\geq 1}a_jt^j)(1+\sum_{l\geq 1}b_lt^l))=\log(1+\sum_{j\geq 1}a_jt^j)+\log(1+\sum_{l\geq l}b_lt^l)$ in an open neighborhood of $0$.
\end{proof}

In this subsection, it suffices to consider the action of $\SL_2(\Ct)$ on $\mathbb{P}^{1}_{\Ct}$, the set of classical points. 
For $\gamma=\begin{pmatrix} a& b\\ c& d\end{pmatrix}\in\SL_2(\Ct) $ and $x\neq \gamma^{-1}\infty\in \Ct$, the derivative of $\gamma$ at $x$ is given by
$$\gamma'x=1/(cx+d)^2.$$ 
\begin{lem}\label{lem:lc-constant}
	Fix $M\in \mathbb{Z}_{\geq 0}$. For 
	any $\gamma=\begin{pmatrix} a& b\\ c& d\end{pmatrix}\in \SL_2(\Ct)$ with $\gamma\infty\neq\infty$ and $x,y\in \Ct\backslash \{\gamma^{-1}\infty\}$ with $|x-y|_{na}<e^{-M} |x-\gamma^{-1}\infty|_{na}$, we have
	\[ \lt_M(\gamma'x)=\lt_M(\gamma'y). \]
\end{lem}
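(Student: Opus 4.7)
The plan is to reduce the statement to a non-Archimedean estimate on $\gamma'x - \gamma'y$ and then translate that estimate into equality of the first $M+1$ Laurent coefficients.

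First I would rewrite the derivative in the convenient form $\gamma' x = (cx+d)^{-2}$, set $u = cx+d$, $v = cy+d$, and note that $\gamma^{-1}\infty = -d/c$, so that the hypothesis becomes $|u-v|_{na} = |c|_{na}|x-y|_{na} < e^{-M}|c|_{na}|x-\gamma^{-1}\infty|_{na} = e^{-M}|u|_{na}$. Since $e^{-M}\leq 1$, the non-Archimedean ultrametric inequality then gives $|v|_{na}=|u|_{na}$ automatically, which ensures that both $\gamma'x$ and $\gamma'y$ have the same $t$-adic order $n$ (so the notation $\lt_M$ applies to each of them with the same leading power of $t$).

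Next I would factor the difference as
\[
\gamma'x - \gamma'y \;=\; \frac{1}{u^2}-\frac{1}{v^2} \;=\; \frac{(v-u)(v+u)}{u^2 v^2},
\]
and bound each factor with the ultrametric inequality: $|v+u|_{na}\leq |u|_{na}$ and $|u-v|_{na}<e^{-M}|u|_{na}$, so
\[
|\gamma'x-\gamma'y|_{na} \;\leq\; \frac{|u-v|_{na}\,|u|_{na}}{|u|_{na}^2\,|v|_{na}^2} \;<\; e^{-M}\,|u|_{na}^{-2} \;=\; e^{-M}\,|\gamma'x|_{na}.
\]

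Finally I would translate this back to Laurent coefficients. Writing $\gamma'x = t^n\sum_{j\geq 0} a_j t^j$ and $\gamma'y = t^n\sum_{j\geq 0} b_j t^j$ with $a_0,b_0\neq 0$ (using that $|\gamma'x|_{na}=|\gamma'y|_{na}=e^{-n}$), the bound above says the Laurent expansion of $\gamma'x-\gamma'y$ has $t$-adic norm strictly less than $e^{-(M+n)}$, i.e.\ all terms of degree $\leq n+M$ vanish. Equivalently $a_j = b_j$ for $0\leq j \leq M$, which is exactly $\lt_M(\gamma'x)=\lt_M(\gamma'y)$. There is no real obstacle here; the only thing to be careful about is distinguishing the Laurent order $n$ (which can be negative when $\gamma'x$ has a pole) from the index bound, and ensuring that the hypothesis $\gamma\infty\neq\infty$ (equivalent to $c\neq 0$) is what makes the formula $\gamma^{-1}\infty = -d/c$ meaningful.
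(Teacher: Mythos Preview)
Your proof is correct and follows essentially the same approach as the paper: both reduce the claim to the inequality $|\gamma'x-\gamma'y|_{na}<e^{-M}|\gamma'x|_{na}$, factor the difference of inverse squares, and bound each factor using the ultrametric inequality together with the hypothesis. Your substitution $u=cx+d$, $v=cy+d$ makes the bookkeeping slightly cleaner than the paper's direct computation, but the argument is the same.
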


\begin{proof}
	It is sufficient to prove that $|\gamma
	'(x)-\gamma'(y)|_{na}<e^{-M}|\gamma'(x)|_{na}$, which is equivalent to $\lt_M(\gamma'x)=\lt_M(\gamma'y)$.

	We have
	\[ \gamma'x-\gamma'y=\frac{1}{(cx+d)^2}-\frac{1}{(cy+d)^2}=\frac{c(y-x)(c(x+y)+2d)}{(cx+d)^2(cy+d)^2}. \]
	Due to $\gamma\infty\neq\infty$, we know $c\neq 0$. Therefore
	\[\frac{|\gamma
		'x-\gamma'y|_{na}}{|\gamma'x|_{na}}=\left|\frac{c(x-y)(c(x+y)+2d)}{(cy+d)^2}\right|_{na}=\left|\frac{(x-y)(c(x+y)+2d)}{(y+d/c)(cy+d)}\right|_{na}. \]
	Due to $|x-y|_{na}<e^{-M}|x-\gamma^{-1}\infty|_{na}$, we have that
	\[ |y+d/c|_{na}=|y-\gamma^{-1}\infty|_{na}=|x-\gamma^{-1}\infty|_{na}>e^M |x-y|_{na}\]
	and $|cx+d+cy+d|_{na}\leq \max\{|cx+d|_{na},|cy+d|_{na} \}=|cy+d|_{na}$.
	The proof is complete.
\end{proof}

Let $\Gamma$ be a Schottky group in $\SL_2(\Ct)$. For $\gamma\in \Gamma\backslash \{id\}$, let $\lambda_1(\gamma),\ \lambda_2(\gamma)\in \Ct$ be its eigenvalues with $|\lambda_1(\gamma)|_{na}>|\lambda_2(\gamma)|_{na}$. Then there exists $\eta>0$ depending on the group $\Gamma$, such that for any $0<|z|<\eta$, the evaluation of $\lambda_1(\gamma)$ at $z$ satisfies $\lambda_1(\gamma)(z)=\lambda_1(\gamma_z)$. Hence, $\lambda_1(\gamma)$ is a meromorphic function at $0$ (see the proof of \cref{prop:expansion of length function}). Using \cref{equ:obtaining lz using lambdaz} and the definition of $\plog$, we obtain the following lemma.

\begin{lem}\label{lem:formal-expansion}
	There exists a constant $C>0$ that depends only on $\Gamma$ such that for any $\gamma\in \Gamma\backslash \{id\}$, we have for any $0<|z|<e^{-C\ell^{na}(\gamma)}$,
	\begin{equation*}
		2\ \Re\left(\plog(\lambda_1(\gamma))(z)\right)=\ell^{na}(\gamma) \log(1/|z|)+\Re\left(\sum_{j\geq 0} a_j(\gamma) z^j\right),
	\end{equation*}
	where $\plog (\lambda_1(\gamma))(z)$ is to evaluate the series $\plog (\lambda_1(\gamma))$ at $z$, and
	$a_j(\gamma)$'s are the complex coefficients given in \cref{equ:expansion of length function}.
	\end{lem}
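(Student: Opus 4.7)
The plan is to unwind the definition of $\plog$ applied to $\lambda_1(\gamma)$ and recognize the result as the Laurent expansion of $\ell(\gamma_z)=2\log|\lambda_1(\gamma_z)|$ already produced in the proof of \cref{prop:expansion of length function}. The operator $\plog$ is precisely engineered to formalize the logarithm of a meromorphic germ while keeping the $\log(1/t)$ singular part separate; once one realizes this, the statement reduces to bookkeeping.

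First I would establish that $\lambda_1(\gamma)\in\mathcal{M}_0$ admits a Laurent expansion of the form
\[
  \lambda_1(\gamma) = t^{-\ell^{na}(\gamma)/2}\,A_0(\gamma)\Bigl(1+\sum_{j\geq 1}c_j t^j\Bigr).
\]
This was essentially done inside the proof of \cref{prop:expansion of length function}: starting from the Laurent expansion $\tr(\gamma)=z^{-\ell^{na}(\gamma)/2}(A_0(\gamma)+A_1(\gamma)z+\cdots)$ supplied by \cref{prop:laurent-est} and using the closed form $\lambda_1=\tr(\gamma)\bigl(\tfrac12+\sqrt{\tfrac14-\tr(\gamma)^{-2}}\bigr)$, one writes $\lambda_1(\gamma_z)=z^{-\ell^{na}(\gamma)/2}A_0(\gamma)\,f(z)g(z)$ with $f,g$ holomorphic and nonvanishing on the disc $|z|<e^{-C\ell^{na}(\gamma)}$ for $C$ depending only on $\Gamma$. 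Hence the series $1+\sum c_j t^j$ is simply the power-series expansion of $fg$ about $0$, and the bounds on $A_0,A_j$ translate into $|c_j|\leq e^{C'\ell^{na}(\gamma)(j+1)}$ for some larger constant.

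Next I would apply the formal definition \cref{equ:log} of $\plog$ to this expansion. The $\mathbb{Z}\log(1/t)$ piece contributes $\tfrac12\ell^{na}(\gamma)\log(1/t)$, the constant term contributes $\log A_0(\gamma)$ (well-defined modulo $2\pi i\mathbb{Z}$), and the remaining tail is, by construction, the power-series expansion of $\log(fg)$ about $0$. Evaluating at $z$, taking real parts, and multiplying by $2$ then gives
\[
  2\,\Re\bigl(\plog(\lambda_1(\gamma))(z)\bigr)=\ell^{na}(\gamma)\log(1/|z|)+2\log|A_0(\gamma)|+2\log|f(z)g(z)|,
\]
using only that $\Re\log(1/z)=\log(1/|z|)$ and $\Re\log w=\log|w|$. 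The right-hand side is exactly the decomposition of $\ell(\gamma_z)=2\log|\lambda_1(\gamma_z)|$ that appears in \cref{equ:obtaining lz using lambdaz}, so by the definition of the coefficients $a_j(\gamma)$ in \cref{equ:expansion of length function} the identity follows.

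The only technical point to watch is the range of $z$ on which the $\plog$ series converges: absolute convergence at $z$ requires $|\sum_{j\geq 1}c_j z^j|<1$, which, via the exponential bounds above, holds on $|z|<e^{-C\ell^{na}(\gamma)}$ after possibly enlarging $C$. Since this is the same regime already used for \cref{prop:expansion of length function}, no additional work is needed. There is no serious obstacle in this lemma; the real content has already been packaged into \cref{prop:laurent-est} and \cref{prop:expansion of length function}, and the present statement merely rephrases them in terms of the formal operator $\plog$.
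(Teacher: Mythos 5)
Your proposal is correct and follows essentially the same route as the paper: the paper derives this lemma directly from the decomposition $\ell(\gamma_z)=\ell^{na}(\gamma)\log(1/|z|)+2\log|A_0(\gamma)|+2\Re\bigl(\sum_{j\geq 1}b_jz^j\bigr)$ in \cref{equ:obtaining lz using lambdaz} together with the definition of $\plog$, which is exactly the bookkeeping you spell out (including the convergence range $0<|z|<e^{-C\ell^{na}(\gamma)}$ inherited from \cref{prop:expansion of length function}).
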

For each $\gamma\in \Gamma\backslash \{id\}$, we define 
\begin{equation*}
	L_M(\gamma):=\lt_M'(2\ \plog(\lambda_1(\gamma))).
\end{equation*}
Then we have that for any $z\in \mathbb{D}^*$, the evaluation of $L_M(\gamma)$ at $z$ satisfies 
\begin{equation}
	\label{equ:LM and length expansion}
	\Re\left(L_M(\gamma)(z)\right)=\ell_M(\gamma,z)\log(1/|z|)
\end{equation}
where $\ell_M(\gamma,z)$ is defined in \cref{equ:intermediate zeta function}, and hence we obtain $\lt'_{-1}(L_M(\gamma))=\ell^{na}(\gamma)\log(1/t)$.

	\begin{prop}\label{prop:lc-cocycle}
		Fix any $M\in \mathbb{Z}_{\geq 0}$. There exists $C>0$ depending only on $\Gamma$ 
		such that for $N>C(M+1)$, we have for any reduced word $\gamma=\mathfrak{a}_{i_1}\cdots \mathfrak{a}_{i_N}$,
		\begin{equation*}
			\lt_M(\mathfrak{a}_{i_1}'(x))=\lt_M(\mathfrak{a}_{i_1}'(y))\quad \text{for any}\quad x,y\in \Ct\cap D_{\mathfrak{a}_{i_2}\cdots \mathfrak{a}_{i_N}}.
		\end{equation*}
		Denote the common value by $\lt_M(\mathfrak{a}_{i_1}'(\mathfrak{a}_{i_2}\cdots \mathfrak{a}_{i_N}))$. Moreover, for any cyclically reduced word $\gamma=\mathfrak{a}_{i_1}\cdots \mathfrak{a}_{i_n}$ with $n>N$, we have
		\begin{equation}
			\label{equ:LM}
			-L_M(\gamma)=l_M(\mathfrak{a}_{i_1},(\mathfrak{a}_{i_2}\cdots \mathfrak{a}_{i_N}))+l_M(\mathfrak{a}_{i_2},(\mathfrak{a}_{i_3}\cdots \mathfrak{a}_{i_{N+1}}))+\cdots+l_M(\mathfrak{a}_{i_n},(\mathfrak{a}_{i_{n+1}}\cdots \mathfrak{a}_{i_{n+N-1}})), 
		\end{equation}
		where $i_{k+j}:=i_{k+j-n}$ when $k+j>n$, and 
		\begin{equation*}
			l_M(\mathfrak{a}_{i_k},(\mathfrak{a}_{i_{k+1}}\cdots \mathfrak{a}_{i_{k+N-1}})):=\lt'_M\circ\plog\circ\lt_M(\mathfrak{a}_{i_k}'(\mathfrak{a}_{i_{k+1}}\cdots \mathfrak{a}_{i_{k+N-1}})).
		\end{equation*}
			\end{prop}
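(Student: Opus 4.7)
The plan is to handle the two assertions in sequence. For the first (the $M$-leading term of $\mathfrak{a}_{i_1}'(x)$ being constant in $x \in \Ct \cap D_{\mathfrak{a}_{i_2}\cdots \mathfrak{a}_{i_N}}$), I would invoke \cref{lem:lc-constant}: it suffices to ensure $|x-y|_{na} < e^{-M}|x-\mathfrak{a}_{i_1}^{-1}\infty|_{na}$ for all $x,y$ in that disc. The left side is bounded by the $|\cdot|_{na}$-diameter of $D_{\mathfrak{a}_{i_2}\cdots \mathfrak{a}_{i_N}}$, which by the non-Archimedean case ($z=0$) of \cref{lem:uniform-dis} is at most $R c^{N-2}$ for fixed $R>0$, $c\in(0,1)$. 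For the right side, since the word is reduced we have $\mathfrak{a}_{i_2}\neq \mathfrak{a}_{i_1}^{-1}$, so the Schottky discs $D_{\mathfrak{a}_{i_2}}$ and $D_{\mathfrak{a}_{i_1}^{-1}}$ are disjoint; noting that the pole $\mathfrak{a}_{i_1}^{-1}\infty$ lies in $D_{\mathfrak{a}_{i_1}^{-1}}$, the quantity $|x-\mathfrak{a}_{i_1}^{-1}\infty|_{na}$ is bounded below by a positive constant depending only on the fixed Schottky figure. Choosing $N > C(M+1)$ with $C$ large enough produces the required inequality.

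For the cyclic formula, I would expand $\gamma'(\gamma^+)$ by the chain rule as
\[
\gamma'(\gamma^+)=\prod_{k=1}^n \mathfrak{a}_{i_k}'(\gamma_k^+), \qquad \gamma_k^+:=\mathfrak{a}_{i_{k+1}}\cdots \mathfrak{a}_{i_n}(\gamma^+),
\]
where $\gamma_k^+$ is the attracting fixed point of the $k$-th cyclic shift of $\gamma$. Because $\gamma$ is cyclically reduced, iterating the shifted word places $\gamma_k^+$ in the nested disc $D_{\mathfrak{a}_{i_{k+1}}\cdots \mathfrak{a}_{i_{k+N-1}}}$ (cyclic indices, which makes sense because $n>N$), and $\mathfrak{a}_{i_k}\mathfrak{a}_{i_{k+1}}\cdots \mathfrak{a}_{i_{k+N-1}}$ is a reduced word, so the first assertion applies to give
\[
\lt_M(\mathfrak{a}_{i_k}'(\gamma_k^+))=\lt_M(\mathfrak{a}_{i_k}'(\mathfrak{a}_{i_{k+1}}\cdots \mathfrak{a}_{i_{k+N-1}})).
\]
Now I apply $\plog$, which is additive on products modulo $2\pi i\mathbb{Z}$, and then the linear operator $\lt'_M$; using the identity $\lt'_M\circ\plog\circ\lt_M=\lt'_M\circ\plog$ from \cref{properties of combinatorial operations} to replace each factor $\mathfrak{a}_{i_k}'(\gamma_k^+)$ by its $\lt_M$, this yields
\[
\lt'_M(\plog(\gamma'(\gamma^+)))=\sum_{k=1}^n \lt'_M\circ\plog\circ\lt_M(\mathfrak{a}_{i_k}'(\mathfrak{a}_{i_{k+1}}\cdots \mathfrak{a}_{i_{k+N-1}}))=\sum_{k=1}^n l_M(\mathfrak{a}_{i_k},(\mathfrak{a}_{i_{k+1}}\cdots \mathfrak{a}_{i_{k+N-1}})).
\]
Finally, the attractor relation $\gamma'(\gamma^+)=\lambda_1(\gamma)^{-2}$ in $\mathcal{M}_0$ gives $\plog(\gamma'(\gamma^+))=-2\,\plog(\lambda_1(\gamma))$, so the left-hand side equals $-L_M(\gamma)$, producing the asserted identity.

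The main work, rather than any single obstacle, is bookkeeping: (i) verifying that $\gamma_k^+$ really lies in the asserted nested Schottky disc, which rests on the observation that the bi-infinite concatenation of a cyclically reduced word remains reduced, so the dynamics of $\gamma_k$ on the Schottky figure produces the expected shrinking tower of discs converging to $\gamma_k^+$; and (ii) checking that all the identities above, in particular $\gamma'(\gamma^+)=\lambda_1(\gamma)^{-2}$ and the multiplicativity of $\plog$, hold as equalities of germs in $\mathcal{M}_0$ rather than merely pointwise on a punctured neighbourhood of $0$. Both sides are meromorphic at $0$ (the relevant meromorphy of $\gamma^+$ and $\lambda_1(\gamma)$ is established in the proof of \cref{prop:expansion of length function}) and agree on the punctured disc where the family is defined, so the identity of germs follows.
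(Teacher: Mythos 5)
Your proposal is correct and follows essentially the same route as the paper: part one via \cref{lem:lc-constant} combined with the uniform (non-Archimedean) contraction of the discs from \cref{lem:uniform-dis} and the separation of the pole $\mathfrak{a}_{i_1}^{-1}\infty$ from $D_{\mathfrak{a}_{i_2}}$, and part two via the chain rule at the attracting fixed point, the nesting $\mathfrak{a}_{i_{k+1}}\cdots\mathfrak{a}_{i_n}\gamma_+\in D_{\mathfrak{a}_{i_{k+1}}\cdots\mathfrak{a}_{i_{k+N-1}}}$ for cyclically reduced words, and the identities of \cref{properties of combinatorial operations}. The only difference is cosmetic: you apply the additivity of $\lt'_M\circ\plog$ factor by factor, whereas the paper first replaces the product by $\lt_M$ of the truncated factors and then invokes \cref{equ:ltm'}; both rest on the same lemma.
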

	
	\begin{rem}
		\label{rem:analytic Ihara}
		An easy consequence of \cref{equ:LM} is that
		\begin{equation*}
			\lt'_{-1}(-L_M(\gamma))=\lt'_{-1}\left(\sum_{k=1}^{n}l_{M}(\mathfrak{a}_{i_k},(\mathfrak{a}_{i_{k+1}}\cdots \mathfrak{a}_{i_{k+N-1}}))\right)=-\ell^{na}(\gamma)\log(1/t),
		\end{equation*}
		which is useful for the holomorphic extension of the Ihara zeta function.
	\end{rem}
	
	\begin{rem}
		\cref{prop:lc-cocycle} gives another proof of the lower of the leading coefficients. In general, it is hard to get a lower bound estimate of the leading coefficients. Similar questions in Archimedean case are important and hard. In the case of Schottky groups, the geometry and the hyperbolicity enable us to obtain such lower bound.
	\end{rem}

	\begin{proof}[Proof of \cref{prop:lc-cocycle}]

		By assumption, $\infty$ is not contained in any Schottky discs. Due to the properties of the Schottky group, there exists $c_0$ such that we have a lower bound \[d_{na}(\mathfrak{a}_i^{-1}\infty, (D_{\overline{\mathfrak{a}}_i})^c)\geq c_0\] 
		for all generators $\mathfrak{a}_i$. Moreover, the size of Schottky discs $D_{\mathfrak{a}_{i_1}\cdots \mathfrak{a}_{i_N}}$ tends uniformly to zero as the word length $N$ goes to infinity (\cref{lem:uniform-dis} for non-Archimedean norm). Therefore, we can find $N$ such that the radius of any disc $D_{\mathfrak{a}_{i_1}\cdots \mathfrak{a}_{i_{N-1}}}$ is less than $e^{-M}c_0$. By \cref{lem:uniform-dis}, we can take $N>{C(M+1)}$ for a uniform constant $C$ depending only on $\Gamma$.  

For any cyclically reduced word $\gamma=\mathfrak{a}_{i_1}\cdots \mathfrak{a}_{i_n}$ with $n>N$, recall $\lambda_1(\gamma), \lambda_2(\gamma)\in \Ct$ are its eigenvalues with $|\lambda_1(\gamma)|_{na}>|\lambda_2(\gamma)|_{na}$, and let
$\gamma_+$ be its attracting fixed point, which belongs to $\Ct\subset \mathbb{P}^1_{\Ct}$. Then we have that
$\gamma'(\gamma_+)=\lambda_2(\gamma)^2$. It follows from \cref{lem:formal-expansion} that
\begin{align*}
	-{L_M(\gamma)}= \lt_M'\circ\plog(\gamma'(\gamma_+))&=\lt_M'\circ\plog\circ\lt_M(\gamma'(\gamma_+))\\
	&=\lt_M'\circ\plog\circ\lt_M(\mathfrak{a}_{i_1}'(\mathfrak{a}_{i_2}\cdots \mathfrak{a}_{i_n}\gamma_+)\cdots \mathfrak{a}_{i_n}'(\gamma_+))\\
	&=\lt_M'\circ\plog\circ\lt_M\left(\lt_M(\mathfrak{a}_{i_1}'(\mathfrak{a}_{i_2}\cdots \mathfrak{a}_{i_n}\gamma_+))\cdots \lt_M(\mathfrak{a}_{i_n}'(\gamma_+))\right),
\end{align*}
where for the last equality we use the fact that for any $x,y\in\Ct$,
\begin{equation*}
	\lt_M(xy)=\lt_M(\lt_M(x)\lt_M(y)).
\end{equation*}
Since $\gamma$ is cyclically reduced, the attracting fixed point $\gamma_+$ is in the disc $D_{\gamma}\subset D_{\mathfrak{a}_{i_1}\cdots \mathfrak{a}_{i_{N-1}}}$, and $\gamma^{-1}\infty\in D_{\mathfrak{a}_{i_n}^{-1}}$. Due to the choice of $N$ and \cref{lem:lc-constant}, we obtain for any $y\in D_{\mathfrak{a}_{i_1}\cdots \mathfrak{a}_{i_{N-1}}}\cap \Ct$, 
\begin{equation*}
	\lt_M(\mathfrak{a}_{i_n}'(\gamma_+))=\lt_M(\mathfrak{a}_{i_n}'(y))
\end{equation*}
For any $\mathfrak{a}_{i_j}$, we have $\mathfrak{a}_{i_{j+1}}\cdots \mathfrak{a}_{i_n}\gamma_+\in\mathfrak{a}_{i_{j+1}}\cdots \mathfrak{a}_{i_n}D_{\gamma}\subset D_{\mathfrak{a}_{i_{j+1}}\cdots \mathfrak{a}_{i_n}\gamma}\subset D_{\mathfrak{a}_{i_{j+1}}\cdots \mathfrak{a}_{i_{j+N-1}}}$, and $a_{i_j}^{-1}\infty \in D_{a_{i_j}}$. Therefore, for any $y\in D_{\mathfrak{a}_{i_{j+1}}\cdots \mathfrak{a}_{i_{j+N-1}}}\cap\Ct$,
\begin{equation*}
	\lt_M(\mathfrak{a}_{i_j}'(\mathfrak{a}_{i_{j+1}}\cdots \mathfrak{a}_{i_n}\gamma_+))=\lt_M(\mathfrak{a}_{i_j}'(y)). 
\end{equation*}
In conclusion, we obtain that
\begin{equation*}
	-L_M(\gamma)=\lt_M'\circ\plog\circ\lt_M\left(\lt_M(\mathfrak{a}_{i_1}'(\mathfrak{a}_{i_2}\cdots \mathfrak{a}_{i_{N}})\cdots \lt_M(\mathfrak{a}_{i_n}'(\mathfrak{a}_{i_{n}}\cdots \mathfrak{a}_{n+N-1}))\right)
\end{equation*}
The proof is complete by applying \cref{equ:ltm'} to the right hand side of the above equation. 
\end{proof}

As a corollary, we obtain the following. 

\begin{cor}\label{cor:analytic-middle}
For any $M\in \mathbb{Z}_{\geq 0}$, the intermediate zeta function $Z_M(\Gamma,z,s)$ has a holomorphic extension to the entire complex plane $s\in\C$. More precisely, there exists a polynomial $P_{M}(x_1,\cdots,x_J)$ such that for any $s\in \mathbb{C}$ and $0<|z|<1/e$,
\begin{equation}\label{eq:Z_M-poly}
	Z_M(\Gamma,z,s) = P_M(e^{s\mu_1(z)},\, e^{s\mu_2(z)},\cdots, e^{s\mu_J(z)})
\end{equation}
where for $j=1,2,\cdots, J$, $\mu_j(z)$ is of the form 
\begin{equation}\label{eq:mu_j-expansion}
	\mu_j(z)=a_{j}+\frac{1}{\log(1/|z|)}\Re\left(\sum\limits_{k=0}^{M}a_{j,k} z^k \right) \quad \text{with}\,\,a_j\in\mathbb{R},\,\, a_{j,k}\in \mathbb{C}\,\,\text{for}\,\,k=0,\ldots,M.
\end{equation}

\end{cor}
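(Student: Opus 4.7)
The strategy is to realize $Z_M(\Gamma,z,s)$ as a weighted Ihara zeta function on a finite auxiliary graph, and then invoke \cref{prop:ZI} to obtain both the analytic continuation and the polynomial form \eqref{eq:Z_M-poly}.

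Fix $N\geq 3$ large enough for the conclusion of \cref{prop:lc-cocycle} to hold. For each reduced $N$-tuple $\mathbf{i}=(i_1,\ldots,i_N)$ of letters in the generating set, set
$$h_{\mathbf{i}}(z) \;:=\; -\Re\bigl(l_M(\mathfrak{a}_{i_1}, \mathfrak{a}_{i_2}\cdots\mathfrak{a}_{i_N})(z)\bigr)/\log(1/|z|) \;\in\; \RR.$$
Since $l_M=\lt'_M\circ\plog\circ\lt_M$ takes values in $\ZZ\log(1/t)\oplus(\mathcal{O}_0/2\pi i\ZZ)$ with holomorphic part a polynomial of degree $\leq M$, each $h_{\mathbf{i}}(z)$ has the form $n_{\mathbf{i}}+\Re(p_{\mathbf{i}}(z))/\log(1/|z|)$ with $n_{\mathbf{i}}\in\ZZ$ and $\deg p_{\mathbf{i}}\leq M$, matching \eqref{eq:mu_j-expansion}. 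Build the directed graph $G_N$ whose vertices are reduced $(N-1)$-tuples and whose oriented edges are reduced $N$-tuples: the edge $e_{\mathbf{i}}$ runs from $(i_1,\ldots,i_{N-1})$ to $(i_2,\ldots,i_N)$ with inverse $e_{\mathbf{i}^{-1}}$ for $\mathbf{i}^{-1}:=(i_N^{-1},\ldots,i_1^{-1})$, and with weight $h_{\mathbf{i}}(z)$.

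For $N\geq 3$ the Ihara non-backtracking condition $e_{\mathbf{j}}\neq e_{\mathbf{i}}^{-1}$ on consecutive edges is automatic: if $\mathbf{j}=\mathbf{i}^{-1}$ and end-of-$e_{\mathbf{i}}$ equals start-of-$e_{\mathbf{j}}$, one gets $i_{k+1}=i_{N-k+1}^{-1}$ for $k=1,\ldots,N-1$, which forces either a self-inverse generator (for $N$ even via the middle index) or $i_k i_{k+1}=\mathrm{id}$ within $\mathbf{i}$ (for $N$ odd), contradicting the reducibility of $\mathbf{i}$. Hence primitive non-backtracking closed walks in $G_N$, up to cyclic rotation, biject with conjugacy classes of primitive cyclically reduced words in $\Gamma$. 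By \cref{prop:lc-cocycle} combined with \eqref{equ:LM and length expansion}, the total weight of the loop corresponding to a cyclically reduced $\gamma=\mathfrak{a}_{i_1}\cdots\mathfrak{a}_{i_n}$ with $n>N$ equals $\ell_M(\gamma,z)$. For short cyclically reduced words with $n\leq N$, the identity extends by applying \cref{prop:lc-cocycle} to $\gamma^m$ with $mn>N$: the right-hand side of the decomposition is $m$ times the analogous sum for $\gamma$, while $L_M(\gamma^m)=mL_M(\gamma)$ follows from $\lambda_1(\gamma^m)=\lambda_1(\gamma)^m$ together with the fact that $\plog$ and $\lt'_M$ each commute with integer scalar multiplication.

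Consequently, for $\Re s$ sufficiently large,
$$Z_M(\Gamma,z,s) \;=\; \prod_{[P]\in\calP(G_N)}(1-e^{-s\ell_h(P)}) \;=\; Z_I(G_N,h,s) \;=\; \det(I-W(s)),$$
the last equality being \cref{prop:ZI}. Since $\det(I-W(s))$ is entire in $s$, this supplies the holomorphic extension to all of $\CC$. The polynomial form \eqref{eq:Z_M-poly} then follows by expanding the determinant: each off-diagonal matrix entry is $-e^{-s(h_{\mathbf{i}}(z)+h_{\mathbf{j}}(z))/2}=-e^{s\mu(z)}$ with $\mu(z)=-(h_{\mathbf{i}}(z)+h_{\mathbf{j}}(z))/2$, and real half-sums of quantities of form \eqref{eq:mu_j-expansion} remain of that form. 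Taking $\mu_1,\ldots,\mu_J$ to enumerate the finitely many such $\mu$'s appearing in the matrix entries yields the required polynomial $P_M$. The main technical points are the iteration trick extending \cref{prop:lc-cocycle} to short cyclically reduced words and the verification of automatic Ihara non-backtracking in $G_N$ for $N\geq 3$; the rest is a direct application of the determinant formula for weighted Ihara zeta functions.
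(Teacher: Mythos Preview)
Your argument is correct and follows the same route as the paper: build the edge matrix $W(s)$ indexed by reduced words of length $N$ with entries $\exp\bigl(\tfrac{s}{2\log(1/|z|)}\Re(l_M(\mathfrak a_{i_1},\cdot)+l_M(\mathfrak a_{i_2},\cdot))\bigr)$, and identify $\det(I-W(s))$ with $Z_M(\Gamma,z,s)$ via the trace computation of \cref{prop:ZI}. You additionally justify two points the paper leaves implicit, namely that the Ihara non-backtracking condition is automatic for $N\geq 3$ and that \cref{prop:lc-cocycle} extends to short cyclically reduced words by passing to a power $\gamma^m$; both are correct and the second one genuinely fills a small gap.
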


\begin{proof} 

For $\gamma\in \Gamma\backslash \{id\}$, recall from \cref{equ:LM and length expansion} that for $z\in \mathbb{D}^*$, $\Re\left(L_M(\gamma)(z)\right)=\ell_M(\gamma,z)\log(1/|z|)$.

We introduce an edge matrix $W(s)$ with coordinates given by reduced words of length $N$, that is, words of form $\mathfrak{a}_{i_1}\cdots \mathfrak{a}_{i_N}$, where $N>0$ is the constant given in \cref{prop:lc-cocycle}. The entries are given by 
\begin{equation*}
	W(\mathfrak{a}_{i_1}\cdots \mathfrak{a}_{i_N}, \mathfrak{a}_{i_2}\cdots \mathfrak{a}_{i_{N+1}})(s)=\exp\left(\frac{s}{2\log(1/|z|)}\Re\left( l_M(\mathfrak{a}_{i_1},(\mathfrak{a}_{i_2}\cdots \mathfrak{a}_{i_N}))(z)+l_M(\mathfrak{a}_{i_2},(\mathfrak{a}_{i_3}\cdots \mathfrak{a}_{i_{N+1}}))(z)\right)\right), 
\end{equation*}
and the other entries are all equal to zero. Using the definition of $Z_{M}(\Gamma,z,s)$, \Cref{prop:ZI}, and \Cref{prop:lc-cocycle}, we can show that $\mathrm{det}(I-W(s))$ is the holomorphic extension of
$Z_{M}(\Gamma,z,s)$ to entire complex plane. 

It follows from \cref{prop:lc-cocycle} that for any reduced word $\mathfrak{a}_{i_1}\cdots \mathfrak{a}_{i_N}$, 
\begin{equation*}
	l_M(\mathfrak{a}_{i_1},(\mathfrak{a}_{i_2}\cdots \mathfrak{a}_{i_N}))=\lt'_M\circ \plog \circ \lt_M(\mathfrak{a}_{i_1}'(x)) \quad \text{for any}\,\,x\in D_{\mathfrak{a}_{i_2}\cdots \mathfrak{a}_{i_N}}\cap \Ct.
\end{equation*}
Note $\mathfrak{a}_{i_1}'(x)$ is of the form $t^{m}\sum_{j\geq 0}b_jt^j$ with $m\in \mathbb{Z}, \,b_0\neq 0$. Using the definitions of $\lt'_M, \,\plog, \,\lt_M$, we can show that $\mathrm{det}(I-W(s))$ is a polynomial described in the statement of the corollary.
\end{proof}
\begin{rem*}
We define a graph $G_{N,g}$ with vertices given by 
\begin{equation*}
	\{\mathfrak{a}_{i_1}\cdots\mathfrak{a}_{i_{N-1}}:\mathfrak{a}_{i_j}\mathfrak{a}_{i_{j+1}}\neq 1\}
\end{equation*}
and there is a (directed) edge between two vertices of the form $\mathfrak{a}_{i_1}\cdots \mathfrak{a}_{i_{N-1}}$ and $\mathfrak{a}_{i_2}\cdots \mathfrak{a}_{i_{N}}$. We consider the weighted Ihara zeta function of this graph equipped  the edge between $\mathfrak{a}_{i_1}\cdots \mathfrak{a}_{i_{N-1}}$ and $\mathfrak{a}_{i_2}\cdots \mathfrak{a}_{i_{N}}$ with weight
given by $-\frac{1}{\log(1/|z|)}\Re l_M(\mathfrak{a}_{i_1},(\mathfrak{a}_{i_2}\cdots \mathfrak{a}_{i_N}))$.
By a similar proof as \cref{prop:ZI}, we have $Z_M(\Gamma,z,s)=\det(I-V(s))$ where 
\[ V(\mathfrak{a}_{i_1}\cdots \mathfrak{a}_{i_{N-1}}, \mathfrak{a}_{i_2}\cdots \mathfrak{a}_{i_{N}})(s)=\exp\left(\frac{s}{\log(1/|z|)}\Re l_M(\mathfrak{a}_{i_1},(\mathfrak{a}_{i_2}\cdots \mathfrak{a}_{i_N}))\right), \]
which is smaller and has the advantage of being easier to compute.
\end{rem*}

Continuing with \cref{cor:analytic-middle}, we prove that any two
intermediate zeta functions are related in the following way.
\begin{cor}\label{lem:M'-M}
Fix any $M\in \mathbb{Z}_{\geq 0}$. Let $\mu_j(z)$ be given as in \cref{eq:mu_j-expansion} for $j=1,\ldots,J$. Then we have for any  $M'\in \ZZ_{\geq 0}$ with $M'\leq M$, the holomorphic extension of $Z_{M'}(\Gamma,z,s)$ to $\mathbb{C}$ is given by 
\begin{equation}
	\label{equ:ZM'}
	Z_{M'}(\Gamma,z,s)=P_M(e^{s\mu_1^{M'}(z)},\cdots, e^{s\mu_J^{M'}(z)}).
\end{equation}
where each $\mu_j^{M'}(z)$ is the first $M'$ leading terms of $\mu_j(z)$: 
\begin{equation*}
	\mu_j^{M'}(z)=a_{j}+\frac{1}{\log(1/|z|)}\Re\left(\sum\limits_{k=0}^{M'}a_{j,k} z^k \right)
\end{equation*}
with $a_{j},\,a_{j,k}\in \mathbb{C}$ defined in \cref{eq:mu_j-expansion}. 
Similarly, the holomorphic extension of the
Ihara zeta function to $\mathbb{C}$
is given by
\begin{equation}
	\label{equ:analytic extension of Ihara}
	Z_I(\Gamma,s) = P_M(e^{sa_1},\cdots, e^{sa_J}).
\end{equation}
\end{cor}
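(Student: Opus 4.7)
The plan is to revisit the construction of $P_M$ from the proof of \cref{cor:analytic-middle} and observe that the combinatorial/polynomial structure of $P_M$ is independent of $M$; only the values of the exponents change, and those change by truncation. Concretely, fix $N > C(M+1)$ as in \cref{prop:lc-cocycle}, which also works for all $M' \leq M$. For each $M' \in \mathbb{Z}_{\geq 0}$ with $M' \leq M$, form the edge matrix $W_{M'}(s)$ indexed by reduced words of length $N$, with entries
\begin{equation*}
W_{M'}(\mathfrak{a}_{i_1}\cdots\mathfrak{a}_{i_N},\,\mathfrak{a}_{i_2}\cdots\mathfrak{a}_{i_{N+1}})(s) = \exp\!\left(\tfrac{s}{2\log(1/|z|)}\,\Re\bigl[l_{M'}(\mathfrak{a}_{i_1},\mathfrak{a}_{i_2}\cdots\mathfrak{a}_{i_N})(z) + l_{M'}(\mathfrak{a}_{i_2},\mathfrak{a}_{i_3}\cdots\mathfrak{a}_{i_{N+1}})(z)\bigr]\right),
\end{equation*}
and zero elsewhere. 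Exactly as in the proof of \cref{cor:analytic-middle}, combining \cref{prop:ZI} with \cref{prop:lc-cocycle} gives $\det(I - W_{M'}(s)) = Z_{M'}(\Gamma,z,s)$ on $\Re s \gg 1$, and both sides are entire functions of $s$.

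The key observation is that $l_{M'}$ is obtained from $l_M$ by truncation. Indeed, using \cref{properties of combinatorial operations},
\begin{equation*}
l_M(\mathfrak{a},\mathbf{b}) = \lt'_M \circ \plog \circ \lt_M(\mathfrak{a}'(\mathbf{b})) = \lt'_M \circ \plog(\mathfrak{a}'(\mathbf{b})),
\end{equation*}
so applying $\lt'_{M'}$ (which drops the terms of order higher than $M'$ in $t$ while keeping the $\log(1/t)$-part) gives
\begin{equation*}
\lt'_{M'}\bigl(l_M(\mathfrak{a},\mathbf{b})\bigr) = \lt'_{M'} \circ \plog(\mathfrak{a}'(\mathbf{b})) = l_{M'}(\mathfrak{a},\mathbf{b}).
\end{equation*}
In particular, writing $l_M(\mathfrak{a},\mathbf{b}) = n\log(1/t) + \sum_{k=0}^M b_k t^k$, we get $l_{M'}(\mathfrak{a},\mathbf{b}) = n\log(1/t) + \sum_{k=0}^{M'} b_k t^k$, and evaluating at $z$ followed by $\Re(\cdot)/\log(1/|z|)$ gives the truncated exponent.

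Now I would expand $\det(I - W_M(s))$ by the Leibniz formula as a finite sum of products of entries; each such product is $\pm \exp\bigl(s \cdot (\text{sum of } \Re(l_M)\text{-values})/\log(1/|z|)\bigr)$. Collecting terms, we obtain a polynomial $P_M$ in the finitely many distinct exponentials $e^{s\mu_j(z)}$ where $\mu_j(z) = a_j + \frac{1}{\log(1/|z|)}\Re(\sum_{k=0}^M a_{j,k}z^k)$ runs over the distinct values $\Re(l_M(\mathfrak{a},\mathbf{b}))/\log(1/|z|)$ arising from pairs $(\mathfrak{a},\mathbf{b})$. Because the matrix $W_{M'}$ differs from $W_M$ only by replacing each exponent by its $M'$-truncation, the Leibniz expansion of $\det(I - W_{M'}(s))$ has the \emph{same combinatorial shape}, with each $\mu_j$ replaced by $\mu_j^{M'}$. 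This yields \cref{equ:ZM'}. For the Ihara zeta function, I would apply the same truncation idea with $\lt'_{-1}$ (keeping only the $\log(1/t)$-coefficient); by \cref{rem:analytic Ihara}, $\ell_{-1}(\gamma,z) = \ell^{na}(\gamma)$, so the resulting product is exactly $Z_I(\Gamma,s)$, and the polynomial becomes $P_M(e^{sa_1},\ldots,e^{sa_J})$.

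The main obstacle is bookkeeping: making precise the claim that the ``shape'' of the polynomial is intrinsic to the graph of admissible $N$-words and does not change with $M$. Two potentially distinct pairs $(\mathfrak{a},\mathbf{b})$ might produce the same value of $l_M$ but different values of $l_{M'}$ (or vice versa), so when we label variables $x_1,\ldots,x_J$ one must be careful not to over-collapse monomials. The cleanest fix is to label variables by pairs $(\mathfrak{a},\mathbf{b})$ (one variable per nonzero entry of $W$), write both $Z_M$ and $Z_{M'}$ as the \emph{same} polynomial in those labelled variables, and only at the end observe that repeated values of $\mu_j$ merge corresponding variables in a compatible way for any $M'\leq M$. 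Once this careful bookkeeping is set up, the two displayed equations follow directly.
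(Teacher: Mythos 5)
Your proposal is correct and follows essentially the same route as the paper: you derive the truncation identity $l_{M'}=\lt'_{M'}\circ l_M$ from \cref{properties of combinatorial operations}, then rerun the edge-matrix/determinant construction of \cref{cor:analytic-middle} with the truncated weights (valid since $N>C(M+1)\geq C(M'+1)$), and handle the Ihara case via $\lt'_{-1}$ and \cref{rem:analytic Ihara}, exactly as in the paper. Your extra bookkeeping about labelling variables by pairs $(\mathfrak{a},\mathbf{b})$ is a harmless refinement (note that identical $l_M$ forces identical $l_{M'}$ by truncation, so only the harmless direction of collapse can occur), but it does not change the argument.
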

\begin{proof}
Recall that in the proof of \cref{prop:lc-cocycle}, we showed that for any reduced word $\gamma = \mathfrak{a}_{i_1}\cdots \mathfrak{a}_{i_N}$, $l_M(\mathfrak{a}_{i_1},(\mathfrak{a}_{i_2}\cdots \mathfrak{a}_{i_N}))$ is of the following form:
\begin{equation*}
	l_M (\mathfrak{a}_{i_1},(\mathfrak{a}_{i_2}\cdots \mathfrak{a}_{i_N}))= m \log(1/t) +\sum\limits_{j=0}^{M}b_j t^j.
\end{equation*}
with $m\in \mathbb{Z}$ and $b_0\neq 0$.
We have the following general formula for combinatorial operations: for $x\in \Ct$, 
\begin{equation*}
	\lt'_{M'}\circ\plog \circ\lt_{M'} (x)=\lt'_{M'}\circ\plog \circ\lt_{M} (x)= \lt'_{M'}\circ\lt'_M\circ\plog \circ\lt_{M} (x) , 
\end{equation*}
where the first equality is due to \cref{equ:ltm'}. By definition of $l_M$, this implies $l_{M'}(\mathfrak{a}_{i_1}, (\mathfrak{a}_{i_2}\cdots \mathfrak{a}_{i_N}))=\lt'_{M'}l_{M}(\mathfrak{a}_{i_1}, (\mathfrak{a}_{i_2}\cdots \mathfrak{a}_{i_N}))$, and hence
\begin{equation*}
	l_{M'}(\mathfrak{a}_{i_1}, (\mathfrak{a}_{i_2}\cdots \mathfrak{a}_{i_N})) = m \log(1/t) +\sum\limits_{j=0}^{M'}b_j t^j.
\end{equation*}
To obtain \cref{equ:ZM'}, 
when applying the proof of \cref{cor:analytic-middle} to $Z_{M'}(\Gamma,z,s)$, we consider an edge matrix $W(s)$ similar to the one for $Z_M(\Gamma,z,s)$ with $l_M$ replaced by $l_{M'}$.

For the holomorphic extension of the Ihara zeta function, recall that for any $\gamma\in \Gamma\backslash \{id\}$, $\ell^{na}(\gamma)\log(1/t)=\lt_{-1}'(L_{M}(\gamma))$. \cref{rem:analytic Ihara} allows us to apply the proof of \cref{cor:analytic-middle} to $Z_I(\Gamma,s)$ with $l_{M}(\mathfrak{a}_{i_1},(\mathfrak{a}_{i_2}\cdots\mathfrak{a}_{i_N}))$ replaced by $\lt'_{-1}\left(l_{M}(\mathfrak{a}_{i_1},(\mathfrak{a}_{i_2}\cdots\mathfrak{a}_{i_N}))\right)$, which yields \cref{equ:analytic extension of Ihara}.
\end{proof}

\begin{prop}\label{prop:expansion-der-cycl}
There exists $C>0$ depending on $\Gamma$ such that for any reduced word $\gamma = \mathfrak{a}_{i_1}\cdots \mathfrak{a}_{i_N}$, we have
\begin{equation}\label{eq:expansion-derivative-cocycle}
	\mathfrak{a}_{i_1}'(\mathfrak{a}_{i_2}\cdots\mathfrak{a}_{i_N}(\infty))=t^m \sum\limits_{j=0}^{\infty} a_j t^j,
\end{equation}
where $a_j$'s are complex numbers satisfying
\begin{equation*}
	|a_0|\geq e^{-C} \quad \text{and}\quad |a_j|\leq e^{C(N+j)}\quad\text{for}\quad j\in \mathbb{Z}_{\geq 0}.
\end{equation*}
Moreover, we have for any $M\in \ZZ_{\geq 0}$ and $N\geq C_1(M+1)$ (from \cref{prop:lc-cocycle}),
\begin{equation}\label{eq:expansion-lM}
	l_M(\mathfrak{a}_{i_1},(\mathfrak{a}_{i_2}\cdots \mathfrak{a}_{i_N})) =m \log (1/t)+\sum_{j=0}^{M}b_j t^j,
\end{equation}
where $b_j$'s are complex numbers satisfying
\begin{equation*}
	|b_j|\leq e^{C(N+1)j}\quad\text{for}\quad j=0,\ldots, M.
\end{equation*}
\end{prop}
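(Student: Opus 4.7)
The plan is to establish the three assertions in turn: the Laurent expansion together with the upper bound on $|a_j|$, the lower bound on $|a_0|$, and finally the bound on $|b_j|$. Throughout I view $f(z):=\mathfrak{a}_{i_1,z}'(x_z)$ with $x_z=\mathfrak{a}_{i_2,z}\cdots\mathfrak{a}_{i_N,z}(\infty)$ as the evaluation of the formal expression $\mathfrak{a}_{i_1}'(\mathfrak{a}_{i_2}\cdots\mathfrak{a}_{i_N}(\infty))\in\mathcal{M}_0$ at $z$.

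For the upper bound on $|a_j|$, I would apply a Cauchy integral estimate. By \cref{prop:continuous map to Schottky moduli space} there exists $\eta_0>0$, independent of $N$, such that the family $\{\mathfrak{a}_{j,z}\}$ is a Schottky basis for all $|z|<\eta_0$. On $\mathbb{D}_{\eta_0}^{*}$ the point $x_z$ lies in the disc $D_{\mathfrak{a}_{i_2},z}$ and $\mathfrak{a}_{i_1,z}^{-1}\infty=-d_{1,z}/c_{1,z}$ lies in the disjoint disc $D_{\mathfrak{a}_{i_1},z}$, so $c_{1,z}x_z+d_{1,z}\neq 0$ and hence $f$ is holomorphic and nonzero on $\mathbb{D}_{\eta_0}^{*}$. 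Writing $f(z)=z^{m}g(z)$ with $g$ holomorphic on $\mathbb{D}_{\eta_0}$ and $g(0)\neq 0$, the coefficients $a_j$ are recovered by Cauchy's formula on $|z|=\eta_0/2$ with $|a_j|\leq (\eta_0/2)^{-j-m}\sup_{|z|=\eta_0/2}|f(z)|$. Both $|m|=|\ord_{t=0}(c_1 x+d_1)|^{-2}$ and $\sup_{|z|=\eta_0/2}|f(z)|$ are bounded by a constant depending only on $\Gamma$, since the first reduces to the non-Archimedean valuation of $c_1 y+d_1$ for $y$ in the fixed disc $D_{\mathfrak{a}_{i_2}}^{na}$, and the second to the distance (in the absolute norm) between the first-level Schottky discs $D_{\mathfrak{a}_{i_1},z}$ and $D_{\mathfrak{a}_{i_2},z}$ for $|z|=\eta_0/2$. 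This argument in fact yields the $N$-independent bound $|a_j|\leq e^{C(j+1)}$, which is stronger than needed. A more computational alternative is to write $\mathfrak{a}_{i_1}'(x)=c^{2}/c_*^{2}$, where $c=(\mathfrak{a}_{i_2}\cdots\mathfrak{a}_{i_N})_{21}$ and $c_*=(\mathfrak{a}_{i_1}\mathfrak{a}_{i_2}\cdots\mathfrak{a}_{i_N})_{21}$, and to bound the Laurent coefficients of $c,c_*$ by $e^{C(N+k)}$ via the hybrid norm as in \cref{prop:laurent-est}; dividing then requires the lower bound on the leading coefficient of $c_*$ that follows from the non-Archimedean argument below.

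For the lower bound on $|a_0|$, I would invoke \cref{lem:lc-constant} with $M=0$: the leading term $\lt_0(\mathfrak{a}_{i_1}'(y))$ is constant on any ball of radius strictly less than $|y-\mathfrak{a}_{i_1}^{-1}\infty|_{na}$. In the non-Archimedean world, for any $y\in D_{\mathfrak{a}_{i_2}}^{na}$ the quantity $|y-\mathfrak{a}_{i_1}^{-1}\infty|_{na}$ equals $\mathrm{dist}(D_{\mathfrak{a}_{i_2}}^{na},D_{\mathfrak{a}_{i_1}}^{na})$, which strictly exceeds $\mathrm{diam}(D_{\mathfrak{a}_{i_2}}^{na})$ by the ultrametric property of disjoint closed discs. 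Hence $\lt_0(\mathfrak{a}_{i_1}'(y))$ is constant on $D_{\mathfrak{a}_{i_2}}^{na}$, and since $x\in D_{\mathfrak{a}_{i_2}\cdots\mathfrak{a}_{i_N}}^{na}\subset D_{\mathfrak{a}_{i_2}}^{na}$, the leading coefficient $a_0$ depends only on the ordered pair $(\mathfrak{a}_{i_1},\mathfrak{a}_{i_2})$. There are finitely many such pairs, so $|a_0|\geq e^{-C}$ uniformly.

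For the bound on $|b_j|$, I would unfold the definition $l_M=\lt_M'\circ\plog\circ\lt_M(\mathfrak{a}_{i_1}'(x))$. Write $\lt_M(\mathfrak{a}_{i_1}'(x))=a_0 t^m(1+\sum_{k=1}^{M}(a_k/a_0)t^k)$; then by the formal expansion in \cref{equ:log} one gets $\plog(\lt_M(\mathfrak{a}_{i_1}'(x)))=-m\log(1/t)+\log a_0+\sum_{n\geq 1}\frac{(-1)^{n-1}}{n}\bigl(\sum_{k\geq 1}(a_k/a_0)t^k\bigr)^{n}$, and $\lt_M'$ truncates past order $M$. The coefficient of $t^j$ is a finite sum over compositions $j=k_1+\cdots+k_n$ with $k_i\geq 1$, each weighted by $\prod_i(a_{k_i}/a_0)/n$. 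Using the bounds $|a_k/a_0|\leq e^{C(N+k+1)}$ from the first two parts, each product is at most $e^{nC(N+1)+Cj}$; since there are $\binom{j-1}{n-1}$ compositions of $j$ into $n$ positive parts, the total is bounded by $2^{j}\cdot e^{Cj(N+1)+Cj}$, which after redefining the constant gives $|b_j|\leq e^{C(N+1)j}$.

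The main obstacle is obtaining the uniform-in-$N$ lower bound on $|c_{1,z}x_z+d_{1,z}|$ at $|z|=\eta_0/2$ (equivalently, uniformity of the gap between first-level Schottky discs), and the uniform-in-$N$ bound on the pole order $m$. Both are consequences of the fact that the relevant objects are determined at the first level of the Schottky figure and that the Schottky basis is uniform in $N$ for $|z|<\eta_0$ via \cref{prop:continuous map to Schottky moduli space} and \cref{cor:separtion}; once this is organized, the rest of the proof is routine bookkeeping with the combinatorial operators $\lt_M$, $\plog$, $\lt_M'$.
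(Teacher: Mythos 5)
Your proposal is correct, but it proves the two key estimates by a genuinely different route than the paper. For the upper bounds on the $a_j$'s, the paper works formally in $\Ct$: it bounds the hybrid norms of the entries of $\mathfrak{a}_{i_2}\cdots\mathfrak{a}_{i_N}$ (as in \cref{prop:laurent-est}), extracts a lower bound $|c_0|\geq e^{-CN}$ for the leading coefficient of the bottom-left entry via a harmonic-function/mean-value argument, and then pushes Laurent coefficients through \cref{lem:laurent-mult} and \cref{lem:laurent-divide}; this is why its bounds carry the $N$-dependence $e^{C(N+j)}$ and $e^{C(N+1)j}$. You instead observe that, thanks to the uniform Schottky figure on a fixed punctured disc $\mathbb{D}^*_{\eta_0}$ (\cref{lem:relative ford}, \cref{prop:continuous map to Schottky moduli space}), the function $z\mapsto \mathfrak{a}_{i_1,z}'(x_z)$ is holomorphic there, uniformly bounded (the point $x_z$ stays in the fixed first-level disc $D_{\mathfrak{a}_{i_2},z}$, while $\mathfrak{a}_{i_1,z}^{-1}\infty$ stays in the disjoint disc $D_{\mathfrak{a}_{i_1}^{-1},z}$ by reducedness), and has uniformly bounded pole order, so a single Cauchy estimate on $|z|=\eta_0/2$ gives $|a_j|\leq e^{C(j+1)}$ — an $N$-independent bound, strictly stronger than the statement. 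For the lower bound on $|a_0|$ (and the uniformity of $m$), the paper again uses an Archimedean maximum-principle argument on $\log|\pi_0|$ together with the separation of \cref{prop:separtion}, whereas you use the non-Archimedean locally-constant-leading-term lemma \cref{lem:lc-constant} with $M=0$: since in the ultrametric the diameter of $D_{\mathfrak{a}_{i_2}}$ is strictly smaller than its distance to $\mathfrak{a}_{i_1}^{-1}\infty$, the pair $(a_0,m)$ depends only on $(\mathfrak{a}_{i_1},\mathfrak{a}_{i_2})$, and finiteness does the rest; this is cleaner and avoids the second harmonic-function argument. Your third step (expanding $\plog$ of the truncation and summing over compositions) is essentially the paper's \cref{lem:laurent-log}.

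Three cosmetic points. First, $\mathfrak{a}_{i_1}^{-1}\infty$ lies in $D_{\mathfrak{a}_{i_1}^{-1}}$, not $D_{\mathfrak{a}_{i_1}}$; the disjointness from $D_{\mathfrak{a}_{i_2}}$ still follows from $\mathfrak{a}_{i_2}\neq\mathfrak{a}_{i_1}^{-1}$, so nothing breaks. Second, the lower bound for $|c_{1,z}x_z+d_{1,z}|$ on the circle is $|c_{1,z}|\cdot\mathrm{dist}(D_{\mathfrak{a}_{i_1}^{-1},z},D_{\mathfrak{a}_{i_2},z})$, so you also need $|c_{1,z}|$ bounded below on $|z|=\eta_0/2$; this is automatic since there are finitely many generators and $c_{1,z}\neq 0$ there, but it should be said. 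Third, for $j=0$ your scheme (like the paper's) really yields $|b_0|=|\log a_0|\leq C$ rather than the literal $|b_0|\leq e^{C(N+1)\cdot 0}=1$; this imprecision is already in the stated proposition and is harmless for its use in \cref{lem:ellM>}.
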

For the proof, please see \cref{sec.proof}.
\begin{proof}[Proof of \cref{lem:ellM>}]
By \cref{equ:LM and length expansion} and \cref{prop:lc-cocycle}, there exists $C_1>0$ depending only on $\Gamma$ such that for $N=\lceil C_1(M+2) \rceil$ and for any cyclically reduced word $\gamma=\mathfrak{a}_{i_1}\cdots \mathfrak{a}_{i_n}$ with $n>N$, we have
\begin{equation*}
	\log(1/|z|) \ell_M(\gamma,z)=-\Re \left(\sum_{k=1}^{n}l_M(\mathfrak{a}_{i_k},(\mathfrak{a}_{i_{k+1}}\cdots \mathfrak{a}_{i_{k+N-1}}))(z)\right).
\end{equation*}
By \cref{prop:expansion-der-cycl}, there exists $C_2>0$ depending only on $\Gamma$, such that for $k=1,\ldots,n$
\begin{equation*}
	l_M(\mathfrak{a}_{i_k},(\mathfrak{a}_{i_{k+1}}\cdots \mathfrak{a}_{i_{k+N-1}}))-\lt'_{-1}\left(l_M(\mathfrak{a}_{i_k},(\mathfrak{a}_{i_{k+1}}\cdots \mathfrak{a}_{i_{k+N-1}}))\right)
	=\sum_{j=0}^{M} b_jt^j
\end{equation*}
where each $b_j$ satisfies $|b_j|\leq e^{C_2(N+1)j}$. This implies there exists $C_3>0$ depending only on $\Gamma$, such that for any $|z|< e^{-2C_2(N+1)}\leq e^{-2C_2(C_1(M+2)+1)}$, we have 
\begin{equation*}
	\left|\sum_{j=0}^Mb_j z^j \right|\leq C_3.
\end{equation*}
Recall \cref{rem:analytic Ihara}. Combining these together, we obtain $C>0$ depending only on $\Gamma$ such that for
any $0<|z|<e^{-C(M+1)}$, 
\begin{align*}
	\log (1/|z|) \ell_M(\gamma,z)\geq \log (1/|z|)\ell^{na}(\gamma)-nC_3\geq \frac{1}{2} \log(1/|z|) \ell^{na}(\gamma),
\end{align*}
where the second inequality is due to \cref{lem:word-na}.

For any reduced word $\gamma=\mathfrak{a}_{i_1}\cdots \mathfrak{a}_{i_n}$ with 
$n\leq N=\lceil C_1(M+1)\rceil$, we can prove the statement for $\gamma$ by using \cref{lem:word-na} and \cref{prop:expansion of length function}.
\end{proof}

\section{Examples}

Recall that $\Gamma<\SL_2(\mathbb{M}(\mathbb{D}))$ is a family of Schottky groups satisfying $(\bigstar)$.

\subsection{Two-generator case}
In this section, we discuss the case when  $\Gamma$ is generated by two generators to illustrate the $0$-th intermediate zeta function. 
 In this case, we can obtain explicit expressions for the leading coefficient $A_0(\gamma)$ using the Fricke relation of $\SL_2(\Ct)$:
\begin{equation}\label{equ:fricke}
\tr(gh)+\tr(gh^{-1})=\tr(g)\tr(h), \quad g,h\in\SL_2(\Ct).
\end{equation}

For convenience, we denote the leading term of the trace of $\gamma\in \Gamma\backslash \{id\}$ as
\begin{equation*}
\lt(\gamma):=\lt_0(\tr(\gamma))=A_0(\gamma) t^{-\ell^{na}(\gamma)/2}
\end{equation*}
(see \cref{Laurent trace} for the Laurent expansion of $\tr(\gamma)$). Recall the $0$-th expansion of length, $\ell_0(\gamma,t)$ defined in \cref{eqn:Mth expansion of length}.

\begin{lem}For any $\gamma\in \Gamma\backslash \{id\}$, 
we have
\begin{equation*}
	\ell_0(\gamma,z)=2\log|\lt(\gamma)|/\log(1/|z|),
\end{equation*}
and hence
\begin{equation*}
	Z_0(\Gamma,z,s)=\prod_{[\gamma]\in \calP}(1-e^{-2s\log|\lt(\gamma)|/\log(1/|z|)}). 
\end{equation*}
\end{lem}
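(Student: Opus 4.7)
The plan is to identify the coefficient $a_0(\gamma)$ in the expansion of $\ell(\gamma_z)$ explicitly in terms of the leading Laurent coefficient $A_0(\gamma)$ of $\tr(\gamma_z)$, and then to observe that $\log|\lt(\gamma)(z)|$ repackages exactly this information together with the non-Archimedean length, modulo a factor of $\log(1/|z|)$.

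First, I would revisit the derivation in the proof of \cref{prop:expansion of length function}. There, starting from $\lambda_1(\gamma_z) = z^{-\ell^{na}(\gamma)/2} A_0(\gamma) f(z) g(z)$ with $f, g$ holomorphic near $0$ and $f(0)=g(0)=1$, taking $\ell(\gamma_z) = 2\log|\lambda_1(\gamma_z)|$ (see \cref{equ:obtaining lz using lambdaz}) yields
\begin{equation*}
\ell(\gamma_z) = \ell^{na}(\gamma)\log(1/|z|) + 2\log|A_0(\gamma)| + 2\Re\left(\sum_{j\geq 1} b_j z^j\right).
\end{equation*}
Comparing with the expansion $\ell(\gamma_z) = \ell^{na}(\gamma)\log(1/|z|) + \Re(\sum_{j\geq 0} a_j(\gamma) z^j)$ in \cref{equ:expansion of length function}, we may take $a_0(\gamma) = 2\log|A_0(\gamma)| \in \mathbb{R}$, so in particular $\Re(a_0(\gamma)) = 2\log|A_0(\gamma)|$.

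Next, by the definition $\lt(\gamma) = A_0(\gamma)\, t^{-\ell^{na}(\gamma)/2}$, evaluating at $t=z$ gives $|\lt(\gamma)(z)| = |A_0(\gamma)|\cdot|z|^{-\ell^{na}(\gamma)/2}$, hence
\begin{equation*}
\frac{2\log|\lt(\gamma)(z)|}{\log(1/|z|)} = \ell^{na}(\gamma) + \frac{2\log|A_0(\gamma)|}{\log(1/|z|)} = \ell^{na}(\gamma) + \frac{\Re(a_0(\gamma))}{\log(1/|z|)} = \ell_0(\gamma,z),
\end{equation*}
where the last equality is the definition of the $0$th-expansion of length in \cref{eqn:Mth expansion of length}. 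The second displayed identity of the lemma, the product formula for $Z_0(\Gamma,z,s)$, is then immediate by substitution into the defining Euler product \cref{equ:intermediate zeta function}.

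No real obstacle arises: the lemma is essentially a bookkeeping identity that matches the $0$th Taylor coefficient $a_0(\gamma)$ with (twice the log of) the leading coefficient of the Laurent expansion of $\tr(\gamma_z)$. The substance of the statement is conceptual rather than technical, namely that $Z_0$ admits a clean trace-theoretic description; this is precisely the shape needed to exploit the Fricke relation \cref{equ:fricke} in the two-generator computations that follow.
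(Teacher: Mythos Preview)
Your proof is correct and follows essentially the same approach as the paper: both identify $a_0(\gamma)=2\log|A_0(\gamma)|$ via \cref{equ:obtaining lz using lambdaz}, then match this against the definition $\lt(\gamma)=A_0(\gamma)t^{-\ell^{na}(\gamma)/2}$. Your write-up is in fact slightly more explicit than the paper's, which stops after displaying the expansion of $\ell(\gamma_z)/\log(1/|z|)$ and leaves the final substitution to the reader.
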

\begin{proof}
Given any $\gamma\in \Gamma\backslash \{id\}$, recall \cref{prop:laurent-est} gives
\begin{equation*}
	\tr(\gamma) =t^{-\ell^{na}(\gamma)/2}(A_0(\gamma)+A_1(\gamma) t+\cdots),
\end{equation*}
and \cref{prop:expansion of length function} and \cref{equ:obtaining lz using lambdaz} give
\begin{equation*}
	\ell(\gamma_z)/\log(1/|z|)=\ell^{na}(\gamma)+2\log|A_0(\gamma)|/\log(1/|z|)+\Re\left(\sum\limits_{j\geq 1}a_j(\gamma)z^j/\log(1/|z|)\right).
\end{equation*}
The proof is complete.
\end{proof}

When the Schottky group $\Gamma$ is generated by two generators, i.e. $\Gamma=\langle \mathfrak{a}_1, \mathfrak{a}_2, \mathfrak{a}_3, \mathfrak{a}_4\rangle$ where $\mathfrak{a}_1\mathfrak{a}_3=\mathfrak{a}_2\mathfrak{a}_4=id$,
we have three different possibilities (\cref{fig:dumbbell,fig:eight,fig:theta}) for the corresponding finite graph $\Sigma_X$ that we will discuss below. (In the \cref{fig:dumbbell,fig:eight,fig:theta}, we use $\frak{a}$ and $\frak{b}$ to denote two generators and $\frak{a}_{\pm}$ for its attracting and repelling fixed points in $\P_{\Ct}^1$. The tree on the left is the subtree connecting these points in $\P_{\Ct}^{1,an}$. The graph on the right is the corresponding Mumford curve of the Schottky group.)

\begin{figure}
\begin{center}
	\def\svgwidth{450bp}   %% Creator: Inkscape 1.1.1 (c3084ef, 2021-09-22), www.inkscape.org
%% PDF/EPS/PS + LaTeX output extension by Johan Engelen, 2010
%% Accompanies image file 'dumbbell.pdf' (pdf, eps, ps)
%%
%% To include the image in your LaTeX document, write
%%   \input{<filename>.pdf_tex}
%%  instead of
%%   \includegraphics{<filename>.pdf}
%% To scale the image, write
%%   \def\svgwidth{<desired width>}
%%   \input{<filename>.pdf_tex}
%%  instead of
%%   \includegraphics[width=<desired width>]{<filename>.pdf}
%%
%% Images with a different path to the parent latex file can
%% be accessed with the `import' package (which may need to be
%% installed) using
%%   \usepackage{import}
%% in the preamble, and then including the image with
%%   \import{<path to file>}{<filename>.pdf_tex}
%% Alternatively, one can specify
%%   \graphicspath{{<path to file>/}}
%% 
%% For more information, please see info/svg-inkscape on CTAN:
%%   http://tug.ctan.org/tex-archive/info/svg-inkscape
%%
\begingroup%
  \makeatletter%
  \providecommand\color[2][]{%
    \errmessage{(Inkscape) Color is used for the text in Inkscape, but the package 'color.sty' is not loaded}%
    \renewcommand\color[2][]{}%
  }%
  \providecommand\transparent[1]{%
    \errmessage{(Inkscape) Transparency is used (non-zero) for the text in Inkscape, but the package 'transparent.sty' is not loaded}%
    \renewcommand\transparent[1]{}%
  }%
  \providecommand\rotatebox[2]{#2}%
  \newcommand*\fsize{\dimexpr\f@size pt\relax}%
  \newcommand*\lineheight[1]{\fontsize{\fsize}{#1\fsize}\selectfont}%
  \ifx\svgwidth\undefined%
    \setlength{\unitlength}{533.4129053bp}%
    \ifx\svgscale\undefined%
      \relax%
    \else%
      \setlength{\unitlength}{\unitlength * \real{\svgscale}}%
    \fi%
  \else%
    \setlength{\unitlength}{\svgwidth}%
  \fi%
  \global\let\svgwidth\undefined%
  \global\let\svgscale\undefined%
  \makeatother%
  \begin{picture}(1,0.38525153)%
    \lineheight{1}%
    \setlength\tabcolsep{0pt}%
    \put(0,0){\includegraphics[width=\unitlength,page=1]{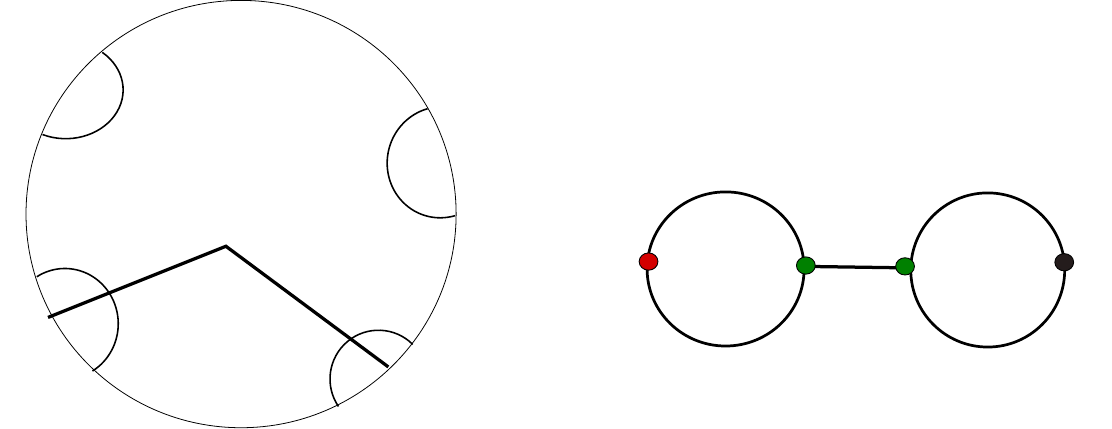}}%
    \put(0.01243825,0.32134679){\makebox(0,0)[lt]{\lineheight{1.25}\smash{\begin{tabular}[t]{l}$\mathfrak{a}_+$\end{tabular}}}}%
    \put(0.42006881,0.25877893){\makebox(0,0)[lt]{\lineheight{1.25}\smash{\begin{tabular}[t]{l}$\mathfrak{a}_-$\end{tabular}}}}%
    \put(-0.00215436,0.1173474){\makebox(0,0)[lt]{\lineheight{1.25}\smash{\begin{tabular}[t]{l}$\mathfrak{b}_-$\end{tabular}}}}%
    \put(0.34870909,0.02877856){\makebox(0,0)[lt]{\lineheight{1.25}\smash{\begin{tabular}[t]{l}$\mathfrak{b}_+$\end{tabular}}}}%
    \put(0,0){\includegraphics[width=\unitlength,page=2]{dumbbell.pdf}}%
    \put(0.60607326,0.1537499){\makebox(0,0)[lt]{\lineheight{1.25}\smash{\begin{tabular}[t]{l}$h_1$\end{tabular}}}}%
    \put(0.91256182,0.15099352){\makebox(0,0)[lt]{\lineheight{1.25}\smash{\begin{tabular}[t]{l}$h_2$\\\end{tabular}}}}%
    \put(0.7558392,0.16226967){\makebox(0,0)[lt]{\lineheight{1.25}\smash{\begin{tabular}[t]{l}$h_3$\\\end{tabular}}}}%
  \end{picture}%
\endgroup%
	
	\caption{Dumbbell graph}
	\label{fig:dumbbell}
\end{center}
\end{figure}

\begin{figure}
\begin{center}
	\def\svgwidth{450bp}   %% Creator: Inkscape 1.1.1 (c3084ef, 2021-09-22), www.inkscape.org
%% PDF/EPS/PS + LaTeX output extension by Johan Engelen, 2010
%% Accompanies image file 'eight.pdf' (pdf, eps, ps)
%%
%% To include the image in your LaTeX document, write
%%   \input{<filename>.pdf_tex}
%%  instead of
%%   \includegraphics{<filename>.pdf}
%% To scale the image, write
%%   \def\svgwidth{<desired width>}
%%   \input{<filename>.pdf_tex}
%%  instead of
%%   \includegraphics[width=<desired width>]{<filename>.pdf}
%%
%% Images with a different path to the parent latex file can
%% be accessed with the `import' package (which may need to be
%% installed) using
%%   \usepackage{import}
%% in the preamble, and then including the image with
%%   \import{<path to file>}{<filename>.pdf_tex}
%% Alternatively, one can specify
%%   \graphicspath{{<path to file>/}}
%% 
%% For more information, please see info/svg-inkscape on CTAN:
%%   http://tug.ctan.org/tex-archive/info/svg-inkscape
%%
\begingroup%
  \makeatletter%
  \providecommand\color[2][]{%
    \errmessage{(Inkscape) Color is used for the text in Inkscape, but the package 'color.sty' is not loaded}%
    \renewcommand\color[2][]{}%
  }%
  \providecommand\transparent[1]{%
    \errmessage{(Inkscape) Transparency is used (non-zero) for the text in Inkscape, but the package 'transparent.sty' is not loaded}%
    \renewcommand\transparent[1]{}%
  }%
  \providecommand\rotatebox[2]{#2}%
  \newcommand*\fsize{\dimexpr\f@size pt\relax}%
  \newcommand*\lineheight[1]{\fontsize{\fsize}{#1\fsize}\selectfont}%
  \ifx\svgwidth\undefined%
    \setlength{\unitlength}{516.38697934bp}%
    \ifx\svgscale\undefined%
      \relax%
    \else%
      \setlength{\unitlength}{\unitlength * \real{\svgscale}}%
    \fi%
  \else%
    \setlength{\unitlength}{\svgwidth}%
  \fi%
  \global\let\svgwidth\undefined%
  \global\let\svgscale\undefined%
  \makeatother%
  \begin{picture}(1,0.39795376)%
    \lineheight{1}%
    \setlength\tabcolsep{0pt}%
    \put(0,0){\includegraphics[width=\unitlength,page=1]{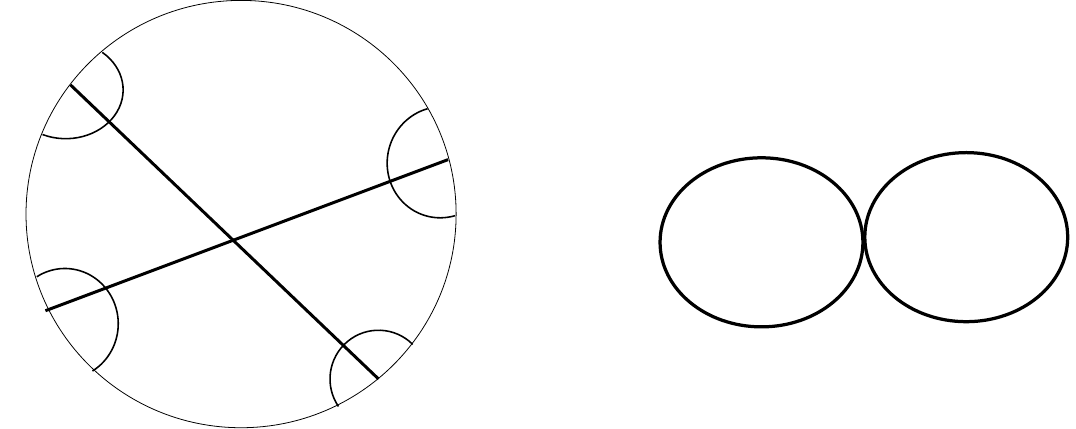}}%
    \put(0.00703877,0.32903723){\makebox(0,0)[lt]{\lineheight{1.25}\smash{\begin{tabular}[t]{l}$\mathfrak{a}_+$\end{tabular}}}}%
    \put(0.35928579,0.02207185){\makebox(0,0)[lt]{\lineheight{1.25}\smash{\begin{tabular}[t]{l}$\mathfrak{a}_-$\end{tabular}}}}%
    \put(-0.0022254,0.11250154){\makebox(0,0)[lt]{\lineheight{1.25}\smash{\begin{tabular}[t]{l}$\mathfrak{b}_-$\end{tabular}}}}%
    \put(0.4302378,0.25814212){\makebox(0,0)[lt]{\lineheight{1.25}\smash{\begin{tabular}[t]{l}$\mathfrak{b}_+$\end{tabular}}}}%
    \put(0,0){\includegraphics[width=\unitlength,page=2]{eight.pdf}}%
  \end{picture}%
\endgroup%
	
	\caption{Figure eight graph}
	\label{fig:eight}
\end{center}
\end{figure}

\begin{figure}
\begin{center}
	
	\def\svgwidth{450bp}   \import{file/pictures}{theta.pdf_tex}	
	\caption{Figure $\Theta$ graph}
	\label{fig:theta}
\end{center}
\end{figure}

We will need
\begin{lem}
Assume $\Gamma=\langle \frak{a}_1,\ldots,\frak{a}_4 \rangle<\SL_2(\mathbb{M}(\mathbb{D}))$ is a family of Schottky groups satisfying $(\bigstar)$.\footnote{This lemma also works for a general Schottky group in $\SL_2(\Ct)$.}
The following identity holds for any cyclically reduced word:
\begin{equation}\label{equ:identity-2gen}
	\lt(\mathfrak{a}_{i_1}\mathfrak{a}_{i_2}\cdots \mathfrak{a}_{i_n})^2=\prod\limits_{j=1}^{n}\lt(\mathfrak{a}_{i_{j}}\mathfrak{a}_{i_{j+1}})
\end{equation}
with the convention that $i_{n+1}=i_1$.

\end{lem}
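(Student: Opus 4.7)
The plan is to use the Fricke relation \eqref{equ:fricke} together with the two $\SL_2$ identities $\tr(g^{-1})=\tr(g)$ and $\tr(g)^2=\tr(g^2)+2$, proceeding by induction on the cyclically reduced length $n$. For $n=1,2$ the identity is immediate: $n=1$ becomes $\lt(\mathfrak{a})^2=\lt(\mathfrak{a}^2)$, which follows from $\tr(\mathfrak{a}^2)=\tr(\mathfrak{a})^2-2$ combined with $\ell^{na}(\mathfrak{a})>0$ (so the constant $2$ is of lower Laurent order); $n=2$ reduces to $\lt(\mathfrak{a}_{i_1}\mathfrak{a}_{i_2})^2=\lt(\mathfrak{a}_{i_1}\mathfrak{a}_{i_2})\lt(\mathfrak{a}_{i_2}\mathfrak{a}_{i_1})$, which holds by trace cyclicity.

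For $n\geq 3$, the observation $\lt(\gamma)^2=\lt(\gamma^2)$ reduces the claim to showing $\lt(\tr(\gamma^2))=\lt\bigl(\prod_{j=1}^n\tr(\mathfrak{a}_{i_j}\mathfrak{a}_{i_{j+1}})\bigr)$. I would expand the right-hand side by iterated applications of Fricke: each step replaces a product $\tr(X)\tr(Y)$ by $\tr(XY)+\tr(XY^{-1})$, so after $n-1$ applications one obtains a signed sum of $2^{n-1}$ trace terms indexed by a sign pattern. The pattern in which one always takes the direct product gives, after trace cyclicity, exactly $\tr(\mathfrak{a}_{i_1}\cdots\mathfrak{a}_{i_n}\mathfrak{a}_{i_1}\cdots\mathfrak{a}_{i_n})=\tr(\gamma^2)$; every other sign pattern produces the trace of a word whose free (and cyclic) reduction is strictly shorter than $2n$. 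Since for any cyclically reduced $\eta\in\Gamma$ one has $\ord_{t=0}\tr(\eta)=-\ell^{na}(\eta)/2$ and \cref{lem:word-na} yields $\ell^{na}(\eta)\gtrsim l(\eta)$, a strict shortening of the reduced word length forces a strictly larger Laurent order, so all error terms disappear under $\lt$.

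The main obstacle I anticipate is the combinatorial bookkeeping of the Fricke expansion: verifying that exactly one branch assembles into $\tr(\gamma^2)$ and that all other $2^{n-1}-1$ branches yield strictly shorter reduced words. A possibly cleaner route, bypassing the Fricke unwinding, is to use the dynamical cocycle formula $-L_0(\gamma)=\sum_k l_0(\mathfrak{a}_{i_k},\mathfrak{a}_{i_{k+1}})$ from \cref{prop:lc-cocycle}, together with its two-letter analogue $-L_0(\mathfrak{a}_{i_k}\mathfrak{a}_{i_{k+1}})=l_0(\mathfrak{a}_{i_k},\mathfrak{a}_{i_{k+1}})+l_0(\mathfrak{a}_{i_{k+1}},\mathfrak{a}_{i_k})$; the desired identity then reduces to the cyclic reversal symmetry $\sum_k l_0(\mathfrak{a}_{i_k},\mathfrak{a}_{i_{k+1}})=\sum_k l_0(\mathfrak{a}_{i_{k+1}},\mathfrak{a}_{i_k})$. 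At the level of leading coefficients, each $l_0$ factor records the squared difference of a pair of fixed points in $\mathbb{P}^1_{\Ct}$ (via $\mathfrak{a}^{-1}\infty=\mathfrak{a}^-$ to leading order), and the symmetry of the two sums then follows by matching up the multisets of cross-differences along the cyclic word using $(x-y)^2=(y-x)^2$.
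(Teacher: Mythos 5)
Your reduction of the identity to the Fricke expansion is the right starting point (and is how the paper begins), but the step where you discard the $2^{n-1}-1$ error terms is exactly where the real difficulty sits, and your justification for it is not valid. You argue that a strictly shorter cyclically reduced word has strictly larger Laurent order of its trace, citing $\ord_{t=0}\tr(\eta)=-\ell^{na}(\eta)/2$ and $\ell^{na}(\eta)\gtrsim l(\eta)$ from \cref{lem:word-na}. But $|\tr(\eta)|_{na}=e^{\ell^{na}(\eta)/2}$ is governed by the non-Archimedean translation length on the skeleton, which is \emph{not} monotone in word length: the edges of $\Sigma_X$ have different lengths, so for instance in the dumbbell case a two-letter word such as $\mathfrak{a}_1\mathfrak{a}_2$ (crossing the long middle edge twice) has much larger $\ell^{na}$ than a long word $\mathfrak{a}_1^k$ winding around a short loop. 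The bound $\ell^{na}\gtrsim l$ is one-sided with an implicit constant and cannot convert ``strictly shorter reduced word'' into ``strictly smaller $|\tr|_{na}$''. What is actually needed is the strict norm inequality for the single discarded term, $|\tr(\gamma_2^{-1}\gamma_1)|_{na}<|\tr(\mathfrak{a}_1\gamma_1)\tr(\mathfrak{a}_1\gamma_2)|_{na}$ (the Claim \cref{equ:claim-2gen} in the paper), and its proof is genuinely specific to two generators: it goes through a case analysis on the three possible skeleta (figure eight, dumbbell, $\Theta$), the combinatorial length identity $2\ell^{na}(\gamma)=\sum_j\ell^{na}(\mathfrak{a}_{i_j}\mathfrak{a}_{i_{j+1}})$ for cyclically reduced words (\cref{equ:cyc}, proved by the $n_3=n_4$ counting on the $\Theta$ graph), and a careful analysis of when equality in \cref{equ:reduce} can occur. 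The paper even records that the analogous inequality fails for three generators, so any argument like yours that never uses the rank-two hypothesis cannot close the gap. You would also still need to treat the cyclically reduced words in which no generator repeats (notably the commutator $\mathfrak{a}_1\mathfrak{a}_2\mathfrak{a}_1^{-1}\mathfrak{a}_2^{-1}$), which the paper handles by the explicit Fricke commutator formula plus another graph-dependent norm comparison.

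Your fallback route has the same problem in disguise, plus a technical one. \cref{prop:lc-cocycle} only provides the cocycle decomposition with conditioning windows of length $N-1$ where $N>C(M+1)$ and $C$ depends on $\Gamma$ (the discs at depth $N-1$ must be smaller than the separation scale $c_0$), so the formula $-L_0(\gamma)=\sum_k l_0(\mathfrak{a}_{i_k},\mathfrak{a}_{i_{k+1}})$ with windows of length one is not available in general. More importantly, the claimed ``cyclic reversal symmetry'' $\sum_k l_0(\mathfrak{a}_{i_k},\mathfrak{a}_{i_{k+1}})=\sum_k l_0(\mathfrak{a}_{i_{k+1}},\mathfrak{a}_{i_k})$ is not a matching of identical multisets: for $n\geq 3$ the two sides involve leading terms of different cross-differences, roughly $\prod_k(\mathfrak{a}_{i_{k+1}}^+-\mathfrak{a}_{i_k}^{-1}\infty)$ versus $\prod_k(\mathfrak{a}_{i_k}^+-\mathfrak{a}_{i_{k+1}}^{-1}\infty)$, so $(x-y)^2=(y-x)^2$ does not apply; establishing this symmetry at the level of leading coefficients is essentially equivalent to the identity you are trying to prove, making that route circular as stated.
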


\begin{proof}

Suppose we have a cyclically reduced word $\mathfrak{a}_1\gamma_1\mathfrak{a}_1\gamma_2$. By \cref{equ:fricke}, we have
\[ \tr(\mathfrak{a}_1\gamma_1\mathfrak{a}_1\gamma_2)=\tr(\mathfrak{a}_1\gamma_1)\tr(\mathfrak{a}_1\gamma_2)-\tr(\gamma_2^{-1}\gamma_1). \]
Claim: We have
\begin{equation}
    \label{equ:claim-2gen}
|\tr(\gamma_2^{-1}\gamma_1)|_{na}<|\tr(\mathfrak{a}_1\gamma_1)\tr(\gamma_2^{-1}\mathfrak{a}_1^{-1})|_{na}=|\tr(\mathfrak{a}_1\gamma_1)\tr(\mathfrak{a}_1\gamma_2)|_{na}.
\end{equation}
Proof of the claim: The claim only works for the two-generator case, and we give an ad-hoc proof.

For the figure eight graph, by using the graph and $2\log |\tr(\gamma)|_{na}=\ell^{na}(\gamma)$, the length of the non-backtracting loop of $\gamma$, we have
\[|\tr( \mathfrak{a}_1\gamma_1)|_{na}>|\tr(\gamma_1)|_{na}. \]
Therefore
\begin{align*}
	|\tr(\gamma_2^{-1}\gamma_1)|_{na}&=\exp(\ell^{na}(\gamma_2^{-1}\gamma_1)/2)\leq \exp((\ell^{na}(\gamma_2^{-1})+\ell^{na}(\gamma_1))/2)\\
	&= |\tr(\gamma_1)|_{na}|\tr(\gamma_2^{-1})|_{na}<|\tr(\mathfrak{a}_1\gamma_1)|_{na}|\tr(\gamma_2^{-1}\mathfrak{a}_1^{-1})|_{na}.  
\end{align*}

For the figure $\Theta$ graph (\cref{fig:theta}): let $h_1,h_2,h_3$ be the lengths of the three edges $e_1,e_2,e_3$ connecting two vertices $v_1,v_2$. For each generator, we use a loop with the starting point $v_1$ in the graph. For example the loops of $\frak a_1,\frak a_2$ are represented by $e_1e_5=e_1e_2^{-1}$ and $e_2e_6=e_2e_3^{-1}$, respectively. For a cyclically reduced word $\gamma$, we connect the loops corresponding to each letters through $v_1$ to get a loop of $\gamma$. The only possible backtracking part in the loop is the middle edge in the graph $\Theta$, which corresponds to the pairs $\frak{a}_1\frak{a}_2$ and $\frak{a}_4\frak{a}_3$ in $\gamma$. We have that
\begin{equation}\label{equ:ell theta}
	\ell^{na}(\gamma)=n_1(\gamma)(h_1+h_2)+n_2(\gamma)(h_2+h_3)-n_3(\gamma)(2h_2),     
\end{equation}
with $n_1(\gamma)$ the number of $\frak{a}_1,\frak{a}_3$ in $\gamma$, $n_2(\gamma)$ the number of $\frak{a}_2,\frak{a}_4$, and $n_3(\gamma)$ the number of pairs $\frak{a}_1\frak{a}_2$ and $\frak{a}_4\frak{a}_3$ in $\gamma$\footnote{For example $\gamma=\frak a_{i_1}\cdots \frak a_{i_N}$, the pair $\frak a_{i_N}\frak a_{i_1}$ should also be considered.}. 
\begin{lem}\label{equ:two-generator}
	For cyclically reduced $\gamma=\frak{a}_{i_1}\cdots \frak{a}_{i_N}$, we have 
\footnote{It is possible that the word is simply $\frak{a}_{i_1}$, then we use that $2\ell^{na}(\frak{a}_{i_1})=\ell^{na}(\frak{a}_{i_1}\frak{a}_{i_1})$.}
	\begin{equation}\label{equ:cyc}
		2\ell^{na}(\gamma)=\sum_{1\leq j\leq N}\ell^{na}(\frak{a}_{i_j}\frak{a}_{i_{j+1}}) . 
	\end{equation}
\end{lem}
\begin{proof}
It suffices to check the equality for $n_1,n_2,n_3$ in \cref{equ:ell theta}. The corresponding equality for $n_1,n_2$ is trivial. For $n_3$, let $n_4$ be the number of pairs of $\frak{a}_2\frak{a}_1$ and $\frak{a}_3\frak{a}_4$ in $\gamma$. Let $I=\{\frak{a}_1,\frak{a}_4 \}$ and $II=\{\frak{a}_2,\frak{a}_3 \}$. A cyclically reduced word can be represented as loop on the graph with vertices $\{\frak{a}_i , 1\leq i\leq 4\}$ and all the edges are allowed except the edges $\frak{a}_i\frak{a}_{i+2}$. Then $n_3$ is the number of edges in the loop from vertices in $I$ to $II$. $n_4$ is the number of edges in the loop from vertices in $II$ to $I$. Since $\gamma$ represents a loop, we have $n_3(\gamma)=n_4(\gamma)$. Notice that $n_3(\frak{a}_2\frak{a}_1)=n_3(\frak{a}_1\frak{a}_2)=1$. We have
	\[ 2n_3(\gamma)=n_3(\gamma)+n_4(\gamma)=\sum_{1\leq  j\leq N} n_3(\frak{a}_{i_j}\frak{a}_{i_{j+1}}), \]
	where the second equality is because both sides represent the number of $\frak{a}_1\frak{a}_2$, $\frak{a}_4\frak{a}_3$, $\frak{a}_2\frak{a}_1$, $\frak{a}_3\frak{a}_4$ in $\gamma$. The proof is complete.
\end{proof}
For a non-cyclically reduced word $\gamma$, we also have
\begin{equation}\label{equ:non-cyc}
	2\ell^{na}(\gamma)\leq \sum_{1\leq j\leq N}\ell^{na}(\frak{a}_{i_j}\frak{a}_{i_{j+1}}), 
\end{equation}
since in order to obtain a cyclically reduced representation of $\gamma$, we are doing induction and we only need to treat the case such as $\cdots \mathfrak{b}_1\frak{a}_1\frak{a}_1^{-1}\mathfrak{b}_2\cdots$. 

We go back to the claim \cref{equ:claim-2gen}. For any $\mathfrak{b}_1,\mathfrak{b}_2$ in the set of generators, we have
\begin{equation}\label{equ:reduce}
	\ell^{na}(\mathfrak{b}_1\frak{a}_1)+\ell^{na}(\frak{a}_1^{-1}\mathfrak{b}_2)\geq \ell^{na}(\mathfrak{b}_1\mathfrak{b}_2),
\end{equation}
which is true by considering all $16$ possibilities of $\mathfrak{b}_1,\mathfrak{b}_2$, and the equality holds if and only if 
\begin{equation}\label{equ:b1b2}
	\mathfrak{b}_1=\frak{a}_3 \text{ or }\mathfrak{b}_2=\frak{a}_1.
\end{equation}
Therefore suppose $\frak{a}_1\gamma_1=\frak{a}_1\mathfrak{b}_1\cdots \mathfrak{b}_2$ and $\gamma_2^{-1}\frak{a}_1^{-1}=\mathfrak{b}_3\cdots \mathfrak{b}_4\frak{a}_1 ^{-1} $ with $\mathfrak{b}_i$ from the set of generators, due to \cref{equ:cyc} and \cref{equ:non-cyc}
\begin{align*}
	&2\ell^{na}(\frak{a}_1\gamma_1)+2\ell^{na}(\gamma_2^{-1}\frak{a}_1^{-1})-2\ell^{na}(\gamma_2^{-1}\gamma_1)\\
	\geq &\ \ell^{na}(\frak{a}_1\mathfrak{b}_1)+\ell^{na}(\mathfrak{b}_2\frak{a}_1)+\ell^{na}(\frak{a}_1^{-1}\mathfrak{b}_3)+\ell^{na}(\mathfrak{b}_4\frak{a}_1^{-1})-\ell^{na}(\mathfrak{b}_4\mathfrak{b}_1)-\ell^{na}(\mathfrak{b}_2\mathfrak{b}_3)\\
	= &  \ell^{na}(\mathfrak{b}_1\frak{a}_1)+\ell^{na}(\frak{a}_1^{-1}\mathfrak{b}_4)-\ell^{na}(\mathfrak{b}_1\mathfrak{b}_4)+\ell^{na}(\mathfrak{b}_2\frak{a}_1)+\ell^{na}(\frak{a}_1^{-1}\mathfrak{b}_3)-\ell^{na}(\mathfrak{b}_2\mathfrak{b}_3)\geq 0, 
\end{align*} 
where the last inequality is due to \cref{equ:reduce}. Since $\frak{a}_1\gamma_1\frak{a}_1\gamma_2$ is cyclically reduced, we obtain $\mathfrak{b}_1,\mathfrak{b}_2\neq \frak{a}_3$ and $\mathfrak{b}_3,\mathfrak{b}_4\neq \frak{a}_1$. Therefore neither $(\mathfrak{b}_1,\mathfrak{b}_4)$ nor $(\mathfrak{b}_2,\mathfrak{b}_3)$ satisfies \cref{equ:b1b2}. Hence the equality of the last equation cannot hold and the proof is complete.

For the dumbbell graph (\cref{fig:dumbbell}), we can prove by the same method as the figure $\Theta$ graph: first we show \cref{equ:cyc} holds, then \cref{equ:reduce} holds, finally we can conclude \cref{equ:claim-2gen}.

Back to the proof: Hence \footnote{This is not a general equation, for three generators with graph $(||)$ consisting of two vertices and four edges between them, the sum of length of $\mathfrak{a}_1\mathfrak{a}_2$ and $\mathfrak{a}_2^{-1}\mathfrak{a}_3^{-1}$ equals the length of $\mathfrak{a}_1\mathfrak{a}_3^{-1}$.}
\begin{equation}\label{equ:gamma a gamma a}
	\lt(\mathfrak{a}_1\gamma_1\mathfrak{a}_1\gamma_2)=\lt(\mathfrak{a}_1\gamma_1)\lt(\mathfrak{a}_1\gamma_2). 
\end{equation}
Moreover, if $\mathfrak{a}_1\gamma_1$ and $\mathfrak{a}_1\gamma_2$ satisfy \cref{equ:identity-2gen}, then by \cref{equ:gamma a gamma a}, the longer word $\mathfrak{a}_1\gamma_1\mathfrak{a}_1\gamma_2$ also satisfies \cref{equ:identity-2gen}.

For any cyclically reduced word of length greater than $5$, we can find some generator that appears at least twice. Therefore we can use \cref{equ:gamma a gamma a} to reduce the length of the word and do induction. 

The only cyclically reduced words without any generator appearing twice are the commutator $\mathfrak{a}_1\mathfrak{a}_2\mathfrak{a}_3\mathfrak{a}_4$ and $\mathfrak{a}_i\mathfrak{a}_j$, $\mathfrak{a}_i$. For the commutator, we have the well-known formula from the Fricke relation
\[ \tr(\mathfrak{a}_1\mathfrak{a}_2\mathfrak{a}_1^{-1}\mathfrak{a}_2^{-1})=\tr(\mathfrak{a}_1)^2+\tr(\mathfrak{a}_2)^2+\tr(\mathfrak{a}_1\mathfrak{a}_2)^2-\tr(\mathfrak{a}_1)\tr(\mathfrak{a}_2)\tr(\mathfrak{a}_1\mathfrak{a}_2)-2=\tr(\mathfrak{a}_1)^2+\tr(\mathfrak{a}_2)^2-\tr(\mathfrak{a}_1\mathfrak{a}_2)\tr(\mathfrak{a}_1\mathfrak{a}_2^{-1})-2 \]
By checking the three graphs of two generator cases, we have $|\tr(\mathfrak{a}_1)^2|_{na}=|\tr(\mathfrak{a}_1^2)|_{na}<|\tr(\mathfrak{a}_1\mathfrak{a}_2)\tr(\mathfrak{a}_2^{-1}\mathfrak{a}_1)|_{na}$.
Hence
\[\lt(\mathfrak{a}_1\mathfrak{a}_2\mathfrak{a}_1^{-1}\mathfrak{a}_2^{-1})=-\lt(\mathfrak{a}_1\mathfrak{a}_2)\lt(\mathfrak{a}_1\mathfrak{a}_2^{-1}). \]

The proof is complete.
\end{proof}

\subsubsection{Figure $\Theta$ graph}\label{subsec:fig-theta}

We first consider the case that the graph $\Sigma_X$ is like $\Theta$ in \cref{fig:theta}. Without loss of generality, we suppose each edge of the graph $\Sigma_X$ has length $2$ (for example the symmetric three-funnel case). Suppose 
$\lt(\mathfrak{a}_1)=Az^{-2}$, $\lt(\mathfrak{a}_2)=B z^{-2}$ and $\lt(\mathfrak{a}_1\mathfrak{a}_2)=Cz^{-2}$ with $A,B,C\in\C^*$. For the weighted graph, we can define three weights by
\begin{equation*}
h_j=2+\frac{2\alpha_j}{\log(1/|z|)},
\end{equation*}
with $\alpha_j$'s determined by
\begin{equation*}
\alpha_1+\alpha_2=\log|A|,\quad \alpha_2+\alpha_3=\log|B|,\quad \alpha_1+\alpha_3=\log|C|.
\end{equation*}
For a reduced word $\gamma$ of word length two, from the choice of $\alpha_j$ we can verify that the weighted length $\ell_h$ satisfies
\begin{equation}\label{equ:ell-h-gamma}
\ell_h(\gamma)= 2\log |\lt(\gamma)|/\log(1/|z|)=\ell_0(\gamma,z).
\end{equation}
For example for the word $\frak{a}_1\frak{a}_2^{-1}$, from the Fricke relation, we have $\tr(\frak{a}_1\frak{a}_2^{-1})=\tr(\frak{a}_1)\tr(\frak{a}_2)-\tr(\frak{a}_1\frak{a}_2)$, then $\lt(\frak{a}_1\frak{a}_2^{-1})=\lt(\frak{a}_1)\lt(\frak{a}_2)$ which implies $\ell_0(\frak{a}_1\frak{a}_2^{-1},z)=h_1+2h_2+h_3=\ell_h(\frak{a}_1\frak{a}_2^{-1})$.
By \cref{equ:identity-2gen}, we know this relation also holds for any cyclically reduced $\gamma$.
Therefore, the weighted Ihara zeta function of the weight $h$ gives the intermediate zeta function $Z_0$.

For this example, by \cref{prop:ZI}, the zeta function $Z_0(\Gamma,z,s)$ can be computed and is given by $(-2abc + a^2 + b^2 + c^2 - 1) (2abc + a^2 + b^2 + c^2 - 1)$ with $a=e^{-s(h_2+h_3)/2}, b=e^{-s(h_1+h_3)/2}, c=e^{-s(h_1+h_2)/2}$.

\subsubsection{Dumbbell graph}
The case that the graph $\Sigma_X$ is like a dumbbell as in \cref{fig:dumbbell} is similar to \cref{subsec:fig-theta}.  Suppose each edge of the graph has length two. For example, $\lt(\mathfrak{a}_1)=Az^{-1}$, $\lt(\mathfrak{a}_2)=Bz^{-1}$ and $\lt(\mathfrak{a}_1\mathfrak{a}_2)=Cz^{-4}$ with $A,B,C\in\C^*$. We define the three weights by
\begin{equation*}
h_j=2+\frac{2\beta_j}{\log(1/|z|)},
\end{equation*}
with $\beta_j$'s determined by
\begin{equation*}
\beta_1=\log|A|,\quad \beta_{2}=\log|B|,\quad \beta_1+2\beta_{3}+\beta_{2}=\log|C|.
\end{equation*}
Similar to the previous case, the weighted Ihara zeta function equals the intermediate zeta function $Z_0$.

For this example, by \cref{prop:ZI} the zeta function $Z_0(\Gamma,z,s)$ can be computed and is given by $-(c - 1)(c + 1)(a - 1)(a + 1)(4a^2b^4c^2 - a^2c^2 + a^2 + c^2 - 1)$ with $a=e^{-sh_1/2}, b=e^{-sh_{3}/2},c=e^{-sh_{2}/2}$. For $t=0$, $s=\frac{1}{2}\log 2$ is the solution with the maximal real part.
\begin{exam}\label{exa.o-o}
If $A=B=C=1$, then the intermediate zeta function coincides with the Ihara zeta function.
\end{exam}

\subsubsection{Figure eight graph}\label{sec:figure-eight}

The case when the graph $\Sigma_X$ is like figure $8$ (as shown in \cref{fig:eight}) is not directly applicable for weighted Ihara zeta function, since \cref{equ:ell-h-gamma} does not hold. But using the identity \cref{equ:identity-2gen}, we can compute
\begin{equation*}
\ag{
	-\log Z_0(\Gamma,z,s)&=\sum\limits_{j=1}^{\infty}\sum\limits_{[\gamma]\in\calP}\frac{1}{j}e^{-2js\log|\lt(\gamma)|/\log(1/|z|)}\\
	&=\sum\limits_{n=1}^{\infty}\frac{1}{n}\sum\limits_{j\#(\gamma)=n} e^{-js\log|\lt(\gamma)^2|/\log(1/|z|)}
}
\end{equation*}
Consider the matrix $W\in M_{12\times 12}$ given by
$W(\mathfrak{a}_{i_1}\mathfrak{a}_{i_2}, \mathfrak{a}_{i_2}\mathfrak{a}_{i_3})=\exp\left({-s\frac{\log |\lt(\mathfrak{a}_{i_1}\mathfrak{a}_{i_2})|+\log|\lt(\mathfrak{a}_{i_2}\mathfrak{a}_{i_3})|}{2\log(1/|z|)}}\right)$, then the above expression is equal to 
\begin{equation*}
\sum\limits_{n=1}^{\infty}\frac{1}{n} \tr W^n=-\log \det(I-W).
\end{equation*}
\begin{rem*}
This is indeed the path zeta function in Section 4 in \cite{hortonWhatAreZeta2006}.
Consider the matrix $V\in M_{4\times 4}$ given by
$V(\mathfrak{a}_{i_1},\mathfrak{a}_{i_2})=\exp\left({-s\frac{\log |\lt(\mathfrak{a}_{i_1}\mathfrak{a}_{i_2})|}{\log(1/|z|)}}\right)$ for $\mathfrak{a}_{i_1}\mathfrak{a}_{i_2}\neq id$. Then the above expression is also equal to
\begin{equation*}
	\sum\limits_{n=1}^{\infty}\frac{1}{n} \tr V^n=-\log \det(I-V).
\end{equation*}
\end{rem*}

\begin{exam}\label{exa.funnel-torus}
Example of funneled torus in \cite[Section 16]{borthwickSpectralTheoryInfiniteArea2016}. Here the matrices are given by 
\[ S_1=\begin{pmatrix}
	e^{\ell/2} & 0 \\ 0 & e^{-\ell/2}
\end{pmatrix},\ S_2=\begin{pmatrix}
	\cosh(\ell/2)-\cos\phi\ \sinh(\ell/2) & \sin^2\phi\ \sinh(\ell/2) \\ \sinh(\ell/2) & \cosh(\ell/2)+\cos\phi\ \sinh(\ell/2)
\end{pmatrix}  \]
with $\phi$ the angle between the two shortest geodesics.
We do the change of variable with $z=e^{-\ell/2}$ and obtain a Schottky family. \footnote{The verification of condition $(\bigstar)$ can be done similarly as in \cref{exa.borth}. }
Moreover, 
\[ \lt(S_1)=z^{-1},\ \lt(S_2)=z^{-1},\ \lt(S_1S_2)=(1+\cos\phi)z^{-2}/2. \]
The graph is figure eight with edge length $2$. 

From the first paragraph of \cref{sec:figure-eight}, with $\phi=\pi/2$ the middle zeta function can be computed and is given by 
\[ Z_0(\Gamma,z,s)= -(-a + 2b + 1)  (a + 2b - 1)  (a - 1)^2, \]
with $a=e^{-2s},\ b=e^{-s(2-\log 2/\log(1/|z|))}$
. Hence the Hausdorff dimension $\delta_z$ of the limit set is given by the first zero of
\[ e^{-2s}+2e^{-s(2-\log 2/\log(1/|z|)}-1=0 \]
dividing by $\log(1/|z|)$.
For the case $\ell=10$ or $z=e^{-5}$, the first zero of $Z_0(\Gamma,e^{-5},s)$ dividing by $\log(1/|z|)$ is close to $0.115$, which is close to numerics in \cite[Fig 16.8]{borthwickSpectralTheoryInfiniteArea2016}. The main term $\frac{\log 3}{10}$ of \cite{dangHausdorffDimension2024} gives $0.1099$.

\end{exam}

\begin{rem}
In \cite[Theorem 1.1]{weichResonanceChainsGeometric2015}, the author considered the case of three funnels with width given by $n_1\ell,n_2\ell,n_3\ell$ with $n_1,n_2,n_3\in\N$ and $\ell\rightarrow \infty$. Under an extra triangle condition $n_i+n_j>n_k$ with $\{i,j,k\}=\{1,2,3\}$, the convergence to the Ihara zeta function is proved in \cite{weichResonanceChainsGeometric2015}. Actually, the triangle condition says that the graph is figure $\Theta$. If $n_1+n_2=n_3$, then the graph is figure eight. If $n_1+n_2<n_3$, then the graph is figure dumbbell. For all these three cases, by the change of variable with $z=e^{-\ell/2}$, our computation of Ihara zeta functions/intermediate zeta function $Z_0$ and the convergence all work.

We also remark that the type of graph (figure eight, dumbbell, and figure $\Theta$) depends on the choice of the lengths of the generators, while the topological type of the surface depends on the configuration of the Schottky discs. In the case of two generators $\mathfrak{a}_1$ and $\mathfrak{a}_2$, if $D_{\mathfrak{a}_1}$ and $D_{\mathfrak{a}_1^{-1}}$ are adjacent (\cref{fig:dumbbell,fig:theta}), then the hyperbolic surface is a three-funnel surface; if $D_{\mathfrak{a}_1}$ and $D_{\mathfrak{a}_1^{-1}}$ are separated by $D_{\mathfrak{a}_2}$ and $D_{\mathfrak{a}_2^{-1}}$ (\cref{fig:eight}), then the hyperbolic surface is a funneled torus.

\end{rem}

 \subsection{Symmetric three-funnel surface}

In this section, we discuss how our main theorem applies to the symmetric three-funnel surface. In particular, we recover the result of \cite{pollicottZerosSelbergZeta2019a}.

\begin{exam}\label{exa.borth}

    Recall $X(\ell)$ from introduction. In \cite[Section 16]{borthwickSpectralTheoryInfiniteArea2016}, an explicit form is given by 
    \[ S_1=\begin{pmatrix}
        \cosh(\ell/2) & \sinh(\ell/2) \\ \sinh(\ell/2) & \cosh(\ell/2)
    \end{pmatrix},\ S_2=\begin{pmatrix}
        \cosh(\ell/2) & a^{-1}\sinh(\ell/2) \\ a\sinh(\ell/2) & \cosh(\ell/2)
    \end{pmatrix} \]
    and $a\in\R$ such that $\tr(S_1S_2)=-2\cosh(\ell/2)$. We do the change of variable with $z=e^{-\ell/4}$, then the group becomes a Schottky family with
    \[ \lt(S_1)=\lt(S_2)=-\lt(S_1S_2)=z^{-2}. \]

    This family satisfies condition $(\bigstar)$ can be verified in the following way: \begin{equation*}
    \tr(S_1S_2)=2((z^{-2}+z^2)/2)^2+(a+a^{-1})((z^{-2}-z^2)/2)^2=-2(z^{-2}+z^2)/2.
\end{equation*}
So we obtain a solution \[a^{-1}=-1+2z+O(|z^2|).\]
The fixed points of $S_1$ are at $1$ (attracting) and $-1$ (repelling). The fixed points of $S_2$ are at $a^{-1}$ and $-a^{-1}$. The cross-ratios of these four points are  
\[ \left( \frac{1\pm a^{-1}}{1-\pm a^{-1}} \right)^2, \]
which have order greater than $-2$.
The contracting ratios of $S_1$ are $S_2$ are $z^4$. From these, we can verify that this family satisfies condition $(\bigstar)$.

    The graph is two vertices with three edge connecting them with each edge of length $2$. 
    From the discussion of the two generator case, the intermediate zeta function $Z_0$ with $M=0$ is exactly the graph zeta function for symmetric three-funnel surface. 
    
    Since \cref{prop:speed-conv} also works for $\SL_2(\R)$-case, combined with the computation of $Z_0$, we obtain the convergence of zeta functions in \cref{thm:three-funnel}. The statement of Hausdorff dimension follows directly.
\end{exam}

\begin{exam}\label{exa.PV19}
   We want to compute the zeta function for other $M$ and explain that we recover the result of \cite{pollicottZerosSelbergZeta2019a}.

Continue from the previous example. The three-funnel surfaces can be described by four discs, each of radius $z^2$.  Therefore the separation between $d_{na}(\frak{a}_i\infty,(D_{\frak{a}_i})^c)=e^{-2}$ for the generators.

After doing computations using the algorithm in \cref{prop:lc-cocycle} and \cref{cor:analytic-middle}, we obtain:\\
Let $x_1=e^{-2s}$, $x_2=e^{sz^2/2\log(1/|z|)}$, $x_3=e^{sz^4/2\log(1/|z|)}$. Then
\begin{equation*}
    Z_2(\Gamma,z,s)= ((x_2^4 + x_1^4 (-1 + x_2^2)^2 + x_1^2 (x_2^2 - 2 x_2^4))^2 (x_2^4 + 
   x_1^4 (-1 + x_2^2)^2 - 2 x_1^2 (x_2^2 + x_2^4)))/x_2^{12}
\end{equation*}
and
\begin{equation*}
\begin{aligned}
    Z_4(\Gamma,z,s)=\frac{1}{x_3^{18} x_2^{12}}& (x_1^8 - 4 x_3^2 x_1^8 + x_3^8 x_1^4 (-1 + x_1^2)^2 + 
    x_3^4 (-2 x_1^6 + 6 x_1^8) + x_3^3 x_1^4 (-2 + x_1^2) x_2^2 - 
    2 x_3^5 x_1^4 (-1 + x_1^2) x_2^2\\
    &\,\,\,+ x_3^7 x_1^2 (-1 + x_1^2)^2 x_2^2 - 
    x_3^6 (-1 + x_1^2) (4 x_1^6 + x_2^4 - x_1^2 x_2^4))^2 \\
    &(x_1^8 - 4 x_3^2 x_1^8 + 
    x_3^8 x_1^4 (-1 + x_1^2)^2 + x_3^4 (-2 x_1^6 + 6 x_1^8) + 
    4 x_3^5 x_1^4 (-1 + x_1^2) x_2^2 \\
    &\,\,\,- 2 x_3^7 x_1^2 (-1 + x_1^2)^2 x_2^2 - 
    2 x_3^3 x_1^4 (1 + x_1^2) x_2^2 - x_3^6 (-1 + x_1^2) (4 x_1^6 + x_2^4 - x_1^2 x_2^4)).
\end{aligned}
\end{equation*}

In particular, factorizing $Z_2$, we can recover the result of \cite{pollicottZerosSelbergZeta2019a} by \cref{cor:zero-ZM}. In $Z_2$, if we let $x_2=1$, then we obtain the Ihara zeta function of the symmetric three-funnel surface, which is predicted by \cref{lem:M'-M}.
\end{exam}

\begin{rem}
    From previous computations, we conjecture that the term $x_1$ in $Z_{2M}(\Gamma,z,s)$ has degree $6\cdot 2^M$ with coefficient $(1-x_{M+1}^2)^{3\cdot 2^M}$, where $x_{M+1}=e^{s z^{2M}/2\log(1/|z|)}$. This conjecture gives some support to the fractal Weyl law, that is the number of resonances in the region $\{s\in\C:\ \Re s>-C, |\Im s|\leq T \}$ for any fixed $C>0$ grows asymptotic to $T^{1+\delta_\Gamma}$ with $\delta_\Gamma$ the critical exponent of the Schottky group $\Gamma$.

    Below is a heuristic argument: In the region $\Re s \in [-C,\delta], \Im s \in[2T\pi,2(T+1)\pi]$ with $ T\approx |z|^{-2M}$, the leading coefficient $(1-x_{M+1}^2)^{3\cdot 2^M}$ is of constant size. Since $z$ is small, the terms $x_{j+1}=e^{s z^{2j}/2\log(1/|z|)}$ with $j\geq 1$ are almost constant. The only variable in $Z_{2M}$ is $x_1$. Hence as a polynomial on $x_1$, $Z_{2M}$ is of degree $6\cdot 2^M$ and the ratio between the coefficients and the leading coefficient is bounded. Therefore this polynomial has $6\cdot 2^M$ zeros of bounded size. Then $s=-\frac{1}{2}\log x_1$ is in the box $\{\Re s\in [-C,\delta],\ \Im s\in[2T\pi,2(T+1)\pi] \}$. The predicted numbers of zeros by the fractal Weyl law in this region is 
    \[(|z|^{-2M})^{\delta}\approx (|z|^{-2M})^{\log 2/2\log(1/|z|)}=2^{M}. \]

        For the region $\Re s\in [-C,\delta], \Im s \in[2T\pi,2(T+1)\pi]$ with $T\ll |z|^{-2M}$, the leading coefficient $(1-x_{M+1}^2)^{3\cdot 2^M}$ is so small, hence $Z_{2M}$ may have large zeros of $x_1$ and the corresponding $s$ is outside the region. Since $Z_{2M'}(s)$ for some $M'<M$ with $M'\approx \log T/2\log(1/|z|)$ is a good approximation to $Z_{2M}(s)$ in the region, the number of zeros of $Z_{2M}(s)$ can be computed from $Z_{2M'}(s)$, which is approximately $2^{M'}\approx T^\delta$. 

    In conclusion, with the intermediate zeta function $Z_{2M}$, for $T\leq |z|^{-2M}$, we observe the number of zeros in the region $\{\Re s\in[-C,\delta], \Im s\in [2T\pi,2(T+1)\pi]\}$ is approximately $T^\delta$, which fits the prediction of the fractal Weyl law.

\end{rem}

\section{Appendix}

\subsection{Critical exponent of the non-Archimedean Poincar\'e series}
\label{sec.appendix}

In this Appendix, we explain that the critical exponent of the Poincar\'e series is equal to the growth rate of the number of periodic geodesics for non-Archimedean $k$.

Let $\Gamma$ be a Schottky group in $\PGL_2(k)$ of $g$ generators. Recall from \cite[Theorem II.3.18]{poineauBerkovichCurvesSchottky2021} the non-wandering domain $O$ of $\Gamma$ on $\P^{1,an}_k$, the Mumford curve $X=\Gamma\backslash O$ and their skeleta $\Sigma_O$ and $\Sigma_X$. Then $\Sigma_X$ is a graph of rank $g$. Let $p$ be the quotient map from $O$ to $X$, then $p^{-1}\Sigma_X=\Sigma_O$ and $\Sigma_X=p(\Sigma_O)=\Gamma\backslash \Sigma_O$. Recall that we have a distance $d_a$ on $\Sigma_O$, the $\Gamma$ action on $\Sigma_O$ preserves the distance and we have a quotient distance on $\Sigma_X$.

Recall the definition of length periodic primitive geodesic in \cref{lem:na length}. Let $\cG_\Gamma(L)$ be the number of periodic primitive geodesics of length less than $L$ in $\Sigma_X$, which is also the least translation length of the corresponding conjugacy class on $\Sigma_O$. Let $\cN_\Gamma(L,o)$ be the number of $\gamma\in\Gamma$ such that $d_a(o,\gamma o)\leq L$ for $o\in \Sigma_O$. 

Since $\Sigma_O$ is a locally finite tree with a metric, it is CAT($-1$). We can apply \cite{roblinFonctionOrbitaleGroupes2002} to obtain the critical exponent of the Poincar\'e series
\[  \delta(\Gamma)=\lim_{L\rightarrow\infty}\frac{1}{L}\log\cN_\Gamma(L,o).  \]

Moreover, by \cite[Corollaire 2]{roblinFonctionOrbitaleGroupes2002}
\begin{prop}
    We have
    \[ \lim_{L\rightarrow\infty}\frac{1}{L}\log\cG_\Gamma(L)=\delta(\Gamma). \]
\end{prop}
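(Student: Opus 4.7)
The plan is to deduce the equality directly from Roblin's orbital counting theorem \cite[Corollaire~2]{roblinFonctionOrbitaleGroupes2002} applied to the $\Gamma$-action on the skeleton $\Sigma_O$. First I would record the geometric setup that brings us into Roblin's framework: the tree $\Sigma_O$ equipped with the additive length $d_a$ is a locally finite metric tree, which is automatically CAT($-1$); the Schottky group $\Gamma$ acts freely and isometrically on $\Sigma_O$, and the quotient $\Sigma_X=\Gamma\backslash\Sigma_O$ is a finite graph of genus $g\ge 2$, so the action is cocompact on the convex hull of the limit set (namely on all of $\Sigma_O$). This is precisely the class of actions covered by Roblin.

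Next I would translate between conjugacy class counting in $\Gamma$ and periodic primitive geodesic counting on $\Sigma_X$. By \cref{lem:na length}, for every loxodromic $\gamma\in\Gamma$ the translation length of $\gamma$ on its axis in $\Sigma_O$ coincides with $\ell^{na}(\gamma)$, which in turn equals the length of the corresponding closed geodesic of $\Sigma_X$; the passage to primitive loops corresponds to restricting to primitive conjugacy classes. Hence $\cG_\Gamma(L)$ agrees with the conjugacy-class counting function that appears in Roblin's theorem, and the orbital counting statement $\delta(\Gamma)=\lim_L L^{-1}\log \cN_\Gamma(L,o)$ is the hypothesis already supplied by \cite{roblinFonctionOrbitaleGroupes2002} and recalled in the excerpt.

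Finally I would invoke Corollaire~2 of \emph{loc.~cit.}, which asserts exactly the equality claimed in the proposition. The only point worth flagging is that the length spectrum on $\Sigma_X$ may be arithmetic: for example, for $k=\Ct$ with $|t|_{na}=e^{-1}$, every $\ell^{na}(\gamma)$ lies in $\mathbb{Z}_{\ge 0}$, so $\cG_\Gamma$ is a step function. This arithmeticity does not interfere with the conclusion, because the statement is only about the exponential growth rate (not a prime geodesic theorem with a leading constant); one simply reads the limit off along integer values of $L$. The \emph{only} part of this plan that requires any genuine work beyond citing Roblin is verifying that his divergence-type hypothesis holds in the present tree setting, but this is immediate from the cocompactness of the action on $\Sigma_O$, so I do not anticipate any substantive obstacle.
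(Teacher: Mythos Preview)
Your proposal is correct and follows the same approach as the paper: both simply invoke \cite[Corollaire~2]{roblinFonctionOrbitaleGroupes2002} after noting that $\Sigma_O$ is a locally finite CAT($-1$) tree on which $\Gamma$ acts properly. The paper gives no proof beyond the citation, so your additional verification (cocompactness, translation between $\cG_\Gamma$ and conjugacy-class counting via \cref{lem:na length}, the remark on arithmeticity) is more detailed than what the paper records but entirely in the same spirit.
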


By \cite[Theorem 2.10]{hortonWhatAreZeta2006}, we have 

\begin{cor}
    The first zero of Ihara zeta function of the graph is equal to the critical exponent.
\end{cor}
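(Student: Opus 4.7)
The plan is to derive the corollary directly from the preceding proposition by relating the first real zero of $Z_I(\Gamma,s)$ to the exponential growth rate $\lim_{L\to\infty}\frac{1}{L}\log \cG_\Gamma(L)$ of the number of primitive closed geodesics in $\Sigma_X$. Since the proposition identifies this growth rate with the critical exponent $\delta(\Gamma)$, it suffices to show that the first real zero of $Z_I(\Gamma,s)$ equals this exponential growth rate.

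First I would recall the Euler product
\[
Z_I(\Gamma,s)=\prod_{[\gamma]\in\mathcal P}(1-e^{-s\ell^{na}(\gamma)}).
\]
By the root/ratio test, this infinite product converges absolutely and defines a non-vanishing holomorphic function on the half-plane $\{\Re s>\alpha\}$, where $\alpha:=\limsup_{L\to\infty}\tfrac{1}{L}\log\cG_\Gamma(L)$; this is the classical connection between zeta products and growth rates for length spectra. Consequently $Z_I(\Gamma,s)$ cannot vanish on $\{\Re s>\alpha\}$, so its first real zero (if any) occurs at $s_0\leq \alpha=\delta(\Gamma)$.

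For the reverse inequality, I would invoke the determinant formula established earlier in the paper (Theorem~\ref{thm-ihara} and \Cref{prop:ZI}, applied to $\Sigma_X$ with edge lengths given by the non-Archimedean length): $Z_I(\Gamma,s)=\det(I-W(s))$, where $W(s)$ is the weighted edge matrix whose $(e,e')$-entry is $e^{-s(\ell(e)+\ell(e'))/2}$ when $e'$ follows $e$ non-backtrackingly and zero otherwise. As $s\to +\infty$ the matrix $W(s)\to 0$, so $\det(I-W(s))\to 1>0$, while as $s\to 0^+$ the Perron--Frobenius eigenvalue $\lambda(s)$ of the non-negative matrix $W(s)$ is $>1$ (since the graph has rank $g\geq 2$ and positive edge lengths). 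By continuity and monotonicity of $\lambda(s)$ in $s\in\mathbb R$, there is a unique $s_0>0$ with $\lambda(s_0)=1$, and at this $s_0$ the determinant vanishes. A standard transfer-operator / Ruelle-type argument then identifies $s_0$ with the topological pressure of the length function, i.e.\ with the exponential growth rate $\alpha$ of primitive closed non-backtracking loops on $\Sigma_X$—this is exactly the content of \cite[Theorem~2.10]{hortonWhatAreZeta2006} in the metric-graph setting, and the statement is cited in the corollary.

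Combining the two inequalities gives $s_0=\alpha=\delta(\Gamma)$, establishing that the first zero of $Z_I(\Gamma,s)$ coincides with the critical exponent. The only subtle point—and the main obstacle—is verifying that Horton et al.'s theorem, formulated for unit-edge-length or rationally-weighted graphs, indeed applies verbatim to the metric graph $\Sigma_X$ whose edge lengths come from the additive non-Archimedean length $d_a$. This is handled by the uniform Perron--Frobenius theory for the weighted edge matrix $W(s)$, together with the fact that primitive closed geodesics on $\Sigma_X$ correspond bijectively to conjugacy classes $[\gamma]\in\mathcal P$ with length $\ell^{na}(\gamma)$ (\Cref{prop:na-Ihara}), so the two growth rates (closed orbits on $\Sigma_X$ versus those counted by $\cG_\Gamma(L)$) are literally the same.
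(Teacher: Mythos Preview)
Your approach is correct and essentially matches the paper's: the paper's entire argument is the one-line citation ``By \cite[Theorem 2.10]{hortonWhatAreZeta2006}'' combined with the preceding Proposition identifying $\delta(\Gamma)$ with the exponential growth rate $\lim_{L\to\infty}\frac{1}{L}\log \cG_\Gamma(L)$. You have simply unpacked that citation---sketching the Euler product convergence, the determinant formula, and the Perron--Frobenius step---and flagged the metric-graph caveat, which the paper leaves implicit.
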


\subsection{Proof of \cref{prop:expansion-der-cycl}}\label{sec.proof}

Before the proof, we state three elementary lemmas about estimates of the coefficients of the Laurent series after multiplication, division, and logarithm.

\begin{lem}\label{lem:laurent-mult}
    Suppose $f(t),g(t)\in \Ct$ are two formal Laurent series:
    \begin{equation*}
        f(t)= t^m\sum_{j=0}^{\infty} f_j t^j\quad \text{with}\quad |f_j|\leq e^{C(j+1)}\,\,\text{for}\,\,j\in \mathbb{Z}_{\geq 0};
    \end{equation*}
    \begin{equation*}
        g(t) =t^{m'}\sum_{j=0}^{\infty} g_j t^j\quad \text{with} \quad |g_j|\leq e^{C(j+1)} \,\,\text{for}\,\,j\in \mathbb{Z}_{\geq 0}.
    \end{equation*}
    Then $f(t)g(t)\in\Ct$ has the Laurent series expansion
    \begin{equation*}
        f(t)g(t) = t^{m+m'}\sum\limits_{j=0}^{\infty} h_j t^j\quad \text{with} \quad |h_j|\leq (j+1)e^{C(j+2)}\,\,\text{for}\,\,j\in\mathbb{Z}_{\geq 0}.
    \end{equation*}
\end{lem}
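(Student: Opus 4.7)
The plan is to compute the coefficient $h_j$ of $t^{m+m'+j}$ in the product $f(t)g(t)$ directly from the Cauchy convolution formula, and then bound it via the triangle inequality. More precisely, expanding the product of $f(t) = t^m\sum_{j\geq 0} f_j t^j$ and $g(t) = t^{m'}\sum_{j\geq 0} g_j t^j$, one obtains
\begin{equation*}
    f(t)g(t) = t^{m+m'}\sum_{j=0}^{\infty}\left(\sum_{k=0}^{j} f_k g_{j-k}\right) t^j,
\end{equation*}
so that $h_j = \sum_{k=0}^{j} f_k g_{j-k}$.

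Next, I would apply the hypothesized exponential bounds $|f_k|\leq e^{C(k+1)}$ and $|g_{j-k}|\leq e^{C(j-k+1)}$ to estimate
\begin{equation*}
    |h_j| \leq \sum_{k=0}^{j} |f_k|\,|g_{j-k}| \leq \sum_{k=0}^{j} e^{C(k+1)}\cdot e^{C(j-k+1)} = \sum_{k=0}^{j} e^{C(j+2)} = (j+1)\, e^{C(j+2)},
\end{equation*}
which is precisely the claimed bound. The argument is essentially a bookkeeping check on how exponential coefficient bounds behave under convolution: the exponents of $e$ in the two factors add to $C(j+2)$, independently of the summation variable $k$, and the linear prefactor $(j+1)$ arises simply because there are $j+1$ terms in the convolution sum. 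There is no real obstacle; the only point worth flagging is the minor shift from $e^{C(j+1)}$ in the hypothesis to $e^{C(j+2)}$ in the conclusion, which reflects this additive loss in the exponent.
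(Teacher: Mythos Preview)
Your proof is correct and follows essentially the same approach as the paper: both compute $h_n=\sum_{j+k=n}f_jg_k$ via the Cauchy product and bound it by $(n+1)e^{C(n+2)}$ using the triangle inequality. The paper compresses this into a single displayed inequality, while you have written out the intermediate steps explicitly.
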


\begin{proof}
    We have
    \begin{equation*}
        |h_n|=\left|\sum\limits_{j+k=n}f_jg_k \right|\leq (n+1)e^{C(n+2)}.\qedhere
    \end{equation*}
\end{proof}

\begin{lem}\label{lem:laurent-divide}
    Suppose $f(t),g(t)\in \Ct$ are two formal Laurent series with 
    \begin{equation*}
        f(t)= t^m\sum_{j=0}^{\infty} f_j t^j,\quad f_0\neq 0,\, |f_j|\leq e^{C(j+1)};
    \end{equation*}
    \begin{equation*}
        g(t) =t^{m'}\sum_{j=0}^{\infty} g_j t^j,\quad |g_0|\geq e^{-A},\, |g_j|\leq e^{C(j+1)}.
    \end{equation*}
    Then $f(t)/g(t)\in\Ct$ has the Laurent series expansion
    \begin{equation*}
        f(t)/g(t)=t^{m-m'}\sum\limits_{j=0}^{\infty} h_j t^j,\quad |h_j|\leq e^{(3C+C_1+A)(j+1)}
    \end{equation*}
    for some universal constant $C_1$. 
\end{lem}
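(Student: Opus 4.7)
The plan is to reduce the problem to power series and run a direct recursion for the quotient, rather than invert $g$ separately. First I would factor out the pole orders by writing $f/g = t^{m-m'}F/G$, where $F(t)=\sum_{j\geq 0}f_j t^j$ and $G(t)=\sum_{j\geq 0}g_j t^j$ are formal power series satisfying the same coefficient bounds as $f$ and $g$. Setting $H=F/G=\sum_{j\geq 0} h_j t^j$, the identity $HG=F$ gives the recursion
\[
h_0=f_0/g_0,\qquad h_n=\frac{1}{g_0}\Bigl(f_n-\sum_{k=0}^{n-1}h_k\,g_{n-k}\Bigr)\quad(n\geq 1),
\]
which, combined with $|g_0|^{-1}\leq e^A$, $|f_j|\leq e^{C(j+1)}$, and $|g_j|\leq e^{C(j+1)}$, yields $|h_0|\leq e^{C+A}$ and
\[
|h_n|\leq e^A\Bigl(e^{C(n+1)}+\sum_{k=0}^{n-1}|h_k|\,e^{C(n-k+1)}\Bigr).
\]

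Next, I would close this recursion with the ansatz $|h_n|\leq D\lambda^n$. Substituting and using the geometric sum $\sum_{k=0}^{n-1}\lambda^k e^{C(n-k+1)}\leq \lambda^n e^{2C}/(\lambda-e^C)$ valid for $\lambda>e^C$ reduces the induction step to the scalar inequality $D\geq e^{A+C}+D e^{A+2C}/(\lambda-e^C)$. Choosing $\lambda=e^C+2e^{A+2C}$ makes the damping factor equal to $1/2$, so $D=2e^{A+C}$ works, giving
\[
|h_n|\leq 2e^{A+C}(e^C+2e^{A+2C})^n\leq 2\cdot 3^n\,e^{A(n+1)+C(2n+1)}
\]
(using $e^C\leq e^{A+2C}$, which is harmless since one may assume $A,C\geq 0$).

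Finally, absorbing $2\cdot 3^n\leq e^{(\log 6)(n+1)}$ into the exponential gives $|h_n|\leq e^{(2C+A+\log 6)(n+1)}\leq e^{(3C+A+C_1)(n+1)}$ with the universal constant $C_1=\log 6$, which is the desired bound on the coefficients of $f/g=t^{m-m'}\sum h_j t^j$.

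The main step to watch is the closing of the geometric-series estimate: one must check that the damping factor $e^{A+2C}/(\lambda-e^C)$ can be made strictly less than $1$ by a choice of $\lambda$ that is itself only $e^{O(C+A)}$, so that the resulting constant $C_1$ is universal rather than depending on $C$ or $A$. This is why $\lambda$ must contain the factor $e^{A+2C}$; the growth of $\lambda^n$ then contributes the $2C$ term in the final exponent, while the isolated $f_n$ term in the recursion (bounded by $e^{C(n+1)}$) is what forces the remaining $+C$ to appear in the stated bound.
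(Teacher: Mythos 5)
Your proof is correct, and it takes a genuinely different route from the paper's. The paper first inverts $g$ explicitly: after normalizing $\tilde g_j=g_j/g_0$, it expands $g_0t^{m'}/g(t)=\sum_n\bigl(-\sum_{j\geq 1}\tilde g_jt^j\bigr)^n$ and bounds the resulting coefficients by summing over compositions $j_1+\cdots+j_k=n$ (the count of compositions is where the universal constant comes from), and then convolves with $f$, absorbing an extra factor $(n+1)$ as in the multiplication lemma. You never invert $g$: you reduce to power series, use the convolution identity $HG=F$ to get the division recursion $h_n=g_0^{-1}\bigl(f_n-\sum_{k<n}h_kg_{n-k}\bigr)$, and close a strong induction with the geometric ansatz $|h_n|\leq D\lambda^n$, choosing $\lambda=e^C+2e^{A+2C}$ so that the damping factor is $1/2$; your tail estimate $\sum_{k=0}^{n-1}\lambda^ke^{C(n-k+1)}\leq \lambda^ne^{2C}/(\lambda-e^C)$ and the resulting choice $D=2e^{A+C}$ check out, and the final absorption yields the explicit universal constant $C_1=\log 6$. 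What each approach buys: the paper's version is modular (it reuses the same composition-counting scheme as in the logarithm lemma and the Cauchy-product lemma), while yours avoids the combinatorial counting and the $(n+1)$ factors entirely and produces an explicit constant. One point worth making explicit in a write-up: your reduction to $A,C\geq 0$ (used in $e^C\leq e^{A+2C}$ and $2C\leq 3C$) should be justified by noting that enlarging $A$ or $C$ only weakens both hypothesis and conclusion in a compatible way for the intended applications; the paper's own proof makes the same implicit assumption when it bounds $e^{C(n+k)+Ak}\leq e^{2Cn+An}$ using $k\leq n$.
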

\begin{proof}
    We may first take $\tilde{g}_0=1$ and $|\tilde{g}_j|=|g_j/g_0|\leq e^{C(j+1)+A}$. Then
    \begin{equation*}
        \frac{g_0t^{m'}}{g(t)}=\sum\limits_{n=0}^{\infty}\left(-\sum\limits_{j=1}^{\infty}\tilde{g}_jt^j\right)^n=\sum\limits_{n=0}^{\infty} g'_n t^n
    \end{equation*}
    where
    \begin{equation*}
        |g'_n| \leq \sum_{j_1+\cdots+j_k=n} |\tilde{g}_{j_1}\cdots \tilde{g}_{j_k}| \leq e^{C_0n}e^{2Cn+An}\leq e^{(2C+C_0+A)n}
    \end{equation*}
    for some universal constant $C_0$.
    Finally we multiply by $f(t)$:
    \begin{equation*}
        \frac{f(t)}{g(t)}=\frac{f(t)}{g_0t^{m'}}\frac{g_0t^{m'}}{g(t)} = g_0^{-1}t^{m-m'}\left(\sum_{j=0}^{\infty} f_j t^j\right)\left(\sum_{n=0} g_n' t^n\right)=t^{m-m'}\sum\limits_{j=0}^{\infty} h_j t^j
    \end{equation*}
    where
    \begin{equation*}
        |h_n|\leq |g_0|^{-1}\sum_{j+k=n}|f_jg'_k|\leq (n+1)e^{A}e^{C(n+1)} e^{(2C+C_0+A)n}\leq e^{(3C+C_1+A)(n+1)}
    \end{equation*}
    for some universal constant $C_1$.
\end{proof}

\begin{lem}\label{lem:laurent-log}
    Suppose $f(t)\in \Ct$ has the expansion
    \begin{equation*}
        f(t)=t^m\sum_{j=0}^{\infty} f_j t^j,\quad f_0\neq 0,\quad |f_j|\leq e^{C(j+1)}.
    \end{equation*}
    Then as a formal series, $\log(f(t))$ has the expansion
    \begin{equation*}
        \log(f(t))=m\log t+ \log(f_0)+\sum_{j=0}^{\infty} h_j f_0^{-j} t^j,\quad |h_j|\leq e^{(3C+C_0)j}
    \end{equation*}
    where $C_0$ is a universal constant.
\end{lem}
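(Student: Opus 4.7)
The plan is to reduce to the standard formal power series identity $\log(1+g) = \sum_{k\geq 1} \frac{(-1)^{k-1}}{k} g^k$ and then combinatorially bound the coefficients. First I would factor
\[
f(t) = f_0 t^m \bigl(1 + g(t)\bigr), \qquad g(t) := \sum_{j\geq 1} \frac{f_j}{f_0}\, t^j,
\]
so that $\log f(t) = m\log t + \log f_0 + \log(1+g(t))$. Since $g(t)$ has no constant term, $\log(1+g(t))$ is well-defined as a formal power series via the Mercator expansion.

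Next I would expand
\[
\log(1+g(t)) = \sum_{k=1}^{\infty} \frac{(-1)^{k-1}}{k} g(t)^k = \sum_{j=1}^{\infty} c_j\, t^j,
\]
and compute the coefficient $c_j$ explicitly as
\[
c_j = \sum_{k=1}^{j} \frac{(-1)^{k-1}}{k}\, f_0^{-k} \sum_{\substack{j_1+\cdots+j_k=j \\ j_i\geq 1}} f_{j_1}\cdots f_{j_k}.
\]
Writing $c_j = h_j f_0^{-j}$ pulls out the stipulated $f_0^{-j}$ factor and leaves
\[
h_j = \sum_{k=1}^{j} \frac{(-1)^{k-1}}{k}\, f_0^{\,j-k}\sum_{\substack{j_1+\cdots+j_k=j \\ j_i\geq 1}} f_{j_1}\cdots f_{j_k}.
\]

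Then I would bound $|h_j|$ using the hypothesis $|f_j|\leq e^{C(j+1)}$, which in particular (taking $j=0$) gives $|f_0|\leq e^C$. Thus $|f_0^{j-k}|\leq e^{C(j-k)}$ for $0\leq k\leq j$, and $|f_{j_1}\cdots f_{j_k}|\leq e^{C(j+k)}$ for any composition $j_1+\cdots+j_k=j$ with $j_i\geq 1$. The number of such compositions is $\binom{j-1}{k-1}$. Combining everything,
\[
|h_j|\leq \sum_{k=1}^{j} \frac{1}{k}\,\binom{j-1}{k-1}\, e^{C(j-k)}\, e^{C(j+k)} = e^{2Cj}\sum_{k=1}^{j} \frac{1}{k}\binom{j-1}{k-1} = \frac{e^{2Cj}(2^j-1)}{j} \leq e^{(2C+\log 2)j},
\]
where we used the standard identity $\frac{1}{k}\binom{j-1}{k-1} = \frac{1}{j}\binom{j}{k}$. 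This gives the desired bound $|h_j|\leq e^{(3C+C_0)j}$ for any universal constant $C_0\geq \log 2$ (and even with room to spare: one could replace $3C$ by $2C$ in the statement).

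There is no real obstacle here since everything reduces to elementary combinatorics; the only bookkeeping care needed is to correctly isolate the factor $f_0^{-j}$ rather than $f_0^{-k}$, for which one writes $f_0^{-k} = f_0^{\,j-k}\cdot f_0^{-j}$ and uses the hypothesis $|f_0|\leq e^C$ to absorb $f_0^{\,j-k}$ into the exponential bound.
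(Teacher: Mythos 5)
Your proof is correct and follows essentially the same route as the paper's: factor out $f_0t^m$, expand $\log(1+\sum_{j\geq 1}(f_j/f_0)t^j)$ by the Mercator series, and bound the coefficient $h_j$ by summing over compositions $j_1+\cdots+j_k=j$ using $|f_j|\leq e^{C(j+1)}$ and $|f_0|\leq e^{C}$. Your bookkeeping is just a bit more explicit (counting the $\binom{j-1}{k-1}$ compositions and noting the cancellation that yields $e^{(2C+\log 2)j}$), which gives a marginally sharper constant than the stated $e^{(3C+C_0)j}$, but the argument is the same.
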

\begin{proof}
    We use the formula for $\log$:
    \begin{equation*}
    \begin{aligned}
    \log(f(t))&=m\log t+ \log(f_0)+\log\left(1+\sum\limits_{j=1}^{\infty}\frac{f_j}{f_0}t^j\right)\\
    &=m\log t+ \log(f_0)+\sum_{n=1}^{\infty}\frac{(-1)^{n+1}}{n}\left(\sum\limits_{j=1}^{\infty}\frac{f_j}{f_0}t^j\right)^n.
    \end{aligned}
    \end{equation*}
    Therefore,
    \begin{equation*}
        |h_n|\leq\sum_{j_1+\cdots+j_k=n}\frac{1}{k}|f_{j_1}\cdots f_{j_k}|\cdot|f_0|^{n-k}\leq e^{C_0 n +3Cn}.
    \end{equation*}
\end{proof}

\begin{proof}[Proof of \cref{prop:expansion-der-cycl}]

First we check the estimate for $\mathfrak{a}_{i_2}(\cdots(\mathfrak{a}_{i_N}(\infty)))$. Each entry of $\mathfrak{a}_{i_2}\cdots \mathfrak{a}_{i_{N}}$ is a sum of at most $2^{N-1}$ terms, with each term a product of $N-1$ entries of generators. By subadditivity and submultiplicativity of the hybrid norm, for each entry $\mathfrak{a}_{i_2}\cdots \mathfrak{a}_{i_{N}}(i,l)$ we have
\begin{equation*}
\sup_{1\leq i,l\leq 2}\|\mathfrak{a}_{i_2}\cdots \mathfrak{a}_{i_{N}}(i,l)\|_{\mathrm{hyb}}\leq 2^{N-1} \sup_{\substack{\mathfrak{a}\in\mathcal{A}\\
1\leq i,l\leq 2}}\|\mathfrak{a}(i,l)\|_{\mathrm{hyb}}^{N-1}\lesssim e^{CN}.
\end{equation*}
Suppose $\mathfrak{a}_{i_2}\cdots \mathfrak{a}_{i_{N}}=\begin{pmatrix}
    a&b\\
    c&d
\end{pmatrix}$. 
For some $0<r<1/e$ independent of $N$ and $|t|<r$, the Schottky discs $D_{\mathfrak{a}}$ are disjoint for $\mathfrak{a}\in\mathcal{A}$. If $a=\sum_{n\geq m} a_nt^n\in \Ct$ with $a_{m}\neq 0$, then $\|a\|_{\mathrm{hyb}}\leq e^{CN}$ implies that $e^{-m}\leq \|a\|_{\mathrm{hyb}}\leq e^{CN}$ and
\begin{equation*}
    |a_z|\leq \sum\limits_{n\leq 0} |a_n|(r/2)^{n}+\sum\limits_{n>0}|a_n|e^{-n}\leq \max\{(re/2)^{m},1\}\|a\|_{\mathrm{hyb}}\leq  e^{C'N},\quad \frac{r}{2}\leq |z|\leq 1/e. 
\end{equation*}
In the last step, we use $(re/2)^{m}=e^{m\log(r/2)}$ and $\log(r/2)<0$.

Let $f(z)=\sum\limits_{j=0}^{\infty}c_j z^j$ where $c_j$ is the coefficient in the Laurent series $c=t^m\sum_{j=0}^{\infty} c_jt^j \in \Ct$ with $c_0\neq 0$. Since $\infty$ does not lie in $D_{\overline{\mathfrak{a}}_{i_N},z}$ and $a/c=\mathfrak{a}_{i_2}\cdots \mathfrak{a}_{i_N}(\infty)$, $a_z/c_z$ is in the disc $D_{\mathfrak{a}_{i_2}\cdots\mathfrak{a}_{i_N},z}\subset D_{\mathfrak{a}_{i_2},z}$, which we assume is bounded uniformly from $\infty$. We conclude that $|a_z/c_z|\leq C$ and $c_z\neq 0$ for $|z|<r$. Thus $\log|f(z)|$ is harmonic in the disc $D(0,r)$. 
Since $ad-bc=1$, we have
\begin{equation*}
   1\leq |a_zd_z|+|b_zc_z|\leq C(|a_z|+|b_z|)|c_z|\leq e^{CN}|c_z|,\quad \frac{r}{2}\leq |z|<r.
\end{equation*}
From the Laurent series expansion $c=t^{m}\sum_{j=0}^{\infty} c_jt^j \in \Ct$, we have
\begin{equation*}
    e^{-m} \leq \|c\|_{\mathrm{hyb}}\leq e^{CN}, 
\end{equation*}
which implies $m\geq -CN$.

By mean value theorem on $D(0,r/2)$, we have
\[\log|c_0|=\log|f(0)|\geq \min_{|z|=r/2}\log|f(z)|=\min_{|z|=r/2}\log|c_z/z^m|\geq -CN - m\log(r/2) \geq -C'N.\]
In the last step we use $\log(r/2)<0$ and $m\geq -CN$. Therefore, $|c_0|\geq e^{-CN}$.
Recall $\|a\|_{\mathrm{hyb}}\leq e^{CN}$ and $\|c\|_{\mathrm{hyb}}\leq e^{CN}$, we have a Laurent series expansion
\begin{equation}\label{equ:aj ll}
    \mathfrak{a}_{i_2}(\cdots(\mathfrak{a}_{i_N}(\infty)))=\frac{a}{c}=t^{m'}\sum_{j=0}^{\infty}a_j' t^j
\text{ with }|a_j'|\leq e^{C(N+1)j},
\end{equation} where in the last step we use \cref{lem:laurent-divide}.

Suppose $\mathfrak{a}_{i_1}=\begin{pmatrix}
    a'&b'\\
    c'&d'
\end{pmatrix}$, then $\mathfrak{a}_{i_1}'(w)=(c'w+d')^{-2}$. We would like to show
\begin{equation*}
        \mathfrak{a}_{i_1}'(\mathfrak{a}_{i_2}(\cdots\mathfrak{a}_{i_{N-1}}(\mathfrak{a}_{i_N}(\infty))))=t^m \sum\limits_{j=0}^{\infty} a_j t^j ,\quad |a_j|\leq e^{C(N+1)j}.
\end{equation*}
By \cref{lem:laurent-mult}, it suffices to show a similar expansion for $(\mathfrak{a}_{i_2}\cdots\mathfrak{a}_{i_N}(\infty)+d'/c')^{-1}$. By \cref{lem:laurent-divide} and \cref{equ:aj ll}, it suffices to show the leading coefficient $\pi_0$ in the expansion
\begin{equation*}
    \mathfrak{a}_{i_2}\cdots\mathfrak{a}_{i_N}(\infty)+d'/c'=t^m\sum\limits_{j=0}^{\infty}\pi_j t^j,\quad \pi_0\neq 0
\end{equation*}
satisfies $|\pi_0|\geq  e^{-CN}$.
By \cref{prop:separtion}, $\mathfrak{a}_{i_2}(\cdots(\mathfrak{a}_{i_N}(\infty)))\in \mathfrak{a}_{i_2}D_{\mathfrak{a}_{i_3},z}\subset |z|^{v'}D_{\mathfrak{a}_{i_2},z}$ for some $v'>0$ and $0<|z|<r$. Hence the point $ \mathfrak{a}_{i_2}(\cdots(\mathfrak{a}_{i_N}(\infty)))$ is $|z|^v$ separated from $D_{\overline{\mathfrak{a}}_{i_1},z}$ (which contains $-d'/c'$) for some $v>0$ and $0<|z|<r$. By considering the separation with non-Archimedean norm, we know $|m|\leq C$ is uniformly bounded. Consider the holomorphic function $f(z)=\sum\limits_{j=0}^{\infty}\pi_j z^j$, then $\log|f(z)|$ is a harmonic function in a small disc $|z|<r$. By mean value theorem,
\begin{equation*}
    \log |\pi_0|\geq \min_{|z|=r/2} \log|f(z)|\geq \min_{|z|=r/2} \log|z^mf(z)|- m\log(r/2)\geq \nu\log(r/2)-m\log(r/2).
\end{equation*}
Since $m$ and $\nu$ are uniform, we actually prove $|\pi_0|\geq e^{-C}$ holds for some uniform constant $C>0$. Therefore \cref{lem:laurent-divide} gives \cref{eq:expansion-derivative-cocycle}.
Similarly, since the discs are bounded uniformly away from $\infty$, we have
\begin{equation*}
    \log |\pi_0|\leq \max_{|z|=r/2} \log|f(z)|\leq \max_{|z|=r/2} \log|z^mf(z)|- m\log(r/2)\leq \log C-m\log(r/2).
\end{equation*}
Thus $|\pi_0|\leq e^C$ for some uniform constant $C>0$.
Since we know the leading coefficient $a_0$ in \cref{eq:expansion-derivative-cocycle} is given by $\pi_0^{-2}$ times the leading coefficient of $c'^{-2}$. We conclude
\begin{equation*}
    |a_0|\geq e^{-2C'}|\pi_0|^{-2}\geq e^{-2C-2C'}.
\end{equation*}

\cref{eq:expansion-lM} then follows from \cref{eq:expansion-derivative-cocycle} and the lower bound $|a_0|\geq e^{-C}$.
\end{proof}

\bibliographystyle{alpha}

%\bibliography{bibfile}
\bibliography{main}

\begin{thebibliography}{CMN24}

\bibitem[ACS83]{AvronCraigSimon}
J.~Avron, W.~Craig, and B.~Simon.
\newblock Large coupling behaviour of the {L}yapunov exponent for tight binding
  one-dimensional random systems.
\newblock {\em J. Phys. A}, 16(7):L209--L211, 1983.

\bibitem[AR97]{andersonAnalyticityHausdorffDimension1997}
James~W. Anderson and Andr{\'e}~C. Rocha.
\newblock Analyticity of {{Hausdorff}} dimension of limit sets of {{Kleinian}}
  groups.
\newblock {\em Annales Academiae Scientiarum Fennicae. Mathematica}, 22(2),
  1997.

\bibitem[Ber75]{bersAutomorphicFormsSchottky1975}
Lipman Bers.
\newblock Automorphic forms for {{Schottky}} groups.
\newblock {\em Advances in Mathematics}, 16(3):332--361, June 1975.

\bibitem[BJ17]{BoJo2017}
S\'ebastien Boucksom and Mattias Jonsson.
\newblock Tropical and non-{A}rchimedean limits of degenerating families of
  volume forms.
\newblock {\em J. \'Ec. polytech. Math.}, 4:87--139, 2017.

\bibitem[Bor16]{borthwickSpectralTheoryInfiniteArea2016}
David Borthwick.
\newblock {\em Spectral {{Theory}} of {{Infinite-Area Hyperbolic Surfaces}}},
  volume 318 of {\em Progress in {{Mathematics}}}.
\newblock {Springer International Publishing}, {Cham}, 2016.

\bibitem[CG24]{CourtoisGuilloux2024}
Gilles Courtois and Antonin Guilloux.
\newblock {{Hausdorff}} dimension, diverging {{Schottky}} representations and
  the infinite dimensional hyperbolic space, 2024.

\bibitem[CMN24]{calderonMageeNaud}
Irving Calder{\'o}n, Michael Magee, and Fr{\'e}d{\'e}ric Naud.
\newblock Spectral gap for random schottky surfaces.
\newblock {\em arXiv preprint arXiv:2407.21506}, 2024.

\bibitem[CS83]{cullerVarietiesGroupRepresentations1983}
Marc Culler and Peter~B. Shalen.
\newblock Varieties of {{Group Representations}} and {{Splittings}} of
  3-{{Manifolds}}.
\newblock {\em The Annals of Mathematics}, 117(1):109, January 1983.

\bibitem[DF19]{dujardinDegenerationsProtectMathrm2019}
Romain Dujardin and Charles Favre.
\newblock Degenerations of $ \mathrm{SL}(2, \mathbb{C})$ representations and
  {{Lyapunov}} exponents.
\newblock {\em Annales Henri Lebesgue}, 2:515--565, 2019.

\bibitem[DM24]{dangHausdorffDimension2024}
Bac Dang and Vler\"{e} Mehmeti.
\newblock {{Hausdorff}} dimension of limit sets of {{Schottky}} groups., 2024.

\bibitem[DZ19]{DyZw2019}
Semyon Dyatlov and Maciej Zworski.
\newblock {\em Mathematical theory of scattering resonances}, volume 200 of
  {\em Graduate Studies in Mathematics}.
\newblock American Mathematical Society, Providence, RI, 2019.

\bibitem[Fav20]{favreDEGENERATIONENDOMORPHISMSCOMPLEX2020}
Charles Favre.
\newblock Degeneration of endomorphisms of the complex projective space in the
  hybrid space.
\newblock {\em Journal of the Institute of Mathematics of Jussieu},
  19(4):1141--1183, July 2020.

\bibitem[GLZ04]{guillopeSelbergZetaFunction2004}
Laurent Guillop{\'e}, Kevin~K. Lin, and Maciej Zworski.
\newblock The {{Selberg}} zeta function for convex co-compact {{Schottky}}
  groups.
\newblock {\em Communications in Mathematical Physics}, 245(1):149--176, 2004.

\bibitem[Hej75]{hejhalSchottkyTeichmullerSpaces1975}
Dennis~A Hejhal.
\newblock On {{Schottky}} and {{Teichm{\"u}ller}} spaces.
\newblock {\em Advances in Mathematics}, 15(2):133--156, February 1975.

\bibitem[HST06]{hortonWhatAreZeta2006}
Matthew~D. Horton, H.~M. Stark, and Audrey~A. Terras.
\newblock What are zeta functions of graphs and what are they good for?
\newblock In Gregory Berkolaiko, Robert Carlson, Stephen~A. Fulling, and Peter
  Kuchment, editors, {\em Contemporary {{Mathematics}}}, volume 415, pages
  173--189. {American Mathematical Society}, {Providence, Rhode Island}, 2006.

\bibitem[Iha66]{iharaDiscreteSubgroupsTwo1966}
Yasutaka Ihara.
\newblock On discrete subgroups of the two by two projective linear group over
  $\mathfrak{p}$-adic fields.
\newblock {\em Journal of the Mathematical Society of Japan}, 18(3):219--235,
  July 1966.

\bibitem[IJM22]{ingramAsymptoticSumsLyapunov2022}
Patrick Ingram, David {Jaramillo-Martinez}, and Jorge Mello.
\newblock An asymptotic for sums of {{Lyapunov}} exponents in families, October
  2022.

\bibitem[JN12]{JakobsoNaud}
Dmitry Jakobson and Fr\'ed\'eric Naud.
\newblock On the critical line of convex co-compact hyperbolic surfaces.
\newblock {\em Geom. Funct. Anal.}, 22(2):352--368, 2012.

\bibitem[Kiw15]{Kiwi2015}
Jan Kiwi.
\newblock Rescaling limits of complex rational maps.
\newblock {\em Duke Math. J.}, 164(7):1437--1470, 2015.

\bibitem[Luo22]{Luo2022}
Yusheng Luo.
\newblock Trees, length spectra for rational maps via barycentric extensions,
  and {B}erkovich spaces.
\newblock {\em Duke Math. J.}, 171(14):2943--3001, 2022.

\bibitem[Mas67]{maskitCharacterizationSchottkyGroups1967}
Bernard Maskit.
\newblock A characterization of {{Schottky}} groups.
\newblock {\em Journal d'Analyse Math{\'e}matique}, 19(1):227--230, December
  1967.

\bibitem[McM98]{mcmullenHausdorffDimensionConformal1998}
Curtis~T. McMullen.
\newblock Hausdorff {{Dimension}} and {{Conformal Dynamics}}, {{III}}:
  {{Computation}} of {{Dimension}}.
\newblock {\em American Journal of Mathematics}, 120(4):691--721, 1998.

\bibitem[McM08]{mcmullenThermodynamicsDimensionWeilPetersson2008}
Curtis~T. McMullen.
\newblock Thermodynamics, dimension and the {{Weil-Petersson}} metric.
\newblock {\em Inventiones Mathematicae}, 173(2):365--425, 2008.

\bibitem[MN20]{MageeNaud}
Michael Magee and Fr\'ed\'eric Naud.
\newblock Explicit spectral gaps for random covers of {R}iemann surfaces.
\newblock {\em Publ. Math. Inst. Hautes \'Etudes Sci.}, 132:137--179, 2020.

\bibitem[Ota15]{otalCompactificationSpacesRepresentations2015}
Jean-Pierre Otal.
\newblock Compactification of {{Spaces}} of {{Representations After Culler}},
  {{Morgan}} and {{Shalen}}.
\newblock In Antoine Ducros, Charles Favre, and Johannes Nicaise, editors, {\em
  Berkovich {{Spaces}} and {{Applications}}}, volume 2119, pages 367--413.
  {Springer International Publishing}, {Cham}, 2015.

\bibitem[OW16]{OhWinter}
Hee Oh and Dale Winter.
\newblock Uniform exponential mixing and resonance free regions for convex
  cocompact congruence subgroups of {${\rm SL}_2(\Bbb{Z})$}.
\newblock {\em J. Amer. Math. Soc.}, 29(4):1069--1115, 2016.

\bibitem[Pat76]{Pat1976}
S.~J. Patterson.
\newblock The limit set of a {F}uchsian group.
\newblock {\em Acta Math.}, 136(3-4):241--273, 1976.

\bibitem[Pat88]{Pat1988}
S.~J. Patterson.
\newblock On a lattice-point problem in hyperbolic space and related questions
  in spectral theory.
\newblock {\em Ark. Mat.}, 26(1):167--172, 1988.

\bibitem[Pat89]{Pat1989}
S.~J. Patterson.
\newblock The {S}elberg zeta-function of a {K}leinian group.
\newblock In {\em Number theory, trace formulas and discrete groups ({O}slo,
  1987)}, pages 409--441. Academic Press, Boston, MA, 1989.

\bibitem[PP01]{pattersonDivisorSelbergZeta2001}
S.~J. Patterson and Peter~A. Perry.
\newblock The divisor of {{Selberg}}'s zeta function for {{Kleinian}} groups.
\newblock {\em Duke Mathematical Journal}, 106(2):321--390, February 2001.

\bibitem[PT21]{poineauBerkovichCurvesSchottky2021}
J{\'e}r{\^o}me Poineau and Daniele Turchetti.
\newblock Berkovich {{Curves}} and {{Schottky Uniformization II}}: {{Analytic
  Uniformization}} of {{Mumford Curves}}.
\newblock In Bruno Angl{\`e}s and Tuan Ngo~Dac, editors, {\em Arithmetic and
  {{Geometry}} over {{Local Fields}}: {{VIASM}} 2018}, Lecture {{Notes}} in
  {{Mathematics}}, pages 225--279. {Springer International Publishing}, {Cham},
  2021.

\bibitem[PT22]{poineauSchottkySpacesUniversal2022}
J{\'e}r{\^o}me Poineau and Daniele Turchetti.
\newblock Schottky spaces and universal {{Mumford}} curves over $\mathbb{Z}$.
\newblock {\em Selecta Mathematica}, 28(4):79, September 2022.

\bibitem[PV19]{pollicottZerosSelbergZeta2019a}
Mark Pollicott and Polina Vytnova.
\newblock Zeros of the {{Selberg}} zeta function for symmetric infinite area
  hyperbolic surfaces.
\newblock {\em Geometriae Dedicata}, 201(1):155--186, August 2019.

\bibitem[Rob02]{roblinFonctionOrbitaleGroupes2002}
Thomas Roblin.
\newblock {Sur la fonction orbitale des groupes discrets en courbure
  n{\'e}gative}.
\newblock {\em Annales de l'Institut Fourier}, 52(1):145--151, 2002.

\bibitem[Sul79]{sullivanDensityInfinityDiscrete1979}
Dennis Sullivan.
\newblock The density at infinity of a discrete group of hyperbolic motions.
\newblock {\em Publications math{\'e}matiques de l'IH{\'E}S}, 50(1):171--202,
  December 1979.

\bibitem[Wei15]{weichResonanceChainsGeometric2015}
Tobias Weich.
\newblock Resonance chains and geometric limits on {{Schottky}} surfaces.
\newblock {\em Communications in Mathematical Physics}, 337(2):727--765, July
  2015.

\bibitem[Zwo17]{zworski-survey}
Maciej Zworski.
\newblock Mathematical study of scattering resonances.
\newblock {\em Bull. Math. Sci.}, 7(1):1--85, 2017.

\end{thebibliography}

\bigskip

 \noindent 
	Jialun Li. {\it CNRS-Centre de Math\'ematiques Laurent Schwartz, \'Ecole Polytechnique, Palaiseau, France.}  \\
	email: {\tt jialun.li@cnrs.fr} 
		
		\bigskip  

     \noindent 
	Carlos Matheus. {\it CNRS-Centre de Math\'ematiques Laurent Schwartz, \'Ecole Polytechnique, Palaiseau, France.}  \\
	email: {\tt carlos.matheus@math.cnrs.fr } 
		
		\bigskip  

\noindent 
	Wenyu Pan. {\it Department of Mathematics, University of Toronto, Toronto, Cananda}  \\
	email: {\tt wenyup.pan@utoronto.ca} 
		
		\bigskip  
    \noindent 
	Zhongkai Tao. {\it Department of Mathematics, Evans Hall, University of California, Berkeley, CA 94720, USA}  \\
	email: {\tt ztao@math.berkeley.edu} 
		
		\bigskip

\end{document}